\definecolor{darkred}{rgb}{0.5,0,0}
\definecolor{darkgreen}{rgb}{0,0.5,0}
\definecolor{darkblue}{rgb}{0,0,0.7}
\definecolor{orchid}{rgb}{.75,.24,1}
\definecolor{chocolate}{rgb}{.6,.2,0}
\definecolor{teal}{rgb}{.094,.655,.710}
\definecolor{comp_teal}{rgb}{0,.502,.502}
\newcommand\coolunder[2]{\mathrlap{\smash{\underbrace{\phantom{%
    \begin{matrix} #2 \end{matrix}}}_{\mbox{$#1$}}}}#2}
\newcommand{\+}[1]{\ensuremath{\mathbf{#1}}}
\newcommand{\C}{{\mathbb C}}
\newcommand{\Z}{{\mathbb Z}}
\renewcommand{\P}{{\mathbb P}}
\newcommand{\bull}{{\scriptscriptstyle \bullet}}
\newcommand{\cO}{{\mathcal O}}
\newcommand{\comment}[1]{}
\newcommand{\type}{\mathfrak{t}}
\newcommand{\op}{\text{op}}
\newcommand{\sym}{\mathfrak{s}}
\DeclarePairedDelimiter{\ceil}{\lceil}{\rceil}
\DeclarePairedDelimiter{\floor}{\lfloor}{\rfloor}
\renewcommand{\emptyset}{\varnothing}
\renewcommand{\tilde}{\widetilde}
\newtheorem{thm}{Theorem}[section]
\newtheorem{lemma}[thm]{Lemma}
\newtheorem{prop}[thm]{Proposition}
\newtheorem{cor}[thm]{Corollary}
\theoremstyle{definition}
\newtheorem{example}[thm]{Example}
\newtheorem{obs}[thm]{Observation}
\begin{document}
\pagestyle{plain}
\title{Triple Intersection Formulas for Isotropic Grassmannians}
\date{March 7, 2014}
\author{Vijay Ravikumar}
\address{Mathematics Department\\ Tata Institute of Fundamental Research\\ Mumbai, India}
\email{vravi@math.tifr.res.in}
%\thanks{...}
\subjclass[2010]{Primary 14N15; Secondary 19E08, 14M15}

\begin{abstract}
Let $X$ be an isotropic Grassmannian of type $B$, $C$, or $D$.
In this paper we calculate $K$-theoretic Pieri-type triple intersection numbers for $X$: 
that is, the sheaf Euler characteristic of the triple intersection of two arbitrary Schubert varieties
and a special Schubert variety in general position.
We do this by determining explicit equations for the projected Richardson variety
corresponding to the two arbitrary Schubert varieties,
and show that it is a complete intersection in projective space.
The $K$-theoretic Pieri coefficients are alternating sums of these triple intersection numbers, 
and we hope they will lead to positive Pieri
formulas for isotropic Grassmannians.
\comment{
that is, the 
sheaf euler characteristic of the product of a special Schubert class with 
two arbitrary Schubert classes in the Grothendieck ring of $X$.
We do this by determining 
explicit equations for the projected Richardson variety
corresponding to the two arbitrary Schubert classes, which
is shown to be a complete intersection in projective space.
The $K$-theoretic Pieri coefficients are alternating sums of these triple intersection numbers, 
and we hope they will lead to positive Pieri
formulas for isotropic Grassmannians.
}
\end{abstract}

\maketitle
\tableofcontents

%SECTION 1
\section{Introduction}\label{S:intro}
When studying the ordinary cohomology of an (isotropic) Grassmannian, a \emph{triple intersection number} refers
to the number of intersection points of three Schubert
varieties in general position. 
By convention, this number is zero when the triple intersection has positive dimension.
Algebraically this number is given as the pushforward
of the product of three Schubert classes
to the the cohomology ring of a single point.

Given three Schubert varieties in general position, let $Z$ denote their scheme theoretic triple intersection.
The corresponding \emph{$K$-theoretic triple intersection number} is the sheaf Euler characteristic of $Z$; that is,
the pushforward of the product of the three Schubert classes
to the Grothendieck ring of a point.  We denote this number $\chi(Z)$.
If $Z$ is finite,
then just as in cohomology, $\chi(Z)$ is equal to the number of points in $Z$
(since these finitely many points are reduced by Kleiman's transversality theorem \cite{kleiman:transversality}).
If $Z$ has positive dimension however, then $\chi(Z)$ can be a nonzero (and possibly negative) integer.

In either setting, the triple intersection numbers
determine the structure contants for
multiplication with respect to the Schubert basis.
These structure constants are known as \emph{Littlewood-Richardson
coefficients} and in ordinary cohomology 
they are equal to triple intersection numbers.
In $K$-theory however, the Littlewood-Richardson coefficients are
alternating sums of triple intersection numbers.

An arbitrary Schubert class can be written as an integer polynomial in
certain \emph{special Schubert classes}, which (in cohomology)
are closely related to the Chern classes of the tautological quotient bundle on the Grassmannian
in question.
A triple intersection number is said to be of \emph{Pieri-type} if
one of the three Schubert classes is a special Schubert class.
Similarly, a \emph{Pieri coefficient} refers to a Littlewood-Richardson
coefficient occuring in the product of an arbitrary Schubert class
and a special Schubert class.

In this paper, we
determine $K$-theoretic Pieri-type triple intersection numbers
for all isotropic Grassmannians of types $B$, $C$, and $D$. 
\comment{The Pieri coefficients are alternating sums of these numbers.}
Our results generalize \cite{buch:cominuscule}, in which
similar calculations are carried out for the cominuscule Grassmannians;
that is for the type $A$ Grassmannian $Gr(m,\C^N)$, 
the maximal odd orthogonal Grassmannian $OG(m,\C^{2m+1})$, and the
Lagrangian Grassmannian $LG(m,\C^{2m})$.

\subsection{Methods and Results}
Let $\omega$ be a skew-symmetric or symmetric nondegenerate
bilinear form on $\C^N$, where $N \geq 2$.  Fix a
basis $\+{e}_1, \ldots, \+{e}_N$ for $\C^N$ that is \emph{isotropic} in the sense that
\begin{equation*}\label{E:isotropic}
\omega(\+{e}_i,\+{e}_j) = \delta_{i+j,N+1} \text{ for } 1 \leq i \leq j \leq N. 
\end{equation*}
Note that if $\omega$ is symmetric, then $\omega(\+{e}_i,\+{e}_j) 
= \delta_{i+j,N+1}$ for all $i$ and $j$ in the integer interval $[1,N]$.
If $\omega$ is skew-symmetric (which can only happen when $N$ is even),
then $\omega(\+{e}_i,\+{e}_j) = -\delta_{i+j,N+1}$ for $i > j$.

For any subspace $\Sigma \subset \C^N$, we define
$\Sigma^{\perp} := \{\+w \in \C^N: \omega(\+v,\+w) = 0 \hphantom{a} \forall \+v \in \Sigma\}$.
We say $\Sigma$ is \emph{isotropic} if $\Sigma \subset \Sigma^\perp$.
Given a positive integer $m \leq \frac{N}{2}$,
the isotropic Grassmannian $IG_{\omega}(m,\C^N)$ is defined as
\[IG_{\omega}(m,\C^N) := \{\Sigma \in Gr(m,\C^N): \Sigma \subset \Sigma^\perp\}.\]
This projective variety parametrizes isotropic $m$-planes in $\C^N$.
It is said to have Lie type $C$ when $\omega$ is skew-symmetric (in which case $N$ is even),
Lie type $B$ when $\omega$ is symmetric and $N$ is odd, and Lie type $D$
when $\omega$ is symmetric and $N$ is even.
\comment{
It is said to have Lie type $C$, $B$, or $D$, depending
on whether $\omega$ is skew-symmetric or symmetric, and (in the latter case)
on whether $N$ is odd or even.}

In order to define Schubert varieties in $X := IG_{\omega}(m,\C^N)$, 
we must fix some flags on $\C^N$.
We define the \emph{standard flag} $E_{\bull}$ on $\C^N$ by
$E_j := \langle \+{e}_1, \ldots, \+{e}_j \rangle$,
the span of the first $j$ basis vectors.
In types $B$ and $C$, we define the opposite flag $E^{\op}_{\bull}$ by 
$E^{\op}_j := \langle \+{e}_{N+1-j}, \ldots, \+{e}_N \rangle$,
the span of the last $j$ basis vectors.  A more complicated
type $D$ definition is given in Section \ref{S:prelim_D}.

Given $\Sigma \in X$, the \emph{Schubert symbol of $\Sigma$} relative to $E_{\bull}$,
\[\sym(\Sigma) := \{c \in [1,N]: \Sigma \cap E_c \supsetneq \Sigma \cap E_{c-1}\},\]
records the steps $c$ in $E_{\bull}$ at which
the intersection $\Sigma \cap E_c$ jumps dimension.
\comment{
If $\+M$ is an $m \times N$ matrix with rowspace $\Sigma$, then $\sym(\Sigma)$ is the set of
pivot columns in the reduced row echelon form of $\+M$.}
Note that the set $\sym(\Sigma)$ has cardinality $m$, and
that if $c \in \sym(\Sigma)$ then $N+1-c \not\in \sym(\Sigma)$,
since $\Sigma$ is isotropic.
In general, a subset $P \subset [1,N]$ of cardinality $m$
is a \emph{Schubert symbol}\footnote{Schubert symbols are sometimes
known as \emph{jump sequences}, 
and in \cite{buch:quantum_pieri} they are referred to as \emph{index sets}.} 
if $c + d \neq N+1$ for any $c,d \in P$.
We let $\Omega(X)$ denote the set of all Schubert symbols for $X$.

\comment{
Schubert varieties in $X$ can be indexed by Schubert symbols.}
Given a Schubert symbol $P$, we define the \emph{Schubert variety} $X_P := X_P(E_{\bull})$
to be the closure in $X$ of the \emph{Schubert cell} $X^{\circ}_P(E_{\bull}) := \{\Sigma \in X: \sym(\Sigma) = P\}$.
We say $X_P$ is a Schubert variety relative to the flag $E_{\bull}$.
We also define the \emph{opposite Schubert variety} $X^P$ to be
the unique Schubert variety relative to the opposite flag $E^{\op}_{\bull}$
that intersects $X_P$ at a single point.
For the special Schubert varieties, we
adopt an additional indexing convention,
writing $X_{(r)}$ to denote
the special Schubert variety of codimension $r$ in $X$.
Given Schubert symbols $P$ and $T$, we write $T \preceq P$
if $X_T \subset X_P$.  The resulting partial order on the
set of Schubert symbols, known as the \emph{Bruhat order},
is described combinatorially in Sections \ref{S:prelim} and \ref{S:prelim_D}.

The tranvserse intersection of two Schubert varieties is known as a \emph{Richardson variety}.
Associated to Schubert symbols $P$ and $T$ we have a Richardson variety $Y_{P,T} := X_P \cap X^T$,
which is nonempty if and only if $T \preceq P$.
Since $[\cO_{X_P}] \cdot [\cO_{X^T}] = [\cO_{Y_{P,T}}]$ (see e.g. \cite{brion:lectures}),
the $K$-theoretic Pieri-type triple intersection numbers can be written
\begin{equation}\label{E:intro_triple_int_number}
 \chi([\cO_{Y_{P,T}}] \cdot [\cO_{X_{(r)}}]),
\end{equation}
where $\chi: K(X) \to \Z$ is the sheaf euler characteristic map.
These numbers are nonzero only when $T \preceq P$.

We can reinterpret this triple intersection number by means of the following incidence relation,
which consists of the two-step isotropic flag variety $IF_{\omega}(1,m,\C^N)$
and the natural projections $\psi$ and $\pi$.
In particular, we make use of the
projected Richardson variety $\psi(\pi^{-1}(Y_{P,T})) \subset \P^{N-1}$,
which is the (projectivization of the) union of all $m$-planes in the Richardson variety $Y_{P,T}$.

\begin{center}
\begin{tikzpicture}[node distance = 1.8cm]
  \node (OF) {$IF_{\omega}(1,m,\C^N)$};
  \node(dummy) [right of=OF]{};
  \node (OG) [below of=OF] {$IG_{\omega}(m,\C^N)$};
  \node (T) [right of=dummy] {$IG_{\omega}(1,\C^N) \subset \P^{N-1}$};
  \draw[->] (OF) edge node[left]{$\pi$} (OG);
  \draw[->] (OF) edge node[above]{$\psi$}(T);
  \end{tikzpicture}
\end{center}

Projected Richardson varieties like $\psi(\pi^{-1}(Y_{P,T}))$ have a number of nice geometric properties.
The recent work of He and Lam \cite{he:projected} relates these varieties
to the $K$-theory of affine Grassmannians, and it has been
proved by Billey and Coskun  \cite{billey:singularities}, and by
Knutson, Lam, and Speyer  \cite{knutson:projections}, that they
are Cohen-Macaulay with rational singularities, and that the projection map
is cohomologically trivial, in the sense that
$\psi_*[\cO_{\pi^{-1}(Y_{P,T})}] = [\cO_{\psi(\pi^{-1}(Y_{P,T}))}]$.
By this last fact, along with the projection formula,
the calculation of the triple intersection number \eqref{E:intro_triple_int_number}
amounts to showing that $\psi(\pi^{-1}(Y_{P,T}))$ 
is a complete intersection in $\P^{N-1}$
and determining the equations that define it.

A description of the projected Richardson variety $\psi(\pi^{-1}(Y_{P,T}))$
is carried out in \cite{buch:quantum_pieri},
but in the special
case that the Schubert symbols $P$ and $T$ satisfy a relation $P \to T$.
Roughly speaking, this relation signifies that $T$ shows up in some
cohomological Pieri product involving $P$.
The relation $P \to T$ requires that
$T \preceq P$, and $T \preceq P$ is a more general condition.
We note that
for $P \not\to T$,
the $K$-theoretic triple intersection numbers
$\chi([\cO_{Y_{P,T}}] \cdot [\cO_{X_{(r)}}])$ need not vanish
(in contrast to the cohomological triple intersection
numbers), and are therefore essential ingredients for the
$K$-theoretic Pieri coefficients.

When $X$ is a Grassmannian of Lie type $B$ or $C$, the authors of \cite{buch:quantum_pieri}
define a complete intersection $Z_{P,T} \subset \P^{N-1}$
for Schubert symbols $T \preceq P$ and prove that
the projected Richardson variety $\psi(\pi^{-1}(Y_{P,T}))$
is contained in it.
The authors attempt to extend the definition of $Z_{P,T}$ to the type $D$ Grassmannian 
but use an erroneous definition of Schubert varieties,
resulting in a definition of $Z_{P,T}$ that only makes sense in the special case that $P \to T$.

The first result of this paper, presented in Section \ref{S:type_D_equations}, is
to provide a corrected definition of $Z_{P,T}$ in the general setting that $T \preceq P$,
and to show that it is a complete intersection of linear and quadratic hypersurfaces. 
This process involves new combinatorics of Schubert symbols,
such as the notion of an \emph{exceptional cut}.

The second result, presented in Section \ref{S:type_D_into}, is that
$\psi(\pi^{-1}(Y_{P,T})) \subset Z_{P,T}$ for any Schubert symbols $T \preceq P$ in
a Grassmannian of Lie type $B$, $C$, or $D$.
This result generalizes \cite[Lemma 5.1]{buch:quantum_pieri}, in which the authors
prove this statement in types $B$ and $C$ only.

The third result, presented
in Section \ref{S:type_BC_onto}, is that
given a type $B$, $C$, or $D$ Grassmannian and
arbitrary Schubert symbols $T \preceq P$,
we have $Z_{P,T} \subset \psi(\pi^{-1}(Y_{P,T}))$.
We prove this result by constructing
a smaller Richardson variety contained in $Y_{P,T}$
that projects surjectively onto $Z_{P,T}$.
\comment{
More precisely, we construct
a Schubert symbol $P'$ such that
$T \preceq P' \preceq P$,
$P' \to T$,
and $Z_{P,T} = Z_{P',T}$.
By a result of \cite{buch:quantum_pieri}, since $P' \to T$, 
we then have $\psi(\pi^{-1}(Y_{P',T})) = Z_{P',T} = Z_{P,T}$.}

Combining these results, we arrive at the main theorem of this paper:
\begin{thm}\label{T:intro_main_theorem}
Let $X$ be a Grassmannian of Lie type $B$, $C$, or $D$.  For any Schubert
symbols $T \preceq P$
we have $Z_{P,T} = \psi(\pi^{-1}(Y_{P,T}))$.
\end{thm}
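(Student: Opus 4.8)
The plan is to prove the two inclusions separately, since equality of $Z_{P,T}$ and $\psi(\pi^{-1}(Y_{P,T}))$ as subschemes of $\P^{N-1}$ will follow once we know each is contained in the other and (say) that both are reduced and irreducible, or more directly by combining the scheme-theoretic containments established in the body of the paper. The first inclusion, $\psi(\pi^{-1}(Y_{P,T})) \subset Z_{P,T}$, is exactly the second result advertised in the introduction and proved in Section~\ref{S:type_D_into}; I would simply invoke it. The content of the present theorem is therefore the reverse inclusion $Z_{P,T} \subset \psi(\pi^{-1}(Y_{P,T}))$, which is the third result, proved in Section~\ref{S:type_BC_onto}. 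So the proof of Theorem~\ref{T:intro_main_theorem} is a two-line assembly: cite the containment of Section~\ref{S:type_D_into}, cite the containment of Section~\ref{S:type_BC_onto}, and conclude.

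For the substantive direction, $Z_{P,T} \subset \psi(\pi^{-1}(Y_{P,T}))$, the strategy I would follow is the one sketched in the commented-out paragraph of the introduction: reduce to the case $P \to T$, where the answer is already known from \cite{buch:quantum_pieri}. Concretely, I would construct a Schubert symbol $P'$ with $T \preceq P' \preceq P$ satisfying $P' \to T$ and $Z_{P',T} = Z_{P,T}$. The monotonicity $T \preceq P' \preceq P$ gives $Y_{P',T} = X_{P'} \cap X^T \subset X_P \cap X^T = Y_{P,T}$, hence $\psi(\pi^{-1}(Y_{P',T})) \subset \psi(\pi^{-1}(Y_{P,T}))$. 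Since $P' \to T$, the result of \cite{buch:quantum_pieri} identifies $\psi(\pi^{-1}(Y_{P',T}))$ with $Z_{P',T}$, and by construction $Z_{P',T} = Z_{P,T}$; chaining these gives $Z_{P,T} = \psi(\pi^{-1}(Y_{P',T})) \subset \psi(\pi^{-1}(Y_{P,T}))$, as desired.

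The main obstacle is the construction of $P'$ and the verification that it has the three required properties simultaneously. Producing a $P'$ with $T \preceq P' \preceq P$ and $P' \to T$ is a combinatorial problem about Schubert symbols and the relation $\to$; the delicate point is that it must be done without changing the equations cutting out $Z_{P,T}$, i.e.\ so that $Z_{P',T} = Z_{P,T}$ on the nose. This is where the new combinatorics of the paper --- in particular the notion of an \emph{exceptional cut} introduced in Section~\ref{S:type_D_equations} --- should enter: one needs to understand precisely which linear and quadratic generators of the ideal of $Z_{P,T}$ come from which features of the pair $(P,T)$, and then move $P$ down to a $P'$ that preserves exactly those features while achieving $P' \to T$. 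The type $D$ case is the genuinely new one, since in types $B$ and $C$ the relevant statement is \cite[Lemma 5.1]{buch:quantum_pieri}; the type $D$ opposite flag and the parity subtleties it introduces (reflected in the corrected definition of $Z_{P,T}$) are what make the bookkeeping nontrivial.

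Finally, I would note that the scheme structures match, not merely the underlying sets: since $Z_{P,T}$ is a complete intersection of the stated linear and quadratic hypersurfaces (Section~\ref{S:type_D_equations}) and $\psi(\pi^{-1}(Y_{P,T}))$ is Cohen--Macaulay and reduced (by \cite{billey:singularities, knutson:projections}), a containment of reduced complete-intersection-type schemes with equal support upgrades to equality of schemes. This is what makes the equality $Z_{P,T} = \psi(\pi^{-1}(Y_{P,T}))$ usable for the Euler-characteristic computation via the projection formula, as explained in the introduction.
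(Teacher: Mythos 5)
Your proposal is correct and follows the paper's own route essentially verbatim: cite the containment $\psi(\pi^{-1}(Y_{P,T})) \subset Z_{P,T}$ from Section~\ref{S:type_D_into} (Proposition~\ref{P:into}), reduce the reverse containment to the case $P \to T$ by constructing a Schubert symbol $\tilde P$ with $T \preceq \tilde P \preceq P$, $Z_{\tilde P,T}=Z_{P,T}$, and $\tilde P \to T$ (Proposition~\ref{P:shrink_existence}), and then invoke the birational-isomorphism criterion of \cite{buch:quantum_pieri} (Proposition~\ref{P:birational_crit}) together with properness of $\psi$ restricted to $\pi^{-1}(Y_{\tilde P,T})$. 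You have also correctly located the hard part --- the simultaneous satisfaction of the three conditions by $\tilde P$, which requires the exceptional-cut combinatorics and the care over the type $D$ Bruhat order.
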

By Theorem \ref{T:intro_main_theorem}, we
know exactly which equations define the projected Richardson variety in all three Lie types. 
These equations allow for a pleasant calculation of the triple intersection
numbers, which we carry out in Section \ref{S:triples}.
In Section \ref{S:pieris} we describe how
$K$-theoretic Pieri coefficients are calculated as alternating sums of these triple intersection numbers.
Taken together, the results of this paper complete the story of Pieri-type triple intersection numbers
for Grassmannians.
We hope this approach will soon lead to a positive Pieri formula.

\comment{
{\color{red}old version:}
When $X$ is an isotropic Grassmannian of Lie type $B$ or $C$, the authors of \cite{buch:quantum_pieri}
define a complete intersection $Z_{P,T} \subset \P^{N-1}$
for Schubert symbols $T \preceq P$ and prove that
the projected Richardson variety $\psi(\pi^{-1}(Y_{P,T}))$
is contained in it.  However
their subsequent proof that $Z_{P,T}$ is in fact equal to
$\psi(\pi^{-1}(Y_{P,T}))$ requires the stronger assumption
that $P$ and $T$ satisfy the relation $P \to T$.

The first result of this paper, presented
in Section \ref{S:type_BC_onto}, is that
given a type $B$ or $C$ Grassmannian and
arbitrary Schubert symbols $T \preceq P$,
we have $Z_{P,T} = \psi(\pi^{-1}(Y_{P,T}))$.
We prove this result by constructing
a Schubert symbol $P'$ such that
$T \preceq P' \preceq P$,
$P' \to T$,
and $Z_{P,T} = Z_{P',T}$.
In other words, we construct a smaller Richardson variety
contained in $Y_{P,T}$ that projects surjectively
onto $Z_{P,T}$.
We note that
when $P \not\to T$,
the $K$-theoretic triple intersection numbers
$\chi([\cO_{Y_{P,T}}] \cdot [\cO_{X_{(r)}}])$ need not vanish
(in contrast to the cohomological triple intersection
numbers), and are therefore essential ingredients for the
$K$-theoretic Pieri coefficients.

When $X$ is an isotropic Grassmannian of Lie type $D$,
the authors of \cite{buch:quantum_pieri} only define the complete intersection
$Z_{P,T}$ for Schubert symbols $P \to T$.
Our second result, presented in Section \ref{S:type_D_equations}, is
to provide a rigorous definition of $Z_{P,T}$ in the general setting that $T \preceq P$,
and to show that it is a complete intersection of linear and quadratic hypersurfaces. 
This process involves new combinatorics of Schubert symbols,
such as the notion of an \emph{exceptional cut}.
The third and fourth results of this paper
show  that $Z_{P,T}$ is indeed
the projected Richardson variety:
in Section \ref{S:type_D_into} we prove that
$\psi(\pi^{-1}(Y_{P,T})) \subset Z_{P,T}$,
and in Section \ref{S:type_D_onto} we prove that
$Z_{P,T} \subset \psi(\pi^{-1}(Y_{P,T}))$.
Combining these results, we arrive at the main result of this paper:
\begin{thm}\label{T:intro_main_theorem}
Supposing $X$ is an isotropic Grassmannian of arbitrary Lie type, and
given any Schubert symbols $T \preceq P$, 
we have $Z_{P,T} = \psi(\pi^{-1}(Y_{P,T}))$.
\end{thm}
By Theorem \ref{T:intro_main_theorem}, we
know exactly which equations define the projected Richardson variety in types $B$, $C$, and $D$. 
These equations allow for a pleasant calculation of the triple intersection
numbers, which we carry out in Section \ref{S:triples}.
In Section \ref{S:pieris} we describe how
$K$-theoretic Pieri coefficients are calculated as alternating sums of these triple intersection numbers.
Taken together, the results of this paper complete the story of Pieri-type triple intersection numbers
for Grassmannians.
We hope this approach will soon lead to a positive Pieri formula.
}

\subsection{Acknowledgments}
The results of this paper are part of the author's dissertation \cite{ravikumar:thesis}. 
The author wishes to sincerely thank 
Anders Buch for his encouragement and motivation over the years.
\comment{In addition, the author wishes to thank Chris Woodward and Leonardo Mihalcea
for their careful reading of an earlier draft of this paper.}

%SECTION 2
\section{Preliminaries 1: Types B and C}\label{S:prelim}

\subsection{Schubert Symbols}
Let $X:= IG_\omega(m,\C^N)$ be a Grassmannian of type $C$ or $B$,
where $N := 2n$ or $N := 2n+1$, depending
on whether $X$ is of type $C$ or $B$ respectively.
In the former case, we will also denote $X$
by $SG(m,2n)$ and refer to it as a \emph{symplectic Grassmannian}.
In the latter case, we will also denote $X$
by $OG(m,2n+1)$ and refer to it as an \emph{odd orthogonal Grassmannian}.
Recall that for Schubert symbols $T$ and
$P$ in $\Omega(X)$, the relation $T \preceq P$ signifies that $X_T \subset X_P$.
This partial order on the set of Schubert symbols
has a simple combinatorial description.

Given Schubert symbols $T = \{t_1 < \ldots < t_m\}$ and
$P = \{p_1 < \ldots < p_m\}$, we write
$T \leq P$ whenever $t_i \leq p_i$ for $1 \leq i \leq m$.
By \cite[Proposition 4.1]{buch:quantum_pieri} we have the following lemma:

\begin{lemma}
Provided $X$ is of Lie type $B$ or $C$, we have
$T \leq P$ if and only if $T \preceq P$.
\end{lemma}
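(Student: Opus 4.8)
The plan is to translate both partial orders into rank conditions on $\Sigma \cap E_{\bull}$ via the embedding $X \hookrightarrow Gr(m,\C^N)$. Write $Gr_P := \overline{Gr^{\circ}_P(E_{\bull})}$ for the type-$A$ Schubert variety; classically $Gr_P = \{\Sigma : \dim(\Sigma \cap E_c) \geq \#(P \cap [1,c]) \text{ for all } c\}$ and $Gr_T \subseteq Gr_P$ exactly when $T \leq P$. Since $X^{\circ}_P(E_{\bull}) = X \cap Gr^{\circ}_P(E_{\bull})$ is a single orbit of the connected subgroup $B \subset \mathrm{Aut}(\C^N,\omega)$ stabilizing $E_{\bull}$, the variety $X_P$ is irreducible, $B$-stable, and a finite union of Schubert cells; taking closures in $Gr(m,\C^N)$ of $X^{\circ}_P \subseteq Gr^{\circ}_P$ gives $X_P \subseteq Gr_P$. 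The implication $T \preceq P \Rightarrow T \leq P$ is then immediate: the coordinate point $E_T := \langle e_i : i \in T\rangle$ lies in $X^{\circ}_T \subseteq X_T \subseteq X_P \subseteq Gr_P$, so $T \leq P$.

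For the converse it suffices to prove $E_T \in X_P$ whenever $T \leq P$, since $X_P$ is closed and $B$-stable and $X_T = \overline{B \cdot E_T}$. I would induct on $\delta(T,P) := \sum_i(p_i - t_i)$, the engine being: if $S'$ covers $S$ in the entrywise order, say $S' = (S \setminus \{a\}) \cup \{b\}$ with $a < b$ and both $S, S' \in \Omega(X)$, then $E_S \in \overline{X^{\circ}_{S'}}$. The natural witness is the curve $t \mapsto \Sigma(t) := \langle e_j : j \in S \cap S'\rangle + \langle e_a + t e_b\rangle$: a short flag-intersection computation gives $\sym(\Sigma(t)) = S'$ for $t \neq 0$ and $\Sigma(0) = E_S$, and $\Sigma(t)$ is isotropic because every pair of indices involved other than $\{a,b\}$ sums to something $\neq N+1$ (as $S, S' \in \Omega(X)$), while $\omega(e_a + t e_b, e_a + t e_b) = 2t\,\omega(e_a,e_b)$ vanishes automatically in type $C$ (skew form) and in type $B$ unless $a + b = N+1$. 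In the exceptional type-$B$ case one instead exhibits a curve lying on the isotropic quadric — e.g. a suitably rescaled one-parameter family inside $X^{\circ}_{S'}$ whose limit is $E_S$. Running the induction then requires a usable cover $S \lessdot S' \leq P$; granting this, $E_{S'} \in X_P$ by induction, hence $X^{\circ}_{S'} = B \cdot E_{S'} \subseteq X_P$, hence $E_S \in \overline{X^{\circ}_{S'}} \subseteq X_P$.

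The main obstacle is the combinatorial bookkeeping behind ``a usable cover exists'': given $X$-symbols $S < P$, one must produce a cover $S \lessdot S' \leq P$ for which the curve argument applies (in type $B$, avoiding $a + b = N+1$ where possible and otherwise supplying the corrected curve). This is a finite case analysis on Schubert symbols — the type-$B$/$C$ shadow of the more delicate combinatorics (exceptional cuts, the corrected type-$D$ flag) developed later in the paper. Alternatively, one can sidestep the geometry: invoke the Weyl-group description of the Bruhat order on $G/P$ together with the standard combinatorial model of the minimal coset representatives $W^P$, under which the induced order on Schubert symbols is manifestly the entrywise order $\leq$.
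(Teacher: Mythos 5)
The paper itself does not prove this lemma; it is quoted from \cite[Proposition 4.1]{buch:quantum_pieri}, so any self-contained argument is necessarily a different route. Your forward direction is complete and correct: $X_P\subseteq Gr_P$ by taking closures inside the (closed) subvariety $X\subseteq Gr(m,\C^N)$, and evaluating the classical rank conditions at the coordinate point $E_T$ gives $T\le P$. The curve $\langle \+e_j : j\in S\cap S'\rangle + \langle \+e_a+t\+e_b\rangle$ is also the right degeneration tool, and the type-$B$ exception $a+b=N+1$ is genuinely there and genuinely fixable (e.g.\ replace $\+e_a+t\+e_b$ by $\+e_a+t\+e_{n+1}-\tfrac{t^2}{2}\+e_b$, which is isotropic and still has jump at $b$ for $t\neq 0$; note $n+1\notin S\cap S'$ automatically in type $B$).

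The genuine gap is exactly the step you label the ``main obstacle'': given Schubert symbols $S<P$ in $\Omega(X)$, producing $a\in S$ and $b\notin S$ with $a<b$ such that $S'=(S\setminus\{a\})\cup\{b\}$ is still an \emph{isotropic} symbol satisfying $S<S'\le P$. This is where the entire content of the lemma sits, not mere bookkeeping: the naive choice fails (e.g.\ $S=\{1,2\}$, $P=\{3,4\}$ in $SG(2,4)$: raising $s_1=1$ to $3$ gives $\{2,3\}$, which violates the isotropy condition $c+d\neq N+1$, so one must raise $s_2$ first), and the analogous statement is \emph{false} in type $D$ — the whole point of Sections \ref{S:prelim_D}--\ref{S:type_D_equations} is that $\le$ and $\preceq$ differ there. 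So the claim cannot follow from soft generalities; the case analysis must actually be done, and your proposal defers it. For the same reason, the suggested shortcut — that the Bruhat order on the minimal coset representatives $W^P$ is ``manifestly'' the entrywise order on symbols — is not an argument but a restatement of the lemma; verifying it for the hyperoctahedral group is precisely what \cite[Proposition 4.1]{buch:quantum_pieri} does. As written, the proposal is a correct and workable outline with its decisive combinatorial step missing.
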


\comment{
\begin{proof}
Given $T < P$, let $j$ be the minimum integer such that $t_j < p_j$.
Consider the map $\P^1 \to X$ given by 
\[[s:t] \mapsto \langle  \rangle\]

Thus $X^{\circ}_T \subset X_P$ for every $T \leq P$.
Now, the union of  $X^{\circ}_T \subset X_P$  is
equal to {\color{red}RANK CONDITION}, which is a closed subset of $X$
containing $X^{\circ}_P$.  Therefore,
it is equal to $X_P$, and hence
if $T \not\leq P$, we have
$T \not\preceq P$.
{\color{red} FINISH PROOF! MAY NEED TO LOOK AT BKT2 FOR TYPE B, IN ADDITION TO BKT1 FOR TYPE C.}
\end{proof}
}

For any Schubert symbol $P \in \Omega(X)$, let
$\bar{P} = \{c \in [1,N]: N+1-c \in P\}$ and let
$[P] = P \cup \bar{P}$.  Also let
$|P|$ denote the codimension
of the Schubert variety $X_P$ in $X$.

\comment{
Given a Schubert symbol $P$ such that $d = |P|$,
let $[X_P] \in H^{2d}(X)$
denote the Poincare dual of the homology class in $H_{2d}(X)$
corresponding to $X_P$.  The class $[X_P]$ is independent
of the flag $E_{\bull}$ and we refer to it as the
\emph{Schubert class corresponding to $P$}.
}

\comment{
To each Schubert symbol $P$, 
there is a unique dual symbol
$P^{\vee}$ with the property that for any Schubert symbol $T$,
\[
\rho_{*}([X_{P}] \cdot [X_{T}]) = \delta_{T,P^{\vee}},
\]
where
$\rho$ is the map from $X$ to a single point.
Equivalently,  $X_P \cap X^T$
is equal to a single point if and only if $T = P$, where $X^T := X_{T^{\vee}}(E^{\op}_{\bull})$.
The following lemma, from \cite[Proposition 4.2]{buch:quantum_pieri}, 
gives a simple description the dual symbol $P^{\vee}$.
}

For each Schubert symbol $P$, 
there is a unique dual symbol
$P^{\vee}$ with the property that for any Schubert symbol $T$,
$X_P(E_{\bull}) \cap X_{T}(E^{\op}_{\bull})$
is equal to a single point if and only if $T = P^{\vee}$.
The opposite Schubert symbol $X^P$ defined in the introduction
is therefore equal to $X_{P^{\vee}}(E^{\op}_{\bull})$.
The following lemma, from \cite[Proposition 4.2]{buch:quantum_pieri}, 
gives a simple description the dual symbol $P^{\vee}$.

\begin{lemma}\label{L:poincare_dual}
 Provided $X$ is of Lie type $B$ or $C$, we have
$P^{\vee} = \bar{P}$ for all Schubert symbols $P \in \Omega(X)$.
\end{lemma}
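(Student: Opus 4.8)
The plan is to show that the opposite Schubert variety $X_{\bar P}(E^{\op}_{\bull})$ meets $X_P(E_{\bull})$ transversally in exactly one point, which by the defining property of $P^\vee$ forces $P^\vee = \bar P$. The key observation is that the opposite flag in types $B$ and $C$ is literally the standard flag with the basis reversed: $E^{\op}_j = \langle \+e_{N+1-j}, \ldots, \+e_N\rangle$. So the Schubert symbol of a point $\Sigma$ relative to $E^{\op}_{\bull}$ records the jumps of $\Sigma \cap E^{\op}_c$, and a column $c$ is a jump for the opposite flag exactly when $N+1-c$ is a jump for the standard flag. In other words, $\sym^{\op}(\Sigma) = \{N+1-c : c \in \sym(\Sigma)\}$; equivalently, if we measure the opposite symbol in the same $[1,N]$ labeling as the standard symbol, a point of the cell $X^\circ_{\bar P}(E^{\op}_{\bull})$ has the property that its standard-flag symbol is $\overline{\bar P} = P$. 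Hence $X^\circ_P(E_{\bull}) \cap X^\circ_{\bar P}(E^{\op}_{\bull})$ is precisely the set of isotropic $m$-planes with a prescribed pair of jump patterns.

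First I would check that the coordinate subspace $\Sigma_P := \langle \+e_c : c \in P\rangle$ is isotropic — this is immediate from the isotropy condition $c + d \neq N+1$ for $c, d \in P$ and the formula $\omega(\+e_i,\+e_j) = \delta_{i+j,N+1}$ — so $\Sigma_P \in X$, and it clearly lies in both cells. Next I would argue uniqueness: if $\Sigma$ lies in $X^\circ_P(E_{\bull}) \cap X^\circ_{\bar P}(E^{\op}_{\bull})$, then for each $c \in P$ the space $\Sigma$ contains a vector supported on $E_c$ with nonzero $\+e_c$-coefficient and, from the opposite side, a vector supported on $\langle \+e_c, \ldots, \+e_N\rangle$ with nonzero $\+e_c$-coefficient; intersecting these filtration conditions (a standard echelon argument: the standard-flag jumps give a basis in row echelon form with pivots in the columns of $P$, and the opposite-flag jumps pin down each pivot row to be supported only on its pivot column) forces $\Sigma = \Sigma_P$. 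Finally, transversality of the intersection — hence that it is a single reduced point and that this is exactly the dual-symbol condition — follows because opposite Schubert cells in $G/B$ (and their images in $G/P$) intersect transversally, or can be seen directly by a tangent space computation at $\Sigma_P$; either way this is standard and I would cite it rather than recompute.

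The main obstacle, such as it is, is bookkeeping rather than mathematics: one must be careful about the convention identifying the opposite-flag Schubert symbol with a subset of $[1,N]$, and about the fact that for a subset to be a valid Schubert symbol it must satisfy $c+d\ne N+1$, so one should note in passing that $\bar P$ is again a Schubert symbol (clear, since $\overline{\bar P}=P$ and the condition is symmetric under $c \mapsto N+1-c$). There is no genuine difficulty in types $B$ and $C$ because the opposite flag is honestly the reversed standard flag; the whole point of the paper's later Section \ref{S:prelim_D} is that this convention breaks in type $D$, which is why the lemma is stated only for types $B$ and $C$. Since the result is quoted from \cite[Proposition 4.2]{buch:quantum_pieri}, the cleanest write-up is to give the coordinate-subspace witness $\Sigma_P$, the echelon uniqueness argument, and a citation for transversality of opposite cells.
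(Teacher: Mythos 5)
The paper does not actually supply a proof of this lemma; it simply cites \cite[Proposition 4.2]{buch:quantum_pieri} (the proof environment is commented out in the source), so there is no in-paper argument to compare against. Your overall strategy — exhibit $\Sigma_P$ as the unique point of $X_P(E_\bull) \cap X_{\bar P}(E^{\op}_\bull)$ — is the natural one, but there are two problems.

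First, your ``key observation'' is false: for a general $\Sigma$ one does \emph{not} have $\sym^{\op}(\Sigma) = \{N+1-c : c \in \sym(\Sigma)\}$. The standard symbol records the positions where $\Sigma \cap E_c$ jumps (trailing pivots), whereas the opposite symbol, translated via $c \mapsto N+1-c$, records leading pivots, and these differ. For instance $\Sigma = \langle \+e_3 + \+e_4 \rangle \in SG(1,4)$ has $\sym(\Sigma) = \{4\}$ but $\sym^{\op}(\Sigma) = \{2\} \neq \{1\}$. Taken literally, your observation would put $X^\circ_{\bar P}(E^{\op}_\bull)$ inside $X^\circ_P(E_\bull)$, in which case the intersection would generally not be a single point. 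Fortunately the echelon argument you give afterward does not use this claim, only the correct fact that the opposite-flag jumps control the \emph{leading} positions; the false assertion should simply be dropped.

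Second, there is a genuine gap: your uniqueness argument only establishes $X^\circ_P(E_\bull) \cap X^\circ_{\bar P}(E^{\op}_\bull) = \{\Sigma_P\}$, but the defining property of $P^\vee$ involves the \emph{closed} Schubert varieties, and your appeal to transversality at $\Sigma_P$ cannot rule out components of $X_P(E_\bull) \cap X_{\bar P}(E^{\op}_\bull)$ away from $\Sigma_P$. The clean fix, available precisely in types $B$ and $C$, uses the Bruhat order $\leq$: if $\Sigma$ is in the closed intersection then $\sym(\Sigma) \leq P$ componentwise, while $\sym^{\op}(\Sigma) \leq \bar P$ translates through the order-reversing bijection $c \mapsto N+1-c$ into the leading pivot positions of $\Sigma$ being $\geq P$ componentwise. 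Since the $i$-th leading pivot of any $m$-plane is at most its $i$-th trailing pivot, both are forced to equal $P$, which places $\Sigma$ in both open cells; your echelon argument then finishes the proof.
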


\comment{
\begin{proof}
 {\color{red} PROOF TO COME}
\end{proof}
}

\subsection{Richardson Diagrams}
It is a well-known fact (following from Borel's fixed-point theorem \cite{borel:fixed_point}) 
that  $T \preceq P$ if and only if $X_P \cap X^T$ is nonempty.
This variety $Y_{P,T} := X_P \cap X^T$ was shown to be reduced and irreducible in \cite{richardson:intersections},
and is known as a \emph{Richardson variety}.

Given Schubert symbols $T \leq P$,
we define the \emph{Richardson diagram} $D(P,T) = \{(j,c): t_j \leq c \leq p_j\}$, which we
represent as an $m \times N$ matrix with a $*$ for every entry in $D(P,T)$ and zeros elsewhere.
We say a matrix $(a_{i,j})$ has \emph{shape} $D(P,T)$ if its dimensions are $m \times N$ and
$a_{j,c} = 0$ for all $(j,c) \not\in D(P,T)$.
Given a matrix of shape $D(P,T)$ whose row vectors are independent and orthogonal, its rowspace
will be an element of $Y_{P,T}$.
\begin{example}
Any rank $m$ matrix of shape $D(P,P)$ will have rowspace
$\Sigma_P := \langle \+e_{p_1}, \ldots, \+e_{p_m}\rangle$,
which is the only element of $Y_{P,P}$.
\end{example}

\begin{example}\label{E:first_richardson_diagram}
Suppose $P = \{2,3,4,10\}$
and $T = \{1,2,4,6\}$
in $SG(4,10)$.  Suppose $(a_{i,j})$
is a rank $m$ matrix of shape $D(P,T)$.
The rowspace of $(a_{i,j})$ will be in $Y_{P,T}$ if and only if 
$a_{1,1}a_{4,10} + a_{1,2}a_{4,9} = 0$,
$a_{2,2}a_{4,9} + a_{2,3}a_{4,8} = 0$, and
$a_{4,7} = 0$.
We leave it to the reader to write down an explicit entries satisfying
these equations. The diagram $D(P,T)$
is shown below:
 \[
\left(
  \begin{array}{cccccccccc}
    * & * & 0 & 0 & 0 & 0 & 0 & 0 & 0 & 0  \\
    0 & * & * & 0 & 0 & 0 & 0 & 0 & 0 & 0  \\
    0 & 0 & 0 & * & 0 & 0 & 0 & 0 & 0 & 0  \\
    0 & 0 & 0 & 0 & 0 & * & * & * & * & *  \\
  \end{array}
\right).
\]
\end{example}

Given a Schubert symbol $P=\{p_1,\ldots,p_m\}$,
let $p_0 = 0$ and $p_{m+1} = N+1$.  We won't consider these
as actual elements in the Schubert symbol $P$, but the notation will
be useful.
Define a \emph{visible cut} through $D(P,T)$ to be any integer
$c \in [0,N]$ such that no row of $D(P,T)$ contains stars in both
column $c$ and column $c+1$; i.e. such that $p_i \leq c < t_{i+1}$ for some $i$.
We will consider $c=0$ and $c=N$ 
to be visible cuts.
Define an \emph{apparent cut} to be any integer $c \in [0,N]$ such that
$c$ or $N-c$ is a visible cut.  In types $B$ and $C$ we define
a \emph{cut} in $D(P,T)$ to be synonymous with an \emph{apparent cut}.
\comment{(in type $D$ we will introduce an additional type of cut).}
Let $\mathcal{C}_{P,T}$ be the set of cuts in $D(P,T)$. 

An integer $c$ is a \emph{zero column} of $D(P,T)$
if  $p_j < c < t_{j+1}$ for some $j$, since
in this case column $c$ of $D(P,T)$ has no stars.
An entry $(j,c)$ in $D(P,T)$
is a \emph{lone star} if either
\begin{enumerate}[label=\emph{\roman*})]
 \item $c \in T$ and $c$ is a cut in $D(P,T)$, or
 \item $c \in P$ and $c-1$ is a cut in $D(P,T)$. 
\end{enumerate}
The simplest example of a lone star occurs when
$t_j = p_j=c$ for some $j$.  In this case 
row $j$ and column $c$ of $D(P,T)$ each contain a
single star at $(j,c)$.
We define the set $\mathcal{L}_{P,T} \subset [1,N]$
to be the set of integers $c$ such that either
\begin{enumerate}[label=\emph{\roman*})]
 \item $c$ is a zero column in $D(P,T)$, or
 \item there exists a lone star in column $N+1-c$.
\end{enumerate}
Finally, we define the set 
\begin{equation*}
\mathcal{Q}_{P,T} :=
\begin{cases}
[0,n] \cap \mathcal{C}_{P,T} & \text{if $X$ is of type $C$,}\\
([0,n] \cap \mathcal{C}_{P,T}) \cup \{n+1\} & \text{if $X$ is of type $B$.}\\
\end{cases}
\end{equation*}

\begin{example}\label{E:rich_diagram_properties}
Continuing with Example \ref{E:first_richardson_diagram},
the set of cuts $\mathcal{C}_{P,T}$ is equal to $\{0,3,4,5,6,7,10\}$.
Of these, $0$, $3$, $4$, $5$, and $10$ are visible cuts.
Furthermore, $5$ is a zero column, $(3,4)$ is a lone star,
$\mathcal{L}_{P,T} = \{5,7\}$, and 
$\mathcal{Q}_{P,T} = \{0,3,4,5\}$.
\comment{
\[
\left(
  \begin{array}{cccccccccc}
    * & * & 0 & 0 & 0 & 0 & 0 & 0 & 0 & 0  \\
    0 & * & * & 0 & 0 & 0 & 0 & 0 & 0 & 0  \\
    0 & 0 & 0 & * & 0 & 0 & 0 & 0 & 0 & 0  \\
    0 & 0 & 0 & 0 & 0 & * & * & * & * & *  \\
  \end{array}
\right).
\]}
\end{example}

In types $B$ and $C$, lone stars take a particularly
simple form.  Namely,

\begin{prop}\label{P:simple_lone_star}
Let $X$ be an isotropic Grassmannian of Lie type $B$ or $C$.
Suppose $(j,c)$ is a lone star in $D(P,T)$.
If $c = t_j$ and $t_j$ is an apparent cut,
or if $c = p_j$ and $p_j-1$ is an apparent cut,
then either $N+1-c$ is a zero column,
or $t_j=p_j=c$.  
\end{prop}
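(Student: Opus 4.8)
The plan is to unwind the definition of an apparent cut in each of the two cases of the hypothesis and then settle the resulting subcases by elementary inequalities among the $t_i$ and the $p_i$, invoking the defining property of a Schubert symbol exactly once per ``complementary'' subcase. Recall that an apparent cut is an integer of the form $d$ or $N-d$ for some visible cut $d$; so ``$t_j$ is an apparent cut'' means that $t_j$ or $N-t_j$ is a visible cut, and ``$p_j-1$ is an apparent cut'' means that $p_j-1$ or $N+1-p_j$ is a visible cut. Throughout I use the conventions $p_0=0$, $t_{m+1}=N+1$, and the characterization of a visible cut $d$ as an integer with $p_k\leq d<t_{k+1}$ for some $k\in\{0,\dots,m\}$. (Since ``cut'' and ``apparent cut'' coincide in types $B$ and $C$, each case of the displayed hypothesis already forces $(j,c)$ to be a lone star, via condition (i) respectively (ii), so the opening sentence of the statement is automatic.)

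First I would dispose of the two ``direct'' subcases, in which the second alternative $t_j=p_j=c$ of the conclusion holds. If $t_j$ is a visible cut, choose $k$ with $p_k\leq t_j<t_{k+1}$: from $t_j<t_{k+1}$ and the strict increase of $T$ we get $j\leq k$, while $p_k\leq t_j\leq p_j$ and the strict increase of $P$ give $k\leq j$; hence $k=j$, so $p_j\leq t_j\leq p_j$, i.e. $t_j=p_j=c$. Symmetrically, if $p_j-1$ is a visible cut, choose $k$ with $p_k\leq p_j-1<t_{k+1}$: then $p_k<p_j$ forces $k\leq j-1$, hence $t_{k+1}\leq t_j\leq p_j$, while $p_j-1<t_{k+1}$ forces $p_j\leq t_{k+1}$; combining, $p_j=t_{k+1}=t_j$, and since $c=p_j$ here this again reads $t_j=p_j=c$.

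It then remains to handle the two ``complementary'' subcases, which produce the zero column $N+1-c$. Suppose $N-t_j$ is a visible cut, witnessed by $p_k\leq N-t_j<t_{k+1}$. Then $p_k<N+1-t_j\leq t_{k+1}$, and I claim the right inequality is strict: $N+1-t_j=t_{k+1}$ would give $t_j+t_{k+1}=N+1$, which for $k+1\leq m$ contradicts the Schubert-symbol condition (which forbids $c+d=N+1$ for $c,d\in T$ even when $c=d$, covering the subcase $k=j-1$ where the equation is $2t_j=N+1$), and for $k+1=m+1$ reads $t_j=0$, which is impossible. Hence $p_k<N+1-t_j<t_{k+1}$, so $N+1-c=N+1-t_j$ is a zero column. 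The last subcase, with $N+1-p_j$ a visible cut witnessed by $p_k\leq N+1-p_j<t_{k+1}$, is shorter still: one needs only $p_k<N+1-p_j$, i.e. $p_k+p_j\neq N+1$, which is automatic for $k=0$ (then $p_k=0$ and $p_j\leq N$) and follows from the Schubert-symbol condition (again allowing $c=d$) for $k\geq 1$; then $p_k<N+1-p_j<t_{k+1}$ shows $N+1-c$ is a zero column.

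I anticipate no genuine obstacle here. The only points requiring care are the boundary conventions $p_0=0$, $t_{m+1}=N+1$, and the observation that it is exactly the Schubert-symbol property---in the form that also excludes the middle index $(N+1)/2$ in type $B$---that upgrades the weak inequality $N+1-c\leq t_{k+1}$ coming from a visible cut to the strict inequality required to exhibit a zero column.
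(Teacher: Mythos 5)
Your proof is correct and follows essentially the same route as the paper's: both unwind the disjunction in the definition of an apparent cut, use the Schubert-symbol condition (no two entries summing to $N+1$, including the doubled case relevant in type $B$) to show that when the reflected integer is the visible cut the column $N+1-c$ must be a zero column, and otherwise conclude $t_j=p_j=c$ by pinning down the index $k$ of the visible cut. The paper phrases this as a short contrapositive while you split the cases directly and spell out the index-chasing, but the content is identical.
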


\begin{proof}
Suppose $c=t_j$ is an apparent
cut in $D(P,T)$, and that
$N+1-c$ is not a zero column.
Since $N+1-c$ is not in $T$ \emph{and}
not a zero-column,
it follows that $N-c$ is not a visible cut.
But then $c$ must be a visible cut,
so $c = p_j$.  A similar argument
holds if we start by assuming $c=p_j$.
\end{proof}

Since zero columns are flanked by cuts,
we have the following immediate corollary.

\begin{cor}\label{C:lin_cuts}
 In types $B$ and $C$, if $c \in \mathcal{L}_{P,T}$, then $c$ and $c-1$
are both cuts.
\end{cor}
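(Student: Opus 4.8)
The plan is to case on the two ways an integer $c$ can lie in $\mathcal{L}_{P,T}$, and in the second case reduce everything to Proposition \ref{P:simple_lone_star}. First I would dispose of the easy case where $c$ is a zero column of $D(P,T)$: then $p_j < c < t_{j+1}$ for some $j$, so $p_j \le c-1$ and $c < t_{j+1}$, which exhibits both $c-1$ and $c$ as visible cuts (take $i = j$ in the definition), hence as cuts. This is exactly the remark ``zero columns are flanked by cuts'' made just before the statement.

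For the remaining case there is a lone star in column $N+1-c$; set $c' := N+1-c$, so that $N+1-c' = c$, and by the definition of a lone star we have either $c' \in T$ with $c'$ a cut, or $c' \in P$ with $c'-1$ a cut. The point I expect to be the one genuine obstacle is that the row index $j$ of the given lone star need not satisfy $c' = t_j$ (resp.\ $c' = p_j$), which is precisely the hypothesis Proposition \ref{P:simple_lone_star} requires. I would get around this by passing to the canonical row: in the first case let $k$ be the unique index with $t_k = c'$, and observe $(k,c') \in D(P,T)$ since $t_k \le p_k$; in the second case let $k$ be the unique index with $p_k = c'$. In either case $(k,c')$ is again a lone star, now with $c' = t_k$ an apparent cut (resp.\ $c' = p_k$ with $p_k - 1$ an apparent cut), so Proposition \ref{P:simple_lone_star} applies to it and gives that either $N+1-c' = c$ is a zero column --- already handled --- or $t_k = p_k = c'$.

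It then remains to treat the subcase $t_k = p_k = c'$, where row $k$ consists of a single star. Here I would check directly that both $c'$ and $c'-1$ are visible cuts: taking $i = k$ in the definition of visible cut gives $p_k = c' \le c' < t_{k+1}$ (using that $T$ is strictly increasing), and taking $i = k-1$ gives $p_{k-1} \le c'-1 < c' = t_k$ (using $p_{k-1} < p_k$). Since $N - c = c'-1$ and $N-(c-1) = c'$ are then visible cuts, both $c$ and $c-1$ are apparent cuts, i.e.\ cuts, by definition. The argument is short overall; the only thing one must resist is assuming the given lone star sits at an endpoint of its row, which is why the reduction to the canonical row $k$ comes before invoking Proposition \ref{P:simple_lone_star}.
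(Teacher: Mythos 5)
Your proof is correct and essentially reconstructs the paper's intended argument: the paper offers only the one-line justification ``since zero columns are flanked by cuts, we have the following immediate corollary,'' implicitly combining that remark (your Case 1) with Proposition~\ref{P:simple_lone_star} (your Case 2). Your discussion of passing to the canonical row index $k$ with $t_k = c'$ (resp.\ $p_k = c'$) is a real detail the paper's terseness obscures: the hypothesis of Proposition~\ref{P:simple_lone_star} demands $c = t_j$ or $c = p_j$, whereas the definition of lone star only forces $c \in T$ or $c \in P$ and places no constraint on the row index $j$, so the reduction you make before invoking the proposition is in fact necessary.
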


\subsection{The Projected Richardson Variety}
We now define a subvariety
of $\P^{N-1}$ that will
play a key role in the calculation of triple intersection numbers.
\comment{turn out to 
equal the projected Richardson variety $\psi(\pi^{-1}(Y_{P,T}))$.}
Let $x_1, \ldots, x_N \in (\C^N)^*$ be the dual basis to
the isotropic basis $\+e_1, \ldots, \+e_N \in \C^N$.
Let $f_0 = 0$, and for $1 \leq c \leq n$, let 
$f_c = x_1x_N + \ldots + x_cx_{N+1-c}$.
For example, $f_1 = x_1x_N$, and $f_2 = x_1x_N + x_2x_{N-1}$.
In addition, if $X$ is type $B$, 
let $f_{n+1} = x_1x_{2n+1} + \ldots + x_nx_{n+2} + \frac{1}{2}x^2_{n+1}$.
Given Schubert symbols $T \preceq P$, let
$Z_{P,T} \subset \P^{N-1}$ denote the subvariety
defined by the vanishing of the polynomials
$\{f_c \mid c \in \mathcal{Q}_{P,T}\} \cup \{x_c \mid c \in \mathcal{L}_{P,T}\}$.
We note that in the type $B$ case,
$Z_{P,T}$ must satisfy the equation $f_{n+1} = 0$
and hence lie in $OG(1,2n+1)$, the quadric
hypersurface of isotropic lines in $\P^{2n}$.

\comment{
In the case of the orthogonal Grassmannians $OG(m,2n+1)$ and $OG(m,2n+2)$,  
note that isotropic vectors must satisfy a nontrivial defining equation.
For $OG(m,2n+1)$,
it is the equation
$f^B_{n+1}
=x_1x_{N}+ \ldots + x_nx_{n+2} + \frac{1}{2}x^2_{n+1}=0$.
Motivated by this inherent quadratic equation, we let
$\mathcal{Q}_{P,T} = ([0,n] \cap \mathcal{C}) \cup \{n+1\}$
for any $T \preceq P$.
}

In fact, $Z_{P,T}$ is a complete intersection in $\P^{N-1}$ cut out by the polynomials:
\begin{enumerate}[label=\emph{\alph*})]
 \item $f_d - f_c = x_{c+1}x_{N-c} + \ldots + x_dx_{N+1-d}$ 
 if $c$ and $d$ are consecutive elements of $\mathcal{Q}_{P,T}$ such that $d-c \geq 2$, and
 \item $x_c$ if $c \in \mathcal{L}_{P,T}$.
\end{enumerate}
We will prove this fact for all three Lie types in Proposition \ref{P:complete_intersection_BCD}.

Recall that we have projections $\pi$ and $\psi$ from the flag variety
$IF_{\omega}(1,m,\C^N)$ to $X$ and $IG_{\omega}(1,\C^N)$ respectively.
The variety $\pi^{-1}(Y_{P,T})$ is a Richardson variety in $IG_{\omega}(1,m,\C^N)$,
and its image $\psi(\pi^{-1}(Y_{P,T}))$ is known as a \emph{projected Richardson variety}.
We shall prove that the projected Richardson variety $\psi(\pi^{-1}(Y_{P,T}))$
is in fact equal to $Z_{P,T}$.  One direction is straightforward.

\begin{lemma}\label{L:type_B_C_into}
Given Schubert symbols $T \preceq P$ for a Grassmannian $X$ of 
Lie type $B$ or $C$, we have  $\psi(\pi^{-1}(Y_{P,T})) \subset Z_{P,T}$.
\end{lemma}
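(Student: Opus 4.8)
The plan is to show directly that every point of $\psi(\pi^{-1}(Y_{P,T}))$ satisfies the defining equations of $Z_{P,T}$, namely the quadrics $f_c$ for $c \in \mathcal{Q}_{P,T}$ and the linear forms $x_c$ for $c \in \mathcal{L}_{P,T}$. A point of $\psi(\pi^{-1}(Y_{P,T}))$ is a line $\ell \subset \C^N$ contained in some isotropic $m$-plane $\Sigma \in Y_{P,T} = X_P(E_\bull) \cap X^T(E^\op_\bull)$. Since $Y_{P,T}$ is irreducible with a dense open Richardson cell, and $\psi\circ\pi^{-1}$ is a morphism so its image has closure equal to the image of the closure, it suffices to verify the equations for lines $\ell \subset \Sigma$ where $\Sigma$ lies in the open cell; equivalently, I may take $\Sigma$ to be the rowspace of a matrix $(a_{j,c})$ of shape $D(P,T)$ whose rows are independent and orthogonal (as in Example \ref{E:first_richardson_diagram}), and write a generic vector $v = \sum_j \lambda_j (\text{row } j)$ of $\Sigma$, so that the coordinate $x_c(v) = \sum_j \lambda_j a_{j,c}$ vanishes whenever column $c$ of $D(P,T)$ is identically zero.

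The linear equations are the easy part. If $c \in \mathcal{L}_{P,T}$ then either $c$ is a zero column of $D(P,T)$ — in which case $x_c$ vanishes on all of $\Sigma$, hence on $\ell$, by the shape constraint — or there is a lone star in column $N+1-c$. In the latter case I use isotropy of $\Sigma$: a lone star at $(j, N+1-c)$ means row $j$ is the only row meeting column $N+1-c$, and the orthogonality relation $\omega(\text{row } i, \text{row } j) = 0$ pairs column $d$ of row $i$ with column $N+1-d$ of row $j$; because row $j$ is supported only at $N+1-c$ (after accounting for which entries of row $i$ are forced to vanish by the cut structure — here I invoke Proposition \ref{P:simple_lone_star} and Corollary \ref{C:lin_cuts}, which guarantee that $c$ and $c-1$ are both cuts so the relevant rows really are "pinched off" at this column), the relation forces the coefficient of $a_{j,N+1-c}$, and hence $x_c$ on $\Sigma$, to vanish. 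I should be slightly careful to handle the degenerate subcase $t_j = p_j = c$ separately, where the conclusion is immediate.

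The quadratic equations are the substantive part. Fix $c \in \mathcal{Q}_{P,T} \subseteq [0,n] \cap \mathcal{C}_{P,T}$ (plus $c = n+1$ in type $B$, where $f_{n+1}=0$ holds automatically on $IG_\omega(1,\C^N)$, so nothing to prove there). Since $c$ is a cut, either $c$ or $N-c$ is a visible cut of $D(P,T)$; a visible cut at position $e$ means the stars of $D(P,T)$ split into a block in columns $\le e$ and a block in columns $> e$, with rows not shared across the divide. I want to show $f_c = x_1 x_N + \cdots + x_c x_{N+1-c}$ vanishes on every line of $\Sigma$. The idea is that $f_c$, evaluated on a vector $v\in\Sigma$, is (up to sign) the value of the bilinear form $\omega$ paired between the "truncation of $v$ to its first $c$ coordinates" and $v$ itself — more precisely, writing $v = \sum_j \lambda_j r_j$ with $r_j$ the rows, $f_c(v) = \sum_{d \le c} x_d(v) x_{N+1-d}(v)$, and one expands this using the orthogonality relations among the $r_j$. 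The visible cut ensures the sum over $d \le c$ can be reorganized row-by-row: rows whose support lies entirely in columns $\le c$ or entirely in columns $> N-c$ contribute terms that are exactly the pairwise orthogonality relations $\omega(r_i, r_j) = 0$, while rows straddling the cut are excluded precisely because $c$ (or $N-c$) is a visible cut. The main obstacle, and where I expect to spend the most care, is bookkeeping: matching up the index $c$, the visible cut (which may be at $c$ or at $N-c$), and the pairing $d \leftrightarrow N+1-d$ correctly, and checking that no "cross terms" survive. This is essentially the type $B$/$C$ specialization of \cite[Lemma 5.1]{buch:quantum_pieri}, so I expect the argument to parallel theirs; the cleanest writeup will probably reduce $f_c$ on $v$ to a linear combination of the relations $\omega(r_i,r_j)=0$ with explicit coefficients, and then observe that the visible-cut hypothesis makes all coefficients of non-relations vanish.

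Finally, I close by noting that since these equations hold for every line $\ell$ in every $\Sigma$ in the dense cell of $Y_{P,T}$, and $Z_{P,T}$ is closed, we get $\psi(\pi^{-1}(Y_{P,T})) \subseteq Z_{P,T}$ as claimed.
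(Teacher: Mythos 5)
Your overall strategy matches the paper's for the zero-column linear forms and for the quadratics: decompose a vector $\+w\in\Sigma$ into the pieces ``before'' and ``after'' the visible cut and apply isotropy. Where you genuinely diverge is the lone-star case. The paper handles it by observing that $c-1,c\in\mathcal{C}_{P,T}$ forces $f_c-f_{c-1}=x_cx_{N+1-c}$ to vanish on the projected Richardson variety, then invokes irreducibility to conclude $x_c=0$ or $x_{N+1-c}=0$, and rules out $x_{N+1-d}$ by looking at the open cell. Your route is more direct and is special to types $B$/$C$: by Proposition~\ref{P:simple_lone_star}, a lone star at $(j,N+1-c)$ with $c$ not a zero column forces $t_j=p_j=N+1-c$, so row $j$ has a single star, and then $0=\omega(\+v,r_j)=\pm a_{j,N+1-c}\,x_c(\+v)$ gives $x_c(\+v)=0$ immediately. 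That is a real shortcut, and it is worth noting that it does \emph{not} survive in type $D$ (which is why the paper uses the irreducibility argument in Proposition~\ref{P:into}, which also covers type $D$).

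Two things need tightening. First, your quadratic argument is a plan, not a proof; the bookkeeping you defer (``matching up the index $c$, the visible cut (which may be at $c$ or at $N-c$), and the pairing $d\leftrightarrow N+1-d$'') is the entire substance. The clean way to finish it --- and what the paper actually does --- is to avoid matrix coordinates entirely: if $p_j\le c<t_{j+1}$ (resp.\ $p_j\le N-c<t_{j+1}$), set $W_1=E_{p_j}$, $W_2=\langle\+e_{p_j+1},\dots,\+e_N\rangle$; the rank conditions $\dim(\Sigma\cap W_1)\ge j$ and $\dim(\Sigma\cap\langle\+e_{t_{j+1}},\dots,\+e_N\rangle)\ge m-j$ give $\Sigma=(\Sigma\cap W_1)\oplus(\Sigma\cap W_2)$, and then for $\+w=\+w_1+\+w_2$ one checks directly that $f_c(\+w)=\omega(\+w_1,\+w_2)=0$ by isotropy. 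Doing it this way handles both the $c$-visible and $N-c$-visible subcases uniformly and sidesteps your second unstated dependency: the claim that every $\Sigma$ in the open Richardson cell $Y^\circ_{P,T}$ is the rowspace of a matrix of shape $D(P,T)$ with orthogonal rows is true but is itself a nontrivial fact you would need to justify (the paper works with the incidence conditions $\dim(\Sigma\cap E_{p_i})\ge i$ directly and so never needs it, and never needs to pass to the dense cell for the quadratic and zero-column cases at all).
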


A proof of Lemma \ref{L:type_B_C_into} can be found in 
\cite[Lemma 5.1]{buch:quantum_pieri}.
This proof is correct for types $B$ and $C$, but
does not go through in type $D$ due to an erroneous definition of the Bruhat order.
We supply a corrected proof for all three Lie types in Section \ref{S:type_D_into}.

\begin{example}
Continuing with Example \ref{E:rich_diagram_properties},
suppose $\+M$ is a matrix with shape $D(P,T)$
and independent, isotropic row vectors.
Note that any vector in the rowspace of
$\+M$ must satisfy the quadratic equation 
$x_1x_{10} + x_2x_9 + x_3x_8=0$
and the linear equations
$x_5=0$ and $x_7 = 0$,
which are precisely the equations
defining $Z_{P,T}$.
By Lemma \ref{L:type_B_C_into},
\emph{any} vector contained in an $m$-plane $\Sigma \in Y_{P,T}$
satisfies these equations.
\end{example}

\section{Preliminaries 2: Type D}\label{S:prelim_D}
Consider $\C^{2n+2}$ endowed with a nondegenerate symmetric bilinear form.
Let $X := OG(m,2n+2)$ denote the \emph{even orthogonal Grassmannian}
of isotropic $m$-planes in $\C^{2n+2}$.
For any Schubert symbol $P \in \Omega(X)$, let
$\bar{P} = \{c \in [1,2n+2]: 2n+3-c \in P\}$ and let
$[P] = P \cup \bar{P}$. As before, let
$|P|$ denote the codimension
of the Schubert variety $X_P$ in $X$.
We define 
$\type(P) \in \{0,1,2\}$ as follows.
If $n+1 \in [P]$,
then we let $\type(P)$
be congruent mod $2$ to the
number of
elements in $[1,n+1] \setminus P$.  In other words,
if $\#([1,n+1] \setminus P)$ is even
then $\type(P)=0$, and
if $\#([1,n+1] \setminus P)$ is odd
then $\type(P)=1$.
Finally, if $\{n+1, n+2\} \cap P = \emptyset$,
we set $\type(P)=2$.
\footnote{$\type(P)$ differs slightly from the \cite{buch:even_orthogonal} function type$(P)$.
Namely, $\text{type}(P) \equiv \type(P)+1 \pmod{3}$.}
\comment{
For $\Sigma \in X$, we define
$\type(\Sigma)$ to be $\type(\sym_{E_{\bull}}(\Sigma))$.
We note that
whenever
$\Sigma \cap E_{n+2} \neq \Sigma \cap E_{n}$,
$\type(\sym(\Sigma))$ is congruent mod $2$ to
the codimension of $\Sigma \cap E_{n+1}$ in $E_{n+1}$.
We will always measure $\type(\Sigma)$ with
respect to the standard flag $E_{\bull}$.
}

The following proposition is due to
\cite[Proposition A.2]{buch:even_orthogonal}.

\begin{prop}\label{P:type_D_bruhat_order}
Given Schubert symbols $P$ and $T$ in $\Omega(OG(m,2n+2))$,
we have $T \preceq P$ if and only if
\begin{enumerate}[label=\emph{\roman*})]
\item $T \leq P$, and
\item if there exists $c \in [1,n]$ such that $[c+1,n+1] \subset [P] \cap [T]$
and $\#P \cap [1,c] = \#T \cap [1,c]$, then
we have $\type(P) = \type(T)$.
\end{enumerate}
\end{prop}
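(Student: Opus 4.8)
The plan is to prove both directions by exhibiting explicit rational curves (or chains of curves) in $X = OG(m,2n+2)$ connecting Schubert cells, the standard technique for understanding the Bruhat order on a (co)minuscule-type homogeneous space. For the ``only if'' direction, one shows that $T \preceq P$ forces conditions (i) and (ii). Condition (i), $T \le P$, follows by projecting everything to the type $A$ Grassmannian $Gr(m,2n+2)$: the inclusion $X_T(E_\bull) \subset X_P(E_\bull)$ persists after forgetting the isotropic condition, and in type $A$ the Bruhat order is exactly the componentwise inequality on jump sequences, so $t_i \le p_i$ for all $i$. For condition (ii), the key point is that when there exists $c \in [1,n]$ with $[c+1,n+1] \subset [P] \cap [T]$ and $\#(P \cap [1,c]) = \#(T \cap [1,c])$, the portion of the flag in coordinates $[c+1,n+1]$ (and its ``mirror'' $[n+2,2n+1-c]$) is completely rigid along any curve in $Y_{P,T}$, so the relevant $m$-plane meets $E_{n+2}$ and $E_n$ in a prescribed way, and the parity of the codimension of $\Sigma \cap E_{n+1}$ in $E_{n+1}$ — which is exactly $\type$ — cannot change. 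The cleanest way to package this is: if $\type(P) \neq \type(T)$, then $X_T$ and $X_P$ lie in ``different components'' of the locus where this flag data is fixed, hence $X_T \not\subset X_P$; this can be made precise using the $SO(2n+2)$-equivariant geometry, or by directly computing $\sym(\Sigma)$ for $\Sigma$ in a curve and noting the type-invariant is locally constant on such curves.

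For the ``if'' direction, assume (i) and (ii); I would build a chain $T = R^{(0)} \preceq R^{(1)} \preceq \cdots \preceq R^{(k)} = P$ of Schubert symbols, each consecutive pair differing by an elementary ``move'' realized by an explicit $\mathbb{P}^1 \hookrightarrow X$. The elementary moves come in two flavors: (a) a move that increases a single entry $t_j$ to $t_j + 1$ (possibly while simultaneously decreasing the mirror entry, to stay inside $\Omega(X)$), realized by the curve $[s:u] \mapsto$ the rowspace obtained by rotating $\mathbf{e}_{t_j}$ toward $\mathbf{e}_{t_j+1}$ (and dually); and (b) a ``type-changing'' move across the middle that is only available when the obstruction in (ii) does \emph{not} apply, i.e. when there is room on one side of the middle to absorb a parity change. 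One must check that the hypotheses (i) and (ii) guarantee such a chain exists: start from $T$ and greedily push entries rightward toward $P$, and argue that the only obstruction to reaching $P$ is precisely a forced type mismatch of the kind ruled out by (ii). This is essentially a combinatorial lemma about interleaving the sets $P$ and $T$ given $T \le P$ and the parity condition.

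The main obstacle I expect is the careful bookkeeping around the middle coordinates $n+1, n+2$ and the definition of $\type$ — in particular verifying that each elementary $\mathbb{P}^1$ stays inside $OG(m,2n+2)$ (isotropy must be preserved along the curve) and lands in the correct connected family, and conversely that condition (ii) really is an obstruction and not merely a necessary-looking artifact. Concretely, the subtlety is that $OG(n+1, 2n+2)$ has two connected components, and $\type$ is the shadow of that phenomenon at lower $m$; the condition $[c+1,n+1] \subset [P]\cap[T]$ with equal counts on $[1,c]$ is exactly the condition under which the span $\Sigma \cap E_{n+1}$ behaves like a near-maximal isotropic subspace whose component is pinned down. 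I would handle this by reducing, via a linear-section / projection argument, to the known two-component structure of the maximal even orthogonal Grassmannian, where the statement is classical. Since the proposition is quoted from \cite[Proposition A.2]{buch:even_orthogonal}, the honest route in this paper is simply to cite it; the sketch above is the reconstruction one would give if a self-contained proof were required.
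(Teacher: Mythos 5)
The paper gives no proof of this proposition at all: it is stated as a quoted result, ``due to \cite[Proposition A.2]{buch:even_orthogonal},'' so your closing remark that the honest route here is simply to cite that reference is exactly what the paper does. Your sketch is a reasonable outline of how the cited proof goes (type~$A$ projection for condition (i), rigidity of the middle flag data for the necessity of (ii), and a chain of elementary $\P^1$-moves for sufficiency), but be aware that as written it is only a plan: the ``if'' direction --- verifying that conditions (i) and (ii) always permit a chain of isotropy-preserving moves from $T$ up to $P$, with the bookkeeping around columns $n+1$, $n+2$ --- is the substantive combinatorial content of the cited proof and is not actually carried out here. Since the paper itself supplies no argument, there is nothing further to compare; if a self-contained proof were demanded, that step is where the real work would lie.
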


By Proposition \ref{P:type_D_bruhat_order}, the type $D$ Bruhat order is not 
simply the $\leq$ ordering.
The following example illustrates the difference.

\begin{example}\label{E:OG(2,6)}
 The $\preceq$ partial order is shown below for
 the Schubert symbols on $OG(2,\C^6)$, which are colored by type.  
 Notice that there are six ``missing'' edges, which
 would have occurred had we used the (incorrect) $\leq$ ordering.
 \begin{center}
 \begin{tikzpicture}
  \node (min) at (0,0) {$\{1,2\}$};
  \node (a) at (2,1) {$ {\color{red}\{1,4\}}$};
  \node (b) at (2,-1) {$ {\color{blue}\{1,3\}}$};
  \node (c) at (4,2) {$ {\color{red}\{2,4\}}$};
  \node (d) at (4,0) {$\{1,5\}$};
  \node (e) at (4,-2) {$ {\color{blue}\{2,3\}}$};
  \node (f) at (6,2) {$ {\color{red}\{3,5\}}$};
  \node (g) at (6,0) {$\{2,6\}$};
  \node (h) at (6,-2) {$ {\color{blue}\{4,5\}}$};
  \node (i) at (8,1) {$ {\color{red}\{3,6\}}$};
  \node (j) at (8,-1) {$ {\color{blue}\{4,6\}}$};
  \node (max) at (10,0) {$\{5,6\}$};
  \draw (min) -- (a) -- (c) -- (f) -- (i) -- (max)
(min) -- (a) -- (c) -- (f) -- (i) -- (max)
(a) -- (d) -- (g) -- (i)
(b) -- (d)
(g) -- (j)
(c) -- (g)
(d) -- (f)
(d) -- (h)
(e) -- (g)
  (min) -- (b) -- (e) -- (h) -- (j) -- (max);
  \draw[preaction={draw=white, -,line width=6pt}];
\end{tikzpicture}
\end{center} 
\end{example}

We define the opposite flag $E^{\op}_{\bull}$ by
$E^{\op}_j = \langle \+e_{2n+3-j}, \ldots, \+e_{2n+2} \rangle$ for $j \neq n+1$,
and
\begin{equation*}
E^{\op}_{n+1} =
\begin{cases}
\langle \+e_{n+2}, \+e_{n+3} ,\ldots, \+e_{2n+2}  \rangle & \text{ if } n \text{ is odd}, \\
\langle \+e_{n+1}, \+e_{n+3} ,\ldots, \+e_{2n+2}  \rangle & \text{ if } n \text{ is even}. \\
\end{cases}
\end{equation*}
This definition guarantees that $E_{\bull}$ 
and $E^{\op}_{\bull}$ lie in the same connected component of
the variety of complete isotropic flags on $\C^{2n+2}$ 
(endowed with a nondegenerate symmetric bilinear form),
which is disconnected.  
\comment{
As a consequence,
$[\cO_{X_P(E^{\op}_{\bull})}] = [\cO_{X_P(E_{\bull})}]$ for any Schubert symbol $P$.
}

Let $\iota$ be the permutation of $\{1, \ldots, 2n+2\}$ that
interchanges $n+1$ and $n+2$ and leaves all other numbers fixed.
Given a type $D$ Schubert symbol $P = \{p_1, \ldots, p_m\}$, 
let $\iota(P)=\{\iota(p_1), \ldots, \iota(p_m)\}$. 
From \cite[p. 43]{buch:quantum_pieri},
we have the following description of the dual symbol $P^{\vee}$:

\begin{lemma}
Given a Schubert symbol $P \in \Omega(OG(m,2n+2))$, we have
\begin{equation*}
P^{\vee} =
\begin{cases}
 \bar{P} & \text{ when $n$ is odd}, \\
 \iota(\bar{P}) & \text{ when $n$ is even}. \\
\end{cases}
\end{equation*}
\end{lemma}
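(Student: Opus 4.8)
The plan is to characterize the dual symbol $P^\vee$ by the defining property that $X_P(E_\bull) \cap X_{P^\vee}(E^\op_\bull)$ is a single point, and to reduce the type $D$ computation to the already-understood type $B$/$C$ situation wherever possible. First I would recall that in the full orthogonal group setting (ignoring the connected-component subtlety) the naive duality $P \mapsto \bar P$ is forced: the only $T$ for which $X_P(E_\bull)$ meets the opposite Schubert variety built from $T$ in exactly one point, at the coordinate subspace level, is $T = \bar P$, since $\dim(\Sigma_P \cap \Sigma_{\bar P}^{\text{opp}}) = 0$ exactly when the index sets are ``complementary'' in the sense $c \in P \iff 2n+3-c \notin \bar P$. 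This is the same linear-algebra computation that proves Lemma \ref{L:poincare_dual} in types $B$ and $C$, and it shows that the only candidates for $P^\vee$ are $\bar P$ and $\iota(\bar P)$ — the two differ only in whether $n+1$ or $n+2$ is used, and both give the same intersection dimension because $\+e_{n+1}, \+e_{n+2}$ are interchangeable up to the form.

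Next, I would pin down which of the two candidates is correct by tracking the connected component. The key point is that $E^\op_\bull$ was defined precisely so that it lies in the \emph{same} component of the isotropic flag variety as $E_\bull$; equivalently, the reference point of $X_{P^\vee}(E^\op_\bull)$ must be a genuine point of $X$, and moreover the intersection $X_P(E_\bull) \cap X_{P^\vee}(E^\op_\bull)$ must be a single \emph{reduced} point, which forces a type-matching condition via Proposition \ref{P:type_D_bruhat_order}: the intersection is a Richardson variety $Y$ which is a point only if $P^\vee \preceq P$ with equality of the relevant data, and condition \emph{ii)} of that proposition requires $\type(P^\vee) = \type(P)$ in the borderline case (which here is exactly the ``all of $[1,n+1]$ in $[P]\cap[P^\vee]$'' situation arising for the full flag). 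So I would compute $\type(\bar P)$ and $\type(\iota(\bar P))$ in terms of $\type(P)$ and the parity of $n$: applying $\iota$ changes $\type$ by $1 \pmod 2$ when $n+1 \in [P]$, while passing from $P$ to $\bar P$ also shifts the count $\#([1,n+1]\setminus \cdot)$ by an amount whose parity depends on $n$ (since $\bar{P}$ reflects $[1,2n+2]$ and $n+1, n+2$ straddle the midpoint). Matching these parities shows $\type(\bar P) = \type(P)$ precisely when $n$ is odd, and $\type(\iota(\bar P)) = \type(P)$ precisely when $n$ is even, which is exactly the claimed dichotomy.

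Concretely, the steps in order are: (1) verify at the level of coordinate subspaces that $\dim\big(\Sigma_P \cap \langle \+e_j : 2n+3-j \in Q\rangle\big) = 0$ iff $Q \in \{\bar P, \iota(\bar P)\}$, so these are the only candidates for $P^\vee$; (2) observe that for the intersection to be a \emph{single reduced} point of $X$ — not just of the ambient Grassmannian, and not two points in different components — the candidate $Q$ must satisfy $Q \preceq P$ with the dimension count of Proposition \ref{P:type_D_bruhat_order} being tight, hence must have $\type(Q) = \type(P)$; (3) carry out the parity bookkeeping for $\type(\bar P)$ versus $\type(\iota(\bar P))$ as functions of $\type(P)$ and $n \bmod 2$; (4) conclude. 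The main obstacle is step (3) combined with the careful handling of step (2): one has to be scrupulous about the definition of $\type$ when $n+1 \notin [P]$ (the $\type = 2$ case), where the type-matching constraint is vacuous but one must check that $\bar P$ and $\iota(\bar P)$ actually \emph{coincide} in that regime (since neither $n+1$ nor $n+2$ lies in $[P]$, $\iota$ fixes $\bar P$), so the formula is consistent; and about verifying that the component condition built into the definition of $E^\op_\bull$ really does select the stated candidate rather than its partner. I expect this to be the delicate part, whereas the linear algebra in step (1) is routine and essentially identical to the type $B$/$C$ argument already cited.
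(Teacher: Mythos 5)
The paper itself offers no proof of this lemma (it is quoted from \cite[p.~43]{buch:quantum_pieri}), so I am judging your argument on its own terms. The skeleton --- narrow to the two candidates $\bar{P}$ and $\iota(\bar{P})$, then select one --- is reasonable, but your selection mechanism in steps (2)--(3) is wrong and would select the wrong candidate. You claim that $P^{\vee}$ is forced to satisfy $\type(P^{\vee})=\type(P)$ via Proposition \ref{P:type_D_bruhat_order}, and that comparing types then yields the dichotomy in $n \bmod 2$. Neither claim holds. First, $\type(P^{\vee})=\type(P)$ is false: in $OG(2,6)$ (so $n=2$), take $P=\{1,3\}$; then $P^{\vee}=\iota(\bar{P})=\{3,6\}$, yet $\type(\{1,3\})=1$ while $\type(\{3,6\})=0$. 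The type-matching heuristic fails because Proposition \ref{P:type_D_bruhat_order} compares Schubert varieties defined with respect to the \emph{same} flag, whereas $X_P(E_{\bull})$ and $X_{P^{\vee}}(E^{\op}_{\bull})$ live on opposite flags; nonemptiness of their intersection is not the relation $P^{\vee}\preceq P$, so no type constraint of this form is forced. Second, the parity bookkeeping does not come out as you state: since $\#([1,n+1]\cap\bar{P})=\#(P\cap[n+2,N])=m-\#(P\cap[1,n+1])$, one finds $\type(\bar{P})\equiv\type(P)+m\pmod 2$ whenever $n+1\in[P]$, so which candidate matches $\type(P)$ is governed by the parity of $m$, not of $n$. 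No type comparison can produce a dichotomy depending on $n\bmod 2$. Your step (1) is also not correct as stated: $\dim\bigl(\Sigma_P\cap\langle \+e_j : 2n+3-j\in Q\rangle\bigr)=0$ merely says $P\cap\bar{Q}=\emptyset$, which holds for many $Q$ besides $\bar{P}$ and $\iota(\bar{P})$.

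The parity of $n$ enters only through the definition of $E^{\op}_{n+1}$, which you mention at the end but never actually use. The direct route is to compute $\sym_{E^{\op}_{\bull}}(\Sigma_P)$ for the coordinate point $\Sigma_P=\langle\+e_{p_1},\dots,\+e_{p_m}\rangle$: for $j\notin\{n+1,n+2\}$ the intersection $\Sigma_P\cap E^{\op}_j$ jumps at $j$ iff $N+1-j\in P$, in agreement with $\bar{P}$; at the middle steps, $E^{\op}_{n+1}$ adjoins $\+e_{n+2}$ when $n$ is odd (still agreeing with $\bar{P}$) but adjoins $\+e_{n+1}$ when $n$ is even, which interchanges the jumps at $n+1$ and $n+2$ and produces $\iota(\bar{P})$. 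Then $\Sigma_P$ lies in the open cells $X^{\circ}_P(E_{\bull})$ and $X^{\circ}_T(E^{\op}_{\bull})$ for this $T$; these are orbits of opposite unipotent groups through the common torus-fixed point $\Sigma_P$, so their codimensions are complementary and the (irreducible) Richardson intersection is zero-dimensional, hence the single point $\Sigma_P$, giving $T=P^{\vee}$. This is where the $n$-parity dichotomy genuinely comes from; the type function plays no role in the proof.
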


\comment{
\begin{proof}
{\color{red} PROOF TO COME!} 
\end{proof}
}

If the type $D$ definitions of opposite flags and dual Schubert symbols
appear confusing, the following observation may offer some relief:

\begin{obs}\label{O:two_wrongs}
Let $X := IG_{\omega}(m,N)$ be a Grassmannian of type $B$, $C$, or $D$.
For any Schubert symbol $P$, we have
\[
X^{\circ}_{P^{\vee}}(E^{\op}_{\bull}) = \{\Sigma \in IG:
\Sigma \cap \langle \+e_{p_i}, \ldots, \+e_{N} \rangle \supsetneq 
\Sigma \cap \langle \+e_{p_i+1}, \ldots, \+e_{N} \rangle \}.
\]
\end{obs}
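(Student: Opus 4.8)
The plan is to verify Observation \ref{O:two_wrongs} directly from the definitions of the opposite flag $E^{\op}_{\bull}$ and the dual symbol $P^{\vee}$, case by case on the Lie type, by unwinding what $\sym_{E^{\op}_{\bull}}(\Sigma) = P^{\vee}$ means in terms of jumps of $\Sigma$ against the opposite flag, and then re-indexing so the condition is expressed against the subspaces $\langle \+e_{p_i}, \ldots, \+e_N\rangle$ of the \emph{standard} basis. The key point is that the two ``nonstandard'' features of the type $D$ setup — the twisted opposite flag at level $n+1$ and the application of $\iota$ in the dual symbol — are designed to cancel each other, so that the resulting cell description looks exactly like the naive type $B$/$C$ one. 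Hence the title ``two wrongs (make a right)''.

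First I would recall that $X^{\circ}_{P^{\vee}}(E^{\op}_{\bull}) = \{\Sigma \in X : \sym_{E^{\op}_{\bull}}(\Sigma) = P^{\vee}\}$, where $\sym_{E^{\op}_{\bull}}(\Sigma)$ records the indices $j$ at which $\dim(\Sigma \cap E^{\op}_j) > \dim(\Sigma \cap E^{\op}_{j-1})$. In types $B$ and $C$ we have $E^{\op}_j = \langle \+e_{N+1-j}, \ldots, \+e_N\rangle$ and $P^{\vee} = \bar P$ by Lemma \ref{L:poincare_dual}, so $j \in \bar P$ iff $N+1-j \in P$; writing $c = N+1-j$, the jump condition at $j \in \bar P$ becomes $\dim(\Sigma \cap \langle \+e_c, \ldots, \+e_N\rangle) > \dim(\Sigma \cap \langle \+e_{c+1}, \ldots, \+e_N\rangle)$ for each $c \in P$. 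Since $|P| = m = \dim \Sigma$, having a strict jump at each of the $m$ values $c = p_i$ forces equality in the displayed set, giving the claim in types $B$ and $C$.

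For type $D$ I would split into $n$ odd and $n$ even. When $n$ is odd, $E^{\op}_{\bull}$ is the ``untwisted'' flag $E^{\op}_j = \langle \+e_{2n+3-j}, \ldots, \+e_{2n+2}\rangle$ for all $j$, and $P^{\vee} = \bar P$, so the computation is verbatim the same as types $B$/$C$ with $N = 2n+2$. When $n$ is even, $E^{\op}_{n+1} = \langle \+e_{n+1}, \+e_{n+3}, \ldots, \+e_{2n+2}\rangle$ (so $\+e_{n+2}$ is replaced by $\+e_{n+1}$) and $P^{\vee} = \iota(\bar P)$. Here the verification is more delicate: one checks that for $j \neq n+1$ the jump of $\Sigma$ against $E^{\op}_j$ at index $j \in \iota(\bar P)$ still corresponds, after the substitution $c = 2n+3-j$ and applying $\iota$, to a jump against $\langle \+e_c, \ldots, \+e_{2n+2}\rangle$ for $c \in P$; and that at the single index $j = n+1$ the $\iota$-twist in $P^{\vee}$ exactly compensates for the $\+e_{n+2} \leftrightarrow \+e_{n+1}$ swap in $E^{\op}_{n+1}$. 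Concretely, $n+1 \in P^{\vee} = \iota(\bar P)$ iff $n+2 \in \bar P$ iff $n+1 \in P$, and in that case the flag step $E^{\op}_{n+1} = \langle \+e_{n+1}, \+e_{n+3}, \ldots\rangle$ differs from $E^{\op}_n = \langle \+e_{n+3}, \ldots\rangle$ precisely by $\+e_{n+1}$, so the jump condition reads $\dim(\Sigma \cap \langle \+e_{n+1}, \ldots, \+e_{2n+2}\rangle) > \dim(\Sigma \cap \langle \+e_{n+2}, \ldots, \+e_{2n+2}\rangle)$, which is the $c = n+1 = p_i$ instance of the claimed description. (The symmetric possibility $n+1 \in \bar P$, i.e. $n+2 \in P$, does not arise together with $n+1 \in P$ because $P$ is a Schubert symbol; and if instead $n+2 \in P$ one argues analogously using $E^{\op}_{n+1}$ versus $E^{\op}_{n+2}$.) Once each index of $P^{\vee}$ is matched with the corresponding index of $P$, the dimension count $|P| = m$ again upgrades ``$\Sigma$ jumps at every $p_i$'' to the equality of sets.

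I expect the main obstacle to be the bookkeeping at the level $j = n+1$ in the $n$ even case: one must carefully track which of $n+1, n+2$ lies in $P$ versus $\bar P$, use the isotropy/Schubert-symbol constraint $c + d \neq 2n+3$ to rule out the degenerate overlaps, and confirm that the replacement of $\+e_{n+2}$ by $\+e_{n+1}$ in $E^{\op}_{n+1}$ together with $\iota$ in the dual symbol yields precisely the span $\langle \+e_{p_i}, \ldots, \+e_{2n+2}\rangle$ and not a span involving the ``wrong'' middle basis vector. Everything else is a routine re-indexing argument plus the dimension-count observation that $m$ strict jumps among $m$ nested subspaces already pin down the cell.
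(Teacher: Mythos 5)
Your strategy is the same direct unwinding of definitions that the paper (tacitly) uses — indeed the paper gives no proof here at all, only declaring all cases except type $D$ with $n$ even ``obvious'' and illustrating that last case with a single worked example, so your proposal is already more detailed than the source. The gap is in the one case that actually matters. You write that since $E^{\op}_{n+1}=\langle\+e_{n+1},\+e_{n+3},\ldots,\+e_{2n+2}\rangle$ differs from $E^{\op}_n=\langle\+e_{n+3},\ldots,\+e_{2n+2}\rangle$ by $\+e_{n+1}$, ``the jump condition reads $\dim(\Sigma\cap\langle\+e_{n+1},\ldots,\+e_{2n+2}\rangle)>\dim(\Sigma\cap\langle\+e_{n+2},\ldots,\+e_{2n+2}\rangle)$.'' That ``so'' is not valid: the jump condition for $\sym_{E^{\op}_\bull}(\Sigma)$ at level $n+1$ compares $\Sigma$ against the two flag steps $E^{\op}_{n+1}$ and $E^{\op}_n$, both of whose spans omit $\+e_{n+2}$, whereas the displayed condition (the $c=n+1$ instance of the observation) uses spans in which $\+e_{n+2}$ is present. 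For an arbitrary $m$-plane these are genuinely different conditions — $\Sigma=\langle\+e_{n+1}+\+e_{n+2}\rangle$ satisfies the displayed one but fails the true jump condition. They coincide precisely because $\Sigma$ is isotropic: for $\+v\in\Sigma\cap\langle\+e_{n+1},\ldots,\+e_{2n+2}\rangle$ with $\+e_{n+1}$-coefficient $a$ and $\+e_{n+2}$-coefficient $b$, one has $\omega(\+v,\+v)=2ab$, so $a\neq 0$ forces $b=0$ (and vice versa), and then whether $\+e_{n+2}$ sits in the span no longer affects the intersection dimension. This small isotropy computation, together with the mirror analysis when $n+2\in P$ and the constraint $(n+1)+(n+2)=N+1$ ruling out both being jump steps, is exactly what shows $\sym_{E^{\op}_\bull}(\Sigma)=\iota(\sym_{F_\bull}(\Sigma))$ for the naive opposite flag $F_j:=\langle\+e_{N+1-j},\ldots,\+e_N\rangle$, and hence that the two ``wrongs'' cancel. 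You do flag this as ``the main obstacle'' in your closing paragraph, but the body of the proposal asserts the conclusion with a ``so'' rather than supplying the quadratic-form argument, so as written the proof is incomplete at precisely the step where the lemma's content lies.
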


Observation \ref{O:two_wrongs} is obvious unless we are working in $OG(m,2n+2)$
and $n$ is even.  We illustrate that case in the following example.

\begin{example}
Consider $OG(1,6)$, and let $P = \{4\}$.
Then $P^{\vee} = \{4\}$, 
$E^{\op}_{3} = \langle \+e_3, \+e_5, \+e_6 \rangle$, and
$E^{\op}_{4} = \langle \+e_3, \+e_4, \+e_5, \+e_6 \rangle$.
By definition, 
$X^{\circ}_{P^{\vee}}(E^{\op}_{\bull}) = \{\Sigma \in IG:
\Sigma \cap E^{\op}_4 \supsetneq \Sigma \cap E^{\op}_3\}$,
which is equal to the set of points in $\P^5$ of the form
$\langle (0, 0, 0, 1, *, *) \rangle$,
in agreement with Observation \ref{O:two_wrongs}.
\end{example}

By Observation \ref{O:two_wrongs},
any element of the Schubert cell $X^{\circ}_{P^{\vee}}(E^{\op}_{\bull})$
is the rowspace of an isotropic $m \times N$ matrix $(a_{i,j})$
with $a_{i,p_i}=1$ for $1 \leq i \leq m$ and
$a_{i,j} = 0$ for $j < p_i$.

\begin{example}
\comment{
 Consider $OG(3,8)$, and let $P = \{2,3,5\}$.  
 Then $P^{\vee} = \{4,6,7\}$, and 
 $X^{\circ}_{P^{\vee}}(E^{\op}_{\bull})$ consists
 of elements of $OG(3,8)$
 that are equal to the span of the rows of
 a matrix of the form
 \[
 \left(
  \begin{array}{cccccccc}
    0 & 1 & * & * & * & * & * & * \\
    0 & 0 & 1 & * & * & * & * & * \\
    0 & 0 & 0 & 0 & 1 & * & * & * \\
  \end{array}
\right).
\]
}
Consider $OG(3,10)$, and let  $P = \{1,4,5\}$.
In this case, $P^{\vee} = \{5,7,10\}$.
We can
write any element of  $X^{\circ}_{P^{\vee}}(E^{\op}_{\bull})$
as the rowspace of a matrix of the form 
 \[
 \left(
  \begin{array}{cccccccccc}
    1 & * & * & * & * & * & * & * & * & * \\
    0 & 0 & 0 & 1 & * & * & * & * & * & * \\
    0 & 0 & 0 & 0 & 1 & 0 & * & * & * & * \\
  \end{array}
\right).
\]
\end{example}

Thus $X_{T^{\vee}}(E^{\op}_{\bull}) = X^T$ for any Schubert symbol $T$,
since $X_T \cap X_{T^{\vee}}(E^{\op}_{\bull})$ is a single point.
We define the Richardson variety $Y_{P,T} := X_P \cap X^{T}$.  As before,
$Y_{P,T} \neq \emptyset$ if and only if $T \preceq P$.
We define the Richardson diagram $D(P,T) := \{(j,c): t_j \leq c \leq p_j\}$ 
for any Schubert symbols $T \leq P$.  This definition holds
when $T \not\preceq P$, but in this case there cannot exist
a matrix of shape $D(P,T)$ whose row vectors are independent and orthogonal
(a fact we shall prove in Proposition \ref{P:alt_bruhat}).

\begin{example}
There are no matrices $(a_{i,j})$ of shape
$D(\{2,5,7,8\},\{1,3,4,6\})$ whose rows span
an element of $OG(4,10)$, because
the isotropic relations on the entries are inconsistent:
\[
\left(
  \begin{array}{cccccccccc}
    a_{1,1} & a_{1,2} & 0 & 0 & 0 & 0 & 0 & 0 & 0 & 0 \\
    0 & 0 & a_{2,3} & a_{2,4} & a_{2,5} & 0 & 0 & 0 & 0 & 0 \\
    0 & 0 & 0 & a_{3,4} & a_{3,5} & a_{3,6} & a_{3,7} & 0 & 0 & 0 \\
    0 & 0 & 0 & 0 & 0 & a_{4,6} & a_{4,7} & a_{4,8} & 0 & 0 \\
  \end{array}
\right).
\]
We leave it to the reader to verify this fact,
as well as the fact that
$\{1,3,4,6\} \not\preceq \{2,5,7,8\}$
in type $D$.
\end{example}

\section{Result 1: Defining $Z_{P,T}$ in Type D}\label{S:type_D_equations}
Let $X := OG(m,2n+2)$ be a type $D$ Grassmannian, let $N:=2n+2$, and let $T \preceq P$
be Schubert symbols in $\Omega(X)$.
Visible cuts, apparent cuts, lone stars, and zero columns in $D(P,T)$
are defined exactly as in types $B$ and $C$.  Similarly, 
$\mathcal{C}_{P,T}$ continues to denote the set of all cuts in $D(P,T)$,
and $\mathcal{L}_{P,T}$ continues to denote the set of integers $c \in [1,2n+2]$
such that either $c$ is a zero column \comment{in $D(P,T)$}
or column $2n+3-c$ \comment{of $D(P,T)$} contains a lone star.
However, in order to define the subvariety $Z_{P,T} \subset \P^{2n+1}$,
we must define a new type of cut in $D(P,T)$.

\subsection{Exceptional Cuts}
If for some $i$ we have $p_i = n+2 \leq t_{i+1}$
or $t_i = n+1 \geq p_{i-1}$, we let $n+1$ be a cut in $D(P,T)$,
which we will refer to as an \emph{exceptional center cut}.
This cut will induce a lone star in column $n+2$ or $n+1$
respectively.

\begin{example}
$P = \{2, 4\}$ and $T = \{1, 2\}$
in $OG(2,6)$.  $D(P,T)$ is shown below,
and has an exceptional center cut.  As
a result, $(2,4)$ is a lone star,
and $3 \in \mathcal{L}_{P,T}$
\[
\left(
  \begin{array}{ccc|ccc}
    * & * & 0 & 0 & 0 & 0 \\
    0 & * & * & * & 0 & 0 \\
  \end{array}
\right).
\]
\end{example}

There are additional exceptional cuts in $D(P,T)$.
Let $c \in [1,n]$ be a \emph{cut candidate} if
$[c+1,n+1] \subset [P] \cap [T]$ and $\#(T\cap[1,c])$ = $\#(P\cap[1,c]) + 1$. 
If $\type(T) \neq \type(P)$, then $c$ and $N+1-c$
will also be cuts in $D(P,T)$, for each cut candidate $c$. 
We'll refer to these as \emph{exceptional cuts} as well.
We give several examples of diagrams with exceptional cuts,
as the definition is somewhat complicated.

\begin{example}
$P = \{3, 6\}$ and $T = \{2, 3\}$
in $OG(2,6)$.  $D(P,T)$ is shown below,
and $\mathcal{C} = \{0,1,2,3,4,5,6\}$.
Of these, $2$, $3$ (the center cut), and $4$ are
exceptional cuts.  $(1,2), (1,3)$, and $(2,3)$
are all lone stars, and
$\mathcal{L}_{P,T} = \{1,4,5\}$.
 \[
\left(
  \begin{array}{cc|c|c|cc}
    0 & * & * & 0 & 0 & 0 \\
    0 & 0 & * & * & * & * \\
  \end{array}
\right).
\]
\end{example}

\begin{example}
$P = \{3, 4, 7\}$ and $T = \{1,3, 4\}$
in $OG(3,8)$.  $D(P,T)$ is shown below,
and $\mathcal{C} = \{0,1,2,3,4,5,6,7,8\}$.
Of these, $2$, $3$, $4$ (the center cut), $5$,and $6$ are
exceptional cuts. By finding all the lone stars,
one can check that
$\mathcal{L}_{P,T} = \{2,5,6,8\}$.
 \[
\left(
  \begin{array}{cc|c|c|c|c|cc}
    * & * & * & 0 & 0 & 0 & 0 & 0\\
    0 & 0 & * & * & 0 & 0 & 0 & 0\\
    0 & 0 & 0 & * & * & * & * & 0\\    
  \end{array}
\right).
\]
\end{example}

\begin{example}
$P = \{4, 6, 8\}$ and $T = \{1, 3, 5\}$
in $OG(3,8)$.  $D(P,T)$ is shown below,
and $\mathcal{C} = \{0,2,6,8\}$.
Of these, $2$ and $6$ are
exceptional cuts, and
$\mathcal{L}_{P,T} = \emptyset$.
\[
\left(
  \begin{array}{cc|cccc|cc}
    * & * & * & * & 0 & 0 & 0 & 0 \\
    0 & 0 & * & * & * & * & 0 & 0 \\
    0 & 0 & 0 & 0 & * & * & * & * \\
  \end{array}
\right).
\]
\end{example}
\begin{example}
$P = \{4, 5, 8, 9\}$ and $T = \{1, 3,4, 6\}$
in $OG(4,10)$.  $D(P,T)$ is shown below,
and $\mathcal{C} = \{0,1,2,8,9,10\}$.
Of these, $2$ and $8$ are
exceptional cuts, and
$\mathcal{L}_{P,T} = \{2,10\}$.
\[
\left(
  \begin{array}{cc|cccccc|cc}
    * & * & * & * & 0 & 0 & 0 & 0 & 0 & 0\\
    0 & 0 & * & * & * & 0 & 0 & 0 & 0 & 0 \\
    0 & 0 & 0 & * & * & * & * & * & 0 & 0 \\
    0 & 0 & 0 & 0 & 0 & * & * & * & * & 0 \\
  \end{array}
\right).
\]
\end{example}

\comment{
\subsection{Additional Features of Diagram}
}

We relate certain features of the Richardson diagram $D(P,T)$
to the type $D$ Bruhat order and to the existence of exceptional cuts.

\begin{lemma}\label{L:another_condition_for_maximal_projection}
 For any Schubert symbol $P$, the following conditions are equivalent:
 \begin{enumerate}
  \item  $[c+1, n+1] \subset [P]$
  \item  $\#([c+1,N-c] \cap P) = n+1-c$.
 \end{enumerate}
\end{lemma}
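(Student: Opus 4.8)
The statement to prove is Lemma~\ref{L:another_condition_for_maximal_projection}: for any type $D$ Schubert symbol $P$ and any $c \in [1,n]$, the conditions $[c+1,n+1] \subset [P]$ and $\#([c+1,N-c]\cap P) = n+1-c$ are equivalent, where $N = 2n+2$.

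\textbf{Plan of proof.} The plan is to unpack the definition $[P] = P \cup \bar P$ where $\bar P = \{a \in [1,N] : N+1-a \in P\}$, and exploit the symmetry of the interval $[c+1, N-c]$ under the involution $a \mapsto N+1-a$. First I would observe that the interval $[c+1,N-c]$ is invariant under $a \mapsto N+1-a$, and that it decomposes as the disjoint union of $[c+1,n+1]$ and its mirror image $[n+2, N-c]$, each of size $n+1-c$. A key preliminary remark is that a Schubert symbol $P$ never contains both $a$ and $N+1-a$ (by the defining condition $c+d \neq N+1$ for $c,d \in P$), so $P \cap [c+1,n+1]$ and $\bar P \cap [c+1,n+1]$ are disjoint, and together they make up $[P] \cap [c+1,n+1]$. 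Also, $a \in \bar P \cap [c+1,n+1]$ iff $N+1-a \in P \cap [n+2, N-c]$, giving a bijection $\bar P \cap [c+1,n+1] \leftrightarrow P \cap [n+2,N-c]$.

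\textbf{Main computation.} Using these remarks, I would compute:
\[
\#([c+1,N-c]\cap P) = \#(P \cap [c+1,n+1]) + \#(P \cap [n+2,N-c]) = \#(P \cap [c+1,n+1]) + \#(\bar P \cap [c+1,n+1]) = \#([P] \cap [c+1,n+1]).
\]
So the quantity $\#([c+1,N-c]\cap P)$ is always equal to $\#([P]\cap[c+1,n+1])$, regardless of any hypothesis. Since $[c+1,n+1]$ has exactly $n+1-c$ elements, we have $\#([P]\cap[c+1,n+1]) = n+1-c$ if and only if $[c+1,n+1] \subset [P]$. Chaining these two equivalences gives the result: condition~(2) holds iff $\#([P]\cap[c+1,n+1]) = n+1-c$ iff condition~(1) holds.

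\textbf{Expected obstacle.} This lemma is essentially a bookkeeping identity, so I do not anticipate a serious obstacle; the only thing requiring care is making sure the index ranges line up — in particular that $[c+1,n+1]$ and $[n+2,N-c]$ genuinely partition $[c+1,N-c]$ (this uses $N = 2n+2$, so that the midpoint falls between $n+1$ and $n+2$), and that the involution $a \mapsto N+1-a$ restricts to a bijection between these two halves. One should also double-check the edge case $c = n$, where $[c+1,n+1] = \{n+1\}$ is a single element, to confirm the argument degrades gracefully. I would write the proof in a few lines, emphasizing the identity $\#([c+1,N-c]\cap P) = \#([P]\cap[c+1,n+1])$ as the crux, since it may be reused elsewhere in Section~\ref{S:type_D_equations}.
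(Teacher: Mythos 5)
Your proof is correct and rests on the same combinatorial facts as the paper's: the interval $[c+1,N-c]$ is invariant under $a \mapsto N+1-a$, and the isotropic condition forbids $P$ from meeting both halves of a mirror pair. The paper's own proof is a terser ``at most $n+1-c$ by isotropy, at least $n+1-c$ by hypothesis'' argument that only spells out the direction $(1)\Rightarrow(2)$; your identity $\#([c+1,N-c]\cap P)=\#([P]\cap[c+1,n+1])$ packages the same idea while making both directions explicit, which is a mild improvement but not a different route.
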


\begin{proof}
 Note that $n+1-c = \#([c+1,n+1])$.  Because $P$ is an isotropic
 Schubert symbol, there can be at most $n+1-c$ elements
 in $[c+1,N-c] \cap P$.  Since $[c+1, n+1] \subset [P]$,
 there are at least that many.
\end{proof}

\begin{lemma}\label{L:mirror_crossings_same_cardinality}
 Given Schubert symbols $T \leq P$ such that $[c+1,n+1] \subset [T] \cap [P]$,
 we have
 \[
\#([1,c] \cap T) - \#([1,c] \cap P) = \#([N+1-c,N] \cap P) - \#([N+1-c,N] \cap T).  
 \]
\end{lemma}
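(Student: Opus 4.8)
The plan is to exploit the fact that a Schubert symbol has fixed cardinality $m$, together with Lemma \ref{L:another_condition_for_maximal_projection}, which pins down how many elements of a symbol lie in the ``middle'' interval $[c+1,N-c]$ once the hypothesis $[c+1,n+1]\subset[\,\cdot\,]$ is in force.

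First I would observe that, since the hypothesis $[c+1,n+1]\subset[T]\cap[P]$ forces $c+1\le n+1$, i.e.\ $c\le n$, we have $c<N-c$ (recall $N=2n+2$), so the three intervals $[1,c]$, $[c+1,N-c]$, $[N+1-c,N]$ are pairwise disjoint and their union is all of $[1,N]$. Hence for any Schubert symbol $S$ of cardinality $m$,
\[
\#([1,c]\cap S)+\#([c+1,N-c]\cap S)+\#([N+1-c,N]\cap S)=m.
\]
Apply this with $S=P$ and with $S=T$.

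Next, since $[c+1,n+1]\subset[P]$, Lemma \ref{L:another_condition_for_maximal_projection} gives $\#([c+1,N-c]\cap P)=n+1-c$; likewise $[c+1,n+1]\subset[T]$ gives $\#([c+1,N-c]\cap T)=n+1-c$. Substituting these into the two cardinality identities yields
\[
\#([1,c]\cap P)+\#([N+1-c,N]\cap P)=m-(n+1-c)=\#([1,c]\cap T)+\#([N+1-c,N]\cap T).
\]
Rearranging this single equality — moving the $[1,c]$ terms to one side and the $[N+1-c,N]$ terms to the other — is exactly the asserted identity. I do not anticipate any real obstacle here; the only point requiring a word of care is checking that the three intervals genuinely partition $[1,N]$, which as noted follows from $c\le n$.
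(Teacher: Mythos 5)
Your proposal is correct and follows essentially the same route as the paper's proof: both invoke Lemma \ref{L:another_condition_for_maximal_projection} to get $\#([c+1,N-c]\cap P)=\#([c+1,N-c]\cap T)=n+1-c$, then use $\#S=m$ together with the partition of $[1,N]$ to deduce the identity. Your extra remark that $c\le n$ (so the three intervals are disjoint) is a sensible point to flag, though the paper leaves it implicit.
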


\begin{proof}
By Lemma \ref{L:another_condition_for_maximal_projection},
$\#([c+1,N-c] \cap P) = \#([c+1,N-c] \cap T) = n+1-c$.
It follows that 
\begin{align*}
&\#([1,c] \cap P) + \#([N+1-c,N] \cap P)\\
&= m - (n+1-c)\\
&= \#([1,c] \cap T) + \#([N+1-c,N] \cap T).\\
\end{align*}
\end{proof}

Lemma \ref{L:mirror_crossings_same_cardinality} says that
whenever $[c+1,n+1] \subset [T] \cap [P]$,
the number of rows crossing from column $c$ to column $c+1$ of $D(P,T)$ is equal
to the number of rows crossing from column $N-c$ to column $N+1-c$ of $D(P,T)$.
We therefore have the following corollary.

\begin{cor}\label{C:ways_to_think_about_critical_windows}
Given $c \in [1,n]$,
 suppose $[c+1,n+1] \subset [T] \cap [P]$ and $\type(P) \neq \type(T)$ for
 Schubert symbols $T \leq P$.
 We then have the following two sets of implications.

  \begin{align*}
   \mathbf{1.} \hphantom{a} & \#([1,c] \cap T) = \#([1,c] \cap P) \\
   &\Longleftrightarrow \#([N-c+1,N] \cap T) = \#([N-c+1,N] \cap P) \\
   &\Longleftrightarrow c \text{ is a visible cut in $D(P,T)$} \\
   &\Longleftrightarrow N-c \text{ is a visible cut in $D(P,T)$} \\
   &\Longrightarrow T \not\preceq P. \\
   \\
   \mathbf{2.} \hphantom{a} & c \text{ and } N-c \text{ are exceptional cuts in $D(P,T)$} \\
   &\Longleftrightarrow\#([1,c] \cap T) = \#([1,c] \cap P)+1 \\
   &\Longleftrightarrow \#([N-c+1,N] \cap P) = \#([N-c+1,N] \cap T)+1 \\
   &\Longleftrightarrow  D(P,T) \text{ has exactly one row crossing from column } c \text{ to column } c+1 \\
   &\Longleftrightarrow  D(P,T) \text{ has exactly one row crossing from column } N-c \\
   &\hphantom{\Longleftrightarrow} \text{ to column } N-c+1 \\
  \end{align*}
\end{cor}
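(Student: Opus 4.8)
The plan is to deduce both blocks of equivalences from the two preceding lemmas together with the combinatorial description of the type $D$ Bruhat order in Proposition \ref{P:type_D_bruhat_order} and the definition of exceptional cuts. Throughout we work under the standing hypotheses that $c \in [1,n]$, that $[c+1,n+1] \subset [T] \cap [P]$, that $\type(P) \neq \type(T)$, and that $T \leq P$; in particular Lemma \ref{L:mirror_crossings_same_cardinality} applies and gives $\#([1,c] \cap T) - \#([1,c] \cap P) = \#([N+1-c,N] \cap P) - \#([N+1-c,N] \cap T)$. The first equivalences in each block — between the ``left'' cardinality condition and the ``right'' cardinality condition — are then immediate from this identity. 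The link between a cardinality condition and a visible cut is also essentially a restatement of definitions: the number of rows of $D(P,T)$ crossing from column $c$ to column $c+1$ equals $\#\{j : t_j \le c < t_{j+1}\}$ compared against the count of $j$ with $p_j \le c$, so ``no row crosses'' ($c$ is a visible cut) is exactly $\#([1,c]\cap T) = \#([1,c]\cap P)$, and similarly at $N-c$; I would spell this counting bijection out once and reuse it.

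For block $\mathbf{1}$, the one genuinely substantive implication is the last: that the cardinality equality forces $T \not\preceq P$. Here I would invoke Proposition \ref{P:type_D_bruhat_order}: the hypotheses $[c+1,n+1] \subset [P] \cap [T]$ and $\#(P \cap [1,c]) = \#(T \cap [1,c])$ are precisely the antecedent of condition ii) of that proposition, so if $T \preceq P$ we would be forced to have $\type(P) = \type(T)$, contradicting our standing assumption. Hence $T \not\preceq P$. For block $\mathbf{2}$, the chain of equivalences with ``$c$ and $N-c$ are exceptional cuts'' unwinds the definition of a cut candidate: by definition $c$ is a cut candidate exactly when $[c+1,n+1] \subset [P]\cap[T]$ and $\#(T \cap [1,c]) = \#(P \cap [1,c]) + 1$, and since $\type(P) \neq \type(T)$ is assumed, a cut candidate is the same thing as an exceptional cut (and then $N-c$ is one too, by the symmetry built into the definition). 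The equivalence with $\#([N-c+1,N]\cap P) = \#([N-c+1,N]\cap T)+1$ is again Lemma \ref{L:mirror_crossings_same_cardinality}, and the equivalences with ``exactly one row crossing'' at columns $c$ and $N-c$ follow from the row-crossing count: the number of rows crossing from column $c$ to $c+1$ equals $\#(T \cap [1,c]) - \#(P \cap [1,c])$ when the relevant window is saturated, which under Lemma \ref{L:another_condition_for_maximal_projection} it is.

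I expect the main obstacle to be bookkeeping rather than any deep idea: making the row-crossing count airtight requires being careful that, inside the window $[c+1,N-c]$, both $P$ and $T$ contribute exactly $n+1-c$ elements (Lemma \ref{L:another_condition_for_maximal_projection}), so that the number of rows of $D(P,T)$ entering column $c+1$ from the left, staying through the saturated middle, and exiting at column $N-c$, is controlled on both ends simultaneously; a misindexing here (e.g. confusing ``rows with $t_j \le c$'' with ``columns'') would break the argument. Once that single counting identity is established cleanly, every arrow in the corollary is either a definition chase or a one-line appeal to Proposition \ref{P:type_D_bruhat_order}, so I would front-load the write-up with that identity and then dispatch the two blocks in turn.
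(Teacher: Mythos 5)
Your proposal is correct and matches the paper's treatment: the paper derives this corollary immediately from Lemma \ref{L:mirror_crossings_same_cardinality} together with the row-crossing count $\#([1,c]\cap T)-\#([1,c]\cap P)$, the definition of cut candidates/exceptional cuts, and condition ii) of Proposition \ref{P:type_D_bruhat_order}, exactly as you outline. (Only a cosmetic slip: your intermediate expression $\#\{j : t_j \le c < t_{j+1}\}$ should be the difference $\#\{j : t_j \le c\} - \#\{j : p_j \le c\}$, which is the identity you correctly state and use afterwards.)
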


\comment{
\subsection{Further Insight into the Type D Bruhat Order}
}

We finish this section by proving that
several important properties of Richardson diagrams
carry over to the type $D$ case.
In particular we extend Corollary \ref{C:lin_cuts} to type $D$,
and then prove in Corollary \ref{C:no_lin_in_PT} that 
$(P \cup T) \cap \mathcal{L}_{P,T} = \emptyset$ (a fact
that is obvious in types $B$ and $C$).
Once these facts are established we will be ready to 
define $Z_{P,T}$.

First we observe that for any $T \leq P$,
$D(P,T)$ and $D(\bar{T},\bar{P})$
have the same cut candidates,
by Lemma \ref{L:mirror_crossings_same_cardinality}. It follows that:

\begin{obs}\label{O:rotate_cuts}
$180^{\circ}$ rotation of the diagram $D(P,T)$ preserves all cuts, including exceptional cuts.
In other words, $\mathcal{C}_{P,T} = \mathcal{C}_{\bar{T},\bar{P}}$.
\end{obs}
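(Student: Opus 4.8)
The statement to prove is Observation \ref{O:rotate_cuts}: $\mathcal{C}_{P,T} = \mathcal{C}_{\bar{T},\bar{P}}$, i.e., $180^\circ$ rotation of $D(P,T)$ preserves all cuts, including exceptional cuts.

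\medskip

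The plan is to verify that each of the three kinds of cut — visible cuts, apparent cuts, and exceptional cuts — is preserved under the $180^\circ$ rotation, which on the level of index sets sends the pair $(P,T)$ to $(\bar T, \bar P)$ and sends a column index $c$ to $N+1-c$ (and a ``cut position'' $c \in [0,N]$ to $N-c$). First I would unwind the combinatorial effect of the rotation: the diagram $D(P,T)$ has a star in position $(j,c)$ exactly when $t_j \le c \le p_j$; relabeling rows $j \mapsto m+1-j$ and columns $c \mapsto N+1-c$, the rotated diagram has a star in position $(j,c)$ exactly when $N+1-p_{m+1-j} \le c \le N+1-t_{m+1-j}$, which is precisely the condition $\bar t_j' \le c \le \bar p_j'$ where $\bar T = \{N+1-p_m < \cdots < N+1-p_1\}$ and $\bar P = \{N+1-t_m < \cdots < N+1-t_1\}$. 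So the rotated diagram is literally $D(\bar T, \bar P)$, and $c$ is a visible cut of $D(P,T)$ iff $N-c$ is a visible cut of $D(\bar T,\bar P)$.

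\medskip

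Next, for apparent cuts: by definition $c$ is an apparent cut of $D(P,T)$ iff $c$ or $N-c$ is a visible cut of $D(P,T)$. Combining with the previous paragraph, $c$ is an apparent cut of $D(P,T)$ iff $N-c$ or $c$ is a visible cut of $D(\bar T, \bar P)$, i.e. iff $N-c$ is an apparent cut of $D(\bar T, \bar P)$. This handles the types $B$ and $C$ notion of cut entirely, and the non-exceptional part of the type $D$ notion.

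\medskip

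The only remaining — and genuinely new — point is the exceptional cuts, and this is where the hint in the excerpt (``by Lemma \ref{L:mirror_crossings_same_cardinality}'') is used. There are two subcases. For the exceptional center cut at $n+1$: the triggering condition $p_i = n+2 \le t_{i+1}$ for $D(P,T)$ becomes, under the rotation, $N+1-p_i = n+1 \ge N+1-t_{i+1} = \bar t'_{\,?}$, i.e. the mirror condition $t_i = n+1 \ge p_{i-1}$ type condition for $D(\bar T,\bar P)$; since the center cut sits at its own mirror position $N - (n+1) = n+1$, it is preserved. For the other exceptional cuts: $c \in [1,n]$ is a cut candidate for $D(P,T)$ iff $[c+1,n+1] \subset [P]\cap[T]$ and $\#(T\cap[1,c]) = \#(P\cap[1,c]) + 1$. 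The condition $[c+1,n+1]\subset[P]\cap[T]$ is symmetric under $P \leftrightarrow \bar P$, $T\leftrightarrow\bar T$ (since $[\bar P] = [P]$), and by Lemma \ref{L:mirror_crossings_same_cardinality}, $\#([1,c]\cap T) - \#([1,c]\cap P) = \#([N+1-c,N]\cap P) - \#([N+1-c,N]\cap T)$; writing the right side in terms of $\bar P, \bar T$ one sees this equals $\#([1,c]\cap \bar T) - \#([1,c]\cap \bar P)$ after the rotation, so $c$ is a cut candidate for $D(P,T)$ iff $c$ is a cut candidate for $D(\bar T, \bar P)$. Finally the extra hypothesis $\type(T)\ne\type(P)$ that promotes cut candidates to actual cuts is symmetric in the obvious way: one checks $\type(\bar P) = \type(P)$ (or more precisely that $\type(T) \ne \type(P) \iff \type(\bar T) \ne \type(\bar P)$) from the definition of $\type$ via $\#([1,n+1]\setminus P)$ together with $[\bar P]=[P]$. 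Since $c$ and $N-c$ are declared cuts together, and the rotation swaps $c \leftrightarrow N-c$, the set of exceptional cuts is preserved. Combining all three cases gives $\mathcal{C}_{P,T} = \mathcal{C}_{\bar T, \bar P}$.

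\medskip

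I expect the main obstacle to be purely bookkeeping: keeping the index relabeling $p_i \mapsto N+1-p_{m+1-i}$ straight so that ``$D(P,T)$ rotated'' is correctly identified with ``$D(\bar T, \bar P)$'' (note the swap of roles of $P$ and $T$ under the bar operation), and then confirming that the $\type$ function behaves symmetrically under the bar operation — this last check is the one genuinely type-$D$-specific ingredient and should be isolated as a short sublemma if it is not already implicit in the definitions.
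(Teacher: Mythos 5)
Your proof is correct and follows essentially the same route as the paper's. The paper's own argument is a single sentence before the observation: it notes that $D(P,T)$ and $D(\bar T,\bar P)$ have the same cut candidates by Lemma~\ref{L:mirror_crossings_same_cardinality}, and states that the observation follows. You spell out the remaining (easy) parts — that the rotated diagram is literally $D(\bar T,\bar P)$, that visible and apparent cuts are preserved, that the exceptional center cut condition transforms correctly, and that the type obstruction is symmetric — which is a more complete writeup of the same idea.

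Two small slips worth noting. First, in unwinding the right-hand side of Lemma~\ref{L:mirror_crossings_same_cardinality}, you write that $\#([N+1-c,N]\cap P) - \#([N+1-c,N]\cap T)$ equals $\#([1,c]\cap\bar T) - \#([1,c]\cap\bar P)$; with the paper's convention $\bar P = \{N+1-p : p \in P\}$ this should be $\#([1,c]\cap\bar P) - \#([1,c]\cap\bar T)$, which is exactly the cut-candidate quantity for $D(\bar T,\bar P)$ — so the sign flip is just a transcription slip and the conclusion you draw is the correct one. Second, the parenthetical claim $\type(\bar P) = \type(P)$ is not literally true in general (a short computation gives $\type(\bar P) \equiv m + \type(P) \pmod 2$), but your hedged version $\type(T)\neq\type(P)\iff\type(\bar T)\neq\type(\bar P)$ is what is actually needed and does hold, since the shift by $m$ cancels in the difference. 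Finally, you correctly use the $c \leftrightarrow N-c$ mirror for cut positions; the paper's text says ``$c$ and $N+1-c$'' for exceptional cuts, but the examples (e.g.\ $\mathcal C = \{0,2,6,8\}$ in $OG(3,8)$) show this is a typo for $N-c$, which is the convention your argument relies on.
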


\comment{
{\color{red} A LITTLE WEIRD TO HAVE BOTH LOWER AND UPPER CASE N'S IN THE FOLLOWING PROOF}}

We can now prove the type $D$ version of Corollary \ref{C:lin_cuts}.

\begin{prop}\label{P:lin_cuts_all_types}
 In type $D$, if $c \in \mathcal{L}_{P,T}$, then $c$ and $c-1$
are both in $\mathcal{C}_{P,T}$.
\end{prop}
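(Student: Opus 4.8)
The statement to prove is: in type $D$, if $c \in \mathcal{L}_{P,T}$, then both $c$ and $c-1$ lie in $\mathcal{C}_{P,T}$. By definition of $\mathcal{L}_{P,T}$, there are two cases: either $c$ is a zero column in $D(P,T)$, or column $N+1-c$ (with $N = 2n+2$) contains a lone star. The first case is easy and identical to types $B$ and $C$: if $c$ is a zero column, then $p_j < c < t_{j+1}$ for some $j$, so $c$ and $c-1$ are both visible cuts (hence cuts), exactly as in Corollary \ref{C:lin_cuts}. So the plan is to focus on the second case.

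Suppose then that column $N+1-c$ contains a lone star at position $(j, N+1-c)$. By the definition of lone star, either (i) $N+1-c \in T$ and $N+1-c$ is a cut, or (ii) $N+1-c \in P$ and $N-c$ is a cut. By Observation \ref{O:rotate_cuts}, $180^\circ$ rotation sends $\mathcal{C}_{P,T}$ to $\mathcal{C}_{\bar T, \bar P}$, and it sends the integer $d$ to $N - d$; so $N+1-c$ being a cut in $D(P,T)$ is equivalent to $c-1$ being a cut in $D(\bar T,\bar P)$, and $N-c$ being a cut in $D(P,T)$ is equivalent to $c$ being a cut in $D(\bar T,\bar P)$. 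Rather than chase the rotation, I would argue directly: the lone star at $(j,N+1-c)$ means row $j$ of $D(P,T)$ spans from $t_j$ to $p_j$ with one endpoint at $N+1-c$, and the lone-star condition forces the appropriate neighboring cut; I then want to show the \emph{other} neighbor of column $N+1-c$ is also a cut. In type $B$/$C$ this followed from Proposition \ref{P:simple_lone_star}, which reduced every lone star either to a zero column on the mirror side or to the degenerate case $t_j = p_j = N+1-c$. The key step is to establish the type $D$ analogue of Proposition \ref{P:simple_lone_star}, using Corollary \ref{C:ways_to_think_about_critical_windows}: a lone star at an apparent (but not visible) cut can only arise from an exceptional cut, and by part \textbf{2} of that corollary an exceptional cut $c'$ comes paired with its mirror $N-c'$ as a cut, and also with the single-row-crossing condition at both $c'$ and $N-c'$. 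From the single-crossing condition at the mirror window one extracts that the neighbor column on the other side is either a zero column (giving two cuts) or again forces $N+1-c$ to sit between consecutive cuts.

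Concretely, the key steps in order: (1) dispose of the zero-column case via the $B/C$ argument; (2) in the lone-star case, split according to whether $N+1-c$ (resp.\ $N-c$) is a \emph{visible} cut or merely an \emph{apparent/exceptional} cut; (3) in the visible-cut sub-case, the mirror column is a zero column or the star is isolated in its row, and in either situation the column $N+1-c$ is flanked by visible cuts, so $c$ and $c-1$ are both apparent cuts; (4) in the exceptional-cut sub-case, invoke Corollary \ref{C:ways_to_think_about_critical_windows}(\textbf{2}) to get that $N+1-c$ and its mirror $c-1$ (or $c$) are both exceptional cuts, and use the ``exactly one row crossing'' characterization together with the lone-star hypothesis (which says that single crossing is the lone star itself, so the rest of that window is empty) to conclude the remaining neighbor is also a cut. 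The main obstacle I anticipate is step (4): carefully untangling how an exceptional cut at $N+1-c$ or $N-c$ interacts with the location of the lone star's row, i.e.\ confirming that the exceptional cut condition together with the lone star forces the column on the far side of $N+1-c$ to have no stars crossing into it, and handling the exceptional \emph{center} cut ($c = n+1$ or nearby) as a possibly separate edge case. Everything else is bookkeeping with the definitions of visible/apparent/exceptional cuts and the counting identities of Lemma \ref{L:mirror_crossings_same_cardinality}.
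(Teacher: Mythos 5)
Your plan follows the paper's proof essentially step for step: dispose of the zero-column case, then for a lone star in column $N+1-c$ reduce the apparent-cut subcase to $t_j=p_j$ (the type $D$ extension of Proposition \ref{P:simple_lone_star}), and in the exceptional-cut subcase use the single-row-crossing characterization of Corollary \ref{C:ways_to_think_about_critical_windows}(\textbf{2}) to force the adjacent row to end exactly at the cut, so that the neighboring column index is also an exceptional cut (with the center cut $n+1$ treated as a separate edge case). This is the same approach as the paper's; what remains is only the bookkeeping you already flagged in your step (4).
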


\begin{proof}
If $c$ is a zero column then the result is clear.
Otherwise, it must be the case that 
$(i,N+1-c)$ is a lone star for some $i$.
By Observation \ref{O:rotate_cuts},
we can assume without loss of generality that
$N+1-c  \leq n+1$.

{\bf Case 1}: $N+1-c = t_i$ and $t_i$ is a cut in $D(P,T)$.

We claim that $t_i -1$ must be a cut as well.
If $t_i = p_i$, then $p_{i-1} < t_i$, 
and we are done. 
Thus, we only need to consider the case that
$t_i$ is an exceptional cut in $D(P,T)$.

If $t_i-1$ is not a visible cut, then
$p_{i-1} \geq t_i$.
In fact, if $t_i = n+1$, then $p_{i-1} = t_i$,
since that is the only way the exceptional center cut
can arise.  On the other hand, if $t_i \neq n+1$,
then since $\#([1,t_i] \cap T) = \#([1,t_i] \cap P) + 1$,
row $i$ of $D(P,T)$ is the \emph{only} row crossing the exceptional cut $t_i$.
In this case too we must have $p_{i-1} = t_i$.
 
We therefore have 
$t_i \in [T] \cap [P]$.  Furthermore, since
row $i-1$ is the only row crossing from column
$t_i-1$ to column $t_i$, we have
$\#([1,t_i-1] \cap T) = \#([1,t_i-1] \cap P)+1$.
Thus $t_i -1$ is also an exceptional cut in $D(P,T)$.

{\bf Case 2}: $N+1-c = p_i$ and $p_i-1$ is a cut in $D(P,T)$.

We claim that $p_i$ must be a cut as well.
As before, we can assume that $p_i-1$
is an exceptional cut in $D(P,T)$.

If $p_i$ is not a visible cut, then
$t_{i+1} \leq p_i$.
In fact, we must have
$t_{i+1} = p_i$, since
row $i$ is the only row crossing the
exceptional cut $p_i-1$.

If $p_i = n+1$, then since $t_{i+1} = p_i$, 
the diagram $D(P,T)$ has the exceptional center cut $n+1$,
and we are done.

If $p_i \neq n+1$, then since $t_{i+1} = p_i$,
row $i+1$ must be the only row
crossing from column $p_i$ to column
$p_i+1$.
Hence,
$\#([1,p_i] \cap T) = \#([1,p_i] \cap P)+1$.
Thus $p_i$ is also an exceptional cut in $D(P,T)$.
\comment{
By Proposition \ref{P:simple_lone_star}, 
we only need to consider the case of a lone star $(i,c)$ 
for which $c$ or $c-1$ is an exceptional cut.
By Observation \ref{O:rotate_cuts} we can
assume $c \leq n+1$.

Suppose first that $c=t_i$ for some $i$, and that 
$t_i$ is an exceptional cut in $D(P,T)$.
If $t_i-1$ is not a visible cut, then
$p_{i-1} \geq t_i$.
However, since $\#([1,t_i] \cap T) = \#([1,t_i] \cap P) + 1$,
there is exactly one row crossing the exceptional cut $t_i$,
so we must have $p_{i-1} = t_i$.
 
We therefore have 
$t_i \in [T] \cap [P]$.  Furthermore, since
row $i-1$ is the only row crossing from
$t_i-1$ to $t_i$, we have
$\#([1,t_i-1] \cap T) = \#([1,t_i-1] \cap P)+1$.
Thus $t_i -1$ is also an exceptional cut in $D(P,T)$.

On the other hand, suppose $c=p_i$ for some $i$, and $p_i-1$
is an exceptional cut.  If $p_i$ is not a visible cut,
then by a similar argument, $t_{i+1} = p_i$.
It follows that 
$\#([1,p_i] \cap T) = \#([1,p_i] \cap P) +1$,
and $p_i$ is an exceptional cut as well.
}
\end{proof}

\comment{
We shall now give an alternative characterization of the Bruhat
order in type $D$.}  

Given Schubert symbols $T \leq P$ in $\Omega(OG(m,2n+2))$ such that $\type(T) \neq \type(P)$, 
\comment{we say the 
$i^{\text{th}}$ row of the diagram $D(P,T)$ is a \emph{center crossing}
if $t_i \leq n+1 < p_i$.}
we define a \emph{critical window} in $D(P,T)$ to be an interval $[c+1,N-c]$
such that $c$ and $N-c$ are visible cuts in $D(P,T)$, 
and $[c+1,n+1] \subset [T] \cap [P]$.
\comment{
\begin{itemize}
\item $c$ and $N-c$ are visible cuts in $D(P,T)$, 
\item $[c+1,n+1] \subset [T] \cap [P]$, and
\item there are an odd number of center crossings in $D(P,T)$.
\end{itemize}}

\begin{lemma}\label{L:critical_window_bruhat}
Given Schubert symbols $T<P$ in $\Omega(OG(m,2n+2))$, 
we have $T \not\prec P$ if and only if a critical window exists in $D(P,T)$.
\end{lemma}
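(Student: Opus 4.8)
The plan is to deduce the lemma directly from the combinatorial description of the type $D$ Bruhat order in Proposition \ref{P:type_D_bruhat_order}, using Corollary \ref{C:ways_to_think_about_critical_windows} to translate its numerical conditions into statements about visible cuts in $D(P,T)$. First I would dispose of the case $\type(T) = \type(P)$: here condition (ii) of Proposition \ref{P:type_D_bruhat_order} is automatically satisfied, so $T \leq P$ already gives $T \preceq P$, and since $T \neq P$ (we assume $T < P$) we get $T \prec P$. On the other side, a critical window is by definition only available when $\type(T) \neq \type(P)$, so none exists. Thus both halves of the asserted equivalence are false, and the lemma holds vacuously in this case.

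So assume $\type(T) \neq \type(P)$; in particular $T \neq P$. Since $T \leq P$, Proposition \ref{P:type_D_bruhat_order} says that $T \not\preceq P$, equivalently $T \not\prec P$, if and only if there exists $c \in [1,n]$ with $[c+1,n+1] \subset [P] \cap [T]$ and $\#(P \cap [1,c]) = \#(T \cap [1,c])$. For any such $c$ the hypotheses of Corollary \ref{C:ways_to_think_about_critical_windows}(1) are met, and that corollary identifies the equality $\#(T \cap [1,c]) = \#(P \cap [1,c])$ with the statement that $c$ is a visible cut in $D(P,T)$, which in turn is equivalent to $N-c$ being a visible cut. Hence $T \not\prec P$ if and only if there is some $c \in [1,n]$ with $[c+1,n+1] \subset [P] \cap [T]$ for which $c$ and $N-c$ are both visible cuts of $D(P,T)$ — and this is exactly the assertion that the interval $[c+1,N-c]$ is a critical window. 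This gives the equivalence.

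Essentially all of the content has been absorbed into Proposition \ref{P:type_D_bruhat_order} and Corollary \ref{C:ways_to_think_about_critical_windows}, so I do not anticipate a genuine obstacle; the argument is bookkeeping. The only point requiring care is lining up the convention that a critical window is only defined — and hence, for the purpose of this lemma, is taken not to exist — when $\type(T) = \type(P)$, with the dichotomy coming from Proposition \ref{P:type_D_bruhat_order}. It is also worth double-checking that the strict inequality $T < P$ in the hypothesis is genuinely used: it is precisely what rules out the spurious failure at $T = P$, where $T \not\prec P$ holds trivially but, since $\type(T) = \type(P)$, no critical window is available.
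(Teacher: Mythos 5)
Your proof is correct and follows essentially the same route as the paper's: both reduce the statement to Proposition \ref{P:type_D_bruhat_order} and use Corollary \ref{C:ways_to_think_about_critical_windows} to identify the cardinality condition with the visible-cut condition defining a critical window. You are merely more explicit about the vacuous case $\type(T)=\type(P)$ and about the converse direction, which the paper dismisses as clear.
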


\begin{proof}
If $T \not\prec P$, then $\type(P) \neq \type(T)$
and there exists $c \in [1,n]$ such that  $[c+1,n+1] \subset [T] \cap [P]$
and $\#[1,c] \cap P = \#[1,c] \cap T$.
By Corollary \ref{C:ways_to_think_about_critical_windows},
both $c$ and $N-c$ are visible cuts in $D(P,T)$, and hence
$[c+1,N-c]$ is a critical window.
Conversely, if $D(P,T)$ has a critical window, then \comment{$\type(P) \neq \type(T)$
and there exists a visible cut $c$ such that  $[c+1,n+1] \subset [T] \cap [P]$.
The fact that $c$ is a visible cut implies that  $\#[1,c] \cap P = \#[1,c] \cap T$,
so $T \not\prec P$.} it is clear that $T \not\prec P$.
\comment{
 Note that $c$ is a visible cut in $D(P,T)$ if and only if 
 $\#[1,c] \cap P = \#[1,c] \cap T$.
 
 Furthermore, if there exists an integer $c$ such
 that $[c+1, n+1] \subset [T] \cap [P]$  \emph{and}
 $\#[1,c] \cap P = \#[1,c] \cap T$,
 then $N-c$ must also be a cut in $D(P,T)$.
 Therefore, the first two conditions characterizing 
 a critical window are equivalent to the first two conditions
 for $T \not\prec P$, given $T < P$.
 
 Finally, given that the first two conditions hold,
 the third condition for a critical window (that there are an odd number of center crossings in $D(P,T)$),
 is equivalent to the third condition for $T \not\prec P$ (that $\type(P) \neq \type(T)$).}
\end{proof}

The fact that $(P \cup T) \cap \mathcal{L}_{P,T} = \emptyset$ follows easily
from the following proposition.

\begin{prop}\label{P:alt_bruhat}
 Given $T$ and $P$ in $OG(m,2n+2)$ such that $T < P$.  
 Then $T \not\prec P$ if and only there exists an integer $d \in [1,N]$ such that
 $D(P,T)$ has lone stars in columns $d$ and $N+1-d$.
\end{prop}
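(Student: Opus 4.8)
The plan is to reduce Proposition \ref{P:alt_bruhat} to the critical-window criterion of Lemma \ref{L:critical_window_bruhat}. That lemma already tells us that $T \not\prec P$ (given $T < P$) is equivalent to the existence of a critical window $[c+1,N-c]$ in $D(P,T)$: that is, visible cuts at $c$ and $N-c$ together with $[c+1,n+1] \subset [T]\cap[P]$ (which in particular forces $\type(P)\neq\type(T)$ via Corollary \ref{C:ways_to_think_about_critical_windows}). So it suffices to show that a critical window in $D(P,T)$ is equivalent to the existence of an integer $d$ with lone stars in both column $d$ and column $N+1-d$ of $D(P,T)$.

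For the forward direction, suppose $[c+1,N-c]$ is a critical window. Since $c$ is a visible cut and $[c+1,n+1]\subset[T]\cap[P]$, the integer $c+1$ lies in $[T]$; I would argue that in fact $c+1 \in T$ or $c+1 \in \bar P$, and combined with $c$ being a cut this produces a lone star in column $c+1$ or in column $N-c$ — essentially running the definition of lone star against the data $c\in\mathcal{C}_{P,T}$ and $c+1\in[T]$. By Observation \ref{O:rotate_cuts} the rotated situation is symmetric, so the mirror column $N-c$ (or $N+1-(c+1)$) carries a lone star as well; setting $d$ to be whichever of $c+1$, $N-c$ works gives lone stars in columns $d$ and $N+1-d$. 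I should be careful here to split according to whether the window boundary lands exactly at a row boundary $t_i=c+1$ versus $p_{i-1}=c$, and to handle the center case $c+1 = n+1$ separately, where the exceptional center cut is the relevant mechanism.

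For the converse, suppose lone stars sit in columns $d$ and $N+1-d$; by Observation \ref{O:rotate_cuts} assume $d \le n+1$. By Proposition \ref{P:lin_cuts_all_types} applied to $N+1-d \in \mathcal{L}_{P,T}$ — or directly to the lone stars — both $d$ and $d-1$, and both $N+1-d$ and $N-d$, lie in $\mathcal{C}_{P,T}$. I then want to show this forces a visible cut at some $c$ with $[c+1,n+1]\subset[T]\cap[P]$. The lone star in column $d$ means $d\in T$ with $d$ a cut, or $d\in P$ with $d-1$ a cut; in either case, tracing the argument of Proposition \ref{P:lin_cuts_all_types} backward, the run of cuts it generates propagates rightward toward the center, and the mirrored lone star at $N+1-d$ forces the corresponding run on the other side to meet it, so that the whole interval $[d, N+1-d]$ (or $[d-1,N-d]$) consists of cuts and its ``interior edges'' all lie in $[T]\cap[P]$. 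Picking $c$ at the outer edge of this symmetric block of cuts gives a visible cut (not exceptional — the block extends past it) with $[c+1,n+1]\subset[T]\cap[P]$, hence a critical window; the $\type$ inequality is automatic from the window's existence. The main obstacle I anticipate is the bookkeeping in this converse direction — ruling out the possibility that the two lone stars are ``local'' phenomena (e.g. from $t_i=p_i$ singletons) that do not extend into a full symmetric block of cuts reaching the center; the key point making this work is that a lone star in column $d\le n+1$ with $d$ a cut but $d-1$ not a visible cut is precisely an exceptional cut, whose defining count $\#(T\cap[1,d]) = \#(P\cap[1,d])+1$ is exactly what propagates the chain inward, and symmetrically on the mirror side, so the two chains cannot fail to meet.
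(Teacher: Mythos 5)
Your reduction to Lemma \ref{L:critical_window_bruhat} is exactly the paper's strategy, and your direction ``lone stars in columns $d$ and $N+1-d$ $\Rightarrow$ $T\not\prec P$'' is essentially sound: the isotropy of $T$ and $P$ forces $t_i\neq p_i$ (resp.\ $t_j\neq p_j$) at the two lone-star columns, so neither $d$ nor $N-d$ can be a visible cut, hence $d$ is an exceptional cut, and its defining conditions hand you the critical window directly --- no ``propagation of chains toward the center'' is needed.

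The gap is in the other direction. Given a critical window $[c+1,N-c]$, you claim a lone star in column $c+1$ or $N-c$ and then assert that ``by Observation \ref{O:rotate_cuts} the rotated situation is symmetric, so the mirror column carries a lone star as well.'' That observation says $\mathcal{C}_{P,T}=\mathcal{C}_{\bar T,\bar P}$; it does not make $D(P,T)$ itself rotationally symmetric, and the claim fails precisely in the case you flag but do not resolve: if $c+1=t_i=p_i$ (a singleton column at the window boundary), then $(i,c+1)$ is a lone star but $N-c=N+1-(c+1)\notin P\cup T$ by isotropy, so column $N-c$ contains no lone star at all. The missing idea is the paper's shrinking step: whenever a boundary of the critical window is a singleton column, $[c+2,N-c-1]$ is again a critical window, and this terminates because the minimal window $[n+1,n+2]$ forces $t_i=n+1<p_i=n+2$. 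One thereby reaches a window of the form $[t_i,p_j]$ with $t_i<p_i$ and $t_j<p_j$, whose endpoints are exceptional cuts (note the exceptional cuts attached to the cut candidate $t_i$ are $t_i$ and $N-t_i=p_j-1$, matching the two clauses of the lone-star definition), giving lone stars in columns $t_i$ and $p_j=N+1-t_i$. Without the shrinking argument your chosen $d$ need not work, so this direction as written does not go through.
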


\begin{proof}

Suppose columns $d$ and $N+1-d$ of 
$D(P,T)$ both contain lone stars, and assume $d \leq n+1$.
If $d = t_i$ for some $i$, then $N+1-d = p_j$ for some $j$, as
shown in the left hand side of Figure \ref{F:conflicting_lone_stars}.
It follows that $t_i \neq p_i$, so $t_i$ must be an exceptional cut in $D(P,T)$.
Thus $[t_i+1,n+1] \subset [T] \cap [P]$ and $\type(T) \neq \type(P)$.
Furthermore, row $i$ must be the only row crossing from column $t_i$ to column $t_i+1$,
and hence $p_{i-1} < t_i$, implying that $t_i-1$ is a visible cut.
Therefore, $[t_i, p_j]$ is a critical window in $D(P,T)$.
On the other hand, if $ d= p_i$ for some $i$, then
$N+1-d = t_j$ for some $j$, as shown in the right hand side of \ref{F:conflicting_lone_stars}.
In this case $p_i-1$ must be an exceptional cut,
$p_i$ must be a visible cut, and
$[p_i+1,t_i-1]$
must be a critical window in $D(P,T)$.
By Lemma \ref{L:critical_window_bruhat}, it follows that
$T \not\preceq P$. 

\begin{figure}
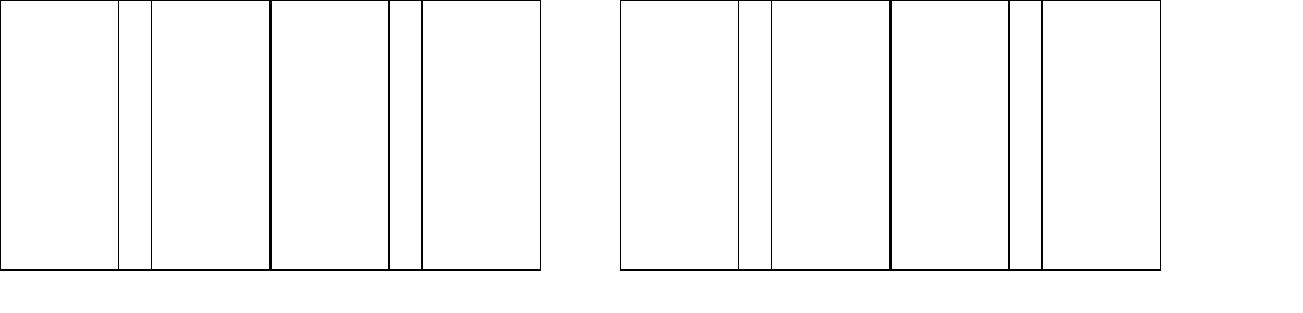 
\caption{Conflicting lone stars in $D(P,T)$ in Proposition \ref{P:alt_bruhat}.}
\label{F:conflicting_lone_stars}
\end{figure}

On the other hand, if $T \not\preceq P$, then
by Lemma \ref{L:critical_window_bruhat} there exists a critical
window $[c+1,N-c]$ in $D(P,T)$.
We claim that there exists
a (possibly smaller) critical window of the form $[t_i,p_j]$ for some $i$ and $j$.
To see why, note that if $[c+1,N-c]$ does not have the form $[t_i,p_j]$,
then either $c+1 = t_i = p_i$, or $N-c = t_j = p_j$.  Either way,
$[c+2,N-c-1]$ is a smaller critical window.
However, this process of shrinking can't continue indefinitely.
In particular, if $[c+1,N-c] = [n+1,n+2]$, then the
fact that $[n+1,n+2]$ is a critical window
implies that $t_i=n+1$ and $p_i = n+2$ for some $i$.

Finally, note that
whenever $[t_i,p_j]$ is a critical window in $D(P,T)$,
it must be the case that
$t_i$ and $p_j-1$ are exceptional cuts.
Thus $(i,t_i)$ and $(j,p_j)$ are lone stars
and $t_i + p_j = N+1$, completing the proof.

\comment{

$(\Rightarrow):$
 Suppose first that $T \not\prec P$.  Then $\type(P) \neq \type(T)$, and
 there exists a \emph{largest} integer $c \leq n$ such that $[c+1, n+1] \subset [T] \cap [P]$
 and $\#([1,c]\cap T) = \#([1,c] \cap P)$.
 
The fact that  $\#([1,c]\cap T) = \#([1,c] \cap P)$ is equivalent
to the statement that $c$ is a visible cut in $D(P,T)$.
In fact, since $[c+1, n+1] \subset [T] \cap [P]$, 
we also know that  $\#([c+1,N-c]\cap T) = \#([c+1,N-c] \cap P)$,
so $N-c$ is a visible cut as well.

 If $c = n$, then we must have $t_i = n+1$ and $p_i = n+2$ for some $i$.
 In this case $D(P,T)$ has an exceptional center cut, so
 $(i,n+1)$ and $(i,n+2)$ are both lone stars.

  Suppose on the other hand $c < n$.
Since $c+1 \in [P]$ we must either have $c+1 \in P$ or $N-c \in P$.
 If $c+1 \in P$, then $t_i = p_i = c+1$ for some $i$,
 since $c$ is a visible cut in $D(P,T)$.
 But then $\#[1,c+1] \cap P = \# [1,c+1] \cap T$ and
 $[c+2,n+1] \subset [P] \cap [T]$,
 contradicting the maximality of $c$.
 Therefore $N-c \in P$.
 Similar reasoning shows that $c+1 \in T$.
 Thus $t_i = c+1 < p_i$ for some $i$,
 and $p_j = N-c > t_j$ for some $j$.
 It follows that $c+1$ and $N-c-1$ are exceptional cuts in $D(P,T)$,
 and hence $(i,c+1)$ and $(j,N-c)$ are lone stars in $D(P,T)$.

 $(\Leftarrow):$
 Suppose $D(P,T)$ has lone stars in columns $d$ and $N+1-d$,
 where $d \leq n+1$.
 
 First consider the case where $d = p_i$ for some $i$, and $p_i-1$ is a cut in $D(P,T)$.
 If $p_i-1$ is an apparent cut, then by Proposition \ref{P:simple_lone_star},
 $p_i = t_i$, contradicting the fact that column $N+1-d$ also contains a lone star
 and hence $N+1-d \in T$.
 Thus $p_i-1$ must be an exceptional cut.
 It follows that $\type(P) \neq \type(T)$, $[d,n+1] \subset [T] \cap [P]$,
 and $\#([1,d-1] \cap T) = \#([1,d-1] \cap P)+1$.
 Note that $d < n+1$, since otherwise $\type(P) = \type(T)$.
 We therefore have $[d+1,n+1] \subset [T] \cap [P]$ as well.
 Finally, since $d \not\in T$, $p_i < t_{i+1}$,
 and $\#([1,d] \cap P) = \#([1,d] \cap T)$, and thus
 $T \not\prec P$.
 
 Now suppose $d = t_i$ for some $i$ and $t_i$ is a cut in $D(P,T)$.
 As in the previous case, it must be an exceptional cut.
 It follows that
 $\type(P) \neq \type(T)$, that $\#([1,d] \cap T) = \#([1,d] \cap P)+1$,
 and, provided $d < n+1$, that
 $[d+1,n+1] \subset [T] \cap [P]$.
 However, whether or not $d < n+2$,
 since $d \in T$ and $N+1-d \in P$, we have $d \in [T] \cap [P]$.
 Thus $[d,n+1] \subset [T] \cap [P]$
 Lastly note that $d \not\in P$, so we have $p_{i-1} < t_i$, and hence
 $\#([1,d-1] \cap P) = \#([1,d-1] \cap T)$.
 Once again we have shown that $T \not\prec P$.
 }
\end{proof}

\begin{cor}\label{C:no_lin_in_PT}
Given $T \preceq P$, we have
$(P \cup T) \cap \mathcal{L}_{P,T} = \emptyset$.
\end{cor}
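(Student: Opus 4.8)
The plan is to argue by contradiction. Suppose some integer $c$ lies in $(P\cup T)\cap\mathcal{L}_{P,T}$; I will show this forces $T\not\preceq P$, against the hypothesis. The two engines of the argument are Proposition \ref{P:lin_cuts_all_types} (which promotes membership in $\mathcal{L}_{P,T}$ to the statement that $c$ and $c-1$ are cuts) and Proposition \ref{P:alt_bruhat} (which reads off $T\not\preceq P$ from a pair of mirrored lone stars).

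First I would decode the hypothesis $c\in(P\cup T)\cap\mathcal{L}_{P,T}$. Since $c\in P\cup T$, we have $c=t_k$ or $c=p_k$ for some row $k$, so $(k,c)$ is an entry of $D(P,T)$; in particular column $c$ is nonempty, hence $c$ is not a zero column. By the definition of $\mathcal{L}_{P,T}$ this forces column $N+1-c$ of $D(P,T)$ to contain a lone star. On the other hand $c\in\mathcal{L}_{P,T}$, so Proposition \ref{P:lin_cuts_all_types} gives $c,c-1\in\mathcal{C}_{P,T}$. Plugging this into the definition of a lone star: if $c=t_k$ then clause (i) (namely $c\in T$ and $c$ a cut) makes $(k,c)$ a lone star, and if $c=p_k$ then clause (ii) ($c\in P$ and $c-1$ a cut) makes $(k,c)$ a lone star. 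So column $c$ of $D(P,T)$ also contains a lone star.

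At this point $D(P,T)$ has lone stars in columns $c$ and $N+1-c$. If $T<P$, Proposition \ref{P:alt_bruhat} immediately yields $T\not\prec P$, hence $T\not\preceq P$, contradicting $T\preceq P$. The only remaining possibility is $T=P$, which Proposition \ref{P:alt_bruhat} does not cover; here I would argue directly that the only entries of $D(P,P)$ are the $(j,p_j)$, so a star in column $N+1-c$ forces $N+1-c\in P$, and then $c,N+1-c\in P$ violates the defining condition of a Schubert symbol. Either way we have a contradiction, so $(P\cup T)\cap\mathcal{L}_{P,T}=\emptyset$.

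I do not expect a real obstacle: once Propositions \ref{P:lin_cuts_all_types} and \ref{P:alt_bruhat} are available, the corollary falls out in a few lines. The two places that want a moment's care are (a) noticing that for $c\in P\cup T$ the pair of cuts $\{c,c-1\}$ is exactly what clauses (i) and (ii) of the lone-star definition need, so column $c$ automatically carries a lone star, and (b) peeling off the degenerate case $T=P$, where the conclusion is really a restatement of ``$P$ is a Schubert symbol'' rather than anything about Richardson diagrams.
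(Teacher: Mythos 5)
Your proposal is correct and follows essentially the same route as the paper: use Proposition \ref{P:lin_cuts_all_types} to get that $c$ and $c-1$ are cuts, deduce that column $c$ then carries a lone star along with column $N+1-c$, and invoke Proposition \ref{P:alt_bruhat} for the contradiction. Your explicit treatment of the degenerate case $T=P$ (which Proposition \ref{P:alt_bruhat} formally excludes) is a small point of extra care that the paper's proof leaves implicit, but it does not change the argument.
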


\begin{proof}
 Suppose $c \in (P \cup T) \cap \mathcal{L}_{P,T}$. 
 Since $c$ is not a zero column, 
 there exists a lone star in column $N+1-c$ of $D(P,T)$.
 By Proposition \ref{P:lin_cuts_all_types},
 $c$ and $c-1$ are both cuts in $D(P,T)$.
 But then column $c$ contains a lone star as well,
 since $c \in (P \cup T)$,
 contradicting Proposition \ref{P:alt_bruhat}.
\end{proof}

\comment{
\begin{cor}\label{C:both_zero_columns}
 Given $T \preceq P$, if $c$ and $N+1-c$ are both in $\mathcal{L}_{P,T}$, then
 $c$ and $N+1-c$ are both zero columns.
\end{cor}

\begin{proof}
If $c$ is not a zero column, then
$N+1-c$ is a lone star. But then
$N+1-c \in P \cup T$, contradicting
Corollary \ref{C:no_lin_in_PT}.
The same reasoning
shows that $N+1-c$ is also a zero column.
\end{proof}
}

\subsection{A Complete Intersection}

The quadratic equation characterizing isotropic vectors
in $\C^{2n+2}$ is $f^D_{n+1} := x_1x_N + \ldots + x_{n+1}x_{n+2}=0$.
We once again let
$\mathcal{Q}_{P,T} = ([0,n]\cap \mathcal{C}) \cup \{n+1\}$.
We let $Z_{P,T} \subset \P^{2n+1}$ denote the subvariety cut out by 
the familiar polynomials
$\{f_c \mid c \in \mathcal{Q}_{P,T}\} \cup \{x_c \mid c \in \mathcal{L}_{P,T}\}$,
where we let $f_{n+1} = f^D_{n+1}$.

It is not immediately obvious that $Z_{P,T}$ is a complete intersection
in $\P^{N-1}$, or even that it is an irreducible subvariety.
To prove these facts, we need the following lemma, which we prove
in all three Lie types.

\begin{lemma}\label{L:consec_cuts}
Given Schubert symbols $T \preceq P$ for a Grassmannian $X$ of 
Lie type $B$, $C$, or $D$,
 if $c-1$ and $c$ are both in $\mathcal{Q}_{P,T}$,
 then $c \in \mathcal{L}_{P,T}$ or $N+1-c \in \mathcal{L}_{P,T}$.
\end{lemma}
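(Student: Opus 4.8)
The plan is to show that if $c-1$ and $c$ are consecutive integers, both lying in $\mathcal{Q}_{P,T}$, then at least one of columns $c$ or $N+1-c$ of $D(P,T)$ is ``forced,'' i.e.\ it is a zero column or its mirror column contains a lone star. Since $c-1, c \in \mathcal{Q}_{P,T} \subset [0,n] \cap \mathcal{C}_{P,T}$ (together with the extra point $n+1$ in types $B$ and $D$), both $c$ and $c-1$ are cuts in $D(P,T)$, and moreover $c \le n$ (or $c = n+1$, a special case). First I would dispose of the easy situation where $c$ is a zero column: then $c \in \mathcal{L}_{P,T}$ by definition and we are done. So assume column $c$ contains a star, say at row $j$, meaning $t_j \le c \le p_j$.

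Next, the key observation: because $c-1$ and $c$ are \emph{both} cuts, column $c$ of $D(P,T)$ contains \emph{exactly one} star. Indeed, if $c-1$ is an apparent (or exceptional) cut, no row has stars in both columns $c-1$ and $c$ on the left side — and symmetrically $c$ being a cut prevents stars in both $c$ and $c+1$. So the only row meeting column $c$ is row $j$, which forces $t_j = p_j = c$ in the non-exceptional case, or puts $c$ at the end of an exceptional window. In the plain situation $t_j = p_j = c$, the entry $(j,c)$ is a lone star, and then by Proposition~\ref{P:alt_bruhat} (or directly) the mirror column $N+1-c$ cannot \emph{also} contain a lone star, but that is not what we need — what we need is that $c$ or $N+1-c$ lies in $\mathcal{L}_{P,T}$. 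Since $(j,c)$ is a lone star, $N+1-c \in \mathcal{L}_{P,T}$ by definition (item \emph{ii}) in the definition of $\mathcal{L}_{P,T}$, a lone star in column $N+1-(N+1-c) = c$). So I would argue: the single star in column $c$ is a lone star (this uses that both $c-1$ and $c$ are cuts, so by the definition of lone star, either $c \in T$ with $c$ a cut, or $c \in P$ with $c-1$ a cut — and since column $c$ has just one star, $c \in T \cap P$, so both hold), hence $N+1-c \in \mathcal{L}_{P,T}$.

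I then handle the remaining cases: (a) the boundary value $c = n+1$ in types $B$ and $D$, where $f_{n+1}$ and $f_n$ are consecutive in $\mathcal{Q}_{P,T}$, which needs the analysis of the center of the diagram and the exceptional center cut — here the relevant star sits at column $n+1$ or $n+2$ and I would invoke the type $D$ machinery (exceptional center cut induces a lone star) and Proposition~\ref{P:simple_lone_star} in type $B$; and (b) the exceptional-cut subtleties of type $D$ where $c$ need not be an endpoint of a row — I would use Corollary~\ref{C:ways_to_think_about_critical_windows} to see that when $c$ and $c-1$ are both exceptional cuts there is exactly one row crossing each, pinning down a lone star at column $c$ via Proposition~\ref{P:lin_cuts_all_types}'s proof technique. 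The main obstacle I expect is the bookkeeping in type $D$: making sure the ``exactly one star in column $c$'' claim survives when $c-1$ or $c$ is an exceptional cut rather than a visible/apparent one, and correctly identifying which of $c$, $N+1-c$ is the one that lands in $\mathcal{L}_{P,T}$ when the relevant window straddles the center $n+1$. I would organize the write-up by first proving the clean claim ``two consecutive cuts $\Rightarrow$ the column between-ish them carries a single star which is a lone star'' uniformly, then reading off membership in $\mathcal{L}_{P,T}$, and finally checking the $c = n+1$ boundary cases by hand.
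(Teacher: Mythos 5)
There is a genuine gap: your ``key observation'' --- that if $c-1$ and $c$ are both cuts then column $c$ of $D(P,T)$ contains exactly one star --- is false, because it conflates apparent cuts with visible cuts. An apparent cut at $c-1$ only guarantees that $c-1$ \emph{or} $N-c+1$ is a visible cut; if only the mirror position is visible, rows may freely cross from column $c-1$ to column $c$. Concretely, take $X = SG(2,8)$, $P=\{3,4\}$, $T=\{1,2\}$. Here every integer in $[0,8]$ is an apparent cut (the visible cuts are $\{0,4,5,6,7,8\}$), so $\mathcal{Q}_{P,T}=\{0,1,2,3,4\}$ and $c=2$ has $c-1,c\in\mathcal{Q}_{P,T}$; yet column $2$ contains two stars, at $(1,2)$ and $(2,2)$, and row $1$ has stars in both columns $1$ and $2$. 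The downstream claims then also fail: the star in column $c$ need not satisfy $t_j=p_j=c$, and in general the lone star you need may sit in column $N+1-c$ rather than column $c$ (or column $N+1-c$ may simply be a zero column, a case your opening reduction does not cover since you only dispose of column $c$ being empty).

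The fix is essentially the paper's argument, which sidesteps counting stars altogether. After discarding the case where $c$ \emph{or} $N+1-c$ is a zero column (and the type $B$ boundary $c=n+1$, where column $n+1$ is forced to be a zero column), one shows $c\in[T]\cup[P]$: if $c-1$ is an exceptional cut then $c\in[T]\cap[P]$ by the definition of cut candidate, and otherwise one of the two visible cuts underlying the apparent cut $c-1$ forces $c\in T$ or $N+1-c\in P$. Since all four of $c-1$, $c$, $N-c$, $N+1-c$ lie in $\mathcal{C}_{P,T}$, whichever membership holds produces a lone star in column $c$ or in column $N+1-c$ directly from the definition of lone star (which does not require the star to be the only one in its column), giving $N+1-c\in\mathcal{L}_{P,T}$ or $c\in\mathcal{L}_{P,T}$. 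Your overall architecture (reduce to nonempty columns, then exhibit a lone star) is the right one, but the single-star claim on which you hang the argument must be replaced by this membership argument.
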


\begin{proof}
If we are working in type $B$, and $c = n+1$,
then $n+1$ must be a zero column, and hence be
in $\mathcal{L}_{P,T}$.
Otherwise, we can assume that
$c \leq \floor{N/2}$.
If either $c$ or $N+1-c$ is a zero column in $D(P,T)$
then we are done, so assume neither column is empty.

Note that if $c-1$ is an exceptional cut,
then $c \in [T] \cap [P]$.
Otherwise, either $c-1$ or $N+1-c$ is a visible cut in $D(P,T)$,
and hence $c \in T$ or $N+1-c \in P$ respectively,
since neither $c$ nor $N+1-c$ is a zero column in $D(P,T)$.

In all of these cases  $c \in [T] \cup [P]$.
Therefore $(j,c)$ or $(j, N+1-c)$ is a lone star for some $j$.
It follows that $N+1-c$ or $c$ is in $\mathcal{L}_{P,T}$.
\end{proof}

We can now prove that $Z_{P,T}$ is a complete intersection
in types $B$, $C$, and $D$.

\begin{prop}\label{P:complete_intersection_BCD}
Given Schubert symbols $T \preceq P$ for a Grassmannian $X$ of 
Lie type $B$, $C$, or $D$,
the variety $Z_{P,T}$ is a complete intersection in $\P^{N-1}$
cut out by the following polynomials:
\begin{enumerate}[label=\emph{\alph*})]
 \item $f_d - f_c = x_{c+1}x_{N-c} + \ldots + x_dx_{N+1-d}$ 
 if $c$ and $d$ are consecutive elements of $\mathcal{Q}_{P,T}$ such that $d-c \geq 2$, and
 \item $x_c$ if $c \in \mathcal{L}_{P,T}$.
\end{enumerate}
\end{prop}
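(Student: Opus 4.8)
The plan is to show that the original defining polynomials $\{f_c \mid c \in \mathcal{Q}_{P,T}\} \cup \{x_c \mid c \in \mathcal{L}_{P,T}\}$ cut out the same subvariety as the polynomials listed in a) and b), and that the latter list has exactly $\codim Z_{P,T}$ members, each contributing an independent condition. The equality of zero loci is essentially immediate: given any two consecutive elements $c < d$ of $\mathcal{Q}_{P,T}$, the polynomial $f_d - f_c = x_{c+1}x_{N-c} + \ldots + x_d x_{N+1-d}$ is a $\C$-linear combination of $f_c$ and $f_d$, so the vanishing of all $f_c$ for $c \in \mathcal{Q}_{P,T}$ is equivalent to the vanishing of $f_{c_0}$ (where $c_0 = \min \mathcal{Q}_{P,T}$, which is $0$, so $f_{c_0} = 0$ automatically) together with the vanishing of all the consecutive differences $f_d - f_c$. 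Thus we may discard any difference $f_d - f_c$ with $d - c = 1$, since by Lemma \ref{L:consec_cuts} in that case $c \in \mathcal{L}_{P,T}$ or $N+1-c \in \mathcal{L}_{P,T}$, so $f_d - f_c = x_d x_{N+1-d}$ already vanishes identically on the locus where the relevant $x$-coordinate is zero. This reduces the generating set precisely to the polynomials in a) and b).

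Next I would verify that this reduced system is a regular sequence, equivalently that its zero locus has codimension equal to the number of polynomials. The key point is that the variables appearing in the quadratic differences $f_d - f_c$ from part a) are pairwise disjoint across different consecutive pairs (the pair $(c,d)$ uses variables with indices in $[c+1,d] \cup [N+1-d, N-c]$), and — this is where I would lean on Corollary \ref{C:no_lin_in_PT}, i.e. $(P \cup T) \cap \mathcal{L}_{P,T} = \emptyset$ — none of the linear forms $x_c$ with $c \in \mathcal{L}_{P,T}$ involves a variable that appears in any of the quadrics. Indeed, an index $c \in [c'+1, d']$ or $c \in [N+1-d', N-c']$ for a consecutive pair $(c',d')$ with $d' - c' \geq 2$ is an index "properly inside a gap between consecutive cuts", hence (after accounting for zero columns, which are single-column gaps flanked by cuts) cannot itself lie in $\mathcal{L}_{P,T}$; the lone-star members of $\mathcal{L}_{P,T}$ are mirror images of indices in $P \cup T$, which are excluded from these open windows. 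So the polynomial system splits as a disjoint union of blocks: each block is either a single linear form $x_c$ in its own variable, or a single smooth quadric $x_{c+1}x_{N-c} + \ldots + x_d x_{N+1-d}$ in its own set of variables, and within each block the system is visibly a regular sequence of length one. A disjoint union of regular sequences is a regular sequence, so the whole system is, and $Z_{P,T}$ is a complete intersection of the stated codimension.

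Finally, I would assemble the count: the codimension of $Z_{P,T}$ in $\P^{N-1}$ equals $\#\{\text{consecutive pairs in } \mathcal{Q}_{P,T} \text{ with gap} \geq 2\} + \#\mathcal{L}_{P,T}$, which matches the number of polynomials in a) and b), confirming "complete intersection" in the strict sense. The main obstacle I anticipate is the disjointness/independence argument in the middle paragraph — carefully checking that no variable index is simultaneously in $\mathcal{L}_{P,T}$ and in the support of one of the quadrics, and that two distinct gaps of $\mathcal{Q}_{P,T}$ give genuinely disjoint index windows even after reflecting $c \mapsto N+1-c$. This requires being precise about how cuts, zero columns, and the type-$B$ special value $n+1$ interact, and is exactly the place where Lemma \ref{L:consec_cuts} and Corollary \ref{C:no_lin_in_PT} do the real work; the algebraic-geometry part (disjoint regular sequences, smoothness of a Segre-type quadric in its own variables) is routine once the combinatorial bookkeeping is in place.
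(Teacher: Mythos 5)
Your proposal is correct and is essentially the paper's own argument: both reduce the defining set to the listed generators by telescoping the $f_c$ and disposing of gap-one differences via Lemma \ref{L:consec_cuts}, and both rest on the fact that no variable appears in two of the resulting generators. The only (cosmetic) difference is packaging — the paper deduces from this disjointness that the quotient ring is a tensor product of integral domains, hence the ideal is prime, citing Corollary \ref{C:lin_cuts} and Proposition \ref{P:lin_cuts_all_types} for the disjointness (which is the statement you actually need, rather than Corollary \ref{C:no_lin_in_PT}), whereas you phrase the same disjointness as giving a regular sequence of blocks.
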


\begin{proof}

Let $I_{P,T} \subset \C[x_1, \ldots, x_N]$ 
be the ideal generated by the polynomials of types (1) and (2) mentioned
in the statment of this proposition.
\comment{
following polynomials:
\begin{enumerate}
 \item $f_d - f_c = x_{c+1}x_{N-c} + \ldots + x_dx_{N+1-d}$ 
 if $c$ and $d$ are consecutive elements of $\mathcal{Q}_{P,T}$ such that $d-c \geq 2$, and
 \item $x_c$ if $c \in \mathcal{L}_{P,T}$.
\end{enumerate}
By construction,
each of the generators of $I_{P,T}$ is
an irreducible polynomial.}
Note that each of these polynomials is irreducible, and that
by Corollary \ref{C:lin_cuts} and
Proposition \ref{P:lin_cuts_all_types},
no variable $x_i$ appears in multiple generators.

It follows that
$\C[x_1,\ldots,x_N]/I_{P,T}$ is a tensor product
over $\C$
of finitely many integral domains.
Since $\C$ is algebraically closed,
$\C[x_1,\ldots,x_N]/I_{P,T}$ must itself
be an integral domain, by \cite[Lemma 1.5.2]{springer:lin_alg_gps}.
Hence $I_{P,T}$ is a prime ideal.

Let $I'_{P,T}$ be the ideal generated by the polynomials
used to define $Z_{P,T}$: namely,
$\{f_c \mid c \in \mathcal{Q}_{P,T}\} \cup \{x_c \mid c \in \mathcal{L}_{P,T}\}$.
Note that each of the generators of $I_{P,T}$ is
a linear combination of these defining polynomials, and is therefore
contained in $I'_{P,T}$.

On the other hand, 
note that whenever $d-1$ and $d$ are elements of $\mathcal{Q}_{P,T}$,
the polynomial $f_d - f_{d-1} = x_dx_{N+1-d}$ is contained in $I_{P,T}$,
by Lemma \ref{L:consec_cuts}.
Thus if $c<d$ are \emph{any} consecutive elements of $\mathcal{Q}_{P,T}$,
then $f_d-f_c \in I_{P,T}$.
Now, supposing $f_c$ is one of 
the quadratic polynomials defining $Z_{P,T}$, let
$\{0 = c_0 <  c_1 < \ldots < c_s = c\}$ be the complete
list of cuts between $0$ and $c$.
It follows that 
$f_c = f_c - f_0 =
(f_{c_s} - f_{c_{s -1}}) +
(f_{c_{s-1}} - f_{c_{s-2}}) + \ldots +
(f_{c_1}-f_{c_0}) \in I_{P,T}$,
and therefore that
$I'_{P,T} \subset I_{P,T}$.

We have shown that
$I'_{P,T} = I_{P,T}$, and hence that
$Z_{P,T}$ is the zero set
of a prime ideal.
It follows that the polynomials
used to define $I_{P,T}$ also
cut out $Z_{P,T}$ as a complete intersection in $\P^{N-1}$.
\comment{
$I'_{P,T}$ is also a prime ideal.
Thus $I'_{P,T} = \sqrt{I'_{P,T}} = I(Z_{P,T})$.
It follows that
$Z_{P,T} = Z(I_{P,T})$, and
that the set $Z_{P,T}$ is irreducible.}
\end{proof}

\section{Result 2: $\psi(\pi^{-1}(Y_{P,T})) \subset Z_{P,T}$}\label{S:type_D_into}
Let $X := OG(m,2n+2)$ and let $N := 2n+2$.
We would like to show that any vector lying in any subspace $\Sigma \in Y_{P,T}$
satisfies the equations defining $Z_{P,T}$. \comment{use I_{P,T} here?}
The equations involving exceptional cuts are the most difficult to verify,
so we'll address them first.

Let $Y^{\circ}_{P,T}
=X^{\circ}_P(E_{\bull}) \cap X^{\circ}_{T^{\vee}}(E^{\op}_{\bull})$.
It is a dense open subset of $Y_{P,T}$ (see \cite{richardson:intersections}),
so we can restrict our attention to 
$\psi(\pi^{-1}(Y^{\circ}_{P,T}))$.

\begin{prop}\label{P:exceptional}
Consider Schubert symbols $T \preceq P$ for $OG(m,2n+2)$,
and suppose $c\in[1,n]$ is an exceptional cut in $D(P,T)$.
Then $f_{c}(\+w) = 0$ for all $\+w \in \psi(\pi^{-1}(Y^{\circ}_{P,T}))$.
\end{prop}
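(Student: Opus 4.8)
The plan is to reduce the statement to an explicit computation on a matrix whose rowspace is a generic point of the Richardson cell $Y^{\circ}_{P,T}$, using the structure of the exceptional cut. Recall that an exceptional cut $c \in [1,n]$ comes with a cut candidate: $[c+1,n+1] \subset [P]\cap[T]$, $\type(T)\neq\type(P)$, and $\#(T\cap[1,c]) = \#(P\cap[1,c])+1$. By Corollary \ref{C:ways_to_think_about_critical_windows}, $D(P,T)$ then has exactly one row, say row $k$, crossing from column $c$ to column $c+1$, and exactly one row crossing from column $N-c$ to column $N-c+1$; by Lemma \ref{L:mirror_crossings_same_cardinality} these cardinalities match up. The first step is to pin down, for a generic $\Sigma\in Y^{\circ}_{P,T}$, a basis coming from a matrix $(a_{i,j})$ of shape $D(P,T)$ with $a_{i,t_i}\neq 0$ and (after using $X^{\circ}_{T^{\vee}}(E^{\op}_\bull)$ and Observation \ref{O:two_wrongs}) normalized pivots, so that $f_c$ restricted to $\Sigma$ becomes a quadratic form in the $m$ row parameters.

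Next I would use the isotropy relations $\omega(\row_i,\row_j)=0$ for all $i,j$. The key observation is that $f_c = x_1x_N+\cdots+x_cx_{N+1-c}$ only "sees" the first $c$ and last $c$ coordinates; because $[c+1,n+1]\subset[P]\cap[T]$, the columns in $[c+1,N-c]$ are, roughly, "full," and the columns in $[1,c]$ resp. $[N-c+1,N]$ are touched only by the rows that start before column $c+1$ resp. end after column $N-c$. Since exactly one row (row $k$) crosses the cut at $c$, the evaluation of $f_c$ on a general vector $\+w = \sum_i \alpha_i \row_i$ of $\Sigma$ collapses: only the row-$k$ contribution and contributions from rows confined to $[1,c]$ interact with rows confined to $[N-c+1,N]$. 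I expect $f_c(\+w)$ to simplify to a sum of off-diagonal terms $\alpha_i\alpha_j\,\omega(\row_i,\row_j)$ (plus possibly a diagonal term $\alpha_k^2\,\omega(\row_k,\row_k)$, which vanishes by isotropy), each of which is zero. The bookkeeping that makes this work is the mirror-symmetry of crossings from Lemma \ref{L:mirror_crossings_same_cardinality}: rows crossing the left cut at $c$ are in bijection with rows crossing the right cut at $N-c$, so the "left half" $x_1x_N+\cdots+x_cx_{N+1-c}$ of the ambient quadratic form $f^D_{n+1}$ restricts to $\Sigma$ as precisely the portion of $\omega(\+w,\+w)=0$ supported on those rows — hence it vanishes identically.

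The main obstacle, I expect, is handling the role of $\type(T)\neq\type(P)$ and the parity/connected-component subtleties of type $D$ correctly: the exceptional cut only exists because of the type mismatch, and one must make sure that the normalization of the opposite Schubert cell (via the type $D$ opposite flag $E^{\op}_\bull$ and the dual symbol $P^\vee = \bar P$ or $\iota(\bar P)$) is compatible with the coordinates $x_1,\dots,x_N$ in which $f_c$ is written — this is exactly where Observation \ref{O:two_wrongs} is needed, and where the erroneous argument of \cite{buch:quantum_pieri} broke down. A secondary technical point is that we only need the identity on the dense open set $Y^{\circ}_{P,T}$, which lets us assume the pivot entries are nonzero and perform the row reduction; the closed statement $\psi(\pi^{-1}(Y_{P,T}))\subset Z_{P,T}$ for all of $Y_{P,T}$ will then follow by continuity (closedness of $Z_{P,T}$), but that last step belongs to the assembly of Lemma \ref{L:type_B_C_into}'s type $D$ analogue rather than to this proposition. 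So the heart of the argument is the linear-algebra identity: $f_c$ evaluated on any isotropic combination of the rows of a shape-$D(P,T)$ matrix is a sub-sum of $\omega(\+w,\+w)$, hence $0$.
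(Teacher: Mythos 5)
There is a genuine gap at the heart of the proposal. You claim that $f_c(\+w)$ ``simplifies to a sum of off-diagonal terms $\alpha_i\alpha_j\,\omega(\row_i,\row_j)$ (plus possibly a diagonal term $\alpha_k^2\,\omega(\row_k,\row_k)$),'' each of which vanishes by isotropy. That is not what happens, and the discrepancy is exactly where the content of the proposition lives. Fix two rows $\row_i$, $\row_j$ with $\row_i$ supported in $[t_i,p_i]$ crossing the cut at $c$ from the left, and $\row_j$ crossing the cut at $N-c$ from the right. The full isotropy relation is
\[
\omega(\row_i,\row_j) = \underbrace{\sum_{k\le c} a_{i,k}a_{j,N+1-k}}_{\text{outer}} + \underbrace{\sum_{c<k\le N-c} a_{i,k}a_{j,N+1-k}}_{\text{inner}} = 0 ,
\]
but $f_c$ only picks up the outer part. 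So what one gets is $f_c(\+w) = -\sum \alpha_i\alpha_j\,(\text{inner part of } \omega(\row_i,\row_j))$, and the vanishing of those inner contributions is an additional, nontrivial fact — it is \emph{not} a sub-sum of the isotropy relations. A concrete case: $OG(2,6)$, $P=\{3,6\}$, $T=\{2,3\}$, $c=2$. A generic shape-$D(P,T)$ matrix gives $f_2(\+w)=\alpha_1\alpha_2\,a_{12}a_{25}= \alpha_1\alpha_2(\omega(\row_1,\row_2)-a_{13}a_{24}) = -\alpha_1\alpha_2\,a_{13}a_{24}$. To get vanishing you must separately show $a_{24}=0$, which here comes from the \emph{diagonal} relation $\omega(\row_2,\row_2)=2a_{23}a_{24}=0$ together with $a_{23}\neq 0$; in general it is much less direct.

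The paper's proof of Proposition~\ref{P:exceptional} supplies exactly the missing ingredient. It chooses a basis of $\Sigma$ adapted to the cut, projects the $\ell+1$ ``middle'' basis vectors to $E^{(c)} = E_{N-c}/E_c$, and observes that two natural $\ell$-dimensional spans $\Sigma_1$ and $\Sigma_2$ both contain the same isotropic $(\ell-1)$-plane $\Sigma'$. A given isotropic $(\ell-1)$-plane in a $2\ell$-dimensional quadratic space lies in exactly two maximal isotropic $\ell$-planes, distinguished by their $SO(2\ell)$-orbit (i.e.\ their type). The crucial computation is that $\type(P)\neq\type(T)$ forces $\type(P^{(c)}) = \type(T^{(c)})$, hence $\Sigma_1 = \Sigma_2$, and only then does the inner quadratic contribution $\omega(\+v_1,\+v_{\ell+1})$ vanish. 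Your proposal mentions $\type(T)\neq\type(P)$ as an ``obstacle'' to be handled by coordinate conventions and Observation~\ref{O:two_wrongs}, but that is a misdiagnosis: the type mismatch is not a normalization issue, it is the mechanism that makes the inner terms vanish. Without identifying the maximal-isotropic-subspace dichotomy, the argument does not close. (Indeed, if $\type(P)=\type(T)$ with the same combinatorial diagram, $\Sigma_1\neq\Sigma_2$ and $f_c$ does \emph{not} vanish — consistent with the fact that $c$ is not then an exceptional cut.)

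Your broader framing is sound: restricting to $Y^{\circ}_{P,T}$, choosing a matrix of shape $D(P,T)$, exploiting the one-row-crossing property (Corollary~\ref{C:ways_to_think_about_critical_windows}), and using density to extend to all of $Y_{P,T}$ are all aligned with the paper. But the step ``each of which is zero'' papers over the one place where something new must happen.
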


\begin{proof}
 
Since $c$ is exceptional,
we know $[c+1, n+1] \subset [T] \cap [P]$, $\#P \cap [1,c]+1 = \#T \cap [1,c]$,
and $\type(P) \neq \type(T)$.

\comment{
Note that
\begin{align*}
 E_c &= \langle \+e_1, \ldots, \+e_c\rangle \\
 E_{c}^{\perp} &= \langle \+e_1, \ldots, \+e_{N-c}\rangle \\
 E^{\op}_c &= \langle \+e_{N+1-c}, \ldots, \+e_N\rangle \\
 (E^{\op}_c)^{\perp} &= \langle \+e_{c+1}, \ldots, \+e_N\rangle
\end{align*}
}

Let $\ell = n+1-c$, and
let $E^{(c)} = E_{N-c} / E_{c}$, which we identify with
the span $\langle \+e_{c+1}, \ldots, \+e_{N-c}\rangle$.
Finally, let $\alpha = \#P \cap [1,c]$.

\comment{
Define the orthogonal projection
$\rho: \C^N \to E^{(c)}$,
and let $\Phi_c:U_{c,\alpha} \to OG(\ell,E^{(c)})$ be the morphism from
Proposition \ref{P:max_og_morphism}, which sends
$\Sigma \mapsto \rho(\Sigma \cap E_{N-c})$.

Let $\beta = \#T \cap [N+1-c, N] = m - (\alpha + \ell + 1)$.
Define $\tilde{U}_{c,\beta} = \{ \Sigma \in OG' 
\mid \dim(\Sigma \cap (E^{\op}_{c})^{\perp})=\beta+\ell
\text{ and } \dim(\Sigma \cap E^{\op}_{c}) = \beta\}$.
The map $\Phi_2:\tilde{U}_{c,\beta} \to OG(\ell,2\ell)$ defined by
$\Sigma \mapsto \rho_V(\Sigma \cap (E^{\op}_{c})^{\perp})$
is a morphism, also by Proposition \ref{P:max_og_morphism}.
Both these morphisms restrict to $Y^{\circ}_{P,T} \subset U_{c,\alpha} \cap \tilde{U}_{c,\beta}$.

\vspace{8mm}

\input{max_map.pdf_tex}

\vspace{8mm}
}

Suppose $\Sigma$ is an element of $Y^{\circ}_{P,T}$. 
Since $\Sigma \in X^{\circ}_P$,
we have $\dim(\Sigma \cap E_c) = \alpha$.
Similarly, since $\Sigma \in X^{\circ}_{T^{\vee}}(E^{\op}_{\bull})$,
we have $\dim(\Sigma \cap E^{\op}_c) = m - (\alpha +\ell + 1)$.
Furthermore,  $\dim(\Sigma \cap E_{N-c}) = \alpha + \ell$
and  $\dim(\Sigma \cap E^{\op}_{N-c}) = m - (\alpha +1)$.
Finally,
we know that
\begin{align*}
\dim(\Sigma \cap E^{(c)}) & = \dim(\Sigma  \cap E_{N-c} \cap E^{\op}_{N-c}) \\
& \geq \dim(\Sigma \cap E_{N-c}) + \dim(\Sigma \cap E^{\op}_{N-c}) - m \\
& = (\alpha + \ell) + (m-(\alpha+1)) - m \\
& = \ell -1.
\end{align*}

Therefore we can choose vectors $\+u_1$ through $\+u_m$ spanning
$\Sigma$ such that
\begin{align*}
\+u_i & \in E_c = \langle \+e_1, \ldots, \+e_{c}\rangle& 
\text{for }&  1 \leq i \leq \alpha, \\
\+u_i& \in E^{(c)} = \langle \+e_{c+1}, \ldots, \+e_{N-c}\rangle&
\text{for }& \alpha+2 \leq i \leq \alpha+\ell,\\
\+u_i& \in E^{\op}_c = \langle \+e_{N-c+1}, \ldots, \+e_N\rangle&
\text{for }& \alpha+\ell+2 \leq i \leq m.
\end{align*}

In other words, $\Sigma$ can be represented as the rowspace
of a matrix with the following shape (in the sense that all
entries outside the horizontal arrows are zero):

\[
\left[
  \begin{array}{ccccccccc}
    \multicolumn{3}{l}{\xleftarrow{\hspace{.6cm}}  \+u_1  \xrightarrow{\hspace{.6cm}}} & & & & & & \\
    \multicolumn{3}{c}{\vdots} & & & & & & \\
    \multicolumn{3}{l}{\xleftarrow{\hspace{.6cm}}  \+u_\alpha  \xrightarrow{\hspace{.6cm}}} & & & & & &\\
    \multicolumn{9}{l}{\xleftarrow{\hspace{3.1 cm}}  \+u_{\alpha+1} \xrightarrow{\hspace{3.6 cm}}} \\ 
    & & & \multicolumn{3}{c}{\xleftarrow{\hspace{.5cm}}  \+u_{\alpha+2} \xrightarrow{\hspace{.5cm}}} & & & \\
    & & & \multicolumn{3}{c}{\vdots}  & & & \\
    & & & \multicolumn{3}{c}{\xleftarrow{\hspace{.5cm}}  \+u_{\alpha+\ell}  \xrightarrow{\hspace{.535cm}}} & & & \\
    \multicolumn{9}{l}{\xleftarrow{\hspace{3.15 cm}}  \+u_{\alpha+\ell+1} \xrightarrow{\hspace{3.25 cm}}} \\ 
    & & & & & & \multicolumn{3}{r}{\xleftarrow{\hspace{.38cm}}  \+u_{\alpha+\ell +2}  \xrightarrow{\hspace{.38cm}}} \\
    & & & & & &  \multicolumn{3}{c}{\vdots}  \\
    \coolunder{c}{& \hphantom{aaaaaaaa}&} & \coolunder{2\ell}{& \hphantom{aaaaaaaaa} &} & \multicolumn{3}{r}{\coolunder{c}{\xleftarrow{\hspace{.7cm}}  \+u_{m}  \xrightarrow{\hspace{.7cm}}}} \\   
  \end{array}
\right].
\]
\vspace{10mm}

Furthermore, since $\dim(\Sigma \cap E_{N-c}) = \alpha + \ell$, we can assume
without loss of generality that 
$\+u_{\alpha+1} \in E_{N-c}$.  The matrix with rowspace $\Sigma$ then
has the following shape.

\[
\left[
  \begin{array}{ccccccccc}
    \multicolumn{3}{l}{\xleftarrow{\hspace{.6cm}}  \+u_1  \xrightarrow{\hspace{.6cm}}} & & & & & & \\
    \multicolumn{3}{c}{\vdots} & & & & & & \\
    \multicolumn{3}{l}{\xleftarrow{\hspace{.6cm}}  \+u_\alpha  \xrightarrow{\hspace{.6cm}}} & & & & & &\\
    \multicolumn{6}{l}{\xleftarrow{\hspace{1.83cm}}  \+u_{\alpha+1}  \xrightarrow{\hspace{1.83cm}}} & & & \\
    & & & \multicolumn{3}{c}{\xleftarrow{\hspace{.5cm}}  \+u_{\alpha+2} \xrightarrow{\hspace{.5cm}}} & & & \\
    & & & \multicolumn{3}{c}{\vdots}  & & & \\
    & & & \multicolumn{3}{c}{\xleftarrow{\hspace{.5cm}}  \+u_{\alpha+\ell}  \xrightarrow{\hspace{.535cm}}} & & & \\
    \multicolumn{9}{l}{\xleftarrow{\hspace{3.15 cm}}  \+u_{\alpha+\ell+1} \xrightarrow{\hspace{3.25 cm}}} \\ 
    & & & & & & \multicolumn{3}{r}{\xleftarrow{\hspace{.38cm}}  \+u_{\alpha+\ell +2}  \xrightarrow{\hspace{.38cm}}} \\
    & & & & & &  \multicolumn{3}{c}{\vdots}  \\
    \coolunder{c}{& \hphantom{aaaaaaaa}&} & \coolunder{2\ell}{& \hphantom{aaaaaaaaa} &} & \multicolumn{3}{r}{\coolunder{c}{\xleftarrow{\hspace{.7cm}}  \+u_{m}  \xrightarrow{\hspace{.7cm}}}} \\   
  \end{array}
\right].
\]
\vspace{10mm}

We shall now consider two cases, corresponding to
whether or not $\+u_{\alpha+1}$ is contained
in $E^{\op}_{N-c}$.

{\bf Case 1}: $\+u_{\alpha+1} \in E^{\op}_{N-c}$.

The matrix with rowspace $\Sigma$ then
has the following shape.

\[
\left[
  \begin{array}{ccccccccc}
    \multicolumn{3}{l}{\xleftarrow{\hspace{.6cm}}  \+u_1  \xrightarrow{\hspace{.6cm}}} & & & & & & \\
    \multicolumn{3}{c}{\vdots} & & & & & & \\
    \multicolumn{3}{l}{\xleftarrow{\hspace{.6cm}}  \+u_\alpha  \xrightarrow{\hspace{.6cm}}} & & & & & &\\ 
    & & & \multicolumn{3}{c}{\xleftarrow{\hspace{.5cm}}  \+u_{\alpha+1} \xrightarrow{\hspace{.5cm}}} & & & \\
    & & & \multicolumn{3}{c}{\xleftarrow{\hspace{.5cm}}  \+u_{\alpha+2} \xrightarrow{\hspace{.5cm}}} & & & \\
    & & & \multicolumn{3}{c}{\vdots}  & & & \\
    & & & \multicolumn{3}{c}{\xleftarrow{\hspace{.5cm}}  \+u_{\alpha+\ell}  \xrightarrow{\hspace{.535cm}}} & & & \\
    \multicolumn{9}{l}{\xleftarrow{\hspace{3.15 cm}}  \+u_{\alpha+\ell+1} \xrightarrow{\hspace{3.25 cm}}} \\ 
    & & & & & & \multicolumn{3}{r}{\xleftarrow{\hspace{.38cm}}  \+u_{\alpha+\ell +2}  \xrightarrow{\hspace{.38cm}}} \\
    & & & & & &  \multicolumn{3}{c}{\vdots}  \\
    \coolunder{c}{& \hphantom{aaaaaaaa}&} & \coolunder{2\ell}{& \hphantom{aaaaaaaaa} &} & \multicolumn{3}{r}{\coolunder{c}{\xleftarrow{\hspace{.7cm}}  \+u_{m}  \xrightarrow{\hspace{.7cm}}}} \\   
  \end{array}
\right].
\]
\vspace{10mm}

Note that for $1  \leq \beta \leq \alpha+\ell$,
we have
$u_{\beta,j} = 0$ 
for any $N+1-c \leq j \leq N$.
Thus for $1 \leq \beta \leq \alpha+\ell$ we have
$u_{\alpha+\ell+1,1} \cdot u_{\beta,N} + \ldots + u_{\alpha+\ell+1,c} \cdot u_{\beta,N+1-c} = 0$,
where $u_{i,j}$ is the $j$th coordinate of $\+u_i$.

Because $\Sigma$ is isotropic,
we have $\omega(\+u_{\alpha+\ell+1},\+u_{\beta}) = 0$
for all $1 \leq \beta \leq m$.  In particular,
for $\alpha + \ell + 2 \leq \beta \leq m$, we then have
$u_{\alpha+\ell+1,1} \cdot u_{\beta,N} + \ldots + u_{\alpha+\ell+1,c} \cdot u_{\beta,N+1-c} = 0$.

Finally, let $\+v$ be the orthogonal projection of
$\+u_{\alpha+\ell+1}$ onto $E^{(c)}$.
The span of $\+u_{\alpha+1},\ldots,\+u_{\alpha+\ell+1}$ is a \emph{maximal}
isotropic subspace of $E^{(c)}$, so $\+v$ must be contained in that span.
In particular, $\+v$ is itself an isotropic vector.
Thus 
$u_{\alpha+\ell+1,1} \cdot u_{\alpha+\ell+1,N} + \ldots + u_{\alpha+\ell+1,c} \cdot u_{\alpha+\ell+1,N+1-c} = 0$.
It follows that
$f_c(\+w) = 0$ for any vector $\+w$ in $\Sigma$.

{\bf Case 2}: $\+u_{\alpha+1} \not\in E^{\op}_{N-c}$.

Since
$\dim(\Sigma \cap E^{\op}_{N-c}) = m- (\alpha+1)$,
we may assume $\+u_{\alpha+\ell+1} \in E^{\op}_{N-c}$,
after possibly adding a linear combination of 
$\+u_1, \ldots, \+u_{\alpha+1}$.
Hence there exists a matrix with rowspace $\Sigma$ of
the following shape.

\comment{
By Appendix \ref{A:annoying} (or by Conjecture \ref{C:nonempty})
we know that $\dim(\Sigma \cap 
R_{\alpha+1}
\cap R_{\alpha+\ell+1} )=0$ 
for general $\Sigma$ in $Y^{\circ}_{P,T}$.

Without loss of generality,
we can therefore choose independent vectors
$\+u_{\alpha+1} \in R_{\alpha+1}$ and
$\+u_{\alpha+\ell+1} \in R_{\alpha+\ell+1}$ 
to produce a matrix with the following shape:
} 
\[
\left[
  \begin{array}{ccccccccc}
    \multicolumn{3}{l}{\xleftarrow{\hspace{.6cm}}  \+u_1  \xrightarrow{\hspace{.6cm}}} & & & & & & \\
    \multicolumn{3}{c}{\vdots} & & & & & & \\
    \multicolumn{3}{l}{\xleftarrow{\hspace{.6cm}}  \+u_\alpha  \xrightarrow{\hspace{.6cm}}} & & & & & &\\
    \multicolumn{6}{l}{\xleftarrow{\hspace{1.83cm}}  \+u_{\alpha+1}  \xrightarrow{\hspace{1.83cm}}} & & & \\
    & & & \multicolumn{3}{c}{\xleftarrow{\hspace{.5cm}}  \+u_{\alpha+2} \xrightarrow{\hspace{.5cm}}} & & & \\
    & & & \multicolumn{3}{c}{\vdots}  & & & \\
    & & & \multicolumn{3}{c}{\xleftarrow{\hspace{.5cm}}  \+u_{\alpha+\ell}  \xrightarrow{\hspace{.5cm}}} & & & \\
    & & & \multicolumn{6}{r}{\xleftarrow{\hspace{1.86cm}}  \+u_{\alpha+\ell+1} \xrightarrow{\hspace{1.85cm}}} \\ 
    & & & & & & \multicolumn{3}{r}{\xleftarrow{\hspace{.38cm}}  \+u_{\alpha+\ell +2}  \xrightarrow{\hspace{.38cm}}} \\
    & & & & & &  \multicolumn{3}{c}{\vdots}  \\
    \coolunder{c}{& \hphantom{aaaaaaaa}&} & \coolunder{2\ell}{& \hphantom{aaaaaaaaa} &} & \multicolumn{3}{r}{\coolunder{c}{\xleftarrow{\hspace{.7cm}}  \+u_{m}  \xrightarrow{\hspace{.7cm}}}} \\   
  \end{array}
\right].
\]
\vspace{10mm}

Let $\rho:\C^N \to \C^N$ be the orthogonal projection on to
$E^{(c)}$.
Notice that $\rho(\+u_i) = \+0$ for $i \leq \alpha $ and $i \geq \alpha+\ell+2$.
Define $\+v_i = \rho(\+u_{\alpha+i})$ for $i \in [1,\ell+1]$.
Note that $\+v_i = \+u_{\alpha+i}$ for $i \in [2,\ell]$, but that
\begin{align*}
\+v_1 =& (0,\ldots,0,u_{\alpha+1,c+1},\ldots,u_{\alpha+1,N-c},0,\ldots,0) \\
\text{and } \+v_{\ell+1} =& (0,\ldots,0, u_{\alpha+\ell+1,c+1},\ldots,u_{\alpha+\ell+1,N-c} ,0,\ldots,0),
\end{align*}
where $u_{i,j}$ is the $j$th coordinate of $\+u_i$.

\comment{
Recall the functions $\Phi_c$ and $\Phi^{\op}_c$
which send $\Sigma$ into $OG(\ell,E^{(c)})$,
provided $[c+1,n+1] \subset [\sym(\Sigma)]$ (see
Proposition \ref{P:max_og_morphism} and
Section \ref{S:opposite_map}).
}

Let
\begin{align*}
\Sigma' &= \langle \+v_2, \ldots, \+v_{\ell} \rangle, \\
\Sigma_1 &= \langle \+v_1, \ldots, \+v_\ell \rangle, \text{ and} \\
\Sigma_2 &= \langle \+v_2, \ldots, \+v_{\ell+1} \rangle.
\end{align*}
Both $\Sigma_1$ and $\Sigma_2$
are elements of $OG(\ell,E^{(c)})$, and both contain $\Sigma'$. 
It is a well known fact that there are exactly two
isotropic $\ell$-planes containing a given isotropic $(\ell - 1)$-plane \cite[\S 23.3]{fulton:representation_theory}.
In particular there are exactly two elements of $OG(\ell, E^{(c)})$ that contain $\Sigma'$,
so it remains to verify that 
$\Sigma_1$ and $\Sigma_2$ are indeed the same element.
It is here that the types of $P$ and $T$ become relevant.

Namely, let 
\begin{align*}
P^{(c)} &:= \{p -c : p \in P \cap [c+1,N-c]\} \text{ and}\\ 
T^{(c)} &:= \{t -c : t \in T \cap [c+1,N-c]\}
\end{align*}
be Schubert symbols for $OG(\ell, E^{(c)}) \cong OG(\ell,2\ell)$.
Note that whenever a subspace $\Lambda$ is contained
in an intersection of Schubert cells $X^{\circ}_R$
and $X^{\circ}_{S^{\vee}}$ in $OG(\ell, 2\ell)$,
it must be the case that $S \preceq R$ 
and hence that $\type(\Lambda) = \type(R) = \type(S)$.
In particular, $\Sigma_1 \in X^{\circ}_{P^{(c)}} \subset OG(\ell,2\ell)$
by the definition of $X^{\circ}_{P^{(c)}}$,
so it follows that $\type(\Sigma_1) = \type(P^{(c)})$.
Similarly,  $\Sigma_2 \in X^{\circ}_{T^{{(c)},\vee}} \subset OG(\ell,2\ell)$
by Observation \ref{O:two_wrongs},
and so $\type(\Sigma_2) = \type(T^{(c)})$.

We claim that
$\type(P) = \type(T)$ if and only if
$\type(P^{(c)}) \neq \type(T^{(c)})$.
To see why, note that
\begin{align*}
\#([1,n+1] \setminus P)  &= \#([1,\ell] \setminus P^{(c)}) + (c-\alpha) , \text{ and} \\
 \#([1,n+1] \setminus T) &= \#([1,\ell] \setminus T^{(c)}) + (c-(\alpha+1)).
\end{align*}
Thus $\type(P) + \type(T) \equiv \type(P^{(c)}) + \type(T^{(c)}) + 1 \pmod{2}$.

\comment{
Namely, by Lemma \ref{L:type_of_image}
we have $\type(\Phi_c(\Sigma)) \equiv \type(P) + c + \alpha$.
On the other hand, by Lemma \ref{L:type_of_op_image},
we have $\type(\Phi^{\op}_c(\Sigma)) \equiv \type(T) + c + \alpha + 1$.
It follows that
$\type(P) \neq \type(T)$ if and only if $\type(\Phi_1(\Sigma)) = \type(\Phi_2(\Sigma))$.
}

Since we know that $\type(P) \neq \type(T)$, we can  conclude that
 $\Sigma_1 = \Sigma_2$.
It follows that $\+v_{\ell+1} \in \Sigma_1$.
But $\Sigma_1$ is isotropic, and therefore
$\omega(\+v_1,\+v_{\ell+1})=0$.
Thus, for all $\+w \in \Sigma$, the polynomial
$x_{c+1}x_{N-c} + \ldots + x_{n+1}x_{n+2}$ vanishes, and 
hence $f_{c}(\+w)=0$.
\end{proof}

Having addressed exceptional cuts, we can now
prove a generalization of Lemma \ref{L:type_B_C_into}
for a Grassmannian $X$ of type $B$, $C$, or $D$.

\begin{prop}\label{P:into}
Given Schubert symbols $T \preceq P$ in $\Omega(X)$,
where $X$ is a Grassmannian of type $B$, $C$, or $D$,
the projected Richardson variety $\psi(\pi^{-1}(Y_{P,T}))$
is contained in $Z_{P,T}$.
\end{prop}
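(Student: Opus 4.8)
The plan is to reduce everything to pointwise verification of the defining polynomials of $Z_{P,T}$ on the dense open locus $Y^{\circ}_{P,T}$, exactly as in the type $B$ and $C$ argument of \cite[Lemma 5.1]{buch:quantum_pieri}, using the machinery we have just built to handle the features peculiar to type $D$. Since $\psi(\pi^{-1}(Y_{P,T})) = \overline{\psi(\pi^{-1}(Y^{\circ}_{P,T}))}$ and $Z_{P,T}$ is closed, it suffices to show that every vector $\+w$ lying in some $\Sigma \in Y^{\circ}_{P,T}$ satisfies $f_c(\+w) = 0$ for each $c \in \mathcal{Q}_{P,T}$ and $x_c(\+w) = 0$ for each $c \in \mathcal{L}_{P,T}$. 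By Proposition \ref{P:complete_intersection_BCD} it is in fact enough to verify the reduced generating set: the quadrics $f_d - f_c = x_{c+1}x_{N-c} + \ldots + x_d x_{N+1-d}$ for consecutive $c,d \in \mathcal{Q}_{P,T}$, and the linears $x_c$ for $c \in \mathcal{L}_{P,T}$.

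First I would dispose of the linear equations. If $c \in \mathcal{L}_{P,T}$ because $c$ is a zero column of $D(P,T)$, then every matrix of shape $D(P,T)$ has vanishing $c$th column, so $x_c(\+w) = 0$ for all $\+w \in \Sigma$ whenever $\Sigma \in Y_{P,T}$; this is immediate from the description of $Y^{\circ}_{P,T}$ in terms of row-echelon representatives adapted to $E_\bull$ and $E^{\op}_\bull$ (Observation \ref{O:two_wrongs}). If instead $c \in \mathcal{L}_{P,T}$ because column $N+1-c$ contains a lone star, I would argue that the isotropy relation $\omega(\+u_i, \+w) = 0$, applied to the row $\+u_i$ whose unique nonzero entry sits in column $N+1-c$, forces the $c$th coordinate of $\+w$ to vanish (here $\omega(\+e_{N+1-c},\+e_c) = \pm 1$ is the only surviving pairing). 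This is the same mechanism as in types $B$ and $C$; the only new input is that lone stars in type $D$ can arise from exceptional cuts, but once the cut structure is in hand the argument is identical.

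Next, the quadratic equations. For a pair of consecutive cuts $c < d$ in $\mathcal{Q}_{P,T}$, I would split according to the nature of $d$ (and of the intermediate cuts). When all the relevant cuts between $c$ and $d$ are \emph{apparent} (visible, or mirror-visible), the argument is the classical one: choosing a matrix representative of $\Sigma$ of shape $D(P,T)$ and contracting the isotropy relations $\omega(\+u_i,\+u_j)=0$ window by window, one telescopes $f_d - f_c$ to zero, exactly as in \cite[Lemma 5.1]{buch:quantum_pieri}; the mirror symmetry $\mathcal{C}_{P,T} = \mathcal{C}_{\bar T,\bar P}$ of Observation \ref{O:rotate_cuts} lets one treat visible and mirror-visible cuts on the same footing. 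The genuinely new case is when $c$ or an intermediate cut is an \emph{exceptional} cut — and this is precisely Proposition \ref{P:exceptional}, which already shows $f_{c}(\+w)=0$ for $\+w \in \psi(\pi^{-1}(Y^{\circ}_{P,T}))$ whenever $c \in [1,n]$ is exceptional. Combining: if $\{0 = c_0 < c_1 < \ldots < c_s\} = [0,n] \cap \mathcal{C}_{P,T}$, then $f_{c_i} = \sum_{k=1}^{i}(f_{c_k} - f_{c_{k-1}})$, and each $f_{c_k} - f_{c_{k-1}} = x_{c_{k-1}+1}x_{N-c_{k-1}} + \ldots + x_{c_k}x_{N+1-c_k}$ is shown to vanish on $\Sigma$ either by the telescoping-isotropy argument (apparent case) or by subtracting two consecutive instances of Proposition \ref{P:exceptional}. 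Adding the $x_{n+1}^2$ term (type $B$) or the $x_{n+1}x_{n+2}$ term (type $D$) to reach $f_{n+1}$ costs nothing, since any $\+w$ in an isotropic line automatically satisfies the ambient quadric $f^D_{n+1} = 0$ (resp.\ $f^B_{n+1}=0$).

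The main obstacle is the exceptional-cut case, but that has been isolated and resolved in Proposition \ref{P:exceptional}; what remains for this proof is the bookkeeping that stitches the apparent and exceptional cases together via the telescoping identity, together with checking that no cut is missed — which is guaranteed because, by construction, $\mathcal{Q}_{P,T}$ lists exactly the cuts in $[0,n]$ (plus $n+1$), and between consecutive elements of $\mathcal{Q}_{P,T}$ the corresponding quadric $f_d - f_c$ is one of the generators produced by Proposition \ref{P:complete_intersection_BCD}. I would organize the write-up as: (i) reduce to $Y^{\circ}_{P,T}$ and to the reduced generators; (ii) handle the $x_c$ with $c \in \mathcal{L}_{P,T}$; (iii) handle $f_d - f_c$ for consecutive cuts, citing Proposition \ref{P:exceptional} for the exceptional windows and giving the direct isotropy computation for the apparent windows; (iv) telescope to conclude $f_c(\+w) = 0$ for all $c \in \mathcal{Q}_{P,T}$, and pass to the closure.
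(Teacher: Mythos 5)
Your overall architecture matches the paper's: verify the defining polynomials pointwise on $Y^{\circ}_{P,T}$, handle apparent cuts by an isotropy/decomposition argument, and quote Proposition \ref{P:exceptional} for the exceptional cuts. The quadratic part of your plan is sound (whether you telescope differences or, as the paper does, verify $f_c(\+w)=0$ for each $c\in\mathcal{Q}_{P,T}$ separately via the splitting $\Sigma=(\Sigma\cap W_1)\oplus(\Sigma\cap W_2)$ at a visible cut).

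There is, however, a genuine gap in your treatment of the linear equations coming from lone stars. You propose to apply $\omega(\+u_j,\+w)=0$ to ``the row $\+u_j$ whose unique nonzero entry sits in column $N+1-c$,'' and assert that the type $D$ case is ``identical'' once the cut structure is known. It is not: in type $D$ a lone star produced by an exceptional cut typically sits in a row with several stars, so no such coordinate-vector row exists and the contraction does not isolate $x_c$. For instance, with $P=\{3,6\}$, $T=\{2,3\}$ in $OG(2,6)$, the entry $(1,2)$ is a lone star (so $5\in\mathcal{L}_{P,T}$), but row $1$ has shape $(0,*,*,0,0,0)$; the relation $\omega(\+u_1,\+w)=u_{1,2}w_5+u_{1,3}w_4=0$ does not force $w_5=0$. (Proposition \ref{P:simple_lone_star}, which guarantees $t_j=p_j$ or a zero column in the mirror position and thus rescues your argument in types $B$ and $C$, explicitly fails for exceptional cuts.) The paper closes this gap by a different mechanism: since $d$ and $d-1$ are both cuts, the quadric $f_{c}-f_{c-1}=x_dx_{N+1-d}$ (with $c=\min(d,N+1-d)$) already vanishes on the projected Richardson variety, which is irreducible, so the variety lies in $\{x_d=0\}$ or $\{x_{N+1-d}=0\}$; and since $d\in P\cup T$, a generic $\Lambda\in X^{\circ}_P\cap X^{\circ}_{T^{\vee}}(E^{\op}_{\bull})$ has vectors with nonzero $d$-th coordinate, forcing $x_{N+1-d}=0$. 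Some argument of this kind (irreducibility plus genericity, or an analysis in the spirit of Proposition \ref{P:exceptional}) is needed; your proposal as written does not supply it.
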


\begin{proof}
Fix $\Sigma \in Y_{P,T}$ and $\+w=(w_1,\ldots,w_{N}) \in \Sigma$.
Suppose $c \in \mathcal{Q}_{P,T}$.
We will first show that
$f_{c}(\+w)=0$.

If $c$ (or $N-c$) is a visible cut,
then there exists $j \in [1,m]$ such that
$p_j \leq c < t_{j+1}$ (or $p_j \leq N-c < t_{j+1}$).

Let $W_1 = \langle \+e_1, \ldots, \+e_{p_j} \rangle $ 
and $W_2 =  \langle \+e_{p_j+1}, \ldots, \+e_{N} \rangle $.
Since $\C^N = W_1 \oplus W_2$, there exists
a unique decomposition $\+w = \+w_1 + \+w_2$ with
$\+w_i \in W_i$. Namely, we have $\+w_1 = (w_1, \ldots, w_{p_j}, 0, \ldots, 0)$
and  $\+w_2 = (0, \ldots, 0, w_{p_{j}+1}, \ldots, w_{N})$.
Note that since $w_i = 0$ for $p_j < i < t_{j+1}$,
we have $\+w_2 = (0, \ldots, 0, w_{t_{j+1}}, \ldots, w_{N})$,
and hence $f_{c}(\+w) = (\+w_1,\+w_2)$.

Since $\Sigma \in Y_{P,T}$,
$\dim(\Sigma \cap W_1) \geq j$ and
$\dim(\Sigma \cap W_2) \geq m-j$.
We can therefore write $\Sigma
= \Sigma \cap W_1 \oplus \Sigma \cap W_2$, and
decompose $\+w$ as the sum of vectors in these
subspaces. Since there is only one such decomposition, we must have
$\+w_1 \in \Sigma \cap W_1 $ and $\+w_2 \in \Sigma \cap W_2$.
Since $\Sigma$ isotropic, $f_{c}(\+w) = (\+w_1,\+w_2) = 0$.

If $c=n+1$ (implying we are working in type $B$ or $D$), then
$f_{c} =f_{n+1}$, the inherent quadratic equation,
which vanishes on all isotropic vectors.

Lastly, if $c$ is exceptional, then 
$f_{c}(\+w)=0$ by Proposition \ref{P:exceptional}.

We have shown that the projected
Richardson variety $\psi(\pi^{-1}(Y_{P,T}))$ lies in the zero
set of the polynomial
$f_{c}$, for any element $c \in \mathcal{Q}_{P,T}$.
In particular, it satisfies all the quadratic equations defining $Z_{P,T}$.

We must now show that $\psi(\pi^{-1}(Y_{P,T}))$ satisfies
the linear equation $x_c=0$ for any $c \in \mathcal{L}_{P,T}$.
Suppose $c \in [1,N]$ is a zero column of $D(P,T)$,
and let $\Sigma$ continue to denote an arbitrary
element of $Y_{P,T}$.
Let $j=\max\{i \in [1,m]: p_i < c\}$, and note that
$t_{j+1} > c$.
This time let $W_1 = \langle \+e_1, \ldots, \+e_{c-1} \rangle $ 
and $W_2 =  \langle \+e_{c+1}, \ldots, \+e_{N} \rangle $.
Thus,
$\dim(\Sigma \cap W_1) \geq j $
and $\dim(\Sigma \cap W_2) \geq m-j $,
so $\Sigma \subset W_1 \oplus W_2$,
which is 
the hyperplane defined by $x_c = 0$.

Finally, suppose 
column $d \in [1,N]$ contains a lone star.
In other words $d = p_j$ or $d=t_j$ for some $j$,
and $d-1$ and $d$ are both cuts in $D(P,T)$.
Let $c=\min(d,N+1-d)$, which is in  $[1,\ceil{N/2}]$.
Both $c-1$ and $c$ are also cuts in $D(P,T)$,
so the set
$\psi(\pi^{-1}(Y_{P,T}))$ must
lie in the zero set of the polynomial 
$f_{c} - f_{c-1} = x_cx_{N+1-c} = x_dx_{N+1-d}$.
However, since $\psi(\pi^{-1}(Y_{P,T}))$ is irreducible
(by \cite{richardson:intersections}),
it must lie in $x_d =0$ or $x_{N+1-d}=0$ (or both).

Consider any subspace $\Lambda$ in
the dense subset $X^{\circ}_P(E_{\bull})
\cap X^{\circ}_{T^{\vee}}(E^{\op}_{\bull}) \subset Y_{P,T}$.
\comment{worth giving reference for density? Richardson's paper for example?}
Since $d \in P \cup T$,
it is impossible that $x_d =0$ on all vectors in $\Lambda$.
Therefore, the equation $x_{N+1-d}=0$ must
be satisfied by
$\psi(\pi^{-1}(Y_{P,T}))$.
\end{proof}

\section{Result 3: $Z_{P,T} \subset \psi(\pi^{-1}(Y_{P,T}))$}\label{S:type_BC_onto}

Let $X := IG_\omega(m,\C^N)$ be a Grassmannian of type $B$, $C$, or $D$.
Given Schubert symbols $P$ and $T$ in $\Omega(X)$, we write $P \to T$
whenever
\begin{enumerate}[label=\emph{\roman*})]
 \item $T \preceq P$;
 \item $p_i \leq t_{i+1}$ for all $i$, unless $p_i=n+2$ and $t_{i+1} = n+1$ in type $D$; and
 \item if $p_i = t_{i+1}$, then $p_i$ is \emph{not} a cut in $D(P,T)$.
\end{enumerate}

When $p_i > t_{i+1}$, we say $D(P,T)$ has a \emph{$2 \times 2$ square}.
Thus, the second condition says that $D(P,T)$ has no $2 \times 2$ squares, except that
a single $2 \times 2$ square in the central columns is permitted in type $D$.
When $p_i=t_{i+1}$ for some $i$ and $p_i \in \mathcal{C}_{P,T}$,
then $(i+1,t_{i+1})$ is  a lone star in $D(P,T)$ (in fact, so is $(i,p_i)$),
and hence $N+1-p_i \in \mathcal{L}_{P,T}$.
If we are working in type $B$ or $C$, then $N+1-p_i$
must be a zero column, which yields
the equivalent definition of $P \to T$ given
in \cite{buch:quantum_pieri}.

If we are working in type $D$, then there are other possibilities
involving exceptional cuts.
For example, if $p_i = t_{i+1} \in \{n+1,n+2\}$,
then $n+1$ will be an exceptional cut, causing
both $(i,p_i)$ and $(i+1,t_{i+1})$ to be lone stars.
It follows that when $P \to T$, the diagram $D(P,T)$
cannot have exactly $3$ stars in the central columns $n+1$ and $n+2$,
which yields
the alternative type $D$ definition of $P \to T$ given
in \cite[\S 5.2]{buch:quantum_pieri}.

The relation $P \to T$ is important because
it characterizes precisely when the map $\psi: \pi^{-1}(Y_{P,T}) \to Z_{P,T}$
is a birational isomorphism.  In particular 
\cite[Proposition 5.1]{buch:quantum_pieri} says the following:
\begin{prop}\label{P:birational_crit}
Given a Grassmannian $X$ of Lie type $B$, $C$, or $D$,
the map $\psi: \pi^{-1}(Y_{P,T}) \to Z_{P,T}$ is a birational isomorphism
if and only if the Schubert symbols $P$ and $T$ satisfy the relation $P \to T$.
\end{prop}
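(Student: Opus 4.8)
The plan is to analyze the morphism $\psi$ restricted to $\pi^{-1}(Y_{P,T})$ through its fibers. Since $\pi$ is a $\P^{m-1}$-bundle (its fiber over $\Sigma$ being the space of lines in $\Sigma$), $\pi^{-1}(Y_{P,T})$ is an irreducible projective variety of dimension $\dim Y_{P,T}+m-1=|T|-|P|+m-1$; by Proposition \ref{P:into} its image under $\psi$ is a closed irreducible subvariety of $Z_{P,T}$, and $Z_{P,T}$ is itself irreducible by Proposition \ref{P:complete_intersection_BCD}. So $\psi\colon\pi^{-1}(Y_{P,T})\to Z_{P,T}$ is a birational isomorphism exactly when one can reconstruct, from a general $\+w\in Z_{P,T}$, a unique subspace $\Sigma\in Y_{P,T}$ with $\+w\in\Sigma$, rationally in $\+w$ (this forces the image to be all of $Z_{P,T}$ and $\psi$ to be generically one-to-one, with $\+w\mapsto(\lbrack\+w\rbrack,\Sigma)$ the inverse). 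I would carry this out over the dense Richardson cell $Y^{\circ}_{P,T}=X^{\circ}_P(E_{\bull})\cap X^{\circ}_{T^{\vee}}(E^{\op}_{\bull})$, whose general member is the rowspace of a matrix of shape $D(P,T)$ with rows $\+u_1,\dots,\+u_m$ normalized by $u_{i,t_i}=1$ (unique, since the only shape-preserving changes of basis of $\Sigma$ are the row scalings), the remaining entries being free subject to the orthogonality relations; a general $\+w\in Z_{P,T}$ has $w_c\ne 0$ for exactly the $c\notin\mathcal{L}_{P,T}$.

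The reconstruction proceeds block by block. Cutting $D(P,T)$ along its visible cuts decomposes both an admissible matrix and the vector $\+w$ into independent pieces, and inside a block the supports $[t_i,p_i]$ are pairwise disjoint or meet in a single column precisely when there is no $2\times 2$ square, i.e.\ $p_i\le t_{i+1}$; condition (iii) of $P\to T$ further rules out a shared column being a cut. Granting $P\to T$, one peels the rows off a block from its ends inward: the private columns of a row recover the corresponding $\lambda_i\+u_i$, and the entry of the next row in a shared column is then pinned down using the defining equations $f_c(\+w)=0$, $c\in\mathcal{Q}_{P,T}$, of $Z_{P,T}$ --- here one checks that the orthogonality relation coupling a shared column $c$ to column $N+1-c$ is nontrivial exactly because $c$ is not a cut. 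This recovers $\Sigma$ uniquely and rationally; it remains to verify that the reconstructed $\Sigma$ is isotropic and lies in $Y^{\circ}_{P,T}$ (Schubert symbol $P$ relative to $E_{\bull}$ and, via Observation \ref{O:two_wrongs}, symbol $T$ relative to $E^{\op}_{\bull}$), which is routine for general $\+w$. The one genuinely delicate point --- and the step I expect to be the main obstacle --- is the central $2\times 2$ square permitted by condition (ii) in type $D$, where $p_i=n+2$ and $t_{i+1}=n+1$: there the central block behaves like a copy of $OG(\ell,2\ell)$, and $\+w$ lies in exactly \emph{two} isotropic $\ell$-planes through the part of $\Sigma$ already determined; the type of $\Sigma$, which on $Y^{\circ}_{P,T}$ is forced to equal $\type(P)$, then selects the correct one --- the same mechanism as in the proof of Proposition \ref{P:exceptional}.

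For the converse, suppose $T\preceq P$ but $P\not\to T$. Then $D(P,T)$ has a $2\times 2$ square other than the permitted central one, or a shared column $p_i=t_{i+1}$ that is a cut. In the latter case the mirror column $N+1-p_i$ is a zero column (as $N+1-p_i\in\mathcal{L}_{P,T}$ and lies in neither $P$ nor $T$ by Corollary \ref{C:no_lin_in_PT}), so the orthogonality relation that would fix the lower row's entry in column $p_i$ is vacuous, and a general $\+w\in Z_{P,T}$ then lies in a one-parameter family of subspaces in $Y_{P,T}$; in the former case the two overlapping rows can be rotated, keeping $\+w$ and all orthogonality constraints, to the same effect. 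Either way $\psi$ has positive-dimensional general fiber and is not birational. Equivalently --- and this is the cleanest way to finish --- one compares dimensions: the complete-intersection computation of Proposition \ref{P:complete_intersection_BCD} gives $\dim Z_{P,T}=N-1-\#\{\text{quadric generators}\}-\#\mathcal{L}_{P,T}$, which falls short of $\dim\pi^{-1}(Y_{P,T})=|T|-|P|+m-1$ by exactly the number of such defects of $D(P,T)$, so $\psi$ cannot be generically finite, let alone birational, unless $P\to T$.
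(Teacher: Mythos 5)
Note that this paper does not actually prove Proposition~\ref{P:birational_crit}: it is cited from \cite[Proposition 5.1]{buch:quantum_pieri}, with a detailed argument deferred to \cite[\S 8]{ravikumar:thesis}. So there is no in-paper proof to compare against, and your proposal would stand in for those references.

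Your block-by-block reconstruction is the right kind of plan, but the key step is misstated and the hard points are left open. You claim the shared entry $u_{i,p_i}$ is ``pinned down using the defining equations $f_c(\+w)=0$, $c\in\mathcal{Q}_{P,T}$,'' but those equations are merely \emph{consequences} of the orthogonality of $\Sigma$ and can carry no information at all: in the simplest nontrivial instance ($P=\{2,4\}$, $T=\{1,2\}$ in $SG(2,4)$, where $P\to T$) one has $\mathcal{Q}_{P,T}=\{0\}$, so no $f_c$ is nontrivial, and yet the shared entry $u_{1,2}$ is recovered as $-w_4/w_3$ using the pairwise relation $\omega(\+u_1,\+u_2)=0$. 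The actual mechanism couples column $p_i$ to the mirror column $N+1-p_i$, which lies in some other row's support precisely because $p_i$ is not a cut; you would need to argue that the entries involved in that relation have already been recovered when one comes to solve for $u_{i,p_i}$, and that the reconstructed $\Sigma$ is automatically isotropic with the correct Schubert symbols, and ``peeling from the ends inward'' within one block does not obviously resolve the dependency order since the mirror may land elsewhere in the diagram. For the converse, your claim that $N+1-p_i$ is a zero column when $p_i=t_{i+1}$ is a cut rests on Proposition~\ref{P:simple_lone_star}, which is stated and proved only for types $B$ and $C$; in type $D$ an exceptional cut can make $p_i=t_{i+1}$ a cut while $N+1-p_i$ sits strictly inside some other row's support, so that step breaks. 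The dimension identity ``falls short $\ldots$ by exactly the number of such defects'' is asserted, not proved, and the type $D$ central $2\times2$ square, which you rightly flag as the hard case, is left unresolved.
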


A more detailed proof of Proposition \ref{P:birational_crit} can be found in \cite[\S 8]{ravikumar:thesis}.
In this section we prove the following proposition.
\begin{prop}\label{P:shrink_existence}
 Given Schubert symbols $T \preceq P$ for a Grassmannian $X$ of type $B$, $C$, or $D$, 
 there exists a Schubert symbol $\tilde{P}$
 such that:
 \begin{enumerate}
  \item $T \preceq \tilde{P} \preceq P$,
  \item $Z_{\tilde{P},T} = Z_{P,T}$, and
  \item $\tilde{P} \to T$.
 \end{enumerate}
\end{prop}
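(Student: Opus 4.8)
The plan is to construct $\tilde P$ row by row from the diagram $D(P,T)$, pushing each row of $P$ as far left as possible subject to staying in the Bruhat interval $[T,P]$ and not creating new cuts. Concretely, I would work within each \emph{window} cut out by consecutive elements of $\mathcal{C}_{P,T}$ separately: the set of cuts $\mathcal{C}_{P,T}$ partitions $[0,N]$ into blocks, and I want $\tilde P$ to have the same cuts, the same zero columns, and the same lone-star positions as $D(P,T)$, so that $\mathcal{Q}_{\tilde P,T}=\mathcal{Q}_{P,T}$ and $\mathcal{L}_{\tilde P,T}=\mathcal{L}_{P,T}$, which by definition gives $Z_{\tilde P,T}=Z_{P,T}$ (condition (2)). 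Inside each window, the candidate values $t_j,\dots$ of $T$ in that block force a lower bound on where the rows of $\tilde P$ may sit; I would define $\tilde p_j$ to be that lower bound, i.e.\ slide the star-run of row $j$ leftward until its left end meets $t_j$ (or meets the previous row's pivot $\tilde p_{j-1}$, or the left cut of the window). This is exactly the configuration with no $2\times 2$ squares and with any equality $\tilde p_i=t_{i+1}$ occurring only across a cut — but wait, we \emph{want} $\tilde p_i = t_{i+1}$ forced to sit across a cut so that condition (iii) in the definition of $P\to T$ holds; so the construction should left-justify rows block by block, leaving $t_{i+1}$ at the start of the next block.

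The key steps, in order, are: (a) show $\tilde P$ so defined is a genuine Schubert symbol, i.e.\ $\tilde p_1<\dots<\tilde p_m$ and $c+d\neq N+1$ for $c,d\in\tilde P$ — the latter follows because left-justifying within windows cannot move a star across the center cut $n+1$ (center/exceptional cuts are preserved by construction) and cannot align mirror columns that were previously unaligned, using Corollary \ref{C:ways_to_think_about_critical_windows} and Proposition \ref{P:alt_bruhat}; (b) show $T\preceq\tilde P$: since $\tilde p_j\geq t_j$ for all $j$ we get $T\leq\tilde P$, and for the type $D$ extra condition I would invoke Lemma \ref{L:critical_window_bruhat} — a critical window in $D(\tilde P,T)$ would have to be a critical window in $D(P,T)$ too (same visible cuts in the relevant range, since cuts are preserved), contradicting $T\preceq P$; (c) show $\tilde P\preceq P$: here $\tilde p_j\leq p_j$ holds by construction, and again the type $D$ condition is handled by checking no critical window of $D(P,\tilde P)$ appears, which follows from the equality of the relevant cut data; (d) show $\mathcal{C}_{\tilde P,T}=\mathcal{C}_{P,T}$, $\mathcal{L}_{\tilde P,T}=\mathcal{L}_{P,T}$ — visible cuts of $D(\tilde P,T)$ are the window boundaries by construction, zero columns and lone stars are preserved because left-justification within a window keeps the column of each lone star fixed relative to the cut that induces it, and Observation \ref{O:rotate_cuts} plus Corollary \ref{C:no_lin_in_PT} ensure the mirrored conditions match; hence (2); (e) verify $\tilde P\to T$ directly from the definition: (i) is (b), (ii) holds because left-justified windows leave no $2\times 2$ square (a $2\times2$ square would mean $\tilde p_i>t_{i+1}$, impossible since $\tilde p_i$ sits in an earlier-or-equal window than $t_{i+1}$, with the sole permitted central exception in type $D$ handled by the exceptional-center-cut bookkeeping), and (iii) holds because whenever $\tilde p_i=t_{i+1}$ the construction placed them at a window boundary, which by definition is not a cut only if... — actually $\tilde p_i$ would be a cut, so I must instead argue the construction never produces $\tilde p_i = t_{i+1}$ \emph{at} a cut: if $t_{i+1}$ begins a new block then $\tilde p_i$ ends the previous block strictly before $t_{i+1}$, giving $\tilde p_i < t_{i+1}$; equality can only happen inside a single block, where $\tilde p_i$ is not a cut.

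The main obstacle I anticipate is step (e)(iii) together with the type $D$ center subtlety: making the left-justification precise enough that $\tilde p_i = t_{i+1}$ never lands on a cut requires careful case analysis of what happens at exceptional cuts and the exceptional center cut, where a single $2\times 2$ square \emph{is} allowed and where the definition of $P\to T$ has its special type $D$ clause. I would isolate this into a lemma about the local structure of $D(P,T)$ near a center/exceptional cut, using Proposition \ref{P:lin_cuts_all_types} (consecutive-cut structure around $\mathcal{L}$) and Corollary \ref{C:ways_to_think_about_critical_windows} to control exactly one row crossing each exceptional cut, so that left-justifying that row is unambiguous and produces precisely the $P\to T$ configuration. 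Once $\tilde P\to T$ is established, Proposition \ref{P:birational_crit} gives $\psi(\pi^{-1}(Y_{\tilde P,T}))=Z_{\tilde P,T}=Z_{P,T}$, and combined with $Y_{\tilde P,T}\subset Y_{P,T}$ (from (1)) this yields the reverse inclusion $Z_{P,T}\subset\psi(\pi^{-1}(Y_{P,T}))$ that Section \ref{S:type_BC_onto} is after — though that deduction is made after the proposition, not inside it.
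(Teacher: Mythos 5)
Your overall strategy is the paper's: shrink $P$ toward $T$ so that the cut set $\mathcal{C}_{P,T}$ and the set $\mathcal{L}_{P,T}$ are preserved, verify $\tilde P \to T$, and then feed the result into Proposition \ref{P:birational_crit}. However, two genuine gaps remain.

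First, the construction is left as a list of desiderata rather than a rule, and the one fork you do confront --- your step (e)(iii) --- is exactly where a rule is needed. The paper keeps $\tilde p_i = p_i$ whenever $p_i < t_{i+1}$ (so zero columns are untouched), and when rows $i$ and $i+1$ overlap it branches on whether $t_{i+1}-1$ is a cut: if not, it sets $\tilde p_i = t_{i+1}$, which is what makes condition (iii) of $\tilde P \to T$ hold; if so, it sets $\tilde p_i = \max\{c \in [t_i,t_{i+1}-1] : c \notin \mathcal{L}_{P,T}\}$. Your patch (``equality can only happen inside a single block'') gestures at this dichotomy but does not establish that the fallback value exists (it does, because $t_i \notin \mathcal{L}_{P,T}$ by Corollary \ref{C:no_lin_in_PT}), that the columns skipped over were already in $\mathcal{L}_{P,T}$ so no new linear equations appear, or that the resulting set is isotropic --- the last point is a genuine three-case argument using Lemma \ref{L:consec_cuts}, not a formality.

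Second, and more seriously, type $D$ is not handled. The rule just described genuinely fails there: in $OG(2,6)$ with $T=\{1,3\}$ and $P=\{5,6\}$ it yields $P'=\{3,6\}$, whose diagram acquires the exceptional center cut at $n+1=3$, so $Z_{P',T}$ satisfies the extra equation $x_4=0$ and is strictly smaller than $Z_{P,T}$ (the whole quadric). The needed fix is to set $\tilde p_i = N+1-t_{i+1}$ instead of $t_{i+1}$ when $t_{i+1}\in\{n+1,n+2\}$, giving $\tilde P = \{4,6\}$ here; your proposal never confronts this. Relatedly, your claim that $\tilde P \preceq P$ ``follows from the equality of the relevant cut data'' is too quick in type $D$: the obstruction lives in the diagram $D(P,\tilde P)$ and involves comparing $\type(P)$ with $\type(\tilde P)$, which is not controlled by $\mathcal{C}_{P,T}=\mathcal{C}_{\tilde P,T}$ alone. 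The paper needs a separate technical statement (Lemma \ref{L:technical}) and a several-case analysis to rule out a critical window in $D(P,\tilde P)$; nothing in your sketch substitutes for it.
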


We prove Proposition \ref{P:shrink_existence} by explicitly
constructing the Schubert symbol $\tilde{P}$.
An immediate consequence is that
for any $T \preceq P$ in $\Omega(X)$,
the Richardson variety $Y_{P,T}$ must contain
a smaller Richardson variety $Y_{\tilde{P},T}$ such that
$\psi:\pi^{-1}(Y_{\tilde{P},T}) \to Z_{\tilde{P},T} = Z_{P,T}$ 
is a birational isomorphism (by Proposition \ref{P:birational_crit}).
It follows that $Z_{P,T} \subset \psi(\pi^{-1}(Y_{P,T}))$.
By combining this observation with Proposition \ref{P:into}, which states $Z_{P,T} \supset \psi(\pi^{-1}(Y_{P,T}))$, 
we have a proof of Theorem \ref{T:intro_main_theorem}.
\comment{We shall prove Proposition \ref{P:shrink_existence} by explicitly
constructing the Schubert symbol $\tilde{P}$.}  
We mention that Proposition \ref{P:shrink_existence} is not the only way to prove $Z_{P,T} \subset \psi(\pi^{-1}(Y_{P,T}))$.
For example, instead of ``lowering'' $P$ we could ``raise'' $T$.  In fact, the construction
we give for $\tilde{P}$ does precisely that when carried out on the rotated diagram $D(\bar{T},\bar{P})$.

\subsection{Constructing a ``Smaller'' Schubert Symbol P'}\label{S:instant_shrink_b}
\comment{
We prove Proposition \ref{P:shrink_existence} by explicitly constructing
the Schubert symbol $P'$.  We carry out this construction
for a Grassmannian $X$ of Lie type $B$, $C$, or $D$ (we will use this construction
in Section \ref{S:type_D_onto} to prove the type $D$
version of Proposition \ref{P:shrink_existence}).
}
\comment{
Recall the following subsets of $[1,N]$:
\begin{enumerate}
\item $\mathcal{C} := \mathcal{C}_{P,T} = \{c \in [1,N] \mid c \text{ is a cut in } D(P,T)\}$.
\item $\mathcal{L} := \mathcal{L}_{P,T} = \{c \in [1,N] \mid x_c=0 \text{ on } Z_{P,T}\}$.
\end{enumerate}  
}
Given Schubert symbols $T \preceq P$, 
define the set ${P}' = \{{p}'_1 , \ldots , {p}'_m\}$ 
as follows:

If $p_i < t_{i+1}$, then
\begin{equation}\label{bshrink1}\tag{\text{$\diamondsuit$}}
{p}'_i = p_i.
\end{equation}  

\hphantom{}

On the other hand, 
if $p_i \geq t_{i+1}$ and $t_{i+1}-1 \not\in \mathcal{C}_{P,T}$, then
\begin{equation}\label{bshrink2}\tag{\text{$\clubsuit$}}
{p}'_i = t_{i+1}.
\end{equation}

Finally,
if $p_i \geq t_{i+1}$ \emph{and} $t_{i+1}-1 \in \mathcal{C}_{P,T}$, then
\begin{equation}\label{bshrink3}\tag{\text{$\heartsuit$}}
{p}'_i = \max\{c \in [t_i, t_{i+1}-1] \mid c \not\in \mathcal{L}_{P,T}\}. 
\end{equation} 

Note that since
$t_i$ cannot be in $\mathcal{L}_{P,T}$,
the set $\{c \in [t_i, t_{i+1}-1] \mid c \not\in \mathcal{L}_{P,T}\}$ 
is nonempty. Thus $p'_i$ is well-defined.

The following property of $P'$ follows from its construction and 
Corollary \ref{C:no_lin_in_PT}.

\begin{obs}\label{O:lin}
For any $1 \leq i \leq m$, $p'_i \not\in \mathcal{L}_{P,T}$.
\end{obs}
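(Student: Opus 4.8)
The plan is to verify the statement by a direct three-way case split according to which of \eqref{bshrink1}, \eqref{bshrink2}, \eqref{bshrink3} was used to define $p_i'$, using Corollary \ref{C:no_lin_in_PT} (that $(P \cup T) \cap \mathcal{L}_{P,T} = \emptyset$) as the single substantive input. First I would note that cases \eqref{bshrink2} and \eqref{bshrink3} can only occur when $i < m$: the defining inequality $p_i \geq t_{i+1}$ combined with the convention $t_{m+1} = N+1 > p_m$ rules out $i = m$, so in these cases $t_{i+1}$ and $t_i$ are genuine elements of $T$.

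In case \eqref{bshrink3}, the element $p_i'$ is by definition the largest element of the set $\{c \in [t_i, t_{i+1}-1] \mid c \notin \mathcal{L}_{P,T}\}$, so $p_i' \notin \mathcal{L}_{P,T}$ is immediate; the only thing to check is that this set is nonempty, which holds because $t_i \in T$ and hence $t_i \notin \mathcal{L}_{P,T}$ by Corollary \ref{C:no_lin_in_PT} (this is exactly the remark already made just after the definition of $P'$).

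In case \eqref{bshrink1} we have $p_i' = p_i \in P$, and in case \eqref{bshrink2} we have $p_i' = t_{i+1} \in T$; in either case $p_i' \in P \cup T$, so Corollary \ref{C:no_lin_in_PT} gives $p_i' \notin \mathcal{L}_{P,T}$. This covers all three cases and completes the argument. (In types $B$ and $C$ one may equally invoke the remark that $(P\cup T)\cap\mathcal{L}_{P,T}=\emptyset$ is elementary, or Corollary \ref{C:lin_cuts}; I would phrase the argument uniformly via Corollary \ref{C:no_lin_in_PT}.)

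I do not expect any genuine obstacle here: all of the combinatorial content — that no element of $P$ or $T$ can lie in $\mathcal{L}_{P,T}$, and that $t_i \notin \mathcal{L}_{P,T}$ so that the $\max$ in \eqref{bshrink3} is taken over a nonempty set — is already established in Corollary \ref{C:no_lin_in_PT}, and the present statement is a short bookkeeping consequence of the way $P'$ was constructed case by case.
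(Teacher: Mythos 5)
Your proof is correct and follows essentially the same route as the paper's: cases \eqref{bshrink1} and \eqref{bshrink2} give $p'_i \in P \cup T$, handled by Corollary \ref{C:no_lin_in_PT}, and case \eqref{bshrink3} is immediate from the definition of $p'_i$ as a maximum over the complement of $\mathcal{L}_{P,T}$. The extra observations (that $i<m$ in the latter two cases and that the set in \eqref{bshrink3} is nonempty) are correct but already implicit in the construction.
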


\begin{proof}
 If $p'_i$ is defined by Case \eqref{bshrink1}
 or Case \eqref{bshrink2}, then $p'_i \in P \cup T$,
 so by Corollary \ref{C:no_lin_in_PT} it is not in $\mathcal{L}_{P,T}$.
 If it is defined by Case \eqref{bshrink3}, then by its construction
 it cannot be in $\mathcal{L}_{P,T}$.
\end{proof}

We also have the following observation, which follows directly from the construction
of $P'$.

\begin{obs}\label{O:cut}
 If $p'_i$ is produced by Case \eqref{bshrink1}
 or Case \eqref{bshrink3} of the previous construction,
then $[p'_i, t_{i+1}-1] \subset \mathcal{C}_{P,T}$.
 Furthermore, if $p'_i < t_{i+1} -1$, then 
 $[p'_i+1, t_{i+1}-1] \subset \mathcal{L}_{P,T}$.
\end{obs}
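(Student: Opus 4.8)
The plan is to treat the two triggering cases of the construction of $P'$ separately, reducing each to facts already recorded about visible cuts, zero columns, and the relation between $\mathcal{L}_{P,T}$ and $\mathcal{C}_{P,T}$.

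First suppose $p'_i$ arises from Case \eqref{bshrink1}, so that $p_i < t_{i+1}$ and $p'_i = p_i$. For every $c$ with $p'_i \le c \le t_{i+1}-1$ we then have $p_i \le c < t_{i+1}$, so $c$ is by definition a visible cut and hence $c \in \mathcal{C}_{P,T}$; this gives $[p'_i, t_{i+1}-1] \subset \mathcal{C}_{P,T}$. If moreover $p'_i < t_{i+1}-1$, then each $c$ with $p'_i+1 \le c \le t_{i+1}-1$ satisfies $p_i < c < t_{i+1}$, so column $c$ of $D(P,T)$ is empty, i.e. $c$ is a zero column and therefore lies in $\mathcal{L}_{P,T}$; thus $[p'_i+1, t_{i+1}-1] \subset \mathcal{L}_{P,T}$.

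Next suppose $p'_i$ arises from Case \eqref{bshrink3}, so that $p_i \ge t_{i+1}$, $t_{i+1}-1 \in \mathcal{C}_{P,T}$, and $p'_i = \max\{c \in [t_i, t_{i+1}-1] : c \notin \mathcal{L}_{P,T}\}$. If $p'_i = t_{i+1}-1$, both assertions are immediate: the first holds because $t_{i+1}-1 \in \mathcal{C}_{P,T}$ is exactly the hypothesis triggering this case, and the second is vacuous. If instead $p'_i < t_{i+1}-1$, then by the maximality defining $p'_i$ every $c$ with $p'_i+1 \le c \le t_{i+1}-1$ lies in $\mathcal{L}_{P,T}$, which is precisely the second assertion. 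To obtain the first, I would invoke Proposition \ref{P:lin_cuts_all_types} (for type $B$ or $C$, Corollary \ref{C:lin_cuts}): for each $c \in \mathcal{L}_{P,T}$ both $c$ and $c-1$ lie in $\mathcal{C}_{P,T}$. Applying this with $c = p'_i+1$ puts $p'_i \in \mathcal{C}_{P,T}$, and applying it to every $c \in [p'_i+1, t_{i+1}-1]$ puts that whole interval in $\mathcal{C}_{P,T}$; together these yield $[p'_i, t_{i+1}-1] \subset \mathcal{C}_{P,T}$.

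I do not expect any real obstacle here: the observation is essentially bookkeeping from the definitions of visible cut, zero column, and $P'$, together with Proposition \ref{P:lin_cuts_all_types}. The one point meriting care is to isolate the boundary case $p'_i = t_{i+1}-1$ in Case \eqref{bshrink3} before running the maximality argument, so as not to apply it to an empty interval; this is dispatched immediately by the standing hypothesis $t_{i+1}-1 \in \mathcal{C}_{P,T}$ built into that case.
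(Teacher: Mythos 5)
Your proof is correct, and it is essentially the verification the paper has in mind: the paper omits a proof, stating only that the observation ``follows directly from the construction of $P'$,'' and your case split on \eqref{bshrink1} versus \eqref{bshrink3} together with the invocation of Corollary~\ref{C:lin_cuts} / Proposition~\ref{P:lin_cuts_all_types} supplies exactly those details.
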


We claim that $P'$ is a Schubert symbol.

\begin{lemma}\label{L:bshrink_is_schubert_symbol}
 $P'$ is a Schubert symbol.
\end{lemma}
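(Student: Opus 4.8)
The plan is to verify the two defining properties of a Schubert symbol directly: that $p'_1,\dots,p'_m$ are $m$ distinct elements of $[1,N]$ listed in increasing order, and that no two of them sum to $N+1$. I would open by recording the crude bound $t_i\le p'_i\le p_i$ for every $i$ (so in particular $p'_i\in[1,N]$): this is immediate in each case of the construction, being $T\preceq P$ in \eqref{bshrink1}, the chain $t_i<t_{i+1}\le p_i$ in \eqref{bshrink2}, and $p'_i\in[t_i,t_{i+1}-1]$ together with $t_{i+1}-1<t_{i+1}\le p_i$ in \eqref{bshrink3}. In particular $p'_i\le t_{i+1}$ whenever $p_i\ge t_{i+1}$.

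For strict monotonicity, if $p_i<t_{i+1}$ then $p'_i=p_i<t_{i+1}\le p'_{i+1}$; so assume $p_i\ge t_{i+1}$. Since $P$ is a Schubert symbol, $p_{i+1}>p_i\ge t_{i+1}$, so row $i+1$ of $D(P,T)$ has stars in columns $t_{i+1}$ and $t_{i+1}+1$. Combining with $p'_i\le t_{i+1}\le p'_{i+1}$, the only failure mode is $p'_i=p'_{i+1}=t_{i+1}$. This forces $p'_i$ to arise from \eqref{bshrink2} and $p'_{i+1}$ from \eqref{bshrink3}: the other alternatives give $p'_i<t_{i+1}$, give $p'_{i+1}=p_{i+1}>t_{i+1}$ (and $p_i\ge t_{i+1}$, $p_i<p_{i+1}$ preclude $p_{i+1}=t_{i+1}$), or give $p'_{i+1}=t_{i+2}\ne t_{i+1}$. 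From \eqref{bshrink2}, $t_{i+1}-1\notin\mathcal C_{P,T}$, while from \eqref{bshrink3} with $p'_{i+1}=t_{i+1}$ we get $[t_{i+1}+1,t_{i+2}-1]\subset\mathcal L_{P,T}$, hence $[t_{i+1},t_{i+2}-1]\subset\mathcal C_{P,T}$ by Observation~\ref{O:cut} (or Proposition~\ref{P:lin_cuts_all_types}); in particular $t_{i+1}\in\mathcal C_{P,T}$. Now a short case split produces a contradiction. If $p_i=t_{i+1}$, then $(i+1,t_{i+1})$ is a lone star, so $N+1-t_{i+1}\in\mathcal L_{P,T}$; but by Proposition~\ref{P:alt_bruhat} column $N+1-t_{i+1}$ cannot itself contain a lone star, so it must be a zero column, and unwinding this shows $t_{i+1}-1$ is a visible cut. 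If $p_i>t_{i+1}$, then rows $i$ and $i+1$ both cross from column $t_{i+1}$ to column $t_{i+1}+1$, so the cut $t_{i+1}$ is neither visible nor exceptional by Corollary~\ref{C:ways_to_think_about_critical_windows}; hence its mirror $N-t_{i+1}$ is a visible cut, and tracing it (using that $T$ is a Schubert symbol to exclude the degenerate position $t_{i+1}=n+1$) again shows $t_{i+1}-1\in\mathcal C_{P,T}$. Either way we contradict \eqref{bshrink2}.

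For isotropy, suppose $p'_i+p'_{i'}=N+1$ with $i<i'$, so $p'_i<(N+1)/2<p'_{i'}$. I would argue by cases on which of \eqref{bshrink1}, \eqref{bshrink2}, \eqref{bshrink3} produces $p'_i$ and which produces $p'_{i'}$. If both come from \eqref{bshrink1} (resp.\ both from \eqref{bshrink2}), then $p'_i$ and $p'_{i'}$ are two distinct elements of $P$ (resp.\ of $T$) summing to $N+1$, impossible. In the mixed cases the mechanism is uniform: if one of the two elements is $p_j$ with $p_j<t_{j+1}$, then $p_j$ is a visible cut, so $N-p_j\in\mathcal C_{P,T}$, and if one of the two elements $p'_j$ comes from \eqref{bshrink3}, then $p'_j\in\mathcal C_{P,T}$ by Observation~\ref{O:cut}; reading off $p'_{i'}=N+1-p'_i$ and $p'_{i'}-1=N-p'_i$, these cut memberships either violate the hypothesis $t_{j+1}-1\notin\mathcal C_{P,T}$ behind a \eqref{bshrink2}-step, or — by the mirror symmetry of apparent cuts — force a lone star into column $p'_i$ or $p'_{i'}$, placing $p'_i$ or $p'_{i'}$ into $\mathcal L_{P,T}$ and contradicting Observation~\ref{O:lin} (with Corollary~\ref{C:no_lin_in_PT}). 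The residual exceptional-cut subcases in type $D$ are closed with Corollary~\ref{C:ways_to_think_about_critical_windows} and Proposition~\ref{P:alt_bruhat}, and the subcases in which the smaller element $p'_i$ itself is produced by \eqref{bshrink2} or \eqref{bshrink3} are the mirror images, under the $180^\circ$ rotation of Observation~\ref{O:rotate_cuts}, of cases already treated.

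The step I expect to be the main obstacle is the type $D$ exceptional-cut bookkeeping: it intrudes into the single boundary case of the monotonicity argument and into the mixed isotropy cases, and each such subcase requires juggling the relations among exceptional cuts, lone stars, and $\mathcal L_{P,T}$. In types $B$ and $C$ there are no exceptional cuts and every member of $\mathcal L_{P,T}$ is a zero column (Corollary~\ref{C:lin_cuts}), so there each case collapses to its first line or two.
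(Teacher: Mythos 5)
Your proof follows essentially the same route as the paper's: the chain $t_i \le p'_i \le t_{i+1} \le p'_{i+1}$, ruling out the equality $p'_i = p'_{i+1} = t_{i+1}$ via Observation \ref{O:cut} together with a lone star at $(i+1,t_{i+1})$, and a cut/non-cut case analysis for the isotropic condition resting on Observations \ref{O:cut} and \ref{O:lin}, Lemma \ref{L:consec_cuts}, and the symmetry $c \mapsto N-c$ of the cut set. One caveat on the monotonicity step: your sub-split on $p_i = t_{i+1}$ versus $p_i > t_{i+1}$ is unnecessary — once $t_{i+1} \in \mathcal{C}_{P,T}$ and $t_{i+1} \in T$, the entry $(i+1,t_{i+1})$ is a lone star regardless of $p_i$, so $N+1-t_{i+1} \in \mathcal{L}_{P,T}$ and Proposition \ref{P:lin_cuts_all_types} yields $t_{i+1}-1 \in \mathcal{C}_{P,T}$ directly, contradicting \eqref{bshrink2} — and the two justifications you give inside that split do not hold as stated: membership of $N+1-t_{i+1}$ in $\mathcal{L}_{P,T}$ does not force column $N+1-t_{i+1}$ to be a zero column, and in type $D$ the isotropy of $T$ does not exclude $t_{i+1}=n+1$ (only the pair $\{n+1,n+2\}$ is forbidden). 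Neither wobble is fatal, since the uniform lone-star argument you already invoke in the first branch covers both subcases at once.
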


\begin{proof}

We will first show that $p'_i < p'_{i+1}$ for
 $1 \leq i \leq m-1$.  
 By our construction of $P'$, we have
 $p'_i \leq t_{i+1} \leq p'_{i+1}$
 for each $i$.  Thus we only need to consider the
 case that $p'_{i+1} = t_{i+1}$.
 If  $p'_{i+1} = t_{i+1}$ then
 $p'_{i+1}$ follows Case \eqref{bshrink1} or Case \eqref{bshrink3},
 so by Observation \ref{O:cut},
 $t_{i+1} \in \mathcal{C}_{P,T}$.
 But then $(i+1,t_{i+1})$ is a lone star in $D(P,T)$,
 so $t_{i+1} - 1 \in \mathcal{C}_{P,T}$ as well (by Corollary \ref{C:lin_cuts}). 
Thus $p'_i$ follows Case \eqref{bshrink1} or Case \eqref{bshrink3},
and we have $p'_i < t_{i+1}$.
It follows that
$p'_i < p'_{i+1}$ for all $1 \leq i \leq m-1$,
and in particular that
$P'$ consists of $m$ distinct integers.

We still have to check that $P'$
satisfies the isotropic condition.
Suppose on the contrary that 
$p'_i + p'_j = N+1$ 
for some $i$ and $j$ in $[1,m]$.
By Observation \ref{O:cut}, if
$p'_i$ is not a cut in $D(P,T)$,
then $p'_i = t_{i+1}$.
Similarly, if
$p'_{j}$
is not a cut in $D(P,T)$,
then $p'_j = t_{j+1}$.

We therefore have three possible cases to
consider, each of which results in a contradiction.

{\bf Case 1}: Both $p'_i$ and $p'_j$
are cuts in $D(P,T)$.

In this case
$N-p'_j = p'_i-1$ is also a cut,
and by Lemma \ref{L:consec_cuts} either $p'_i$
or $p'_j$ is in $\mathcal{L}_{P,T}$.
But neither $p'_i$
nor $p'_j$ can be in $\mathcal{L}_{P,T}$, by
Observation \ref{O:lin}.

{\bf Case 2}: Exactly one of $\{p'_i, p'_j\}$ is a cut in $D(P,T)$.

We will assume without loss of generality 
that $p'_j$ is a cut in $D(P,T)$.
We then have $p'_i-1 \in \mathcal{C}_{P,T}$, as in the previous case,
implying that $p'_i$
cannot follow Case \eqref{bshrink2}.
However, since $p'_i \not\in \mathcal{C}_{P,T}$,
Observation \ref{O:cut} implies that $p'_i$
\emph{does} follow Case \eqref{bshrink2}, a contradiction.

{\bf Case 3}: Neither $p'_i$ nor $p'_j$ is a cut in $D(P,T)$.

In this case, $p'_i = t_{i+1}$
and $p'_j = t_{j+1}$.  But
$t_{i+1} + t_{j+1} \neq N+1$, since
$T$ is a Schubert symbol, so once
again we arrive at a contradiction.
\end{proof}

\subsection{Types B and C}
If $X$ is a Grassmannian of type $B$ or $C$, we set $\tilde{P} := P'$
in order to prove Proposition \ref{P:shrink_existence}.
The type $D$ case of Proposition \ref{P:shrink_existence}
will be addressed in Section \ref{S:type_D_onto}.

Since $T \preceq P$
if and only if $T \leq P$,
it is clear by our construction
that $t_i \leq \tilde{p}_i \leq p_i$ for
$1 \leq i \leq m$,
and hence
$T \preceq \tilde{P} \preceq P$.  Thus $\tilde{P}$
satisfies condition (1) of
Proposition \ref{P:shrink_existence}.
We will need the following lemma to 
verify the remaining conditions.

\begin{lemma}\label{L:same_cuts_type_BC}
  The diagrams $D(P,T)$ and $D(\tilde{P},T)$
 have the same cuts.
\end{lemma}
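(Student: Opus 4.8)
The plan is to show that the Richardson diagrams $D(P,T)$ and $D(\tilde P, T) = D(P',T)$ have the same cuts by arguing that they have the same visible cuts; once that is established, the apparent cuts (which in types $B$ and $C$ are simply the $c$ such that $c$ or $N-c$ is a visible cut) automatically agree. First I would recall that by the construction of $P'$ we have $t_i \le p'_i \le p_i$ for all $i$, so $D(P',T) \subseteq D(P,T)$; hence every visible cut of $D(P,T)$ (an integer $c$ with $p_i \le c < t_{i+1}$ for some $i$) remains a visible cut of $D(P',T)$, since $p'_i \le p_i \le c < t_{i+1}$. This gives $\mathcal{C}_{P,T} \subseteq \mathcal{C}_{P',T}$ immediately, and the work is all in the reverse inclusion.

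For the reverse inclusion, suppose $c$ is a visible cut of $D(P',T)$, so $p'_i \le c < t_{i+1}$ for some $i$. If $p'_i = p_i$ (Case $\diamondsuit$), then $p_i \le c < t_{i+1}$ and $c$ is already a visible cut of $D(P,T)$, so I am done. The remaining cases are $p'_i$ produced by Case $\clubsuit$ or Case $\heartsuit$, i.e. $p_i \ge t_{i+1}$. In these cases I want to show that the interval $[p'_i, t_{i+1}-1]$ — which contains our $c$ — lies inside $\mathcal{C}_{P,T}$. For Case $\heartsuit$ this is precisely the content of Observation \ref{O:cut} (which says $[p'_i, t_{i+1}-1] \subset \mathcal{C}_{P,T}$). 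For Case $\clubsuit$ we have $p'_i = t_{i+1}$ and $t_{i+1}-1 \notin \mathcal{C}_{P,T}$; here the only integer in $[p'_i,t_{i+1}-1]$ is $t_{i+1}-1$ itself when this interval is nonempty, but actually $[t_{i+1}, t_{i+1}-1] = \emptyset$, so there is no $c$ with $p'_i \le c < t_{i+1}$ at all — Case $\clubsuit$ contributes no new visible cut. Thus any visible cut $c$ of $D(P',T)$ coming from index $i$ with $p_i \ge t_{i+1}$ must arise from Case $\heartsuit$, and then $c \in [p'_i, t_{i+1}-1] \subset \mathcal{C}_{P,T}$.

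The one subtlety I expect to be the main obstacle is bookkeeping across the two indexing conventions: a visible cut of $D(P,T)$ witnessed by index $i$ (meaning $p_i \le c < t_{i+1}$) could fail to be witnessed by the \emph{same} index $i$ in $D(P',T)$ if $p'_i < p_i$, but since $p'_i \le p_i$ the inequality $p'_i \le c < t_{i+1}$ still holds, so the same index works — this direction is clean. The genuinely delicate point is the reverse: ruling out a visible cut of $D(P',T)$ in the ``gap'' created by lowering $p_i$ to $p'_i$, i.e. integers $c$ with $p'_i \le c < p_i$. Observation \ref{O:cut} handles exactly this, since it asserts that this entire gap $[p'_i+1, t_{i+1}-1]$ (and in particular $[p'_i, p_i - 1] \subseteq [p'_i, t_{i+1}-1]$) already consisted of cuts — indeed of zero columns, lone-star columns, or their mirrors — of $D(P,T)$. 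So I would close the argument by invoking Observation \ref{O:cut} to conclude $[p'_i, t_{i+1}-1] \subseteq \mathcal{C}_{P,T}$ in Cases $\diamondsuit$ and $\heartsuit$, combine with the trivial inclusion $\mathcal{C}_{P,T} \subseteq \mathcal{C}_{P',T}$, and note that since both diagrams have the same visible cuts and the notion of apparent cut in types $B$ and $C$ depends only on the set of visible cuts, $\mathcal{C}_{P,T} = \mathcal{C}_{\tilde P, T}$.
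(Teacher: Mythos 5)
Your proof is correct in substance and follows essentially the same route as the paper's: in one direction, lowering $p_i$ to $\tilde p_i$ can only add visible cuts, giving $\mathcal{C}_{P,T} \subseteq \mathcal{C}_{\tilde P, T}$; in the other, each visible cut of $D(\tilde P, T)$ sits in the interval $[\tilde p_i, t_{i+1}-1]$, which Observation~\ref{O:cut} places inside $\mathcal{C}_{P,T}$.

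However, your framing is slightly off. You announce the strategy as ``show they have the same \emph{visible} cuts,'' and you reiterate this at the end (``since both diagrams have the same visible cuts\ldots''), but your argument does not establish equality of the visible-cut sets, and it is not true in general: a visible cut of $D(\tilde P, T)$ created by lowering $p_i$ via Case~$\heartsuit$ may be only an \emph{apparent} cut of $D(P,T)$, not a visible one. (For instance in $SG(2,8)$ with $P=\{3,5\}$, $T=\{1,2\}$, one gets $\tilde P=\{1,5\}$; here $c=1$ is visible in $D(\tilde P,T)$ but only apparent in $D(P,T)$.) What you actually prove is the two inclusions $V_{P,T} \subseteq V_{\tilde P,T}$ and $V_{\tilde P,T} \subseteq \mathcal{C}_{P,T}$; the second, combined with the fact that the apparent-cut set is symmetric under $c \mapsto N-c$, gives $\mathcal{C}_{\tilde P,T} \subseteq \mathcal{C}_{P,T}$, and that is enough. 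You should state this explicitly rather than asserting equality of visible cuts. Also, in the penultimate paragraph, the containment $[p'_i, p_i - 1] \subseteq [p'_i, t_{i+1}-1]$ points the wrong way when $p_i \geq t_{i+1}$; the relevant containment you need is just that the witnessing $c$ lies in $[p'_i, t_{i+1}-1]$, which Observation~\ref{O:cut} already handles.
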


\begin{proof}
Given an integer $c \in [1,N]$,
if $p_i \leq c < t_{i+1}$, then $\tilde{p}_i = p_i \leq c  < t_{i+1}$.
Therefore  $\mathcal{C}_{P,T} \subset \mathcal{C}_{\tilde{P},T}$.
 On the other hand, if $\tilde{p}_i < t_{i+1}$, then
 $\tilde{p}_i$ is defined by \eqref{bshrink1} or \eqref{bshrink3}.
 We then have
 $[\tilde{p}_i, t_{i+1} -1] \subset \mathcal{C}_{P,T}$,
 by Observation \ref{O:cut}.  Therefore
 $\mathcal{C}_{P,T} \supset \mathcal{C}_{\tilde{P},T}$.
 \end{proof}

\begin{proof}[Proof of Proposition \ref{P:shrink_existence}]
We show that $\tilde{P}$ satisfies conditions (2) and (3). 

{\em $\tilde{P}$ satisfies (2): $Z_{P,T} = Z_{\tilde{P},T}$.}

  By Lemma \ref{L:same_cuts_type_BC},
 the diagrams $D(P,T)$ and $D(\tilde{P},T)$
 have the same cuts.
 We still have to prove that
  $\mathcal{L}_{P,T} = \mathcal{L}_{\tilde{P},T}$.
 Suppose $c \in \mathcal{L}_{P,T}$.
 Then by Proposition \ref{P:simple_lone_star}, either
 $p_i = t_i = N+1-c$ for some $i$,
 or  $c$ is a zero column of $D(P,T)$.
  If $p_i = t_i = N+1-c$ for some $i$,
  then $p_i < t_{i+1}$, so $\tilde{p}_i$
  must follow Case \eqref{bshrink1} of
  the construction.
  Thus $\tilde{p}_i = t_i = N+1-c$,
 so $c \in \mathcal{L}_{\tilde{P},T}$.
 On the other hand if
 $c$ is a zero column in $D(P,T)$,
 then $p_i < c < t_{i+1}$ for some $i$.
Once again we have $\tilde{p}_i = p_i$,
 so $c$ is a zero column in $D(\tilde{P},T)$,
 and therefore an element of $\mathcal{L}_{\tilde{P},T}$.

Now suppose $c \in \mathcal{L}_{\tilde{P},T}$.
If $\tilde{p}_i < c < t_{i+1}$ for some $i$, then
$\tilde{p}_i$ satisfies \eqref{bshrink1}
or \eqref{bshrink3}.  Either way,
we must have $c \in \mathcal{L}_{P,T}$.
On the other hand if 
$(i,N+1-c)$ is a lone star in $D(\tilde{P},T)$
for some $i$, then
$\tilde{p}_i = t_i = N+1-c$.
Thus, $\tilde{p}_i \in \mathcal{C}_{\tilde{P},T} = \mathcal{C}_{P,T}$.
In this case $(i,t_i)$ is a lone star in $D(P,T)$,
and $c \in \mathcal{L}_{P,T}$.

{\em $\tilde{P}$ satisfies (3): $\tilde{P} \to T$.}
   
Since $\tilde{p}_i \leq t_{i+1}$ for $1 \leq i \leq m-1$,
the diagram $D(\tilde{P},T)$ has no $2 \times 2$ squares.
If $\tilde{p}_i = t_{i+1}$ for some $i$, then $t_{i+1} - 1$
 is not a cut in $D(P,T)$. By Lemma \ref{L:same_cuts_type_BC} it is also not a 
 cut in $D(\tilde{P},T)$,
 so column $N+1-\tilde{p}_i$
 cannot be a zero column in $D(\tilde{P},T)$.
 \end{proof}

\subsection{Type D}\label{S:type_D_onto}

\comment{
Let $X := OG(m,2n+2)$ be a type $D$ Grassmannian,
and let $T \preceq P$ be Schubert symbols in $\Omega(X)$.
In this section we construct a Schubert symbol $\tilde{P}$
satisfying 
$T \preceq \tilde{P} \preceq P$,
$Z_{P,T} = Z_{\tilde{P},T}$,
and  $\tilde{P} \to T$.
}
\comment{
Recall the following subsets of $[1,N]$:
\begin{enumerate}
\item $\mathcal{C} := \mathcal{C}_{P,T} = \{c \in [1,N] \mid c \text{ is a cut in } D(P,T)\}$.
\item $\mathcal{L} := \mathcal{L}_{P,T} = \{c \in [1,N] \mid x_c=0 \text{ on } Z_{P,T}\}$.
\end{enumerate}
}

Let $X := OG(m,2n+2)$ be a type $D$ Grassmannian, let $N := 2n+2$,
and let $T \preceq P$ be Schubert symbols in $\Omega(X)$.
Unfortunately the Schubert symbol $P'$ constructed in Section \ref{S:instant_shrink_b} fails to satisfy the
conditions of Proposition \ref{P:shrink_existence}, as the following example illustrates.

\begin{example}
Consider
$OG(2,6)$, and let $T = \{1,3\}$
and $P = \{5,6\}$.  The projected Richardson variety
$Z_{P,T}$ is the quadric hypersurface
of $\P^{5}$ consisting of all isotropic lines in $\C^6$.
\comment{
\[
\left(
  \begin{array}{cccccc}
    * & * & * & * & * & 0  \\
    0 & 0 & * & * & * & *  \\
  \end{array}
\right).
\]
}
Since $P' = \{3,6\}$,
and the variety $Z_{P',T}$ satisfies the additional linear equation $x_4 = 0$,
so $Z_{P,T} \neq Z_{P',T}$.
Furthermore, $P' \not\to T$, since $p_1 = t_2 = 3$ is a cut in $D(P',T)$--the exceptional center cut:
\[
\left(
  \begin{array}{cccccc}
    * & * & * & 0 & 0 & 0  \\
    0 & 0 & * & * & * & *  \\
  \end{array}
\right).
\]
\end{example}

\vskip 5mm

We therefore define the set
$\tilde{P} = \{\tilde{p}_1 , \ldots , \tilde{p}_m\}$ 
as follows:

If $p_i < t_{i+1}$, then
\begin{equation}\label{shrink1}\tag{\text{$\diamondsuit$}}
\tilde{p}_i = p_i.
\end{equation}

On the other hand, 
if $p_i \geq t_{i+1}$ and $t_{i+1}-1 \not\in \mathcal{C}$, then
\begin{equation}\label{shrink2}\tag{\text{$\clubsuit$}}
\tilde{p}_i = \left\{
\begin{array}{lr}
t_{i+1} & \text{ if } t_{i+1} \not\in \{n+1,n+2\},\\
N+1-t_{i+1} & \text{ if } t_{i+1} \in \{n+1,n+2\}.\\
\end{array}
\right. 
\end{equation}

Finally,
if $p_i \geq t_{i+1}$ \emph{and} $t_{i+1}-1 \in \mathcal{C}$, then
\begin{equation}\label{shrink3}\tag{\text{$\heartsuit$}}
\tilde{p}_i = \max\{c \in [t_i, t_{i+1}-1] \mid c \not\in \mathcal{L}\}. 
\end{equation} 

Note that since
$t_i$ cannot be in $\mathcal{L}$,
the set $\{c \in [t_i, t_{i+1}-1] \mid c \not\in \mathcal{L}\}$ 
is nonempty. Thus $\tilde{p}_i$ is well-defined.

Recall that $\iota$ is the permutation of 
$\{1, \ldots, 2n+2\}$ that interchanges $n+1$ and $n+2$
and leaves all other numbers fixed.
We make the following observation:

\begin{obs}
 $\tilde{P}$ is equal to $\iota P'$ or $P'$.  Moreover,
 $\tilde{P} = \iota P'$ if and only if
 there exists an element $\tilde{p}_i$ defined by Case \eqref{shrink2}
 and  $t_{i+1} \in \{n+1,n+2\}$.
\end{obs}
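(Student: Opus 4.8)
The plan is to compare the two constructions entry by entry. First I would observe that the defining rules for $P'$ in Section~\ref{S:instant_shrink_b} and the rules for $\tilde{P}$ here are \emph{word for word identical} in Cases~\eqref{bshrink1}/\eqref{shrink1} and \eqref{bshrink3}/\eqref{shrink3}, and differ \emph{only} in Case~\eqref{bshrink2}/\eqref{shrink2}: the construction of $P'$ always sets $p'_i = t_{i+1}$, whereas the construction of $\tilde{P}$ sets $\tilde{p}_i = t_{i+1}$ unless $t_{i+1}\in\{n+1,n+2\}$, in which case $\tilde{p}_i = N+1-t_{i+1}$. Since $N = 2n+2$, we have $N+1-(n+1) = n+2$ and $N+1-(n+2) = n+1$, so $N+1-t_{i+1} = \iota(t_{i+1})$ whenever $t_{i+1}\in\{n+1,n+2\}$. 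As $\iota$ fixes every integer outside $\{n+1,n+2\}$, it follows that in \emph{every} case $\tilde{p}_i$ equals either $p'_i$ or $\iota(p'_i)$, and that $\tilde{p}_i = \iota(p'_i)\neq p'_i$ happens precisely when $p'_i$ is produced by Case~\eqref{bshrink2} with $t_{i+1}\in\{n+1,n+2\}$.

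Next I would show there is at most one such exceptional index. If $t_{i+1}$ and $t_{j+1}$ were two distinct entries of $T$ both lying in $\{n+1,n+2\}$, then $T$ would contain both $n+1$ and $n+2$; but $(n+1)+(n+2)=N+1$, contradicting the isotropic condition for the Schubert symbol $T$. Hence at most one index $i$ has $t_{i+1}\in\{n+1,n+2\}$. With this in hand the Observation follows: if no index is produced by Case~\eqref{shrink2} with $t_{i+1}\in\{n+1,n+2\}$, then $\tilde{p}_i = p'_i$ for all $i$ and $\tilde{P}=P'$. Otherwise let $i$ be the unique such index, so that $\tilde{p}_i = \iota(p'_i)$ and $\tilde{p}_j = p'_j$ for all $j\neq i$. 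Because $P'$ is itself a Schubert symbol (Lemma~\ref{L:bshrink_is_schubert_symbol}) it cannot contain both $n+1$ and $n+2$, so $p'_i = t_{i+1}$ is the only entry of $P'$ lying in $\{n+1,n+2\}$; consequently $\iota$ fixes $p'_j$ for every $j\neq i$, and $\iota P'$ agrees with $P'$ except in the $i$-th slot, where its value is $\iota(p'_i)=\tilde{p}_i$. Therefore $\tilde{P}=\iota P'$ (and moreover $\tilde{P}\neq P'$ in this case), which is exactly the asserted equivalence.

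The argument is essentially bookkeeping, so I do not expect a serious obstacle; the two points that need genuine care are the uniqueness of the exceptional index and the claim that $\iota$ acts trivially on all the \emph{other} entries of $P'$, both of which reduce to the observation that a Schubert symbol (here $T$, respectively $P'$) cannot contain $n+1$ and $n+2$ simultaneously. One should also note the harmless degenerate case in which $P'$ contains neither $n+1$ nor $n+2$: there $\iota P' = P'$, the equality $\tilde{P}=\iota P'$ holds vacuously, and the exceptional index indeed fails to exist, so no conflict with the stated equivalence arises.
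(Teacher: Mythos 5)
Your argument is correct, and it is exactly the justification the paper intends: the paper states this as an unproved Observation, since the two constructions visibly coincide except in Case \eqref{shrink2} with $t_{i+1}\in\{n+1,n+2\}$, where $N+1-t_{i+1}=\iota(t_{i+1})$. Your two points of care — that at most one index can be exceptional and that $\iota$ fixes the remaining entries of $P'$, both via the isotropic condition on $T$ and on $P'$ — are the right details to check, and your handling of the degenerate case $\iota P'=P'$ is a reasonable reading of the stated biconditional.
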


Given a type $D$ Schubert symbol $R$,
the set $\iota R$ is also a Schubert symbol,
since the isotropic condition is preserved.
We therefore have the following corollary.

\begin{cor}
 $\tilde{P}$ is a Schubert symbol.
\end{cor}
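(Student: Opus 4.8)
The plan is to deduce this immediately from the Observation preceding the statement, together with Lemma~\ref{L:bshrink_is_schubert_symbol}. The Observation tells us that $\tilde{P}$ is either $P'$ or $\iota P'$, and Lemma~\ref{L:bshrink_is_schubert_symbol} already establishes that $P'$ is a Schubert symbol, so the only thing left to treat is the case $\tilde{P} = \iota P'$.

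For that case I would invoke the remark made just before the statement: since $\iota$ is a bijection of $[1,2n+2]$, the set $\iota P'$ still consists of $m$ distinct integers; and since $\iota$ interchanges $n+1$ and $n+2$ (the only pair summing to $N+1$ that it moves) while fixing every other index, it carries any pair $\{c,d\}$ with $c+d=N+1$ to another such pair. Hence if $\iota P'$ contained two indices summing to $N+1$, so would $P'=\iota(\iota P')$, contradicting Lemma~\ref{L:bshrink_is_schubert_symbol}. Therefore $\iota P'$ is a Schubert symbol, and in either case $\tilde{P}$ is one.

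There is essentially no obstacle here: the real content has been front-loaded into Lemma~\ref{L:bshrink_is_schubert_symbol} and the Observation, and the corollary is just the bookkeeping combining them. The one point that needs a word of justification — that applying $\iota$ preserves the isotropic condition defining a Schubert symbol — is precisely the remark stated just before the corollary, so in the write-up I would simply cite that remark rather than re-derive it, keeping the proof to a couple of lines.
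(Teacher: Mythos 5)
Your proof is correct and follows exactly the paper's route: the paper likewise deduces the corollary from the Observation that $\tilde{P} \in \{P', \iota P'\}$, the fact that $P'$ is a Schubert symbol (Lemma~\ref{L:bshrink_is_schubert_symbol}), and the remark that $\iota$ preserves the isotropic condition, so $\iota R$ is a Schubert symbol whenever $R$ is. Your added sentence spelling out why $\iota$ preserves the defining conditions is just a slightly more explicit version of that same remark.
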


The following observations are 
exact restatements
of Observations \ref{O:lin}
and \ref{O:cut} for type $D$.
We give a brief proof of the first,
whereas the second follows directly
from the construction of $\tilde{P}$.

\begin{obs}\label{O:lin_D}
For any $1 \leq i \leq m$, $\tilde{p}_i \not\in \mathcal{L}_{P,T}$.
\end{obs}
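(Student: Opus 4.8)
The plan is to run the same case analysis used in the proof of Observation \ref{O:lin}, isolating the single situation that is genuinely new in type $D$. If $\tilde p_i$ is produced by Case \eqref{shrink1} then $\tilde p_i = p_i \in P$; if it is produced by Case \eqref{shrink2} with $t_{i+1}\notin\{n+1,n+2\}$ then $\tilde p_i = t_{i+1}\in T$; in either case $\tilde p_i\notin\mathcal{L}_{P,T}$ by Corollary \ref{C:no_lin_in_PT}. If $\tilde p_i$ is produced by Case \eqref{shrink3}, then $\tilde p_i\notin\mathcal{L}_{P,T}$ is immediate from the defining formula. So the whole content of the proof is the remaining case: $\tilde p_i$ produced by Case \eqref{shrink2} with $t_{i+1}\in\{n+1,n+2\}$, in which case $\tilde p_i = N+1-t_{i+1} = \iota(t_{i+1})\in\{n+2,n+1\}$, and we are in this case precisely when $p_i\geq t_{i+1}$ and $t_{i+1}-1\notin\mathcal{C}_{P,T}$.

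Unwinding the definition of $\mathcal{L}_{P,T}$, I must check two things in this case: (a) $\iota(t_{i+1})$ is not a zero column of $D(P,T)$, and (b) column $N+1-\iota(t_{i+1}) = t_{i+1}$ contains no lone star. Both will follow once I establish the key auxiliary fact that $p_i > t_{i+1}$, i.e. that $p_i = t_{i+1}$ cannot occur here. If $t_{i+1}=n+2=p_i$, then $p_i=n+2\leq t_{i+1}$ makes $n+1$ an exceptional center cut, so $t_{i+1}-1=n+1\in\mathcal{C}_{P,T}$, contradicting Case \eqref{shrink2}. If $t_{i+1}=n+1=p_i$, then $P\cap[1,n+1]=\{p_1,\dots,p_i\}$ and $T\cap[1,n+1]=\{t_1,\dots,t_{i+1}\}$ have sizes differing by exactly one, so (since $n+1\in P\cap T$) the parity formula gives $\type(P)\neq\type(T)$; moreover $n$ is then a cut candidate, since $n+1\in[P]\cap[T]$ and $\#(T\cap[1,n]) = \#(P\cap[1,n])+1$, so $n$ is an exceptional cut and $t_{i+1}-1=n\in\mathcal{C}_{P,T}$, again a contradiction. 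Hence $p_i\geq t_{i+1}+1$.

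Given $p_i\geq t_{i+1}+1$, part (a) is immediate: since $t_i < t_{i+1}\leq\iota(t_{i+1})\leq p_i$ (using $|t_{i+1}-\iota(t_{i+1})|\leq 1$), row $i$ of $D(P,T)$ has a star in column $\iota(t_{i+1})$. For part (b), a lone star in column $t_{i+1}$ would, since $t_{i+1}\in T$ while $t_{i+1}-1\notin\mathcal{C}_{P,T}$, force $t_{i+1}\in\mathcal{C}_{P,T}$; so it suffices to show $t_{i+1}\notin\mathcal{C}_{P,T}$. An element of $\{n+1,n+2\}$ is never an exceptional noncentral cut; $t_{i+1}$ is not an apparent cut, because row $i$ spans columns $t_{i+1}$ and $t_{i+1}+1$ and also spans the mirror pair of columns $N-t_{i+1}$, $N-t_{i+1}+1$ (here $t_i\leq n$, because $T$ is isotropic and so cannot contain both $n+1$ and $n+2$); and $t_{i+1}=n+1$ is not the exceptional center cut, as both defining conditions fail once $p_i>t_{i+1}$ and $t_{i+1}=n+1$ is the unique occurrence of $n+1$ in $T$. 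This gives $t_{i+1}\notin\mathcal{C}_{P,T}$ and finishes the argument.

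The main obstacle is the auxiliary fact $p_i>t_{i+1}$ in the second step, together with the verification $t_{i+1}\notin\mathcal{C}_{P,T}$ in the third: both are where one must pin down exactly how exceptional cuts (central and noncentral) and cut candidates interact with $\type(P)$ and $\type(T)$, and both repeatedly use $T\preceq P$ and the isotropic constraint that $n+1$ and $n+2$ never lie simultaneously in $T$ (nor in $P$). Everything else is routine bookkeeping that mirrors the type $B$/$C$ proof of Observation \ref{O:lin}.
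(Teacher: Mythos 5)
Your handling of Cases \eqref{shrink1}, \eqref{shrink3}, and \eqref{shrink2} with $t_{i+1}\notin\{n+1,n+2\}$ matches the paper, but for the remaining case your route diverges from the paper's and contains a gap. The paper disposes of this case in one line: if $\tilde{p}_i = N+1-t_{i+1}$ were in $\mathcal{L}_{P,T}$, then Proposition~\ref{P:lin_cuts_all_types} would put $\tilde{p}_i$ and $\tilde{p}_i-1$ into $\mathcal{C}_{P,T}$, and since $\mathcal{C}_{P,T}$ is closed under $c\mapsto N-c$ this forces $t_{i+1}-1\in\mathcal{C}_{P,T}$, contradicting the hypothesis of Case \eqref{shrink2}. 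You instead unwind the definition of $\mathcal{L}_{P,T}$ and attempt to rule out every cut type at $t_{i+1}$ by hand, which additionally requires the auxiliary fact $p_i>t_{i+1}$ that the paper's argument avoids.

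The gap is the assertion that ``an element of $\{n+1,n+2\}$ is never an exceptional noncentral cut.'' This is false: exceptional noncentral cuts come in pairs $(c,N-c)$ with $c\in[1,n]$ a cut candidate, so $N-n=n+2$ can indeed be one. The paper's own example $P=\{3,6\}$, $T=\{2,3\}$ in $OG(2,6)$ exhibits $4=n+2$ as an exceptional cut, paired with the cut candidate $c=2$. So when $t_{i+1}=n+2$ you have not shown $t_{i+1}\notin\mathcal{C}_{P,T}$. The conclusion does hold under your hypotheses --- the cut-candidate equation $\#(T\cap[1,n])=\#(P\cap[1,n])+1$ together with $p_j>n+2$ for $j\geq i$ would force all of $p_1,\dots,p_{i-1}$ into $[1,n]$, so $n+1\notin[P]$ and $n$ cannot be a cut candidate --- but that argument must actually appear. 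The cleanest patch is the paper's own move: if $t_{i+1}=n+2$ were a cut, then $(i+1,t_{i+1})$ would be a lone star, $n+1$ would lie in $\mathcal{L}_{P,T}$, and Proposition~\ref{P:lin_cuts_all_types} would put $t_{i+1}-1=n+1$ into $\mathcal{C}_{P,T}$, a contradiction.
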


\begin{proof}
 If $\tilde{p}_i$ is defined by Case \eqref{shrink1},
 or if it is defined by Case \eqref{shrink2} and $t_{i+1} \not\in \{n+1,n+2\}$,
 then $\tilde{p}_i \in P \cup T$.
 By Corollary \ref{C:no_lin_in_PT} it is not in $\mathcal{L}_{P,T}$.
 If it is defined by Case \eqref{shrink3}, then by its construction
 it cannot be in $\mathcal{L}_{P,T}$.
 
Finally, suppose $\tilde{p}_i$ is defined by Case \eqref{shrink2}
 and $t_{i+1} \in \{n+1,n+2\}$. If $\tilde{p}_i = N+1-t_{i+1} \in \mathcal{L}_{P,T}$,
then $t_{i+1}-1 \in \mathcal{C}_{P,T}$,
contradicting the assumption that $\tilde{p}_i$ is
defined by Case \eqref{shrink2}.
\end{proof}

\begin{obs}\label{O:cut_D}
 If $\tilde{p}_i$ is produced by Case \eqref{shrink1}
 or Case \eqref{shrink3} of the previous construction,
then $[\tilde{p}_i, t_{i+1}-1] \subset \mathcal{C}_{P,T}$.
 Furthermore, if $\tilde{p}_i < t_{i+1} -1$, then 
 $[\tilde{p}_i+1, t_{i+1}-1] \subset \mathcal{L}_{P,T}$.
\end{obs}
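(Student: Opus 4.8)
The plan is to verify the two assertions separately in the two relevant cases of the construction, \eqref{shrink1} and \eqref{shrink3}, establishing in each case the membership in $\mathcal{L}_{P,T}$ first (the ``furthermore'' clause) and then deducing the membership in $\mathcal{C}_{P,T}$ from it by means of Proposition~\ref{P:lin_cuts_all_types}.

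For Case~\eqref{shrink1}, where $p_i < t_{i+1}$ and $\tilde{p}_i = p_i$, I would observe that every integer $c$ with $p_i \le c < t_{i+1}$ is a visible cut in $D(P,T)$: rows $1,\dots,i$ have their last star in a column $\le p_i \le c$, while rows $i+1,\dots,m$ have their first star in a column $\ge t_{i+1} > c$, so no row has stars in both columns $c$ and $c+1$. Hence $[\tilde{p}_i, t_{i+1}-1]\subset\mathcal{C}_{P,T}$. If moreover $\tilde{p}_i < t_{i+1}-1$, then every $c$ with $p_i < c < t_{i+1}$ is a zero column of $D(P,T)$, and zero columns lie in $\mathcal{L}_{P,T}$ by definition, so $[\tilde{p}_i+1, t_{i+1}-1]\subset\mathcal{L}_{P,T}$.

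For Case~\eqref{shrink3}, where $p_i \ge t_{i+1}$, $t_{i+1}-1\in\mathcal{C}_{P,T}$, and $\tilde{p}_i=\max\{c\in[t_i,t_{i+1}-1]\mid c\notin\mathcal{L}_{P,T}\}$, I would first note that any $c$ with $\tilde{p}_i < c \le t_{i+1}-1$ satisfies $c\in[t_i,t_{i+1}-1]$ yet exceeds the maximum $\tilde{p}_i$, so it cannot belong to $\{c'\in[t_i,t_{i+1}-1]\mid c'\notin\mathcal{L}_{P,T}\}$; hence $c\in\mathcal{L}_{P,T}$, which is the ``furthermore'' clause. Applying Proposition~\ref{P:lin_cuts_all_types} to each such $c$ gives $c,c-1\in\mathcal{C}_{P,T}$; taking the union of these pairs over $c\in[\tilde{p}_i+1,t_{i+1}-1]$, together with the hypothesis $t_{i+1}-1\in\mathcal{C}_{P,T}$, covers the whole interval $[\tilde{p}_i,t_{i+1}-1]$.

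I do not anticipate any genuine obstacle: the statement is a bookkeeping consequence of the definitions together with the already-established Proposition~\ref{P:lin_cuts_all_types} (the type~$D$ analogue of Corollary~\ref{C:lin_cuts}). The only point requiring a moment of care is the degenerate subcase $\tilde{p}_i = t_{i+1}-1$ of Case~\eqref{shrink3}, where the interval $[\tilde{p}_i+1,t_{i+1}-1]$ is empty and one concludes $\tilde{p}_i\in\mathcal{C}_{P,T}$ directly from the hypothesis $t_{i+1}-1\in\mathcal{C}_{P,T}$ rather than from Proposition~\ref{P:lin_cuts_all_types}.
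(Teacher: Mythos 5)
Your proof is correct, and it fills in exactly the routine verification that the paper omits (the paper asserts that this observation ``follows directly from the construction of $\tilde{P}$'' and gives no argument). Your handling of the two cases — visible cuts and zero columns for \eqref{shrink1}, maximality plus Proposition~\ref{P:lin_cuts_all_types} for \eqref{shrink3}, with the degenerate subcase $\tilde{p}_i = t_{i+1}-1$ noted — is the intended reasoning.
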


\comment{
In order to prove that $\tilde{P}$ is a Schubert symbol,
we'll need the following lemma.

\begin{lemma}\label{L:schubert_symbol_no_reps}
  If $t_{i+1} = \tilde{p}_{i+1}$ for some $i$ then
  $\tilde{p}_i < t_{i+1}$.
\end{lemma}

\begin{proof}
Suppose $t_{i+1} = \tilde{p}_{i+1}$.
Then $\tilde{p}_{i+1}$ follows Case \eqref{shrink1}
or Case \eqref{shrink3}
of our construction, so by Observation \ref{O:cut},
$t_{i+1}$ must be a cut in $D(P,T)$.
Therefore $(i+1,t_{i+1})$ is a  lone star
in $D(P,T)$.
 It follows that
$t_{i+1}-1$ is also a cut in $D(P,T)$, and hence $\tilde{p}_i < t_{i+1}$. 
\end{proof}

We can now prove the following.

\begin{prop}
Given Schubert symbols $T \preceq P$, the set
$\tilde{P}$
consists of $m$ distinct integers
$\{ \tilde{p}_1 < \ldots < \tilde{p}_m \}$.
\end{prop}

\begin{proof}
Given $1 \leq i \leq m-1$, we will show that
$\tilde{p}_i < \tilde{p}_{i+1}$.  We consider two cases.

{\bf Case 1}: $t_{i+1} \neq n+1$.

In this case $\tilde{p}_i \leq t_{i+1} \leq \tilde{p}_{i+1}$.
  If $t_{i+1} = \tilde{p}_{i+1}$ then by Lemma \ref{L:schubert_symbol_no_reps}
  we have
  $\tilde{p}_i < t_{i+1} = \tilde{p}_{i+1}$.

{\bf Case 2}: $t_{i+1} = n+1$

In this case we have $\tilde{p}_i \leq n+2$, and $n+1 \leq \tilde{p}_{i+1}$.

Suppose, for the sake of contradiction, that $\tilde{p}_{i+1} = n+2$.
By Observation \ref{O:cut}, 
the fact that $\tilde{p}_{i+1} < n+3 \leq t_{i+2}$ 
implies that $n+2$ must be a
cut in $D(P,T)$.  Thus $n$ is also a cut,
so by Lemma \ref{L:consec_cuts}
$n+1$ or $n+2$ is in $\mathcal{L}$.
Both options are impossible, by Observation \ref{O:cut}.
Therefore $\tilde{p}_{i+1} \neq n+2$.

On the other hand, if $\tilde{p}_{i+1} = n+1$,
then by Lemma \ref{L:schubert_symbol_no_reps}
we have $\tilde{p_i} < n+1 = \tilde{p}_{i+1}$.
\end{proof}

\begin{prop}
 $\tilde{P}$ is a Schubert symbol.
\end{prop}

\begin{proof}

It remains to show that $\tilde{P}$ is isotropic.
Let $\tilde{P}'$ be the 
set
constructed through
the type $B/C$ shrinking construction
given in Section \ref{S:instant_shrink_b}.
By \ref{L:bshrink_is_schubert_symbol},
$\tilde{P}'$ satisfies the isotropic condition.

It remains to show that $\tilde{P}$ is isotropic.  Assume
on the contrary that $\tilde{p}_i + \tilde{p}_j = N+1$ for some
$1 \leq i < j \leq m$.
We shall exhaust all possibilities to arrive at a contradiction.

Suppose first that $\tilde{p}_i = n+1$ and $\tilde{p}_j = n+2$.
In this case $j = i+1$, so we have
\[
n+1 = \tilde{p}_i \leq t_{i+1} \leq \tilde{p}_{i+1} = n+2.
\]
Suppose $t_{i+1}=n+1$. Then since $\tilde{p}_i = n+1$,
$n$ must not be a cut in $D(P,T)$.  But then 
$\tilde{p}_i$ should follow Case \eqref{shrink2} and equal $n+2$,
contradicting the assumption that $\tilde{p}_i = n+1$.
On the other hand if
$t_{i+1} = n+2$,
then $\tilde{p}_{i+1} = t_{i+1}$, so it falls
under Case \eqref{shrink3} of our construction. Thus
$n+2$ is a cut in $D(P,T)$, and 
$(i+1,n+2)$ is a lone star.
But then $n+1 \in \mathcal{L}$, again contradicting
the assumption that $\tilde{p}_i = n+1$.
It therefore cannot be the case that 
$\tilde{p}_i = n+1$ and $\tilde{p}_j = n+2$.
Since $\tilde{p}_i$ and $\tilde{p}_j$ are interchangeable in this argument,
it follows that $\{\tilde{p}_i, \tilde{p}_j\} \cap \{n+1,n+2\} = \emptyset$.

We now claim that if $\tilde{p}_i$ is not a cut in $D(P,T)$,
then $\tilde{p}_i = t_{i+1}$.
To see why, note that if $\tilde{p}_i$ is produced by
Case \eqref{shrink1} or Case \eqref{shrink3} of our construction,
then $\tilde{p}_i$ is a cut in $D(P,T)$.
Thus, if it is not a cut, it must fall into
Case \eqref{shrink2}.
Since $\tilde{p}_i \not\in \{n+1,n+2\}$,
we have
$t_{i+1} \not\in \{n+1,n+2\}$.
Therefore $\tilde{p}_i = t_{i+1}$

Since the same reasoning applies to $\tilde{p}_j$, we are left with
three possibilities:
either  both $\tilde{p}_i$ and $\tilde{p}_j$
are cuts in $D(P,T)$,
or $\tilde{p}_i = t_{i+1}$  and $\tilde{p}_j$
is a cut in $D(P,T)$
\comment{
(which is equivalent to the case that
$\tilde{p}_i$ is a cut and $\tilde{p}_j = t_{j+1}$)},
or $\tilde{p}_i = t_{i+1}$ and $\tilde{p}_j = t_{j+1}$.

If $\tilde{p}_j$ is a cut in $D(P,T)$
then $\tilde{p}_i-1$ is also a cut.
If $\tilde{p}_i$ is a cut as well,
then by Lemma \ref{L:consec_cuts}, 
$\tilde{p}_i$ or $\tilde{p}_j$ must be in $\mathcal{L}$.
Neither option is possible by our construction of $\tilde{P}$
which ensures that none of the integers in $\tilde{P}$
are in $\mathcal{L}$.

On the other hand, if 
$\tilde{p}_i = t_{i+1}$, then
$\tilde{p}_i$ is produced by
Case \eqref{shrink2} of our construction,
so $t_{i+1}-1$ cannot be a cut in $D(P,T)$.
Hence $N - (t_{i+1}-1) = \tilde{p}_j$ cannot be a cut in $D(P,T)$ either.

We are left with the possibility that
neither $\tilde{p}_i$ or $\tilde{p}_j$ are cuts in $D(P,T)$.
But in this case
$\tilde{p}_i + \tilde{p}_j = t_{i+1} + t_{j+1} = N+1$,
contradicting the
isotropic condition on $T$.
It follows that
$\tilde{p}_i + \tilde{p}_j \neq N+1$
for any $1 \leq i < j \leq m$.
\end{proof}

}

We will now proceed to prove that
$T \preceq P$,
$\tilde{P} \to T$,
$Z_{P,T} = Z_{\tilde{P},T}$, and
$\tilde{P} \preceq P$, in that order.
We begin with an important lemma.

\comment{

\begin{obs}\label{O:center_zero}
If $\tilde{p}_i = t_{i+1}$ for some $1 \leq i \leq m-1$,
then $\tilde{p}_i \not\in \{n+1,n+2\}$.
\end{obs}

\begin{proof}
If $\tilde{p}_i = t_{i+1}$ then $\tilde{p}_i$
is defined by \eqref{shrink2} and $t_{i+1} \not\in \{n+1,n+2\}$.
Therefore $\tilde{p}_i \not\in \{n+1,n+2\}$.
\end{proof}
}

\begin{lemma}\label{L:same_type}
If $n+1 \in [T] \cap [\tilde{P}]$ then $\type(T) = \type(\tilde{P})$.
\comment{
 If $\type(T) \in \{0,1\}$, then
 $\type(\tilde{P}) \in \{\type(T),2\}$.}
\end{lemma}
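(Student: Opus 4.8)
The plan is to rephrase the conclusion as a parity statement, bound the relevant count using the three rules $\diamondsuit,\clubsuit,\heartsuit$ defining $\tilde P$, and then eliminate the one bad case with Proposition~\ref{P:type_D_bruhat_order} and Corollary~\ref{C:ways_to_think_about_critical_windows}.

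\emph{Reduction.} I would first note that, since $n+1\in[T]$ and $n+1\in[\tilde P]$, the types $\type(T)$ and $\type(\tilde P)$ both lie in $\{0,1\}$ and are the mod-$2$ reductions of $\#([1,n+1]\setminus T)$ and $\#([1,n+1]\setminus\tilde P)$; hence $\type(T)=\type(\tilde P)$ is equivalent to $\#(T\cap[1,n+1])\equiv\#(\tilde P\cap[1,n+1])\pmod 2$. Because $t_i\le\tilde p_i$ for every $i$ (immediate from $\diamondsuit,\clubsuit,\heartsuit$), the difference of these two counts equals $\#S$, where $S:=\{\,i:t_i\le n+1<\tilde p_i\,\}$ is the set of rows of $D(\tilde P,T)$ whose stars occupy both central columns $n+1,n+2$; moreover $S$ is a block of consecutive indices. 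So the lemma reduces to showing $\#S$ is even.

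\emph{Bounding $\#S$ and isolating the bad case.} If $i\in S$ is not the last index of $S$, then $i+1\in S$, so $\tilde p_i>n+1\ge t_{i+1}$; checking the rules, this excludes $\diamondsuit$ and $\heartsuit$ for $\tilde p_i$ and forces $\clubsuit$ with $t_{i+1}=n+1$ and $\tilde p_i=n+2$. Since $\tilde P$ has distinct entries there is at most one such $i$, so $\#S\le2$, and it remains to rule out $\#S=1$. Assume $S=\{i_0\}$. The same analysis of $\tilde p_{i_0}$ gives $t_{i_0+1}\ge n+3$ (or $i_0=m$), so $n+1\in[T]$ forces $t_{i_0}=n+1$; and since none of $\diamondsuit,\clubsuit,\heartsuit$ can produce the value $n+1$ for $\tilde p_{i_0-1}$, one gets $\tilde p_{i_0-1}\le n$, whence $n+1\in[\tilde P]$ forces $\tilde p_{i_0}=n+2$ --- so $\tilde p_{i_0}$ comes from $\diamondsuit$ or $\heartsuit$.

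\emph{The two closing cases.} I would finish by contradiction.
\begin{itemize}
\item \emph{$\tilde p_{i_0}$ from $\diamondsuit$.} Then $p_{i_0}=n+2$, and isotropy of $P$ gives $p_{i_0-1}\le n$, so $\#(P\cap[1,n])=\#(T\cap[1,n])=i_0-1$; as $\{n+1\}\subseteq[P]\cap[T]$, Proposition~\ref{P:type_D_bruhat_order} applied to $T\preceq P$ forces $\type(P)=\type(T)$, whereas $\#(T\cap[1,n+1])-\#(P\cap[1,n+1])=1$ forces $\type(P)\ne\type(T)$.
\item \emph{$\tilde p_{i_0}$ from $\heartsuit$.} Then $n+2\in\mathcal{C}_{P,T}$ (Observation~\ref{O:cut_D}) but $n+2\notin\mathcal{L}_{P,T}$; the latter forces $i_0\ge2$ and $p_{i_0-1}\ge n+2$ (otherwise $t_{i_0}=n+1\ge p_{i_0-1}$ makes $n+1$ an exceptional center cut, hence $(i_0,n+1)$ a lone star and $n+2\in\mathcal{L}_{P,T}$), and then $\tilde p_{i_0-1}\le n<p_{i_0-1}$ forces $\tilde p_{i_0-1}$ to come from $\heartsuit$, giving $n=t_{i_0}-1\in\mathcal{C}_{P,T}$. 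Since $n+2$ is a cut that is neither visible (row $i_0$ spans columns $n+2$ and $n+3$) nor the center cut, $n$ must be a cut candidate with $\type(P)\ne\type(T)$; Corollary~\ref{C:ways_to_think_about_critical_windows} then forces exactly one row of $D(P,T)$ to cross from column $n$ to $n+1$ (so $p_j\le n$ for $j<i_0-1$) and exactly one to cross from $n+2$ to $n+3$ (so $p_{i_0-1}=n+2$). Counting then gives $\#(T\cap[1,n+1])-\#(P\cap[1,n+1])=2$, hence $\type(P)=\type(T)$ --- again a contradiction.
\end{itemize}
This yields $\#S\in\{0,2\}$, proving the lemma. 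The delicate step, where I expect to spend the real effort, is the $\heartsuit$ case: squeezing $p_{i_0-1}=n+2$ out of ``$\tilde p_{i_0}=n+2$ via $\heartsuit$'' together with $n+2\notin\mathcal{L}_{P,T}$, via the exceptional-cut equivalences of Corollary~\ref{C:ways_to_think_about_critical_windows}. Everything else is bookkeeping with the three rules defining $\tilde P$ and the definitions of cuts, lone stars, and $\mathcal{L}_{P,T}$.
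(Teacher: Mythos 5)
Your proof is correct, but it reaches the conclusion by a genuinely different route at the decisive step. Both arguments ultimately reduce to eliminating the same bad configuration --- a single row of $D(\tilde P,T)$ straddling the central columns, which forces $t_{i_0}=n+1$ and $\tilde p_{i_0}=n+2$ --- but the elimination is done with different tools. The paper notes that $\tilde p_{i_0}=n+2$ cannot arise from $\clubsuit$, so $n+2\in\mathcal{C}_{P,T}$ by Observation \ref{O:cut_D}; then $n$ and $n+1$ are consecutive elements of $\mathcal{Q}_{P,T}$, so Lemma \ref{L:consec_cuts} forces $n+1$ or $n+2$ into $\mathcal{L}_{P,T}$, contradicting Corollary \ref{C:no_lin_in_PT} (for $t_{i_0}=n+1\in T$) and Observation \ref{O:lin_D} (for $\tilde p_{i_0}=n+2\in\tilde P$) in three lines. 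You instead split on whether $\tilde p_{i_0}$ comes from $\diamondsuit$ or $\heartsuit$ and derive, in each case, a parity contradiction with $T\preceq P$ directly from Proposition \ref{P:type_D_bruhat_order} and Corollary \ref{C:ways_to_think_about_critical_windows}. Your version is longer but exhibits explicitly which Bruhat-order condition the bad configuration would violate; the paper's is shorter because Lemma \ref{L:consec_cuts} has already packaged that incompatibility. Your opening reduction --- types agree iff the number of center-crossing rows of $D(\tilde P,T)$ is even, then bound that number by $2$ and exclude $1$ --- is a cleaner framing than the paper's case split on whether the central entries of $T$ and $\tilde P$ occupy the same row, though the two framings cover the same configurations.

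Two small points to tighten. First, ``none of $\diamondsuit,\clubsuit,\heartsuit$ can produce the value $n+1$ for $\tilde p_{i_0-1}$'' does not by itself yield $\tilde p_{i_0-1}\le n$: rule $\clubsuit$ would produce $\tilde p_{i_0-1}=N+1-t_{i_0}=n+2$. You need the extra observation that $\tilde p_{i_0-1}=n+2$ would put $i_0-1$ into $S$, contradicting $S=\{i_0\}$. Second, in the $\heartsuit$ case, to conclude that the cut $n+2$ is exceptional you must exclude not only $n+2$ being a visible cut but also $n=N-(n+2)$ being one (which would make $n+2$ merely an apparent cut); this follows from $t_{i_0-1}\le n<n+2\le p_{i_0-1}$, so row $i_0-1$ crosses from column $n$ to column $n+1$. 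Both fixes use only facts already established in your argument, so neither affects its validity.
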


\begin{proof}
 Suppose $\tilde{p}_i \in \{n+1,n+2\}$ for some $i$
 and $t_j \in \{n+1,n+2\}$ for some $j$.
 
 Since $\tilde{p}_k \geq t_k > n+2$ for all $k >j$,
 we have $i \leq j$.
 Furthermore, $\tilde{p}_{k-1} \leq t_{k}  < n+1$ for all $k < j$,
 so $i \geq j-1$.
 Thus we either have $i = j$ or $i+1 = j$.
 
 {\bf Case 1:} $i = j$. 
 
 We claim that $t_i = \tilde{p}_i \in \{n+1,n+2\}$.
 Suppose on the contrary that $t_i = n+1$ and $\tilde{p}_i = n+2$.
 Since $t_{i+1} \neq n+1$, $\tilde{p}_i$ is \emph{not} defined
 by \eqref{shrink2}.  Therefore by Observation \ref{O:cut_D},
 $\tilde{p}_i \in \mathcal{C}_{P,T}$.  It follows that
 both $n$ and $n+2$ are in $\mathcal{Q}_{P,T}$, so
 by Lemma \ref{L:consec_cuts}, either $n+1$ or $n+2$
 is in $\mathcal{L}_{P,T}$.  But neither $t_i$ nor
 $\tilde{p}_i$ can be in $\mathcal{L}_{P,T}$,
 by Corollary \ref{C:no_lin_in_PT} and Observation \ref{O:lin_D}.
 To avoid this contradiction, we must have $t_i = \tilde{p}_i \in \{n+1,n+2\}$.
Thus, $n+1$ is a visible cut in $D(\tilde{P},T)$,
and $\#([1,n+1] \cap T) = \#([1,n+1] \cap \tilde{P})$.
In other words, $\type(T) = \type(\tilde{P})$.
 
 \comment{
 If $t_i = n+1$, then
 suppose for the sake of contradiction that $p_{i-1} \geq n+2$.
 Note that $n = t_i-1$ is not a cut in $D(P,T)$,
 and therefore $\tilde{p}_{i-1}$ is defined by \eqref{shrink2}.
 It follows that $\tilde{p}_{i-1} = n+2$, and hence
 that $\tilde{p}_i > n+2$, contradicting the assumption
 that $\tilde{p}_i \in \{n+1,n+2\}$.
 Therefore, we must have $p_{i-1} \leq t_{i} = n+1$,
 and hence $D(P,T)$ must have an exceptional center cut.
 It follows that $(i,t_i)$ is a lone star in $D(P,T)$,
 and therefore that $n+2 \in \mathcal{L}_{P,T}$.
 By Observation \ref{O:lin_D}, $\tilde{p}_i = n+1$.
 If on the other hand $t_i = n+2$, then clearly $\tilde{p}_i = n+2$ as well,
 since $t_i \leq \tilde{p}_i$.
It follows that $t_i = \tilde{p}_i \in \{n+1,n+2\}$.
Thus, $n+1$ is a visible cut in $D(\tilde{P},T)$,
and $\#([1,n+1] \cap T) = \#([1,n+1] \cap \tilde{P})$.
In other words, $\type(T) = \type(\tilde{P})$.
}

{\bf Case 2:} $i+1=j$.

If $\tilde{p}_i = t_{i+1}$ then $\tilde{p}_i$
is defined by \eqref{shrink2} and $t_{i+1} \not\in \{n+1,n+2\}$.
Hence $\tilde{p}_i \not\in \{n+1,n+2\}$, a contradiction.
It follows that
$\tilde{p}_i \neq t_{i+1}$.
If $\tilde{p}_i = n+1$ and $t_{i+1} = n+2$,
then $n+1$ is a visible cut in $D(\tilde{P},T)$,
as seen on the left side of Figure \ref{F:center_crossings}.
On the other hand, if 
$\tilde{p}_i = n+2$ and $t_{i+1} = n+1$ as seen on the right side,
then $\#([1,n+1] \cap T) = \#([1,n+1] \cap \tilde{P})+2$,
so once again $\type(T) = \type(\tilde{P})$.
\begin{figure}
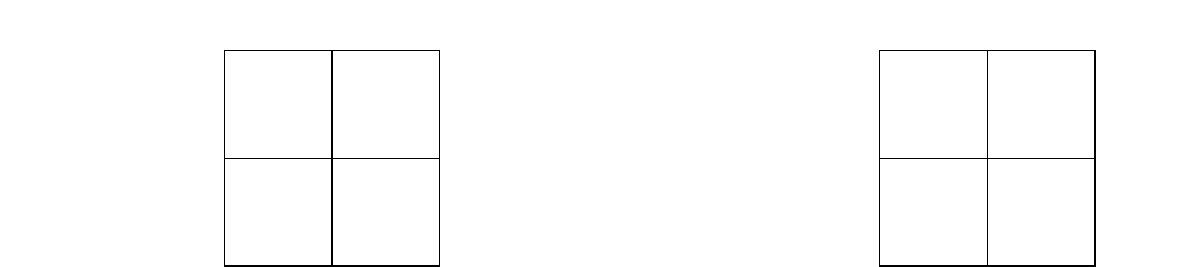
\caption{Center columns of $D(\tilde{P},T)$ in Lemma \ref{L:same_type}.}
\label{F:center_crossings}
\end{figure}
\end{proof}

 Lemma \ref{L:same_type} says that  $\{\type(\tilde{P}), \type(T)\} \neq \{0,1\}$.  Since
 we also know that $T \leq \tilde{P}$, by construction,
 we have the following immediate
 corollaries. 
 
 \begin{cor}\label{C:T_less_than}
 $T \preceq \tilde{P}$.
\end{cor}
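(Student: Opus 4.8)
The plan is to read off $T \preceq \tilde{P}$ directly from the combinatorial description of the type $D$ Bruhat order in Proposition \ref{P:type_D_bruhat_order}, using Lemma \ref{L:same_type} to dispatch the delicate type condition. Concretely, Proposition \ref{P:type_D_bruhat_order} reduces the claim to two checks: (i) $T \leq \tilde{P}$, and (ii) whenever there is a $c \in [1,n]$ with $[c+1,n+1] \subset [\tilde{P}] \cap [T]$ and $\#(\tilde{P} \cap [1,c]) = \#(T \cap [1,c])$, one has $\type(\tilde{P}) = \type(T)$.

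For (i), I would note that $T \leq \tilde{P}$ is immediate from the construction of $\tilde{P}$: in Case \eqref{shrink1} we have $\tilde{p}_i = p_i \geq t_i$ since $T \leq P$; in Case \eqref{shrink3} we have $\tilde{p}_i \in [t_i, t_{i+1}-1]$, so $\tilde{p}_i \geq t_i$; and in Case \eqref{shrink2}, if $t_{i+1} \notin \{n+1,n+2\}$ then $\tilde{p}_i = t_{i+1} > t_i$, while if $t_{i+1} \in \{n+1,n+2\}$ then $\tilde{p}_i = N+1-t_{i+1} \in \{n+1,n+2\}$ and $t_i < t_{i+1} \leq n+2$ forces $t_i \leq \tilde{p}_i$. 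Hence $t_i \leq \tilde{p}_i$ for every $i$.

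For (ii), the point is that any index $c$ as in the hypothesis automatically has $n+1 \in [c+1,n+1] \subset [\tilde{P}] \cap [T]$, so $n+1 \in [T] \cap [\tilde{P}]$, and then Lemma \ref{L:same_type} gives precisely $\type(T) = \type(\tilde{P})$. Together with (i), Proposition \ref{P:type_D_bruhat_order} yields $T \preceq \tilde{P}$.

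I do not expect any real obstacle here: the substantive work — analyzing the center columns of $D(\tilde{P},T)$ with the help of Lemma \ref{L:consec_cuts}, Corollary \ref{C:no_lin_in_PT}, and Observations \ref{O:lin_D} and \ref{O:cut_D} — has already been carried out inside the proof of Lemma \ref{L:same_type}. The only thing to be careful about is the bookkeeping step verifying that the hypothesis of condition (ii) of the Bruhat-order criterion does force $n+1 \in [T] \cap [\tilde{P}]$, which it plainly does since $n+1$ is an element of the interval $[c+1,n+1]$ for every admissible $c \in [1,n]$.
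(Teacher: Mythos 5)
Your proposal is correct and follows essentially the same route as the paper: the paper likewise observes that $T \leq \tilde{P}$ holds by construction and that Lemma \ref{L:same_type} rules out the only remaining obstruction $\{\type(\tilde{P}),\type(T)\} = \{0,1\}$ in the criterion of Proposition \ref{P:type_D_bruhat_order}. Your write-up merely makes the case-by-case verification of $t_i \leq \tilde{p}_i$ and the observation that $n+1 \in [c+1,n+1]$ explicit, which the paper leaves implicit.
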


\begin{proof}
 The condition that $\{ \type(\tilde{P}),\type(T) \} = \{0,1\}$
 is necessary for $T \not\prec \tilde{P}$.
\end{proof}

Corollary \ref{C:T_less_than} says that
$\tilde{P}$ satisfies the first half of
condition (1) of Proposition \ref{P:shrink_existence}.
We prove the second half in Proposition \ref{P:type_D_cond_3}.

\begin{cor}\label{C:no_excep}
The diagram $D(\tilde{P},T)$ has no exceptional cuts,
except possibly the center cut $n+1$.
\end{cor}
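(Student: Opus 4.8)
The plan is to obtain the corollary as a one-line consequence of Lemma \ref{L:same_type}, once we unwind the definition of an exceptional cut. Recall that, apart from the exceptional center cut $n+1$, every exceptional cut of a Richardson diagram arises from a cut candidate: an integer $c \in [1,n]$ with $[c+1,n+1] \subset [\tilde P] \cap [T]$ and $\#(T \cap [1,c]) = \#(\tilde P \cap [1,c]) + 1$ becomes an exceptional cut (together with its mirror $N+1-c$) exactly when $\type(T) \neq \type(\tilde P)$. In particular every exceptional non-center cut either lies in $[1,n]$ or is the mirror of such a cut.

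So I would argue by contradiction. Suppose $D(\tilde P, T)$ has an exceptional cut distinct from $n+1$. Using Observation \ref{O:rotate_cuts} (or part $\mathbf{2}$ of Corollary \ref{C:ways_to_think_about_critical_windows}) to pass, if necessary, from the mirror cut to its partner, we may assume there is a cut candidate $c \in [1,n]$ in $D(\tilde P, T)$ and that $\type(T) \neq \type(\tilde P)$. Since $c \le n$, we have $n+1 \in [c+1,n+1] \subset [\tilde P] \cap [T]$, so Lemma \ref{L:same_type} applies and yields $\type(T) = \type(\tilde P)$, a contradiction. Hence no exceptional cut other than $n+1$ can occur in $D(\tilde P, T)$.

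There is essentially no obstacle here: the substance of the argument is Lemma \ref{L:same_type}, which is already proved, and the only thing left to check is that its hypothesis $n+1 \in [T] \cap [\tilde P]$ is met — which is immediate from the containment $[c+1,n+1] \subset [\tilde P] \cap [T]$ together with $c \le n$. The one place a careless reader might stumble is in remembering that exceptional (non-center) cuts always come in mirror pairs $\{c, N+1-c\}$ with $c \in [1,n]$, so that it genuinely suffices to rule out a cut candidate in $[1,n]$; this is exactly the content of the definition of exceptional cuts, reinforced by Corollary \ref{C:ways_to_think_about_critical_windows}.
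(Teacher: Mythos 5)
Your argument is correct and is essentially the paper's proof, just unwound: the paper also derives the corollary from Lemma \ref{L:same_type} by observing that a non-center exceptional cut forces $\{\type(\tilde P),\type(T)\}=\{0,1\}$, which that lemma rules out.
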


\begin{proof}
 The condition that $\{ \type(\tilde{P}),\type(T) \} = \{0,1\}$
 is necessary for the existence of an exceptional cut other than the center cut.
\end{proof}

In fact, any non-central exceptional cut in $D(P,T)$ becomes a visible cut in $D(\tilde{P},T)$.
However, we will only need this fact in the special case that the
exceptional cut is less than $n+1$:

\begin{lemma}\label{L:what_happens_to_exceptional_cuts}
 If $1 \leq c \leq n$ is an exceptional cut in $D(P,T)$,
 then $c$ is a visible cut in $D(\tilde{P}, T)$.
\end{lemma}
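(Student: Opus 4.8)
The plan is to follow the unique row of $D(P,T)$ responsible for the crossing at $c$ and to show that, under the shrinking operation defining $\tilde P$, this row retreats weakly to the left of column $c$. I would fix notation first. Since $c\le n$, the cut $c$ is not the exceptional center cut, and its mirror $N-c\ge n+2$ cannot be a cut candidate, so $c$ itself must be a cut candidate: $[c+1,n+1]\subset[P]\cap[T]$, $\#(T\cap[1,c])=\#(P\cap[1,c])+1$, and $\type(T)\ne\type(P)$. By Corollary~\ref{C:ways_to_think_about_critical_windows}(2) there is exactly one row $i$ of $D(P,T)$ with $t_i\le c<p_i$; uniqueness of this row forces $p_{i-1}\le c$ and $t_{i+1}>c$, and then $\#(T\cap[1,c])=i$ while $\#(P\cap[1,c])=i-1$. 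The goal is to show $\tilde p_i\le c$: once this holds, in $D(\tilde P,T)$ row $i$ ends at $\tilde p_i\le c$, each row $k<i$ ends at $\tilde p_k<\tilde p_i\le c$, and each row $k\ge i+1$ begins at $t_k\ge t_{i+1}>c$, so no row has stars in both columns $c$ and $c+1$; that is, $c$ is a visible cut in $D(\tilde P,T)$.

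Next I would prove two reductions. First, $t_{i+1}\le n+2$: otherwise $n+1$ and $n+2$ both lie strictly between $t_i\le n$ and $t_{i+1}\ge n+3$, so $n+1\notin[T]$, contradicting $[c+1,n+1]\subset[T]$. Second, $p_i\ge t_{i+1}$, so that case~\eqref{shrink1} does not occur; assume instead $p_i<t_{i+1}$ and split on the size of $p_i$. If $p_i\le n$, the counts above give $\#(T\cap[1,p_i])=\#(P\cap[1,p_i])=i$ while $[p_i+1,n+1]\subset[P]\cap[T]$ and $\type(T)\ne\type(P)$, so Corollary~\ref{C:ways_to_think_about_critical_windows}(1) forces $T\not\preceq P$. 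If $p_i=n+1$, the same counts give $\#([1,n+1]\setminus T)=\#([1,n+1]\setminus P)$, hence $\type(T)=\type(P)$. And if $p_i\ge n+2$, then column $n+1$ of $D(P,T)$ has a single star, in row $i$, which is not an endpoint of that row, so $n+1\notin P\cup T$, again contradicting $n+1\in[T]$. All three cases are impossible.

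Now the main estimate. Because $t_i\le c$ and $t_{i+1}\le p_i$, every column in $[c+1,t_{i+1}-1]$ carries its only star in row $i$. For each $c''\in[c+1,t_{i+1}-2]$ we have $c''\le n$ and $c''\notin P\cup T$, so $c''$ is again a cut candidate and hence an exceptional cut; together with $c$ this gives $[c,t_{i+1}-2]\subset\mathcal C_{P,T}$. For each $c'\in[c+1,t_{i+1}-1]$ we have $c'\le n+1$ and $c'\notin P\cup T$, hence $N+1-c'\in P\cap T$; moreover $c'-1\in[c,t_{i+1}-2]\subset\mathcal C_{P,T}$, so by the $d\mapsto N-d$ symmetry of $\mathcal C_{P,T}$ (Observation~\ref{O:rotate_cuts}) also $N+1-c'\in\mathcal C_{P,T}$, whence the star in column $N+1-c'$ is a lone star and $c'\in\mathcal L_{P,T}$. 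Thus $[c+1,t_{i+1}-1]\subset\mathcal L_{P,T}$, and in particular $t_{i+1}-1\in\mathcal C_{P,T}$ (either $t_{i+1}-1=c$, or Proposition~\ref{P:lin_cuts_all_types} applies). Combined with $p_i\ge t_{i+1}$, this places $\tilde p_i$ in case~\eqref{shrink3}, so $\tilde p_i=\max\{c'\in[t_i,t_{i+1}-1]:c'\notin\mathcal L_{P,T}\}$; since $[c+1,t_{i+1}-1]\subset\mathcal L_{P,T}$ while $t_i\notin\mathcal L_{P,T}$ (Corollary~\ref{C:no_lin_in_PT}), the maximum is attained in $[t_i,c]$, giving $\tilde p_i\le c$, which finishes the proof by the first paragraph.

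I expect the second reduction ($p_i\ge t_{i+1}$) together with the bookkeeping in the main estimate to be the delicate part: both amount to pushing the cut-candidate characterization of Corollary~\ref{C:ways_to_think_about_critical_windows} along the columns $c,c+1,\dots,t_{i+1}-1$ while tracking which of those columns lie in $P$, in $T$, or in neither. The case $p_i\ge n+2$ of the reduction is the one most in need of the clean ``single non-endpoint star'' argument, and one must also keep the degenerate situation $t_{i+1}=c+1$, where $[c+1,t_{i+1}-1]$ is empty, consistent with everything above.
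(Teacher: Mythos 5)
Your proof is correct and follows essentially the same route as the paper's: isolate the unique row $i$ crossing the exceptional cut, establish $t_{i+1}\le n+2$ and $p_i\ge t_{i+1}$ by the same three-way case analysis on $p_i$, show $[c+1,t_{i+1}-1]\subset\mathcal{L}_{P,T}$, and conclude that $\tilde p_i$ falls under case \eqref{shrink3} with $\tilde p_i\le c$. The only slip is in the subcase $p_i\ge n+2$, where showing $n+1\notin P\cup T$ does not by itself contradict $n+1\in[T]$ (one also needs $n+2\notin T$); but that subcase is vacuous anyway, since $p_i<t_{i+1}\le n+2$ already forces $p_i\le n+1$.
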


\begin{proof}
 Since $c$ is an exceptional cut in $D(P,T)$,
 we have that $\#([1,c] \cap T) = \#([1,c] \cap P) +1$.
 In other words there exists
 exactly one integer $i \in [1,m]$ such that
 $t_i \leq c < p_i$.

 Since row $i$ is the only row crossing
 from column $c$ to column $c+1$, we must have $c+1 \leq t_{i+1}$.
 Furthermore, since $n+1 \in [T]$, we must have $t_{i+1} \leq n+2$.
 We will show that $\tilde{p}_i \leq c$, and hence that $c$ is a
 visible cut in $D(\tilde{P},T)$.
 We divide the rest of our argument into two cases.
 
 {\bf Case 1:} $c+1 = t_{i+1}$.
 
 Since $c \in \mathcal{C}_{P,T}$, $\tilde{p}_i$ is defined
 by \eqref{shrink3}.  It follows that $\tilde{p}_i \leq c$.

 {\bf Case 2:} $c+1 < t_{i+1} \leq n+2$.
 
 We claim that  $p_i \geq t_{i+1}$.
 To see why, suppose for the sake of contradiction
 that  $p_i < t_{i+1}$. 
 Then $p_i$ is a visible cut in $D(P,T)$, and
 $\#([1,p_i] \cap T) = \#([1,p_i] \cap P)$.
 If $p_i < n+1$, then since $[p_i+1, n+1] \subset [P] \cap [T]$
 and $\type(T) \neq \type(P)$ (which follow from the
 fact that $c$ is an exceptional cut), we have $T \not\prec P$,
 a contradiction.
  If $p_i = n+1$, then $\#([1,n+1] \cap T) = \#([1,n+1] \cap P)$,
 so $\type(P) = \type(T)$, contradicting the assumption
 that $c$ is an exceptional cut.
 Finally, if $p_i > n+1$, then
 $t_{i+1} > n+2$, contradicting the
 assumption that $n+1 \in [T]$.
 It follows that $p_i \geq t_{i+1}$, as claimed.
  
 Now note that any $d \in [c+1, t_{i+1}-1]$ must
 be an exceptional cut in $D(P,T)$,
 since $\#([1,d] \cap T) = \#([1,d] \cap P) + 1$
 (in particular  row $i$ is the only row
 crossing from column $d$ to column $d+1$).
 Furthermore, for each $d \in [c+1, t_{i+1}-1]$,
 we have
 $N+1-d \in T$, since $d \not\in T$ and $[c+1,n+1] \in [T]$.
 It follows that column $N+1-d$ contains a lone star in $D(P,T)$,
 and hence that $d \in \mathcal{L}_{P,T}$.
 Since $[c+1, t_{i+1}-1] \subset \mathcal{L}_{P,T}$,
 it follows that
 $\tilde{p}_i$ is defined by \eqref{shrink3} and
 that $\tilde{p}_i \leq c$.
\end{proof}

The following proposition will also be needed.

\begin{prop}\label{P:cuts_preserved}
  $\mathcal{C}_{P,T} \cup \{n+1\} = \mathcal{C}_{\tilde{P},T} \cup \{n+1\}$.
\end{prop}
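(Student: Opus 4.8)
The plan is to fix $c\in[1,N]$ with $c\neq n+1$ and prove that $c\in\mathcal{C}_{P,T}$ if and only if $c\in\mathcal{C}_{\tilde{P},T}$; since both sides of the asserted identity contain $n+1$, this is exactly what is needed. Before starting I would record two facts used repeatedly: (a)~a cut of either Richardson diagram is either an apparent cut or an exceptional cut; and (b)~by Corollary~\ref{C:ways_to_think_about_critical_windows} the non-central exceptional cuts come in mirror pairs $\{c',N-c'\}$ with $c'\in[1,n]$ a cut candidate, while the apparent cuts are stable under $d\mapsto N-d$ by definition. Hence $\mathcal{C}_{P,T}$ and $\mathcal{C}_{\tilde{P},T}$ are each stable under $d\mapsto N-d$ (the center $n+1$ being its own mirror).

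For the inclusion $\mathcal{C}_{P,T}\subset\mathcal{C}_{\tilde{P},T}\cup\{n+1\}$, I would split on how $c$ is a cut of $D(P,T)$. If $c$ or $N-c$ is a \emph{visible} cut, say $p_i\leq d<t_{i+1}$ with $d\in\{c,N-c\}$, then $p_i<t_{i+1}$, so $\tilde{p}_i=p_i$ by Case~\eqref{shrink1}, and the same inequalities make $d$ a visible cut of $D(\tilde{P},T)$; hence $c\in\mathcal{C}_{\tilde{P},T}$. If instead $c$ is a non-central \emph{exceptional} cut (so $\type(P)\neq\type(T)$), then one of $c,N-c$ lies in $[1,n]$ and is a cut candidate, hence an exceptional cut $\leq n$ of $D(P,T)$; Lemma~\ref{L:what_happens_to_exceptional_cuts} turns it into a visible cut of $D(\tilde{P},T)$, and again $c\in\mathcal{C}_{\tilde{P},T}$.

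For the reverse inclusion, I would start from $c\in\mathcal{C}_{\tilde{P},T}$, $c\neq n+1$. By Corollary~\ref{C:no_excep}, $D(\tilde{P},T)$ has no non-central exceptional cut, so $c$ or $N-c$ is a visible cut of $D(\tilde{P},T)$: there is an $i$ with $\tilde{p}_i\leq d<t_{i+1}$ and $d\in\{c,N-c\}$. If $\tilde{p}_i$ arose from Case~\eqref{shrink1} or Case~\eqref{shrink3}, Observation~\ref{O:cut_D} gives $[\tilde{p}_i,t_{i+1}-1]\subset\mathcal{C}_{P,T}$, so $d\in\mathcal{C}_{P,T}$ and hence $c\in\mathcal{C}_{P,T}$ by mirror-stability. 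If $\tilde{p}_i$ arose from Case~\eqref{shrink2}, then either $\tilde{p}_i=t_{i+1}$, or $\tilde{p}_i=N+1-t_{i+1}$ with $t_{i+1}\in\{n+1,n+2\}$; in the first case and in the subcase $t_{i+1}=n+1$ one has $\tilde{p}_i\geq t_{i+1}$, so no integer $d$ satisfies $\tilde{p}_i\leq d<t_{i+1}$, while $t_{i+1}=n+2$ forces $\tilde{p}_i=n+1$, hence $d=n+1$ and thus $c=n+1$, contrary to assumption. So Case~\eqref{shrink2} cannot produce $c$, which finishes the proof.

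I expect the main obstacle to be precisely that last paragraph: one must check that the $\iota$-swap built into Case~\eqref{shrink2} is arranged so that the only small cut this case can create (on the left half of the diagram) is the center cut $n+1$ — this is the whole reason the type~$D$ construction differs from the type~$B$/$C$ one of Section~\ref{S:instant_shrink_b}. A secondary point requiring care is feeding Lemma~\ref{L:what_happens_to_exceptional_cuts} correctly: a large exceptional cut $c\geq n+2$ has to be handled through its mirror partner $N-c\leq n$, which is why the mirror-pairing statement of Corollary~\ref{C:ways_to_think_about_critical_windows} is invoked at the outset.
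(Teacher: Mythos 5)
Your proof is correct and follows essentially the same route as the paper's: both directions rest on the same three ingredients (Observation \ref{O:cut_D} plus the analysis of Case \eqref{shrink2} for $\mathcal{C}_{\tilde{P},T}\subset\mathcal{C}_{P,T}\cup\{n+1\}$, and Case \eqref{shrink1} plus Lemma \ref{L:what_happens_to_exceptional_cuts} together with Corollary \ref{C:no_excep} for the reverse inclusion). You are merely more explicit than the paper about the mirror-stability of the cut sets under $d\mapsto N-d$ and about why Case \eqref{shrink2} can only manufacture the center cut, both of which the paper leaves implicit.
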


\begin{proof}
By Corollary \ref{C:no_excep}, there are no exceptional cuts
in $D(\tilde{P},T)$.  If $c$ is a visible cut in $D(\tilde{P},T)$,
then $\tilde{p}_i \leq c < t_{i+1}$ for some $1 \leq i \leq m-1$.
If $\tilde{p}_i$ is defined by \eqref{shrink1} or \eqref{shrink3}
then by Observation \ref{O:cut_D}, $c \in \mathcal{C}_{P,T}$.
Otherwise $\tilde{p}_i$ is defined by \eqref{shrink2} and
$\tilde{p}_i = c = n+1$.
Since all visible cuts in $\mathcal{C}_{\tilde{P},T}$
are contained in  $\mathcal{C}_{P,T} \cup \{n+1\}$,
the same is true for apparent cuts.
It follows that
$\mathcal{C}_{\tilde{P},T} \subset \mathcal{C}_{P,T} \cup \{n+1\}$.

On the other hand, if $p_i < t_{i+1}$ then
$\tilde{p}_i = p_i < t_{i+1}$.
Thus any visible cut $c \in \mathcal{C}_{P,T}$ is also
contained in $\mathcal{C}_{\tilde{P},T}$.
If $c < n+1$ is an exceptional cut in $D(P,T)$, 
then by Lemma \ref{L:what_happens_to_exceptional_cuts},
$c \in \mathcal{C}_{\tilde{P},T}$.
It follows that
$\mathcal{C}_{P,T} \subset \mathcal{C}_{\tilde{P},T} \cup \{n+1\}$.

\comment{
then
let $\alpha = \#([1,c] \cap P)$.  Recall
that $t_{\alpha+1} \leq c$ and $p_{\alpha+1} > c$.
Since $c+1 \in [T]$, $t_{\alpha+2} = c+1$
or $t_{\alpha+2} = N-c$.  

If $t_{\alpha+2} = c+1$, then $\tilde{p}_{\alpha+1} < c+1$
since $c$ is a cut in $D(P,T)$.
On the other hand, if $t_{\alpha+2} = N-c$,
then we must have $c = n$ and 
$t_{\alpha+2} = n+2$, since $[c+1,n+1] \subset [T]$.
But then $p_{\alpha+1} = n+2$, since $\type(P) \neq \type(T)$ and
$p_{\alpha+1} \in \{n+1,n+2\}$.  Thus $n+1 \in \mathcal{L}_{P,T}$,
so $\tilde{p}_{\alpha+1} < n+1 = c+1$ once again.
 Therefore in either case
$c$ is a visible cut in $D(\tilde{P},T)$.}
\end{proof}

We can now prove that $\tilde{P}$ satisfies
condition (3) of Proposition \ref{P:shrink_existence}.

\begin{prop}\label{P:type_D_cond_1}
 $\tilde{P} \to T$.
\end{prop}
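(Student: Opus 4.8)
The plan is to verify directly the three defining conditions of the relation $\tilde P \to T$, exploiting the case split built into the construction of $\tilde P$ (cases \eqref{shrink1}, \eqref{shrink2}, \eqref{shrink3}) together with the structural facts already proved: Corollary \ref{C:T_less_than}, Proposition \ref{P:cuts_preserved}, Corollary \ref{C:no_excep}, and Proposition \ref{P:lin_cuts_all_types}. Conditions (i) and (ii) will be essentially bookkeeping; condition (iii) is where the real argument lies.

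First, condition (i), that $T \preceq \tilde P$, is exactly Corollary \ref{C:T_less_than}, so nothing needs to be done there. For condition (ii) I would run through the three construction cases and check that $\tilde p_i \le t_{i+1}$ in all of them except for the single configuration the definition of $\to$ explicitly permits in type $D$: in \eqref{shrink1} we get $\tilde p_i = p_i < t_{i+1}$; in \eqref{shrink3} we get $\tilde p_i \le t_{i+1}-1 < t_{i+1}$; in \eqref{shrink2} with $t_{i+1}\notin\{n+1,n+2\}$ we get $\tilde p_i = t_{i+1}$; in \eqref{shrink2} with $t_{i+1}=n+2$ we get $\tilde p_i = N+1-t_{i+1}=n+1 < t_{i+1}$; and in \eqref{shrink2} with $t_{i+1}=n+1$ we get $\tilde p_i = N+1-t_{i+1}=n+2$, i.e.\ precisely $\tilde p_i = n+2$ and $t_{i+1}=n+1$, the allowed exception. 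So (ii) follows by inspection.

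For condition (iii) I would suppose $\tilde p_i = t_{i+1}$ for some $i$ and show $\tilde p_i$ is not a cut in $D(\tilde P,T)$. The first observation is that the equality $\tilde p_i = t_{i+1}$ can only arise from case \eqref{shrink2} with $t_{i+1}\notin\{n+1,n+2\}$, since \eqref{shrink1} gives $\tilde p_i = p_i < t_{i+1}$, \eqref{shrink3} gives $\tilde p_i \le t_{i+1}-1$, and \eqref{shrink2} with $t_{i+1}\in\{n+1,n+2\}$ gives $\tilde p_i = N+1-t_{i+1}\ne t_{i+1}$; in particular $t_{i+1}-1\notin\mathcal C_{P,T}$ and $\tilde p_i \ne n+1$. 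Then I argue by contradiction: if $\tilde p_i$ were a cut in $D(\tilde P,T)$, then since $\tilde p_i \ne n+1$, Proposition \ref{P:cuts_preserved} gives $\tilde p_i = t_{i+1}\in\mathcal C_{P,T}$ (Corollary \ref{C:no_excep} ensures there is no stray exceptional cut to worry about, the only candidate being the center cut $n+1$). But $t_{i+1}\in T$ being a cut in $D(P,T)$ makes $(i+1,t_{i+1})$ a lone star in $D(P,T)$, so $N+1-t_{i+1}\in\mathcal L_{P,T}$, and Proposition \ref{P:lin_cuts_all_types} forces both $N+1-t_{i+1}$ and $N-t_{i+1}$ into $\mathcal C_{P,T}$; since $\mathcal C_{P,T}$ is invariant under $c\mapsto N-c$ (apparent cuts), this yields $t_{i+1}-1\in\mathcal C_{P,T}$, contradicting the previous sentence. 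Hence $\tilde p_i$ is not a cut in $D(\tilde P,T)$ and (iii) holds, completing the proof.

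The main obstacle is the chain of implications in condition (iii): identifying that $\tilde p_i = t_{i+1}$ forces case \eqref{shrink2}, and then combining Proposition \ref{P:cuts_preserved} with Proposition \ref{P:lin_cuts_all_types} and the reflection symmetry of the cut set so as to land back on the very hypothesis $t_{i+1}-1\notin\mathcal C_{P,T}$ that defines case \eqref{shrink2}. The bookkeeping around the central columns $n+1,n+2$ (keeping $\tilde p_i \ne n+1$ so that Proposition \ref{P:cuts_preserved} transfers the cut without loss) is the one spot requiring care; everything else is a routine check against the construction.
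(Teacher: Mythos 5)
Your proof is correct and follows essentially the same route as the paper's: condition (i) from Corollary \ref{C:T_less_than}, condition (ii) by inspecting the three construction cases, and condition (iii) by noting that $\tilde p_i = t_{i+1}$ forces case \eqref{shrink2}, then combining the lone-star argument with Propositions \ref{P:lin_cuts_all_types} and \ref{P:cuts_preserved} to contradict $t_{i+1}-1\notin\mathcal C_{P,T}$. The only difference is cosmetic (you argue (iii) by contradiction in $D(\tilde P,T)$ and transfer back, while the paper argues forward from $D(P,T)$).
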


\begin{proof}
By Corollary \ref{C:T_less_than},
$T \preceq \tilde{P}$.
The construction of $\tilde{P}$ ensures that
 $\tilde{p}_i$ is never greater than $t_{i+1}$,
 except possibly when $t_{i+1} = n+1$ and $\tilde{p}_i = n+2$.
 Furthermore, if $\tilde{p}_i = t_{i+1}$, then $t_{i+1} -1$
 is not a cut in $D(P,T)$.  Therefore,
 $t_{i+1}$ cannot be a cut in $D(P,T)$ either,
 since that would make $(i+1,t_{i+1})$ a lone star in $D(P,T)$
 and hence make $t_{i+1} -1$ a cut in $D(P,T)$, by
 Proposition \ref{P:lin_cuts_all_types}.
It follows that $t_{i+1}$ is not a cut
in $D(\tilde{P},T)$, by
Proposition \ref{P:cuts_preserved}.
\end{proof}

We will now show that
 $Z_{P,T} = Z_{\tilde{P},T}$ by
 examining the diagrams $D(P,T)$
 and $D(\tilde{P},T)$.
 Recall that the quadratic equations defining $Z_{P,T}$
 are entirely determined by $\mathcal{C}_{P,T}$,
 the set of cuts in the $D(P,T)$.
 
Notice that
$\mathcal{C}_{P,T}$ is not equal to $\mathcal{C}_{\tilde{P},T}$
 in general, because our construction of $\tilde{P}$ adds the center cut
 $n+1$ whenever $p_i > t_{i+1} = n+2$.  By Proposition \ref{P:cuts_preserved},
 that is the only change in the cut set, and the addition of
 the center cut does not alter the equations in these cases.

 In fact, by Proposition \ref{P:cuts_preserved}, $Z_{P,T}$
and $Z_{\tilde{P},T}$ satisfy the same quadratic
equations: namely $\{f_{c} \mid c \in (\mathcal{C}_{P,T} \cap [1,n]) \cup \{n+1\} \}$.
The following proposition shows that they 
satisfy the same linear equations as well.

\begin{prop}
  $\mathcal{L}_{P,T} = \mathcal{L}_{\tilde{P},T}$.
\end{prop}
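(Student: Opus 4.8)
The plan is to establish the two inclusions $\mathcal{L}_{P,T}\subseteq\mathcal{L}_{\tilde P,T}$ and $\mathcal{L}_{\tilde P,T}\subseteq\mathcal{L}_{P,T}$ separately, following the argument used in types $B$ and $C$ inside the proof of Proposition \ref{P:shrink_existence}; since Proposition \ref{P:simple_lone_star} has no type $D$ analogue, I would replace it with the type $D$ toolkit already assembled -- Proposition \ref{P:lin_cuts_all_types}, Proposition \ref{P:alt_bruhat}, Corollary \ref{C:no_lin_in_PT}, and the invariance of $\mathcal{C}_{P,T}$ under $180^{\circ}$ rotation (Observation \ref{O:rotate_cuts}) -- together with the relations between $D(P,T)$ and $D(\tilde P,T)$ proved above: Observations \ref{O:lin_D} and \ref{O:cut_D}, Proposition \ref{P:cuts_preserved}, Corollary \ref{C:no_excep}, and Lemma \ref{L:what_happens_to_exceptional_cuts}.

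For $\mathcal{L}_{P,T}\subseteq\mathcal{L}_{\tilde P,T}$ I would take $c\in\mathcal{L}_{P,T}$ and distinguish two cases. If $c$ is a zero column of $D(P,T)$, say $p_j<c<t_{j+1}$, then $\tilde p_j=p_j$ by Case \eqref{shrink1}, so $c$ is still a zero column of $D(\tilde P,T)$. Otherwise column $d:=N+1-c$ of $D(P,T)$ contains a lone star, so $d\in P\cup T$ (Corollary \ref{C:no_lin_in_PT}); by Proposition \ref{P:lin_cuts_all_types} we have $c,c-1\in\mathcal{C}_{P,T}$, hence $d-1,d\in\mathcal{C}_{P,T}$ by Observation \ref{O:rotate_cuts}. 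Because $d-1\in\mathcal{C}_{P,T}$, the row of the construction whose endpoint abuts column $d$ cannot be governed by Case \eqref{shrink2}, so under the construction that endpoint either stays at $d$ -- via Case \eqref{shrink1} -- keeping the lone star alive in $D(\tilde P,T)$ (by Proposition \ref{P:cuts_preserved} when $d\neq n+1$, and by an explicit examination of the central columns when $d\in\{n+1,n+2\}$, where the exceptional center cut of $D(\tilde P,T)$ appears exactly when needed) -- or moves strictly left of $d-1$ -- via Case \eqref{shrink3} -- in which case Observation \ref{O:cut_D} puts the columns it vacates into $\mathcal{L}_{P,T}$ and forces $c$ to be a zero column of $D(\tilde P,T)$. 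The possibility that $d$ or $d-1$ is an exceptional rather than a visible cut of $D(P,T)$ is absorbed by Lemma \ref{L:what_happens_to_exceptional_cuts}, after reducing to $d\leq n+1$ via Observation \ref{O:rotate_cuts}.

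For $\mathcal{L}_{\tilde P,T}\subseteq\mathcal{L}_{P,T}$ I would take $c\in\mathcal{L}_{\tilde P,T}$. If $c$ is a zero column of $D(\tilde P,T)$, with $\tilde p_j<c<t_{j+1}$, then Case \eqref{shrink2} is impossible (it would force $\tilde p_j\in\{t_{j+1},N+1-t_{j+1}\}$, leaving no room for such a $c$), so $\tilde p_j$ comes from Case \eqref{shrink1} -- whence $p_j=\tilde p_j<c<t_{j+1}$ makes $c$ a zero column of $D(P,T)$ -- or from Case \eqref{shrink3} -- whence $c\in[\tilde p_j+1,t_{j+1}-1]\subseteq\mathcal{L}_{P,T}$ by Observation \ref{O:cut_D}. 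If instead column $N+1-c$ of $D(\tilde P,T)$ contains a lone star, then by Corollary \ref{C:no_excep} the cut witnessing it is visible or equal to $n+1$, and $N+1-c$ equals $\tilde p_i$ or $t_i$ for the relevant row $i$; tracing which of Cases \eqref{shrink1}--\eqref{shrink3} produced $\tilde p_i$, and using Observation \ref{O:lin_D} ($\tilde p_i\notin\mathcal{L}_{P,T}$) to eliminate the degenerate configurations, I would conclude that the corresponding column of $D(P,T)$ already carried a lone star, so $c\in\mathcal{L}_{P,T}$.

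The step I expect to be the main obstacle is, as throughout this section, the bookkeeping at the central columns $n+1$ and $n+2$: there Case \eqref{shrink2} can move a row endpoint from $t_{i+1}=n+1$ to $\tilde p_i=n+2$, the exceptional center cut can appear or disappear on passing from $D(P,T)$ to $D(\tilde P,T)$, and Proposition \ref{P:cuts_preserved} controls $\mathcal{C}_{P,T}$ and $\mathcal{C}_{\tilde P,T}$ only up to the element $n+1$. One has to check, case by case, that toggling the center cut together with this endpoint shift alters neither the set of lone stars nor whether $n+1$ or $n+2$ lies in $\mathcal{L}$; this is exactly where Corollary \ref{C:no_excep}, Lemma \ref{L:same_type}, and Corollary \ref{C:no_lin_in_PT} applied to $\tilde P$ are needed, the remainder being the routine analogue of the argument in types $B$ and $C$.
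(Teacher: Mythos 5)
Your overall plan (two inclusions, using the type $D$ toolkit of Propositions \ref{P:lin_cuts_all_types} and \ref{P:cuts_preserved}, Corollaries \ref{C:no_excep} and \ref{C:no_lin_in_PT}, and Observations \ref{O:lin_D} and \ref{O:cut_D}) is the right ambient strategy, and your treatment of the zero-column cases in both inclusions matches the paper. But the dichotomy you propose for the lone-star case of $\mathcal{L}_{P,T}\subseteq\mathcal{L}_{\tilde P,T}$ — the endpoint abutting column $d=N+1-c$ either ``stays at $d$, keeping the lone star alive'' or ``moves strictly left of $d-1$, forcing $c$ to be a zero column of $D(\tilde P,T)$'' — is false. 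Take $P=\{3,6\}$, $T=\{2,3\}$ in $OG(2,6)$ (one of the paper's own examples of exceptional cuts): the lone star $(1,3)$ with $d=p_1=3$ puts $c=4$ in $\mathcal{L}_{P,T}$, and the construction gives $\tilde p_1=2$ by Case \eqref{shrink3}, so the endpoint leaves column $3$; yet column $4$ of $D(\tilde P,T)$ is covered by row $2$ and is \emph{not} a zero column. The membership $4\in\mathcal{L}_{\tilde P,T}$ is instead witnessed by a \emph{different} lone star in column $3$, namely $(2,3)$ via $t_2=3$ together with the exceptional center cut that $D(\tilde P,T)$ newly acquires. Your case analysis neither covers this configuration (the endpoint moves to exactly $d-1$, not ``strictly left of $d-1$'') nor reaches the correct witness, so as written the forward inclusion does not go through.

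The paper avoids this trap entirely by reversing the order of the inclusions: it first proves $\mathcal{L}_{\tilde P,T}\subseteq\mathcal{L}_{P,T}$, and then, for $c\in\mathcal{L}_{P,T}$, applies Proposition \ref{P:lin_cuts_all_types} and Proposition \ref{P:cuts_preserved} to get $c-1,c\in\mathcal{C}_{\tilde P,T}$, invokes Lemma \ref{L:consec_cuts} to conclude $c$ or $N+1-c$ lies in $\mathcal{L}_{\tilde P,T}$, and rules out the second alternative using the already-established reverse inclusion together with Corollary \ref{C:no_lin_in_PT} — no tracking of individual lone stars through the construction is needed. Two smaller remarks: your premise that Proposition \ref{P:simple_lone_star} ``has no type $D$ analogue'' is not how the paper proceeds — it does invoke Proposition \ref{P:simple_lone_star} for $D(\tilde P,T)$ in the sub-case where the relevant cuts are not exceptional, which is legitimate since its proof only concerns apparent cuts, and then disposes of the exceptional case separately via Corollary \ref{C:no_excep}; and the lone-star case of your reverse inclusion is left as ``tracing which of the Cases produced $\tilde p_i$,'' which is precisely where the substantive work lies.
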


\begin{proof} 
 Suppose $c \in \mathcal{L}_{\tilde{P},T}$.
 If $c$ is a zero column in $D(\tilde{P},T)$,
 then $\tilde{p}_i < c < t_{i+1}$ for some $i$.
Whether $\tilde{p}_i$ satisfies \eqref{shrink1},
 or \eqref{shrink3}, we then have
 $c \in \mathcal{L}_{P,T}$.
 
 If $c$ is not a zero column in the diagram,
 then column $N+1-c$ contains a lone star in $D(\tilde{P},T)$.
 If neither $N+1-c$ nor $N-c$ are exceptional cuts in $D(\tilde{P},T)$
then we must have $t_i = \tilde{p}_i = N+1-c$ for some $i$,
by Proposition \ref{P:simple_lone_star}.
Thus $t_i \in \mathcal{C}_{\tilde{P},T}$,
and hence by Proposition \ref{P:cuts_preserved}, $t_i \in \mathcal{C}_{P,T}$.
It follows that  $(i,t_i)$ is a lone star in $D(P,T)$
and $c \in \mathcal{L}_{P,T}$.
On the other hand if either
$N+1-c$ or $N-c$ 
is an exceptional
cut in $D(P,T)$ then by Corollary \ref{C:no_excep},
$n+1$ must be that exceptional cut.
It follows that
$c \in \{n+1,n+2\}$. Since $c \in \mathcal{L}_{\tilde{P},T}$,
we have $n$ and $n+2$ in $\mathcal{C}_{\tilde{P},T}$,
and hence in $\mathcal{C}_{P,T}$ by Proposition \ref{P:cuts_preserved}.
By Lemma \ref{L:consec_cuts},
either $n+1$ or $n+2$ (that is,
either $c$ or $N+1-c$) must be in $\mathcal{L}_{P,T}$.
But column $N+1-c$ contains a lone star 
in $D(\tilde{P},T)$, and hence
$N+1-c \in T \cup \tilde{P}$.
Thus by Corollary \ref{C:no_lin_in_PT} and 
Observation \ref{O:lin_D}, it is impossible
for $N+1-c$ to be in $\mathcal{L}_{P,T}$.
Therefore $c \in \mathcal{L}_{P,T}$.
It follows that
 $\mathcal{L}_{\tilde{P},T} \subset \mathcal{L}_{P,T}$.

 On the other hand, if $c \in \mathcal{L}_{P,T}$, then $c-1$ and $c$ are both
 cuts in $D(P,T)$.  By Proposition \ref{P:cuts_preserved}
 they are both cuts in $D(\tilde{P},T)$ as well,
 and therefore either $c$ or $N+1-c$ is
 in $\mathcal{L}_{\tilde{P},T}$ by Lemma \ref{L:consec_cuts}.
 If $N+1-c \in \mathcal{L}_{\tilde{P},T}$,
 then we have shown that $N+1-c \in \mathcal{L}_{P,T}$ as well.
 In that case by Corollary \ref{C:no_lin_in_PT}, $D(P,T)$ does not
 have a lone star in column $N+1-c$, so $c$ must be a zero
 column in $D(P,T)$. In other words
 $p_i < c < t_{i+1}$ for some $i$.
 But then $\tilde{p}_i = p_i$, so
 $c$ is a zero column in $D(\tilde{P},T)$ as well.
\end{proof}

We have condition (2) Proposition \ref{P:shrink_existence} as an immediate corollary:
\begin{cor}\label{C:type_D_cond_2}
 $Z_{P,T} = Z_{\tilde{P},T}$.
\end{cor}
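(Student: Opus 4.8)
The plan is to reduce the statement to an equality of the two explicit lists of defining polynomials. Recall that, by definition, $Z_{P,T}\subset\P^{N-1}$ is the common zero locus of $\{f_c \mid c\in\mathcal{Q}_{P,T}\}\cup\{x_c \mid c\in\mathcal{L}_{P,T}\}$, where in type $D$ we have $\mathcal{Q}_{P,T}=([0,n]\cap\mathcal{C}_{P,T})\cup\{n+1\}$, and similarly $Z_{\tilde{P},T}$ is cut out by $\{f_c \mid c\in\mathcal{Q}_{\tilde{P},T}\}\cup\{x_c \mid c\in\mathcal{L}_{\tilde{P},T}\}$. So the corollary follows as soon as we know $\mathcal{Q}_{P,T}=\mathcal{Q}_{\tilde{P},T}$ and $\mathcal{L}_{P,T}=\mathcal{L}_{\tilde{P},T}$.

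For the quadratic part, I would invoke Proposition \ref{P:cuts_preserved}, which gives $\mathcal{C}_{P,T}\cup\{n+1\}=\mathcal{C}_{\tilde{P},T}\cup\{n+1\}$. Intersecting both sides with $[0,n]$ deletes the element $n+1$, so $[0,n]\cap\mathcal{C}_{P,T}=[0,n]\cap\mathcal{C}_{\tilde{P},T}$; adjoining $n+1$ to each side yields $\mathcal{Q}_{P,T}=\mathcal{Q}_{\tilde{P},T}$. For the linear part, I would simply quote the proposition immediately preceding this corollary, namely $\mathcal{L}_{P,T}=\mathcal{L}_{\tilde{P},T}$.

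Combining these, $Z_{P,T}$ and $Z_{\tilde{P},T}$ are cut out by exactly the same finite set of polynomials, hence are equal as subschemes of $\P^{N-1}$ (indeed their ideals coincide, and both are complete intersections by Proposition \ref{P:complete_intersection_BCD}). There is essentially no obstacle: all the substance has been absorbed into Proposition \ref{P:cuts_preserved} and the preceding proposition, and the only subtlety worth flagging is that the one possible discrepancy $\{n+1\}$ between the cut sets $\mathcal{C}_{P,T}$ and $\mathcal{C}_{\tilde{P},T}$ is harmless, since $\mathcal{Q}$ records cuts only in the range $[0,n]$ and separately always contains $n+1$ in type $D$, so adjoining or removing the center cut changes neither $\mathcal{Q}$ nor the associated quadratic equations.
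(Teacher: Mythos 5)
Your proof is correct and follows the same route as the paper: the quadratic equations are handled by Proposition \ref{P:cuts_preserved} (with the observation that the possible discrepancy at the center cut $n+1$ is absorbed by the definition of $\mathcal{Q}_{P,T}$ in type $D$), and the linear equations by the immediately preceding proposition $\mathcal{L}_{P,T}=\mathcal{L}_{\tilde{P},T}$. The paper states the corollary as immediate from exactly these two facts, so there is nothing to add.
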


\comment{
\subsection{Containment of the Type D Shrunken Richardson Variety}
}

Finally, we prove that $\tilde{P} \preceq P$,
and hence that the Richardson variety $Y_{\tilde{P},T}$
is indeed contained in $Y_{P,T}$.
Our proof will require the following somewhat technical lemma.

\begin{lemma}\label{L:technical}
Suppose $\type(T) \neq \type(P)$, and that
there exists an integer $1 \leq c \leq n$ such that
$c \not\in [T]$ and
$[c+1,n+1] \subset [P] \cap [T] \cap [\tilde{P}]$.
Then $c \not\in [P] \cap [\tilde{P}]$
\end{lemma}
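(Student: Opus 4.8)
The plan is to argue by contradiction: assume $c\in[P]\cap[\tilde P]$ and derive a contradiction.

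First I would extract the combinatorial consequences of the hypotheses. Since $[c+1,n+1]\subset[P]$, Lemma~\ref{L:another_condition_for_maximal_projection} gives $\#([c+1,N-c]\cap P)=n+1-c$; because the interval $[c+1,N-c]$ is invariant under $d\mapsto N+1-d$ and $P$ is isotropic (so $[P]=P\sqcup\bar P$), this forces $[c+1,N-c]\subseteq[P]$, and the same argument gives $[c+1,N-c]\subseteq[T]$ and $[c+1,N-c]\subseteq[\tilde P]$. Using the assumption $c\in[P]\cap[\tilde P]$, the analogous statement with $c-1$ in place of $c$ yields $[c,N+1-c]\subseteq[P]\cap[\tilde P]$; on the other hand $c\notin[T]$ forces $N+1-c\notin T$, so $\#([c,N+1-c]\cap T)=n+1-c$. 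Next, since $n+1\in[T]\cap[\tilde P]$, Lemma~\ref{L:same_type} gives $\type(\tilde P)=\type(T)$, hence $\type(P)\neq\type(\tilde P)$. Finally write $\alpha=\#(P\cap[1,c])$, $\gamma=\#(\tilde P\cap[1,c])$, $\beta=\#(T\cap[1,c])$; by construction $T\le\tilde P\le P$, so $\alpha\le\gamma\le\beta$, and since $T\preceq P$ with $\type(T)\neq\type(P)$ and $[c+1,n+1]\subset[P]\cap[T]$, the contrapositive of Proposition~\ref{P:type_D_bruhat_order}(ii) gives $\beta\ge\alpha+1$. So $D(P,T)$ has a row crossing from column $c$ to column $c+1$, whence by Corollary~\ref{C:ways_to_think_about_critical_windows} neither $c$ nor $N-c$ is a visible cut of $D(P,T)$; consequently $c$ is a cut of $D(P,T)$ only if it is an exceptional cut, which would force $\beta=\alpha+1$.

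Then I would dispose of the exceptional case, which is where the hypothesis $[c+1,n+1]\subset[\tilde P]$ does its work. Suppose $c$ is an exceptional cut of $D(P,T)$, with unique crossing row $\alpha+1$ at column $c$ (so $t_{\alpha+1}\le c<p_{\alpha+1}$, $t_{\alpha+2}>c$). I would show row $\alpha+1$ is assigned by Case~\eqref{shrink3}: it cannot be Case~\eqref{shrink1}, because $p_{\alpha+1}\le n$ would produce a visible cut of $D(P,T)$ inside $[p_{\alpha+1},t_{\alpha+2}-1]$ violating $T\preceq P$, $p_{\alpha+1}=n+1$ would give $\#(P\cap[1,n+1])=\#(T\cap[1,n+1])$ and hence $\type(P)=\type(T)$, and $p_{\alpha+1}\ge n+2$ would give $[c+1,n+1]\subseteq\bar P$, hence $n+1,n+2\in P$, contradicting isotropy; and it cannot be Case~\eqref{shrink2}, since every $d\in[c,t_{\alpha+2}-1]$ with $d\le n$ is then an exceptional cut of $D(P,T)$ (and $t_{\alpha+2}\le n+2$ would moreover create the exceptional cut $n$ or exceptional center cut $n+1$), contradicting the requirement $t_{\alpha+2}-1\notin\mathcal{C}_{P,T}$ of Case~\eqref{shrink2}. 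With row $\alpha+1$ collapsed via Case~\eqref{shrink3}, tracking the resulting cascade down the columns $c+1,\dots,n+1,n+2$ — using Lemma~\ref{L:what_happens_to_exceptional_cuts} and Corollary~\ref{C:no_excep} — one sees that $\tilde P$ contains no element of $\{n+1,n+2\}$, i.e. $\type(\tilde P)=2$, so $n+1\notin[\tilde P]$, contradicting $[c+1,n+1]\subset[\tilde P]$. \emph{This cascade is the main obstacle and needs the most care to write out rigorously.}

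It then remains to treat the case $c\notin\mathcal{C}_{P,T}$, so that $\beta\ge\alpha+2$: at least two rows of $D(P,T)$ cross column $c$, and by Lemma~\ref{L:mirror_crossings_same_cardinality} the same holds at column $N-c$. Here I would locate $c$ or $N+1-c$ inside $\tilde P$, using $c\in[\tilde P]$, and trace back through the construction. If $c=\tilde p_k$: Case~\eqref{shrink1} is impossible (it forces $p_k=c<t_{k+1}$, making $c$ a visible cut of $D(P,T)$), Case~\eqref{shrink2} is impossible ($c\le n$ excludes the value $N+1-t_{k+1}\in\{n+1,n+2\}$, leaving $c=t_{k+1}\in T$, contrary to $c\notin[T]$), so Case~\eqref{shrink3} holds and Observation~\ref{O:cut_D} gives $c\in\mathcal{C}_{P,T}$, a contradiction. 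Hence $N+1-c=\tilde p_k$; since $N+1-c>n+2$, Case~\eqref{shrink2} would give $N+1-c=t_{k+1}\in T$, impossible; Case~\eqref{shrink1} gives $N+1-c=p_k\in P$ with $p_k<t_{k+1}$, so $N+1-c$ is a visible cut of $D(P,T)$ and the counts from the setup force $\#(P\cap[1,N+1-c])=\#(T\cap[1,N+1-c])$, i.e. $\beta=\alpha+1$, contrary to $\beta\ge\alpha+2$; and Case~\eqref{shrink3} gives $N+1-c\in\mathcal{C}_{P,T}$, impossible since with $\beta\ge\alpha+2$ the column $N+1-c$ is neither visible nor apparent nor exceptional in $D(P,T)$. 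Every possibility is contradictory, so $c\notin[P]\cap[\tilde P]$, completing the argument.
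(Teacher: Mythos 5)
Your reduction into the two branches $\beta=\alpha+1$ (so $c$ is an exceptional cut of $D(P,T)$) and $\beta\ge\alpha+2$ (so $c\notin\mathcal{C}_{P,T}$) is sound, and the second branch is complete and correct: there you derive the contradiction from $c\in[\tilde{P}]$ alone by pushing $c=\tilde{p}_k$ and $N+1-c=\tilde{p}_k$ through the three cases \eqref{shrink1}, \eqref{shrink2}, \eqref{shrink3}. That is a genuinely different, and in that branch cleaner, decomposition than the paper's four-way split according to how $c$ sits inside $[P]$ and $[\tilde{P}]$.

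The gap is the exceptional branch, which is precisely where the paper's Cases 3 and 4 do their real work, and your argument there is not a proof. First, the target of the ``cascade'' is wrong: you claim it yields $\tilde{P}\cap\{n+1,n+2\}=\emptyset$, i.e.\ $\type(\tilde{P})=2$, but the correct contradiction is a counting one --- one shows $\tilde{p}_{\alpha+1}\le c$ (Lemma \ref{L:what_happens_to_exceptional_cuts}) and that a later $\tilde{p}_j$ is forced past $N-c$, so $\#(\tilde{P}\cap[c+1,N-c])<n+1-c$, contradicting $[c+1,n+1]\subset[\tilde{P}]$ via Lemma \ref{L:another_condition_for_maximal_projection}. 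The failure of $[c+1,n+1]\subset[\tilde{P}]$ need not occur at $n+1$: for $T=\{1,3,5\}$, $P=\{4,6,8\}$ in $OG(3,8)$ with the exceptional cut $c=2$ one gets $\tilde{P}=\{2,4,8\}$, where $n+1=4\in\tilde{P}$ but $3\notin[\tilde{P}]$; the obstruction sits at $c+1$, not at the center, so ``$\type(\tilde{P})=2$'' is not what the cascade can establish. (Note also that your exceptional branch never invokes the contradiction hypothesis $c\in[P]\cap[\tilde{P}]$, so you are implicitly claiming that branch is vacuous --- a stronger statement that certainly needs a written argument.) Second, one step of your claim that row $\alpha+1$ falls under \eqref{shrink3} is broken: in the sub-case $p_{\alpha+1}\ge n+2$ of \eqref{shrink1}, the containment $[c+1,n+1]\subseteq\bar{P}$ gives $[n+2,N-c]\subseteq P$, which does \emph{not} put $n+1$ in $P$ and violates no isotropy relation (two elements of $[n+2,N-c]$ sum to more than $N+1$). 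That sub-case is still impossible, but for a different reason: the same argument gives $[n+2,N-c]\subseteq T$, forcing $t_{\alpha+2}\le n+2$, while $t_{\alpha+2}>p_{\alpha+1}\ge n+2$ forces $t_{\alpha+2}\ge n+3$. Until the cascade is replaced by an actual count of $\tilde{P}\cap[c+1,N-c]$ (split, as in the paper, according to whether $c$ or $N+1-c$ lies in $P$ and in $\tilde{P}$), the lemma is not proved.
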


\begin{proof}
 Suppose for the sake of contradiction that
 $c \in [P] \cap [\tilde{P}]$.  We divide our
 argument into four cases.
 
 {\bf Case 1:} $c \in \tilde{P}$ and $N+1-c \in P$.
 
 In this case $\tilde{p}_i = c$ for some $i \in [1,m]$,
 and $p_{j} = N+1-c$  for some $j \in [1,m]$.

 Since $c \not\in T$, $\tilde{p}_i$ is
 defined by \eqref{shrink1} or \eqref{shrink3}. Hence
$\tilde{p}_i \in \mathcal{C}_{P,T}$ by Observation \ref{O:cut_D}.
 Since  $p_j - 1 = N-\tilde{p}_i$ is also a cut in $D(P,T)$,
 it follows that  $(j,p_j)$ is a lone star in $D(P,T)$,
 and therefore that $\tilde{p}_i \in \mathcal{L}_{P,T}$,
 contradicting Observation \ref{O:lin_D}.

 {\bf Case 2:} $c \in P$ and $N+1-c \in \tilde{P}$.

  In this case $p_i = c$ for some $i \in [1,m]$,
 and $\tilde{p}_{j} = N+1-c$  for some $j \in [1,m]$.

 Since $N+1-c \not\in T$, $\tilde{p}_j$ is
 defined by \eqref{shrink1} or \eqref{shrink3}. Hence
$\tilde{p}_j \in \mathcal{C}_{P,T}$ by Observation \ref{O:cut_D}.
 Since  $p_i -1= N-\tilde{p}_j$ is also a cut in $D(P,T)$,
 it follows that  $(i,p_i)$ is a lone star in $D(P,T)$,
 and therefore that $\tilde{p}_j \in \mathcal{L}_{P,T}$,
 contradicting Observation \ref{O:lin_D}.

 {\bf Case 3:} $c \in \tilde{P} \cap P$.
 
 In this case $p_i = c$ for some $i$.
 
 Let $\ell := n+1-c$.
 Since $[c+1,n+1] \subset [P]$, Lemma \ref{L:another_condition_for_maximal_projection}
 tells us that $[p_{i+1}, p_{i+\ell}] \subset [c+1,N-c]$.
 We will show that $\tilde{p}_{i+1} < c+1$ and that
 $\tilde{p}_{i+\ell+1} > N-c$,
 contradicting the assumption that
 $[c+1,n+1] \subset \tilde{P}$ and hence
 the assumption that
 $\#([c+1,N-c] \cap \tilde{P}) = \ell$
 (by Lemma \ref{L:another_condition_for_maximal_projection}),
 since in this case there can be at most
 $\ell -1$ elements of $\tilde{P}$ contained
 in the interval $[c+1,N-c]$.
 
We will first show that $\tilde{p}_{i+1} < c+1$.
Since $c \in \tilde{P}$, 
it must be the case that $\tilde{p}_j = c$ for some $j \geq i$.
We will show that $j = i+1$.

Note that if $p_i = \tilde{p}_i < t_{i+1}$ then
$\#([1,c] \cap T) = \#([1,c] \cap P)$, and hence
$T \not\prec P$ (since $\type(T) \neq \type(P)$), 
contradicting the assumption that $T \prec P$.

Also note that if $p_i = \tilde{p}_i = t_{i+1}$,
then $t_{i+1} = c$, contradicting the assumption
the $c \not\in [T]$.

Therefore,  $\tilde{p}_j = c$ for some $j > i$.
Furthermore,
$\tilde{p}_j$ is defined by \eqref{shrink3} since 
$c < t_{j+1}$,
and hence $\tilde{p}_j \in \mathcal{C}_{P,T}$.

However, neither $c$ nor $N-c$ is a visible cut in $D(P,T)$,
since that would imply $T \not\prec P$ by Corollary \ref{C:ways_to_think_about_critical_windows}.
It follows that $c$ is an exceptional cut in $D(P,T)$.

Since $c$ is an exceptional cut,
and since $p_i = c$,
row $i+1$ must be the only row crossing from column $c$ to column $c+1$,
by Corollary \ref{C:ways_to_think_about_critical_windows}.
In other words
$t_{i+1} < c < p_{i+1}$,
and $t_{i+2} > c$.
Thus $j = i+1$, and $\tilde{p}_{i+1} = \tilde{p}_j = c < c+1$.

It remains to show that
 $\tilde{p}_{i+\ell+1} > N-c$.

 Since  $[c+1,n+1] \subset [T]$, and since
 $t_{i+1} < c < t_{i+2}$,
 Lemma \ref{L:another_condition_for_maximal_projection}
 tells us that $[t_{i+2}, t_{i+\ell+1}] \subset [c+1,N-c]$.
Furthermore, 
row $i+\ell+1$ must cross from column $N-c$ to column $N-c+1$,
since $N-c+1 \leq p_{i+\ell+1}$.

In fact, $t_{i+\ell+1} < N-c+1 < p_{i+\ell+1}$,
since $p_{i+\ell+1} \neq N-c+1$ (due to the fact that $c \in P$).
Therefore $N-c+1$ is not a visible cut in $D(P,T)$.
It is not even an apparent cut, since
$t_i < c = p_i$.
Finally, $N+1-c$ is not an exceptional cut, since $c \not\in [T]$.
Thus $N+1-c \not\in \mathcal{C}_{P,T}$.

Note that $t_{i+\ell+2} > N-c+1$.
Thus if $\tilde{p}_{i+\ell+1}$ is defined by \eqref{shrink2},
then $\tilde{p}_{i+\ell+1} > N-c+1$.
Furthermore, if $\tilde{p}_{i+\ell+1}$ is defined by \eqref{shrink1}
or \eqref{shrink3}, then since  $N+1-c \not\in \mathcal{C}_{P,T}$
and $t_{i+\ell+2} > N-c+1$, we must again have
$\tilde{p}_{i+\ell+1} > N-c+1$.

 {\bf Case 4:} $N+1-c \in \tilde{P} \cap P$.
 
 As in the previous case, let $\ell := n+1-c$.
 We have $p_j = N+1-c$ for some $j$.
 Since $[c+1,n+1] \subset [P]$, we have 
 $[p_{j-\ell}, p_{j-1}] \subset [c+1,N-c]$
 by Corollary \ref{C:ways_to_think_about_critical_windows}.
 We will show that $\tilde{p}_j > N-c$
 and $\tilde{p}_{j-\ell} < c+1$,
 contradicting the assumption that
 $[c+1,n+1] \subset [\tilde{P}]$.

 We will first show that $\tilde{p}_j > N-c$.
 Suppose for the sake of contradiction that
 $\tilde{p}_k = N+1-c$ for some $k > j$.
 Then since $N+1-c \not\in T$, 
 $\tilde{p}_k$ is defined by \eqref{shrink1}
 or \eqref{shrink3}, and is therefore
 a cut in $D(P,T)$ by Observation \ref{O:cut_D}.
 Since $c \not\in [T]$, it follows that $\tilde{p}_k$
 is not an exceptional cut.
 Since $t_k \leq \tilde{p}_k = p_j < p_k$,
 it follows that 
 $\tilde{p}_k$
 is not a visible cut either.
 
But  $\tilde{p}_k$ is a cut, and hence
$c-1$ must be a visible cut in $D(P,T)$.
Since $c \not\in T$, $c$ must be a zero column in $D(P,T)$.
But then $c$ is a visible cut,
so by Corollary \ref{C:ways_to_think_about_critical_windows}
$T \not\prec P$, a contradiction.
It follows that $k=j$, and hence that
$\tilde{p}_j = N+1-c > N-c$.

It remains to show that $\tilde{p}_{j-\ell} < c+1$.
Since $p_j = \tilde{p}_j = N+1-c$, and since $N+1-c \not\in T$,
it must be the case that
$\tilde{p}_j$ is defined by \eqref{shrink1}, and therefore that
$p_j$ is a visible cut in $D(P,T)$.
It follows that row $j$ is the only row crossing from column
$N-c$ to column $N-c+1$, and hence that
$c$ and $N-c$ are exceptional cuts in $D(P,T)$
by Corollary \ref{C:ways_to_think_about_critical_windows}.

By Lemma \ref{L:what_happens_to_exceptional_cuts},
$c$ is a visible cut in $D(\tilde{P},T)$.
It follows that
$\tilde{p}_{j-\ell} \leq c$, since
$t_{j-\ell} \leq c$.
\end{proof}

\comment{

Our proof will require a somewhat technical lemma.
We say the 
$i^{\text{th}}$ row of the diagram $D(P,\tilde{P})$ is a \emph{center crossing}
if $\tilde{p}_i \leq n+1 < p_i$.
Note that this definition does not require
$\tilde{P} \prec P$.

\begin{lemma}\label{L:center_crossings}
Suppose there exists an integer $c \in [1,n]$ such that
$[c+1,n+1] \subset [\tilde{P}] \cap [P]$ and
$\#[1,c] \cap \tilde{P} = \#[1,c] \cap P$, and
that $c$ is the maximal integer in $[1,n]$
satisfying these properties.
If, in addition, there is at least one
center crossing in $D(P,\tilde{P})$, then
we have $[c+1,n+1] \subset [T]$.
\end{lemma}

\begin{proof} 
We will show that each $\tilde{p}_i \in [c+1,N-c]$ follows  Case \eqref{shrink2}
of the construction of $\tilde{P}$.

The conditions satisfied by the integer $c$ guarantee that no
integer $d \in [c+1,N-c]$ is a
cut in $D(P,\tilde{P})$, with the possible exception of the center cut $d=n+1$.
However, the possiblity of a center cut is precluded by the additional
contraint that there is at least one center crossing in $D(P,\tilde{P})$.

It follows that no
integer $d \in [c+1,N-c]$ is a
cut in $D(P,T)$, since any cut in $D(P,T)$
is also a cut in $D(P,\tilde{P})$.
Because $D(P,T)$
contains no cuts in the interval $[c+1,N-c]$,
\comment{
also need $c$ maximal to preclude \eqref{shrink1} elements
}
we have
$\tilde{p_i}$ is defined by Case \eqref{shrink2} 
for each $\tilde{p_i} \in [c+1,N-c]$.
In other words $[c+1,n+1] \subset [T]$.
\end{proof}
}

We can now prove that $\tilde{P}$ satisfies 
the second half of condition (1) of Proposition \ref{P:shrink_existence}.

\begin{prop}\label{P:type_D_cond_3}
 $\tilde{P} \preceq P$.
\end{prop}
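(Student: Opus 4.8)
The plan is to verify the two conditions of the type $D$ Bruhat criterion (Proposition \ref{P:type_D_bruhat_order}): that (i) $\tilde{P} \leq P$, and (ii) whenever there is an integer $c \in [1,n]$ with $[c+1,n+1] \subset [P] \cap [\tilde{P}]$ and $\#(\tilde{P} \cap [1,c]) = \#(P \cap [1,c])$, one has $\type(P) = \type(\tilde{P})$. Together with Corollary \ref{C:T_less_than}, Proposition \ref{P:type_D_cond_1}, and Corollary \ref{C:type_D_cond_2}, this finishes the verification of Proposition \ref{P:shrink_existence} in type $D$.

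For (i) I would go case by case through the construction of $\tilde{P}$. Cases \eqref{shrink1} and \eqref{shrink3} give $\tilde{p}_i \leq p_i$ at once, as does \eqref{shrink2} when $t_{i+1} \notin \{n+1,n+2\}$. The one delicate case is \eqref{shrink2} with $t_{i+1} \in \{n+1,n+2\}$, where $\tilde{p}_i = N+1-t_{i+1}$: if $t_{i+1} = n+2$ then $\tilde{p}_i = n+1 < n+2 = t_{i+1} \leq p_i$, and if $t_{i+1} = n+1$ then $\tilde{p}_i = n+2$ and it remains to exclude $p_i = n+1$. But $p_i = t_{i+1} = n+1$ forces $\#(T \cap [1,n]) = \#(P \cap [1,n]) + 1$ with $n+1 \in [P]\cap[T]$, so $c = n$ is a cut candidate in $D(P,T)$; the same count gives $\type(P) \neq \type(T)$, so $n$ would be an exceptional cut in $D(P,T)$, contradicting the Case \eqref{shrink2} hypothesis $t_{i+1}-1 = n \notin \mathcal{C}_{P,T}$. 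Hence $p_i \geq n+2 = \tilde{p}_i$, so $\tilde{P} \leq P$.

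For (ii) I would argue by contradiction, with Lemma \ref{L:technical} as the engine. Suppose such a $c$ exists with $\type(P) \neq \type(\tilde{P})$; given $\tilde{P} \leq P$, this is exactly the statement that $D(P,\tilde{P})$ contains a critical window (Lemma \ref{L:critical_window_bruhat}), so the shrinking argument from the proof of Proposition \ref{P:alt_bruhat}, applied to $D(P,\tilde{P})$, produces indices with $\tilde{p}_i + p_j = N+1$ such that $\tilde{p}_i$ is an exceptional cut of $D(P,\tilde{P})$ — hence $[\tilde{p}_i+1,n+1]\subset[P]\cap[\tilde{P}]$, $\type(P)\neq\type(\tilde{P})$, and $\tilde{p}_i \in [P]\cap[\tilde{P}]$ (because $\tilde{p}_i \in \tilde{P}$ and $N+1-\tilde{p}_i = p_j \in P$). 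The idea is to contradict Lemma \ref{L:technical} applied with the integer $\tilde{p}_i$, whose conclusion $\tilde{p}_i \notin [P]\cap[\tilde{P}]$ would then be absurd. Its first hypothesis, $\type(T) \neq \type(P)$, follows from $\type(P)\neq\type(\tilde{P})$ via Lemma \ref{L:same_type} when $n+1 \in [T]$ and via $n+1 \notin [T] \Rightarrow \type(T)=2\neq\type(P)$ otherwise.

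The hard part will be the remaining two hypotheses of Lemma \ref{L:technical}, namely $\tilde{p}_i \notin [T]$ and $[\tilde{p}_i+1,n+1]\subset[T]$ (and, along the way, confirming that $\tilde{p}_i$ is produced by Case \eqref{shrink1} or \eqref{shrink3}, so that $\tilde{p}_i\in\mathcal{C}_{P,T}$ by Observation \ref{O:cut_D} and $\tilde{p}_i\notin\mathcal{L}_{P,T}$ by Observation \ref{O:lin_D}; the degenerate central case $\tilde{p}_i = n+1$ must be disposed of separately using Corollary \ref{C:no_excep} and $\type(P)\neq\type(\tilde{P})$). The plan is to exploit $\tilde{P}\to T$ (Proposition \ref{P:type_D_cond_1}) together with $T\preceq\tilde{P}$: from $[\tilde{p}_i,n+1]\subset[\tilde{P}]$ and Lemma \ref{L:another_condition_for_maximal_projection}, $\tilde{P}$ fills exactly half of the symmetric block $[\tilde{p}_i,N+1-\tilde{p}_i]$; since $D(\tilde{P},T)$ has no $2\times 2$ squares and $T\leq\tilde{P}$, tracking the crossings of $D(\tilde{P},T)$ across this block should force $T$ to fill exactly half of $[\tilde{p}_i+1,N-\tilde{p}_i]$ — that is, $[\tilde{p}_i+1,n+1]\subset[T]$ — while keeping $\tilde{p}_i\notin[T]$. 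Feeding this into Lemma \ref{L:technical} then produces the contradiction, and with Corollary \ref{C:T_less_than} we obtain $T\preceq\tilde{P}\preceq P$, completing the proof.
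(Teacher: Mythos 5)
Your observation about the $\tilde{P}\leq P$ subtle case is a genuine and correct sharpening of the paper's offhand claim "by construction": when $\tilde{p}_i = N+1-t_{i+1}$ and $t_{i+1}=n+1$ one really does need to exclude $p_i=n+1$, and your argument that $p_i=t_{i+1}=n+1$ would force $n$ to be an exceptional cut in $D(P,T)$ (contradicting the Case~\eqref{shrink2} hypothesis) is sound.

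However, the plan for condition (ii) has real gaps. You want to apply Lemma \ref{L:technical} with $c=\tilde p_{i_0}$, where $[\tilde p_{i_0},p_{j_0}]$ is a minimal critical window of $D(P,\tilde P)$, and that requires (a) $\tilde p_{i_0}\notin[T]$ and (b) $[\tilde p_{i_0}+1,n+1]\subset[T]$. You note you must ``confirm that $\tilde{p}_i$ is produced by Case~\eqref{shrink1} or~\eqref{shrink3}'' for (a), but give no argument; if $\tilde p_{i_0}$ is produced by \eqref{shrink2} then $\tilde p_{i_0}\in[T]$ outright and (a) fails. Your argument for (b) --- that no $2\times 2$ squares in $D(\tilde P,T)$ force $T$ to ``fill exactly half'' of the reflected block --- is vague and not clearly correct: $T\leq\tilde P$ with no $2\times 2$ squares controls $t_{i_0+1},\dots,t_{i_0+\ell}$ lying in $[\tilde p_{i_0},N+1-\tilde p_{i_0}]$ but does not by itself show $[\tilde p_{i_0}+1,n+1]\subset[T]$; the paper obtains the analogous fact $[c+1,n+1]\subset[T]$ only via an inductive chain of applications of Lemma \ref{L:technical} starting at $n+1\in[T]$, and that argument uses the failed Bruhat data for $(P,\tilde P)$ rather than just the $(\tilde P,T)$ structure. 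Finally, when $n+1\notin[T]$ (the paper's Case~1), hypothesis (b) is outright false for $\tilde p_{i_0}<n+1$; you mention handling $\tilde p_{i_0}=n+1$ separately but do not address $\tilde p_{i_0}<n+1$ with $n+1\notin[T]$. The paper's proof is genuinely different: it first dispatches $n+1\notin[T]$ directly, then applies Lemma \ref{L:technical} inductively to get $[c+1,n+1]\subset[T]$, and then closes with a three-way case analysis on $\#([1,c]\cap T)-\#([1,c]\cap P)$ using Lemma \ref{L:what_happens_to_exceptional_cuts} and a pigeonhole count. As written, your ``hard part'' is not resolved, so the proof is incomplete.
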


\begin{proof}

 Note that $\tilde{P} \leq P$ by construction.  Assuming $\tilde{P} < P$, 
 suppose $\tilde{P} \not\prec P$, for the sake of contradiction.  
 It follows that
 $\type(\tilde{P}) \neq \type(P)$, and that there
 exists an integer $c \in [1,n]$ such that
$[c+1,n+1] \subset [\tilde{P}] \cap [P]$ and
$\#([1,c] \cap \tilde{P}) = \#([1,c] \cap P)$.

{\bf Case 1:} $n+1 \not\in [T]$.

Since $n+1 \in [\tilde{P}]$, we have $\tilde{p}_i \in \{n+1,n+2\}$
for some $i$.
Since $n+1 \not\in [T]$, $\tilde{p}_i$ must be defined by \eqref{shrink1}
or \eqref{shrink3}.
By Observation \ref{O:cut_D}, $\tilde{p}_i \in \mathcal{C}_{P,T}$.
If $\tilde{p}_i = n+1$, this means $n+1 \in \mathcal{C}_{P,T}$.
If $\tilde{p}_i = n+2$, then also we have
 $n+1 \in \mathcal{C}_{P,T}$.
 To see why, note that both $n$ and $n+1$ are in
 $\mathcal{Q}_{P,T}$, so
by Lemma \ref{L:consec_cuts}, either $n+1$
or $n+2$ is in $\mathcal{L}_{P,T}$.

If $n+1$ is an exceptional center cut in $D(P,T)$,
then $p_j = n+2 < t_{j+1}$ for some $j$,
since $n+1 \not\in [T]$.  In this case,
$\tilde{p}_j = p_j = n+2$, and hence
$n+1$ is a visible cut in $D(P,\tilde{P})$.
On the other hand if
 $n+1$ is a visible cut in $D(P,T)$, then
it must be a visible cut in $D(P,\tilde{P})$
as well.
It follows that
$\#([1,n+1] \cap P) = \#([1,n+1] \cap \tilde{P})$.
In other words $\type(P) = \type(\tilde{P})$,
a contradiction.

{\bf Case 2:} $n+1 \in [T]$.

Since $n+1 \in [T] \cap [\tilde{P}]$, Lemma \ref{L:same_type}
implies $\type(T) = \type(\tilde{P})$.  Since we are assuming
$\type(\tilde{P}) \neq \type(P)$, this means
$\type(T) \neq \type(P)$.
We can therefore invoke Lemma \ref{L:technical} $n-c$ times to
deduce that $[c+1,n+1] \subset [T]$.

We divide the remainder of the proof into three subcases,
depending on the number of rows crossing from column $c$ to column $c+1$
of $D(P,T)$.

{\bf Case 2a:}
$\#([1,c] \cap T) = \#([1,c] \cap P)$.

Since $\type(T) \neq \type(P)$ and
$[c+1,n+1] \subset [T] \cap [P]$,
we have $T \not\preceq P$, a contradiction.

 {\bf Case 2b:}
 $\#([1,c] \cap T) = \#([1,c] \cap P) + 1$.

 In other words there is exactly one integer $i$
 such that $t_i \leq c < p_i$.
 Furthermore, $c$ is an exceptional cut in $D(P,T)$,
 since $[c+1,n+1] \subset [T]$ and $\type(T) \neq \type(P)$.
 Therefore by Lemma \ref{L:what_happens_to_exceptional_cuts},
 $c$ is a visible cut in $D(\tilde{P},T)$.
 This means $\tilde{p}_i \leq c < t_{i+1}$.
 But $c < p_i$,
 so we have $\tilde{p}_i \leq c < p_i$,
 contradicting the assumption that
  $\#([1,c] \cap \tilde{P}) = \#([1,c] \cap P)$.

 {\bf Case 2c:}
 $\#([1,c] \cap T) \geq \#([1,c] \cap P) + 2$.

Let $i$ be the smallest integer such that $t_i \leq c  < p_i$.
Note that $t_{i+1} \leq c < p_{i+1}$.
Since $\tilde{p}_i \leq t_{i+1} \leq c$, it follows that
$\#([1,c] \cap \tilde{P}) > \#([1,c] \cap P)$,
again contradicting the assumption that $\#([1,c] \cap \tilde{P}) = \#([1,c] \cap P)$.
 \comment{
 Since $\type(\tilde{P}) \neq \type(P)$, there are an
 odd number of center crossings in $D(P,\tilde{P})$, and
 therefore at least one.
Thus, by Lemma \ref{L:center_crossings}, we have $[c+1,n+1] \subset [T]$.

We shall now consider three cases, each of which leads to
a contradiction.

{\bf Case 1}:
$\#[1,c] \cap T = \#[1,c] \cap P$.

Since $[c+1,n+1] \subset [T]$, we know that  $\type(T) \in \{0,1\}$.
By Lemma \ref{L:same_type} we then have $\type(T) = \type(\tilde{P})$.
Since $\type(\tilde{P}) \neq \type(P)$, it follows
that $T \not\preceq P$, a contradiction.

{\bf Case 2}:
$\#[1,c] \cap T = \#[1,c] \cap P + 1$.

In this case $c$ is an exceptional cut in $D(P,T)$,
since $[c+1,n+1] \subset [T] \cap [P]$ and $\type(T) \neq \type(P)$.
It follows that $c+1 \not\in \tilde{P}$,
contradicting the fact
that $[c+1,N-c]$ is a minimal critical window
in $D(P,\tilde{P})$ (see Proposition \ref{P:alt_bruhat} for the definition
of \emph{critical window}).

{\bf Case 3}:
$\#[1,c] \cap T \geq \#[1,c] \cap P + 2$.

Let $i$ be the smallest integer such that $t_i < c+1  \leq p_i$.
Since $\tilde{p}_i \leq t_{i+1} < c+1$, it follows that
$\# [1,c] \cap \tilde{P} > \# [1,c] \cap P$,
contradicting the assumption that $\# [1,c] \cap \tilde{P} = \# [1,c] \cap P$.
}
\end{proof}

Combining Propositions  \ref{P:type_D_cond_1} and \ref{P:type_D_cond_3}, 
and Corollaries \ref{C:T_less_than} and \ref{C:type_D_cond_2}, we see
that the Schubert symbol $\tilde{P}$ satisfies all three conditions 
of Proposition \ref{P:shrink_existence}. By the discussion at the beginning
of Section \ref{S:type_BC_onto}, this completes the proof of 
Theorem \ref{T:intro_main_theorem}.

\comment{
\subsection{Diagram Rotation}

We briefly note that given
a Richardson variety $Y_{P,T}$,
there are other ways to construct
a smaller Richardson varieties $Y_{P',T'}$
such that $Z_{P',T'} = Z_{P,T}$
and $P' \to T'$, assuming
we do not require $T' = T$.

One way to do this is to fix $P$ and \emph{raise} $T$.
Specifically, we can rotate $D(P,T)$ by $180$ degrees, apply the
 previous construction to the resulting diagram $D(\bar{T},\bar{P})$,
and then rotate back.  This procedure produces a Schubert symbol
 $\tilde{T}$, and the pair $(P,\tilde{T})$
 satisfies all the desired properties.
 
\begin{example}
Suppose $T = \{3,4,5,9\}$ and $P =\{6,7,9,10\} $ are Schubert symbols
on $SG(4,10)$.  Raising $T$ as described above, we
produce the Schubert symbol $\tilde{T} = \{3,6,7,10\}$,
which satisfies 
$T \preceq \tilde{T} \preceq P$,
$Z_{P,\tilde{T}} = Z_{P,T}$
and $P \to \tilde{T}$.
\end{example}

\comment{
For example, suppose $T = \{3,4,5,9\}$ and $P =\{6,7,9,10\} $ are Schubert symbols
for $SG(4,10)$.
The diagram $D(P,T)$ is shown below, along with the rotated diagram
$D(\bar{T},\bar{P})$, in which 
we have marked in red the stars that our construction
would have us delete.
\[
\left(
  \begin{array}{cccccccccc}
    0 & 0 & * & * & {*} & {*} & 0 & 0 & 0 & 0  \\
    0 & 0 & 0 & * & * & {*} & {*} & 0 & 0 & 0  \\
    0 & 0 & 0 & 0 & * & * & * & * & {*} & 0  \\
    0 & 0 & 0 & 0 & 0 & 0 & 0 & 0 & * & *  \\
  \end{array}
\right) \curvearrowright
\left(
  \begin{array}{cccccccccc}
    * & {\color{red}*} & 0 & 0 & 0 & 0 & 0 & 0 & 0 & 0  \\
    0 & * & * & * & {\color{red}*} & {\color{red}*} & 0 & 0 & 0 & 0  \\
    0 & 0 & 0 & * & * & {\color{red}*} & {\color{red}*} & 0 & 0 & 0  \\
    0 & 0 & 0 & 0 & * & * & * & * & 0 & 0  \\
  \end{array}
\right).
\]
Rotating back, we have the original diagram, but with different stars marked for deletion:
\[
\left(
  \begin{array}{cccccccccc}
    0 & 0 & * & * & * & * & 0 & 0 & 0 & 0  \\
    0 & 0 & 0 & {\color{red}*} & {\color{red}*} & * & * & 0 & 0 & 0  \\
    0 & 0 & 0 & 0 & {\color{red}*} & {\color{red}*} & * & * & * & 0  \\
    0 & 0 & 0 & 0 & 0 & 0 & 0 & 0 & {\color{red}*} & *  \\
  \end{array}
\right).
\]
Thus, in this case $\tilde{T} = \{3,6,7,10\}$, and the pair $(P, \tilde{T})$
satisfies all the desired properties.}

\comment{
By the previous construction, there exists a Schubert symbol $X$ such that
$\bar{P} \preceq X \preceq \bar{T}$, $Z_{\bar{T},\bar{P}} = Z_{X,\bar{P}}$,
and $X \to \bar{P}$.
By rotating the diagram once more, we observe
$T \preceq X \preceq P$, $Z_{P,T} = Z_{P,\bar{X}}$,
and $P \to \bar{X}$.
}
}

\section{The Grothendieck Ring}\label{S:groth}

In this section we summarize some facts about $K$-theory which will be used
in Sections \ref{S:triples} and \ref{S:pieris}.  Further
details can be found in \cite{fulton:intersection} or \cite{brion:lectures}.

Given an algebraic variety $X$, let
$K^0(X)$ denote the Grothendieck ring of algebraic vector bundles on $X$.
Let $K_0(X)$ denote the Grothendieck group
of coherent $\cO_X$-modules, which is a module over $K^0(X)$.
Both the ring structure of $K^0(X)$ and the module structure of $K_0(X)$
are defined by tensor products.
A closed subvariety $Z \subset X$ has a \emph{Grothendieck class} $[\cO_Z] \in K_0(X)$
defined by its structure sheaf.
If $X$ is nonsingular, the map $K^0(X) \to K_0(X)$ sending a vector
bundle to its sheaf of sections is an isomorphism,
and we write $K(X) := K_0(X) \cong K^0(X)$, which we refer to as
the \emph{Grothendieck ring of $X$}.

A morphism of varieties $f: X \to Y$, defines a pullback ring homomorphism
$f^*:K^0(Y) \to K^0(X)$ by pullback of vector bundles.  
If $f$ is proper, then there exists a pushforward group
homomorphism $f_*: K_0(X) \to K_0(Y)$.
Both these maps are functorial with respect
to composition of morphisms.
The \emph{projection formula} says that $f_*$ is a
$K^0$-module homomorphism, in the sense that
\begin{equation}\label{E:proj}
f_*(f^*\mathcal{A}\cdot\mathcal{B}) = \mathcal{A}\cdot f_*\mathcal{B}
\end{equation}
where $\mathcal{A} \in K^0(Y)$ and $\mathcal{B} \in K_0(X)$.
If $X$ is a complete variety, then the sheaf Euler characteristic map
$\chi_X:K(X) \to K(\text{point}) = \Z$  is defined to be the pushforward along
the morphism $X \to \text{point}$.

We need the following well known fact (see e.g. \cite[Lemma 2.2]{buch:cominuscule}).
\begin{lemma}\label{L:fundamental_lemma_of_intersection_theory}
Let $X$ be a non-singular variety and let $Y$ and $Z$ be closed varieties
of $X$ with Cohen-Macaulay singularities. Assume that each component of $Y \cap Z$
has dimension $\dim(Y) + \dim(Z) - \dim(X)$. Then $Y \cap Z$ is Cohen-Macaulay and
$[\cO_Y] \cdot [\cO_Z] = [\cO_{Y \cap Z}]$ in $K(X)$.
\end{lemma}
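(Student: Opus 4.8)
The plan is to reduce the statement to a transparent local computation with a Koszul complex, via the standard device of reduction to the diagonal. Since $X$ is nonsingular, every coherent sheaf on $X$ admits a finite locally free resolution, so $K(X)=K^0(X)$ and the product of the classes of two coherent sheaves is computed by derived tensor product; in particular $[\cO_Y]\cdot[\cO_Z]=\sum_{i\ge 0}(-1)^i[\mathrm{Tor}^{\cO_X}_i(\cO_Y,\cO_Z)]$ in $K(X)$, where $\mathrm{Tor}^{\cO_X}_i(\cO_Y,\cO_Z)$ denotes the coherent sheaf of local Tor modules, each of whose supports lies in $Y\cap Z$. Thus it is enough to prove that $\mathrm{Tor}^{\cO_X}_i(\cO_Y,\cO_Z)=0$ for $i>0$ and that $\mathrm{Tor}^{\cO_X}_0(\cO_Y,\cO_Z)=\cO_Y\otimes_{\cO_X}\cO_Z=\cO_{Y\cap Z}$ (the scheme-theoretic intersection) is Cohen-Macaulay; these are local assertions, which I would verify at an arbitrary point $q\in Y\cap Z$.

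First I would invoke reduction to the diagonal (Serre): writing $\Delta\subset X\times X$ for the diagonal, which we identify with $X$, there is a canonical isomorphism $\mathrm{Tor}^{\cO_X}_i(\cO_Y,\cO_Z)\cong\mathrm{Tor}^{\cO_{X\times X}}_i(\cO_{Y\times Z},\cO_\Delta)$ of sheaves on $X\cong\Delta$, where $\cO_{Y\times Z}=\cO_Y\boxtimes\cO_Z$ is the structure sheaf of $Y\times Z$; this is compatible with the identification of the common support with $Y\cap Z=(Y\times Z)\cap\Delta$. Because $X$ is smooth of dimension $n:=\dim X$, the subvariety $\Delta\cong X$ is smooth and hence regularly embedded of codimension $n$ in $X\times X$: near $(q,q)$ the ideal of $\Delta$ in $\cO_{X\times X}$ is generated by a regular sequence $f_1,\dots,f_n$, so the Koszul complex $K_\bullet(f_1,\dots,f_n;\cO_{X\times X})$ is a finite free resolution of $\cO_\Delta$. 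Therefore, locally near $q$, the sheaf $\mathrm{Tor}^{\cO_{X\times X}}_i(\cO_{Y\times Z},\cO_\Delta)$ is the $i$-th homology of the Koszul complex $K_\bullet(f_1,\dots,f_n;\cO_{Y\times Z})$.

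Next I would bring in the two hypotheses. Since $Y$ and $Z$ have Cohen-Macaulay singularities, the product $Y\times Z$ is Cohen-Macaulay (a product of Cohen-Macaulay $\C$-schemes is Cohen-Macaulay), and because a Cohen-Macaulay local ring is equidimensional, the local ring $A:=\cO_{Y\times Z,(q,q)}$ is Cohen-Macaulay of dimension $\dim Y+\dim Z$ (using that $Y$ and $Z$ are irreducible). Now $A/(f_1,\dots,f_n)A=\cO_{Y\cap Z,q}$, and the dimension hypothesis---every component of $Y\cap Z$ has dimension $\dim Y+\dim Z-n$---says precisely that this quotient has dimension $\dim Y+\dim Z-n$, i.e. the $f_i$ drop the dimension of $A$ by exactly $n$. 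Hence $f_1,\dots,f_n$ is part of a system of parameters of the Cohen-Macaulay local ring $A$, and therefore an $A$-regular sequence. It follows that $K_\bullet(f_1,\dots,f_n;\cO_{Y\times Z})$ is exact in positive degrees, giving $\mathrm{Tor}^{\cO_X}_i(\cO_Y,\cO_Z)=0$ for $i>0$; and its $H_0$, namely $\cO_{Y\times Z}/(f_1,\dots,f_n)\cO_{Y\times Z}=\cO_{Y\cap Z}$, is, locally at each such $q$, the quotient of the Cohen-Macaulay ring $A$ by a regular sequence, hence Cohen-Macaulay (of dimension $\dim Y+\dim Z-n$). Combining this with the first paragraph yields $[\cO_Y]\cdot[\cO_Z]=[\cO_Y\otimes_{\cO_X}\cO_Z]=[\cO_{Y\cap Z}]$ in $K(X)$, as claimed.

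The step I expect to be the crux is the verification that $f_1,\dots,f_n$ restricts to a regular sequence on $\cO_{Y\times Z}$: this is the only place where both hypotheses are genuinely used, and it rests on the commutative-algebra fact that in a Cohen-Macaulay local ring a sequence of $c$ elements whose quotient ring has dimension $\dim-c$ is automatically regular (equivalently, every part of a system of parameters of a Cohen-Macaulay local ring is a regular sequence). The remaining points---the compatibility of the reduction-to-the-diagonal isomorphism with the identifications of supports, and the passage between local and global dimension---are routine and cause no trouble here, since $Y$ and $Z$ are irreducible and Cohen-Macaulay local rings are equidimensional in any case.
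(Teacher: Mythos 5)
Your proof is correct and is the standard argument: since the paper does not prove this lemma but simply cites it as well known (referring to Lemma~2.2 of the cominuscule paper), your write-up essentially supplies the proof behind that citation — reduction to the diagonal, the Koszul resolution of $\cO_\Delta$, and the commutative-algebra fact that in a Cohen--Macaulay local ring a sequence cutting the dimension by its length is regular. The only (harmless) point worth noting is that $\dim\cO_{Y\times Z,(q,q)}=\dim Y+\dim Z$ as written holds for closed points $q$, which suffices since regularity of the sequence and vanishing of the coherent Tor sheaves can be checked there.
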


Finally we recall some facts about the 
$K$-theory of the projective space $\P^{N-1}$.
Let $h \in K(\P^{N-1})$ be  the class of a hyperplane. 
Then $h^j$ is the class of a codimension $j$ linear subvariety,
$2h-h^2$ is the class of a quadric hypersurface,
and $K(\P^{N-1}) = \Z[h]/{(h^N)}$.
The sheaf euler characteristic $\chi_{\P^{N-1}}: K(\P^{N-1}) \to \Z$
is determined by $h^j \mapsto 1$ for $1 \leq j < N$.

\comment{
\subsection{K-theory of Grassmannians}
Now let $X := IG_\omega(m,\C^N)$ be an isotropic Grassmannian of type $B$, $C$, or $D$.
The Grothendieck ring $K(X)$ has an  additive basis of Schubert classes $[\cO_{X_P}]$.
An arbitrary Schubert class can be written as an integer polynomial in
certain special Schubert classes $[\cO_{X_{(r)}}]$, defined precisely in Section \ref{S:triples}. 
The $K$-theoretic structure constant $\mathcal{N}^Q_{P,r}(X)$ is the coefficient of $[\cO_{X_Q}]$
in the Pieri product $[\cO_{X_P}] \cdot [\cO_{X_{(r)}}]$.  In this section
we describe how to compute $\mathcal{N}^Q_{P,r}(X)$ as an alternating sum of triple intersection numbers
$\chi_X([\cO_{X_P}] \cdot [\cO_{X^T}] \cdot [\cO_{X_{(r)}}])$,
where $T$ ranges over a certain subset of Schubert symbols.
}

\comment{
\subsection{Projection formula}
Recall the projections $\pi$ and $\psi$ from $IF := IF_\omega(1,m,\C^N)$
to $X$ and $Z := IG_\omega(1,\C^N)$ respectively.
Since the $K$-theoretic pushforward is functorial with respect to composition,
the following diagram commutes:

\begin{center}
\begin{tikzpicture}[node distance = 2cm]
  \node (F) {$K(IF)$};
  \node (G) [below of=F] {$K(X)$};
  \node (P) [right of=F] {$K(Z)$};
  \node (pt) [right of=G] {$\Z$};
  \draw[->] (F) -- node[left]{$\pi_*$} (G);
  \draw[->] (F) -- node[above]{$\psi_*$}(P);
  \draw[->] (F) -- node[right]{$\chi_{IF}$}(pt);
  \draw[->] (G) -- node[below]{$\chi_X$}(pt);
  \draw[->] (P) -- node[right]{$\chi_Z$}(pt);
  \end{tikzpicture}
\end{center}

{\color{red}SHOULD I "SUPPOSE" THERE EXISTS?  OR JUST STATE THAT IT DOES EXIST?}

\begin{lemma}\label{L:cohomologically_trivial_transport}
Let $X := IG_{\omega}(m,\C^N)$ be a Grassmannian of type $B$, $C$, or $D$.
Suppose there exists a Schubert variety $W \subset Z := IG_\omega(1,\C^N)$ such that
$\pi(\psi^{-1}(W)) = X_{(r)}$.  We then have
\begin{equation}
 \chi_X([\cO_{X_P}] \cdot [\cO_{X^T}] \cdot [\cO_{X_{(r)}}])
 = \chi_{Z}([\cO_{Z_{P,Q}}] \cdot [\cO_W]),
\end{equation}
for any Schubert symbols $Q \preceq P$.
\end{lemma}

\begin{proof}
 Since $\pi^{-1}(X_P)$ and $\pi^{-1}(X^Q)$ are Schubert varieties in $IF$,
$\pi^{-1}(Y_{P,Q})$ is a Richardson variety in $IF$ (see e.g. \cite{brion:lectures}).
By \cite[4.5]{knutson:projections} or \cite[3.3]{billey:singularities}, the projection
$\psi: \pi^{-1}(Y_{P,Q}) \to Z_{P,Q}$ is cohomologically trivial, in the sense that
$\psi_*[\cO_{\pi^{-1}(Y_{P,Q})}] = [\cO_{Z_{P,Q}}]$.
Since $\pi$ is flat, it follows that \[\psi_*\pi^*[\cO_{Y_{P,Q}}] = [\cO_{Z_{P,Q}}] \in K(Z).\]
Similarly, $\pi: \psi^{-1}(W) \to X_{(r)}$ is cohomologically trivial and $\pi$ is flat, so we have
\[\pi_*\psi^*[\cO_{W}] = [\cO_{X_{(r)}}] \in K(X).\]
By Lemma \ref{L:fundamental_lemma_of_intersection_theory} and two applications of
the projection formula twice, we have:
\begin{align*}
\chi_{X}([\cO_{X_P}] \cdot [\cO_{X^T}] \cdot [\cO_{X_{(r)}}]) &= \chi_{X}([\cO_{Y_{P,T}}] \cdot \pi_*\psi^*[\cO_{W}]) \\
&= \chi_{IF}(\pi^*[\cO_{Y_{P,T}}] \cdot \psi^*[\cO_{W}]) \\
&= \chi_{Z}(\psi_*\pi^*[\cO_{Y_{P,T}}] \cdot [\cO_{W}]) \\
&= \chi_{Z}([\cO_{Z_{P,T}}] \cdot [\cO_{W}]). \\ 
\end{align*}
\end{proof}
}

%SECTION 8
\section{Computing Triple Intersection Numbers}\label{S:triples}
Let $X := IG_\omega(m,\C^{N})$ be a Grassmannian of type $B$, $C$, or $D$.
In this section we calculate the $K$-theoretic Pieri-type 
triple intersection numbers 
\begin{equation}\label{E:triple_int_triple_section}
\chi_X([\cO_{X_P}] \cdot [\cO_{X^T}] \cdot [\cO_{X_{(r)}}]),
\end{equation}
where $T \preceq P$ are Schubert symbols in $\Omega(X)$ and $X_{(r)} \subset X$ is a special
Schubert variety, which we will define shortly for each type of Grassmannian.
Our technique relies on the projection formula to move our calculation to the $K$-theory of projective space.
A similar technique was used for the type $A$ Grassmannian in \cite{buch:cominuscule}.

Recall the projections $\pi$ and $\psi$ from $IF := IF_\omega(1,m,\C^N)$
to $X$ and $Z := IG_\omega(1,\C^N)$ respectively.
Since the $K$-theoretic pushforward is functorial with respect to composition,
the following diagram commutes:

\begin{center}
\begin{tikzpicture}[node distance = 2cm]
  \node (F) {$K(IF)$};
  \node (G) [below of=F] {$K(X)$};
  \node (P) [right of=F] {$K(Z)$};
  \node (pt) [right of=G] {$\Z$};
  \draw[->] (F) -- node[left]{$\pi_*$} (G);
  \draw[->] (F) -- node[above]{$\psi_*$}(P);
  \draw[->] (F) -- node[right]{$\chi_{IF}$}(pt);
  \draw[->] (G) -- node[below]{$\chi_X$}(pt);
  \draw[->] (P) -- node[right]{$\chi_Z$}(pt);
  \end{tikzpicture}
\end{center}

\begin{lemma}\label{L:cohomologically_trivial_transport}
Let $X := IG_{\omega}(m,\C^N)$ be a Grassmannian of type $B$, $C$, or $D$.
Suppose there exists a Schubert variety $W \subset Z := IG_\omega(1,\C^N)$ such that
$\pi(\psi^{-1}(W)) = X_{(r)}$.  We then have
\begin{equation}
 \chi_X([\cO_{X_P}] \cdot [\cO_{X^T}] \cdot [\cO_{X_{(r)}}])
 = \chi_{Z}([\cO_{Z_{P,T}}] \cdot [\cO_W]),
\end{equation}
for any Schubert symbols $T \preceq P$.
\end{lemma}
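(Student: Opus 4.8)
The plan is to reduce the triple intersection number on $X$ to an intersection number on the projective space $Z = IG_\omega(1,\C^N) \subset \P^{N-1}$ by transporting everything through the flag variety $IF := IF_\omega(1,m,\C^N)$ via the projection formula. First I would observe that since $\pi$ is a locally trivial fibration (in particular flat and proper), the preimages $\pi^{-1}(X_P)$ and $\pi^{-1}(X^T)$ are Schubert and opposite Schubert varieties in $IF$ with respect to appropriate flags, so their intersection $\pi^{-1}(Y_{P,T}) = \pi^{-1}(X_P) \cap \pi^{-1}(X^T)$ is a Richardson variety in $IF$ (see e.g. \cite{brion:lectures}); in particular it is Cohen-Macaulay, and by Lemma \ref{L:fundamental_lemma_of_intersection_theory} applied to $\pi^{-1}(X_P)$ and $\pi^{-1}(X^T)$ we get $[\cO_{\pi^{-1}(X_P)}] \cdot [\cO_{\pi^{-1}(X^T)}] = [\cO_{\pi^{-1}(Y_{P,T})}]$ in $K(IF)$.

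Next I would invoke the cohomological triviality of the projections. By \cite[4.5]{knutson:projections} (or \cite[3.3]{billey:singularities}), the map $\psi$ restricted to the Richardson variety $\pi^{-1}(Y_{P,T})$ satisfies $\psi_*[\cO_{\pi^{-1}(Y_{P,T})}] = [\cO_{\psi(\pi^{-1}(Y_{P,T}))}]$, and by Theorem \ref{T:intro_main_theorem} (our main theorem) $\psi(\pi^{-1}(Y_{P,T})) = Z_{P,T}$, so $\psi_*[\cO_{\pi^{-1}(Y_{P,T})}] = [\cO_{Z_{P,T}}]$ in $K(Z)$. Since $\pi$ is flat, pullback commutes with restriction to the Richardson variety, giving $\psi_*\pi^*[\cO_{Y_{P,T}}] = [\cO_{Z_{P,T}}]$. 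Symmetrically, applying the same reasoning to the special Schubert variety: the hypothesis furnishes a Schubert variety $W \subset Z$ with $\pi(\psi^{-1}(W)) = X_{(r)}$; since $\psi$ is also a locally trivial fibration, $\psi^{-1}(W)$ is a Schubert variety in $IF$, $\pi$ restricted to it is cohomologically trivial, and $\pi$ flat gives $\pi_*\psi^*[\cO_W] = [\cO_{X_{(r)}}]$ in $K(X)$.

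Finally I would assemble the computation. Start from the left-hand side, use $[\cO_{X_P}] \cdot [\cO_{X^T}] = [\cO_{Y_{P,T}}]$ (valid by Lemma \ref{L:fundamental_lemma_of_intersection_theory} since Richardson varieties have the expected dimension, or cited from \cite{brion:lectures}), substitute $[\cO_{X_{(r)}}] = \pi_*\psi^*[\cO_W]$, then apply the projection formula \eqref{E:proj} for $\pi$ to move the computation up to $IF$, then the projection formula for $\psi$ to move it down to $Z$, using functoriality $\chi_X \circ \pi_* = \chi_{IF} = \chi_Z \circ \psi_*$ along the way:
\begin{align*}
\chi_X([\cO_{X_P}] \cdot [\cO_{X^T}] \cdot [\cO_{X_{(r)}}])
&= \chi_X([\cO_{Y_{P,T}}] \cdot \pi_*\psi^*[\cO_W]) \\
&= \chi_X(\pi_*(\pi^*[\cO_{Y_{P,T}}] \cdot \psi^*[\cO_W])) \\
&= \chi_{IF}(\pi^*[\cO_{Y_{P,T}}] \cdot \psi^*[\cO_W]) \\
&= \chi_Z(\psi_*\pi^*[\cO_{Y_{P,T}}] \cdot [\cO_W]) \\
&= \chi_Z([\cO_{Z_{P,T}}] \cdot [\cO_W]).
\end{align*}
The main obstacle I anticipate is the careful bookkeeping around cohomological triviality and flatness — specifically ensuring that $\pi^*$ truly commutes with passing to the Richardson/Schubert subvarieties so that $\psi_*\pi^*[\cO_{Y_{P,T}}] = \psi_*[\cO_{\pi^{-1}(Y_{P,T})}]$ (this uses flatness of $\pi$), and invoking Theorem \ref{T:intro_main_theorem} at exactly the right point to identify $\psi(\pi^{-1}(Y_{P,T}))$ with $Z_{P,T}$; the projection-formula manipulations themselves are routine once those identifications are in place.
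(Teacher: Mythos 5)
Your proposal is correct and follows essentially the same route as the paper: identify $\pi^{-1}(Y_{P,T})$ as a Richardson variety in $IF$, use the cohomological triviality of $\psi$ and $\pi$ from \cite{knutson:projections} and \cite{billey:singularities} together with flatness to get $\psi_*\pi^*[\cO_{Y_{P,T}}] = [\cO_{Z_{P,T}}]$ and $\pi_*\psi^*[\cO_W] = [\cO_{X_{(r)}}]$, and then chain two applications of the projection formula with functoriality of pushforward. The paper's proof is the same computation, with the identification $\psi(\pi^{-1}(Y_{P,T})) = Z_{P,T}$ from Theorem \ref{T:intro_main_theorem} absorbed into the statement of cohomological triviality exactly as you anticipate.
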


\begin{proof}
 Since $\pi^{-1}(X_P)$ and $\pi^{-1}(X^T)$ are Schubert varieties in $IF$,
$\pi^{-1}(Y_{P,T})$ is a Richardson variety in $IF$ (see e.g. \cite{brion:lectures}).
By \cite[4.5]{knutson:projections} or \cite[3.3]{billey:singularities}, the projection
$\psi: \pi^{-1}(Y_{P,T}) \to Z_{P,T}$ is cohomologically trivial, in the sense that
$\psi_*[\cO_{\pi^{-1}(Y_{P,T})}] = [\cO_{Z_{P,T}}]$.
Since $\pi$ is flat, it follows that \[\psi_*\pi^*[\cO_{Y_{P,T}}] = [\cO_{Z_{P,T}}] \in K(Z).\]
Similarly, $\pi: \psi^{-1}(W) \to X_{(r)}$ is cohomologically trivial and $\pi$ is flat, so we have
\[\pi_*\psi^*[\cO_{W}] = [\cO_{X_{(r)}}] \in K(X).\]
It is known that all Schubert varieties have rational singularities \cite{mehta:normality}.
Therefore by Lemma \ref{L:fundamental_lemma_of_intersection_theory} and two applications of
the projection formula, we have:
\begin{align*}
\chi_{X}([\cO_{X_P}] \cdot [\cO_{X^T}] \cdot [\cO_{X_{(r)}}]) &= \chi_{X}([\cO_{Y_{P,T}}] \cdot \pi_*\psi^*[\cO_{W}]) \\
&= \chi_{IF}(\pi^*[\cO_{Y_{P,T}}] \cdot \psi^*[\cO_{W}]) \\
&= \chi_{Z}(\psi_*\pi^*[\cO_{Y_{P,T}}] \cdot [\cO_{W}]) \\
&= \chi_{Z}([\cO_{Z_{P,T}}] \cdot [\cO_{W}]). \\ 
\end{align*}
\end{proof}

By Lemma \ref{L:cohomologically_trivial_transport}, the projected Richardson variety $Z_{P,T}$
assumes a crucial role in the calculation of \eqref{E:triple_int_triple_section}.
Let $q$ be the number of quadratic equations and let $l$ be the number of linear equations
defining $Z_{P,T}$ as a complete intersection in $\P^{N-1}$ (from Sections \ref{S:prelim} and \ref{S:type_D_equations}
we know that $q = \#\{c \in \mathcal{Q}_{P,T} : c>0 \text{ and } c-1 \not\in \mathcal{Q}_{P,T}\}$ and $l = \#\mathcal{L}_{P,T}$).
We now calculate 
\eqref{E:triple_int_triple_section} when $X$ has Lie type
$C$, $B$, or $D$, in that order, finally presenting
a unified treatment in Corollary \ref{C:trip_all}.

\subsection{Type C}
Let $X = SG(m,2n)$.  Note that $SG(1,2n)$, the image of $\psi$, is equal to $\P^{2n-1}$.
The codimension $r$ special Schubert variety $X_{(r)}$ is defined by
\[
X_{(r)} := \{\Sigma \in X: \dim(\Sigma \cap E_{2n-m-r+1}) \geq 1\},
\]
for $1 \leq r \leq 2n-m$.
In other words $X_{(r)} = \pi(\psi^{-1}(W))$,
where $W := \P(E_{2n-m-r+1})$.  Note that $W$ is a linear subvariety of $\P^{2n-1}$
and therefore a Schubert variety.

\comment{
consider the two-step symplectic flag variety $SF := SF(1,m;2n)=\{(L,\Sigma): L \subset \Sigma \} \subset 
SG(1,2n) \times X$, with projections
$\pi:SF \to X$ and $\psi:SF \to SG(1,2n)=\P^{2n-1}$.
\[
\begin{CD}
SF @>\psi>> \P^{2n-1}\\
@VV{\pi}V\\
X
\end{CD}
\]
%\vskip3mm
}

We have the following formula.
\begin{prop}\label{P:trip_C}
Let $X := SG(m,2n)$.
Given $T \preceq P$ in $\Omega(X)$ and $1 \leq r \leq 2n-m$, 
we have
\begin{equation*}
\chi_{X}([\cO_{X_P}] \cdot [\cO_{X^T}] \cdot [\cO_{X_{(r)}}])  = \sum_{j = 0}^{2n-m-r-l-q}{\binom{q}{j}(-1)^{j}(2)^{q-j}},
\end{equation*}
where we define $\binom{q}{j}$ to be zero for $j > q$.
\end{prop}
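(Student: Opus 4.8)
The strategy is to apply Lemma~\ref{L:cohomologically_trivial_transport} with $W := \P(E_{2n-m-r+1})$, reducing the computation to an intersection-theoretic calculation in $K(\P^{2n-1}) = \Z[h]/(h^{2n})$. By that lemma,
\[
\chi_X([\cO_{X_P}] \cdot [\cO_{X^T}] \cdot [\cO_{X_{(r)}}]) = \chi_{\P^{2n-1}}([\cO_{Z_{P,T}}] \cdot [\cO_W]).
\]
First I would compute $[\cO_{Z_{P,T}}]$ in $K(\P^{2n-1})$. By Proposition~\ref{P:complete_intersection_BCD}, $Z_{P,T}$ is a complete intersection of $l$ hyperplanes and $q$ quadric hypersurfaces, where $l = \#\mathcal{L}_{P,T}$ and $q$ counts the ``new'' quadratic equations. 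Since the class of a hyperplane is $h$ and the class of a quadric is $2h - h^2 = 1 - (1-h)^2$, multiplicativity of structure sheaf classes for complete intersections gives $[\cO_{Z_{P,T}}] = h^l (2h - h^2)^q$. Likewise $[\cO_W] = h^{m+r-1}$, since $W$ has codimension $m+r-1$ in $\P^{2n-1}$.

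Next I would expand and evaluate. Writing $2h - h^2 = h(2-h)$, we get
\[
[\cO_{Z_{P,T}}] \cdot [\cO_W] = h^{l+q+m+r-1}(2-h)^q = h^{l+q+m+r-1}\sum_{j=0}^{q}\binom{q}{j}(-1)^j 2^{q-j} h^j .
\]
Now apply $\chi_{\P^{2n-1}}$, which sends $h^k \mapsto 1$ for $1 \le k \le 2n-1$ and $h^k \mapsto 0$ for $k \ge 2n$ (and $h^0 \mapsto 1$, but here the exponent $l+q+m+r-1+j$ is at least $m+r \ge 1$, so $h^0$ never appears). Thus the $j$-th term contributes $\binom{q}{j}(-1)^j 2^{q-j}$ exactly when $l+q+m+r-1+j \le 2n-1$, i.e. when $j \le 2n-m-r-l-q$, and contributes $0$ otherwise. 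This yields precisely
\[
\sum_{j=0}^{2n-m-r-l-q}\binom{q}{j}(-1)^j 2^{q-j},
\]
with the convention $\binom{q}{j}=0$ for $j>q$ (which is automatic and harmless, and makes the formula correct even when the upper limit exceeds $q$, in which case we simply have the full alternating binomial sum evaluating to $1$).

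**The main obstacle.** The genuinely substantive points are already done elsewhere: that $Z_{P,T} = \psi(\pi^{-1}(Y_{P,T}))$ (Theorem~\ref{T:intro_main_theorem}), that the projection $\psi$ is cohomologically trivial (used in Lemma~\ref{L:cohomologically_trivial_transport}), and that $Z_{P,T}$ is a complete intersection with the stated equation counts (Proposition~\ref{P:complete_intersection_BCD}). The only mild care needed here is bookkeeping: confirming that $W = \P(E_{2n-m-r+1})$ indeed satisfies $\pi(\psi^{-1}(W)) = X_{(r)}$ (immediate from the definition of $X_{(r)}$ and of the incidence variety $IF$), that its codimension in $\P^{2n-1}$ is $m+r-1$, and that the class of a complete intersection of hypersurfaces of given degrees is the product of their classes in $K(\P^{2n-1})$ — which follows from Lemma~\ref{L:fundamental_lemma_of_intersection_theory} applied inductively, since each partial intersection is Cohen--Macaulay of the expected dimension. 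So the ``hard part'' is really just getting the index ranges right; there is no serious geometric or algebraic obstacle in this particular proposition.
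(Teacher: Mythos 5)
Your proposal is correct and follows exactly the paper's own argument: reduce via Lemma \ref{L:cohomologically_trivial_transport} to $\chi_{\P^{2n-1}}([\cO_{Z_{P,T}}]\cdot[\cO_W])$ with $W=\P(E_{2n-m-r+1})$, write $[\cO_{Z_{P,T}}]=h^{l+q}(2-h)^q$ using Proposition \ref{P:complete_intersection_BCD}, and expand. The only difference is that you spell out the truncation of the binomial sum by the relation $h^{2n}=0$ slightly more explicitly than the paper does.
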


\begin{proof}
By Lemma \ref{L:cohomologically_trivial_transport}, the triple intersection number is equal to
$\chi_{\P^{2n-1}}([\cO_{Z_{P,T}}] \cdot [\cO_{W}])$.
Since $Z_{P,T}$ is a complete intersection defined by $l$ linear and $q$ quadratic polynomials,
we have
$[\cO_{Z_{P,T}}] = h^l h^q (2-h)^q$.
Since $[\cO_{W}] = h^{m-1+r}$, it follows that
\begin{align*}\label{E:type_C_K_class}
 [\cO_{Z_{P,T}}] \cdot [\cO_{W}] &=  h^{m+r+l+q-1}(2-h)^{q} \\
  &=  \sum_{j = 0}^{2n-m-r-l-q}h^{m+r+l+q-1}{\binom{q}{j}(-h)^{j}(2)^{q-j}},
\end{align*}
where we define $\binom{q}{j}$ to be zero for $j > q$.
Taking sheaf Euler characteristic yields the desired triple intersection formula.
\end{proof}

\subsection{Type B}
Let $X := OG(m,2n+1)$.
Let $Q := OG(1,2n+1)$ denote the quadric hypersurface of isotropic lines in $\P^{2n}$ with inclusion
$\iota: Q \hookrightarrow \P^{2n}$.

We describe the Schubert varieties relative to $E_{\bull}$ for
the odd dimensional quadric $Q$.
For  $0 \leq j \leq 2n-1$ there is exactly one codimension $j$
Schubert variety $Q_{(j)} \subset Q$, defined by
 \begin{equation*}
  Q_{(j)} =
 \begin{cases}
 \P(E_{2n+1-j}) \cap Q & \text{ if } 0 \leq j \leq n -1 \\
 \P(E_{2n-j}) & \text{ if } n \leq j \leq 2n-1. \\
 \end{cases}
\end{equation*}
The Schubert varieties $Q_{(j)}$
have a straightforward Bruhat ordering:

\begin{center}
\begin{tikzpicture}
  \node (a) at (.2,0) {$Q_{(2n-1)}$};
  \node (b) at (2,0) {$Q_{(2n-2)}$};
  \node (c) at (4,0) {$Q_{(n)}$};
  \node (f) at (5.5,0) {$Q_{(n-1)}$};
  \node (g) at (7.5,0) {$Q_{(1)}$};
  \node (h) at (9.5,0) {$Q_{(0)} = Q$};
  \node (pa) at (.2,.7)[teal] {$\scriptstyle{\P(E_1)}$};
  \node (pb) at (2,.7)[teal] {$\scriptstyle{\P(E_2)}$};
  \node (pc) at (4,.7)[teal] {$\scriptstyle{\P(E_n)}$};
  \node (pf) at (5.5,-.7)[teal] {$\scriptstyle{\P(E_{n+2}) \cap Q}$};
  \node (pg) at (7.5,-.7)[teal] {$\scriptstyle{\P(E_{2n}) \cap Q}$};
  
 \draw[teal, double] (a)--(pa);
 \draw[teal, double] (b)--(pb);
 \draw[teal, double] (c)--(pc);
 \draw[teal, double] (f)--(pf);
 \draw[teal, double] (g)--(pg);

 \draw[white] (a)edge node[black] {$\subset$}(b);
 \draw[white] (b)edge node[black] {$\scriptstyle{\subset \hphantom{.} \cdots \hphantom{.} \subset}$} (c);
 \draw[white] (c)edge node[black] {$\subset$}(f); 
 \draw[white] (f)edge node[black] {$\scriptstyle{\subset \hphantom{.} \cdots \hphantom{.} \subset}$}(g);
 \draw[white] (g)edge node[black] {$\subset$}(h);
\end{tikzpicture}
\end{center}

\comment{
The reason for the change in description around the middle
is that
\[
Q_{(n)} = \P(E_{n+1}) \cap Q = \P(E_n) \cap Q = \P(E_n).
\]
However, it is convenient to think
of $Q_{(n)}$ as $\P(E_n)$ since
$\iota_*[\cO_{Q_{(n)}}] =  h^{n+1}$.
}

We mention some facts about the Schubert classes in
$K(Q)$ (see \cite{buch:minuscule} for details).
For $0 \leq j \leq n-1$ we have  $[\cO_{Q_{(j)}}] = \iota^*(h^j)$.
Pushforwards of Schubert classes are given by
 \begin{equation*}
\iota_*[\cO_{Q_{(j)}}] =
 \begin{cases}
 h^j (2h-h^2) & \text{ for } 0 \leq j \leq n-1 \\
 h^{j+1} & \text{ for } n  \leq j \leq 2n-1. \\
 \end{cases}
\end{equation*}

Returning to the type $B$ Grassmannian $X$, the codimension $r$ special Schubert variety $X_{(r)}$ is defined by
\[
X_{(r)} = \{\Sigma \in X : \P(\Sigma) \cap Q_{(m+1-r)} \neq \emptyset\},
\]
for $1 \leq r \leq 2n-m$.
In other words $X_{(r)} = \pi(\psi^{-1}(Q_{(m+1-r)}))$.

We now rewrite the type $B$ triple intersection number as the sheaf
Euler characteristic of a $K(\P^{2n})$ class.  The final step
of computing Euler characteristic is exactly the same
as in Proposition \ref{P:trip_C}, and is postponed to
the unified formula in \ref{C:trip_all}.

\begin{prop}\label{P:trip_B}
Let $X := OG(m,2n+1)$.
For $T \preceq P$ in $\Omega(X)$ and $1 \leq r \leq 2n-m$, we have
\begin{equation*}
\chi_{X}([\cO_{X_P}] \cdot [\cO_{X^{T}}] \cdot [\cO_{X_{(r)}}]) =
 \begin{cases}
 \chi_{\P^{2n}}(h^{m+r+l+q-1}(2-h)^q) & \text{if } r \leq n-m, \\
 \chi_{\P^{2n}}(h^{m+r+l+q-1}(2-h)^{q-1}) & \text{if } r > n-m \text{ and } q > 0,\\
 \chi_{\P^{2n}}( h^{m+r+l-1}) = 0  & \text{if } r > n-m \text{ and } q = 0.\\
 \end{cases}
\end{equation*}
\comment{where $l$ and $q$ are the numbers of linear and quadratic
equations defining $Z_{P,T}$ as a complete intersection.}
\end{prop}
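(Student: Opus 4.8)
The plan is to transport the triple intersection number to the odd quadric $Q = OG(1,2n+1)\subset\P^{2n}$ and then push forward to $\P^{2n}$, just as in the proof of Proposition~\ref{P:trip_C}. First I would apply Lemma~\ref{L:cohomologically_trivial_transport} with $W$ the Schubert variety of $Q$ satisfying $\pi(\psi^{-1}(W)) = X_{(r)}$, which a codimension count identifies as $W = Q_{(m+r-1)}$. This gives
\[
\chi_X([\cO_{X_P}]\cdot[\cO_{X^T}]\cdot[\cO_{X_{(r)}}]) = \chi_Q([\cO_{Z_{P,T}}]\cdot[\cO_{W}]) = \chi_{\P^{2n}}\bigl(\iota_*([\cO_{Z_{P,T}}]\cdot[\cO_{W}])\bigr),
\]
where $\iota\colon Q\hookrightarrow\P^{2n}$ and the last equality is functoriality of the pushforward. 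By Proposition~\ref{P:complete_intersection_BCD} the class of $Z_{P,T}$ in $K(\P^{2n})$ is $h^{l}(2h-h^2)^{q}=h^{l+q}(2-h)^{q}$, and since $n+1\in\mathcal{Q}_{P,T}$ in type $B$ we have $Z_{P,T}\subseteq Q$, so this is also $\iota_*$ of its class in $K(Q)$.

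If $j:=m+r-1\le n-1$, i.e.\ $r\le n-m$, then $[\cO_{Q_{(j)}}]=\iota^{*}h^{j}$ in $K(Q)$, so the projection formula gives $\iota_*([\cO_{Z_{P,T}}]\cdot\iota^{*}h^{j}) = h^{l+q}(2-h)^{q}\cdot h^{m+r-1} = h^{m+r+l+q-1}(2-h)^{q}$. If $j\ge n$, i.e.\ $r>n-m$, the class $[\cO_{Q_{(j)}}]$ is no longer a pullback, so instead I would pull back the factor $[\cO_{Z_{P,T}}]$: when $q\ge 1$, the identity $\iota_*\iota^{*}\bigl(h^{l+q-1}(2-h)^{q-1}\bigr) = h^{l+q-1}(2-h)^{q-1}\cdot\iota_*(1) = h^{l+q-1}(2-h)^{q-1}(2h-h^2) = h^{l+q}(2-h)^{q}$, together with injectivity of $\iota_*$ (immediate from the displayed formulas for $\iota_*[\cO_{Q_{(j)}}]$), shows that the class of $Z_{P,T}$ in $K(Q)$ equals $\iota^{*}\bigl(h^{l+q-1}(2-h)^{q-1}\bigr)$. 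Then the projection formula and $\iota_*[\cO_{Q_{(j)}}]=h^{j+1}$ give $\iota_*([\cO_{Z_{P,T}}]\cdot[\cO_{Q_{(j)}}]) = h^{l+q-1}(2-h)^{q-1}\cdot h^{m+r} = h^{m+r+l+q-1}(2-h)^{q-1}$.

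It remains to treat $q=0$, where necessarily $\mathcal{Q}_{P,T}=[0,n+1]$ and $Z_{P,T} = \{x_c=0 : c\in\mathcal{L}_{P,T}\}$ is an isotropic linear subspace of codimension $l$ in $\P^{2n}$; here Lemma~\ref{L:consec_cuts} forces $l\ge n+1$, so the class of $Z_{P,T}$ in $K(Q)$ is the unique Schubert class $[\cO_{Q_{(l-1)}}]$ of codimension $l-1\ge n$. If $r\le n-m$ the first computation still applies (giving $h^{m+r+l-1}$). If $r>n-m$, then $W=Q_{(m+r-1)}$ with $m+r-1\ge n$, and the Schubert classes $[\cO_{Q_{(l-1)}}]$ and $[\cO_{Q_{(m+r-1)}}]$ have codimensions summing to $(l-1)+(m+r-1)\ge 2n>\dim Q$, so their product in $K(Q)$ vanishes; the triple intersection number is $0$, which equals $\chi_{\P^{2n}}(h^{m+r+l-1})$ since $m+r+l-1>2n$ makes that class zero. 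Applying $\chi_{\P^{2n}}$ to the classes obtained gives the three stated cases; the final evaluation of these Euler characteristics is identical to Proposition~\ref{P:trip_C} and is deferred to Corollary~\ref{C:trip_all}.

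I expect the main obstacle to be the identification of $[\cO_{Z_{P,T}}]$ in $K(Q)$ — recognizing that exactly one of the $q$ quadrics cutting out $Z_{P,T}$ in $\P^{2n}$ is absorbed by the quadric $Q$ itself, so that only $q-1$ quadric factors survive inside $Q$ — and, hand in hand with it, the bookkeeping in the degenerate case $q=0$, where $Z_{P,T}$ collapses to a linear subspace of $Q$ whose class lies in the part of $K(Q)$ governed by maximal isotropic subspaces, which is precisely what forces the number to vanish there.
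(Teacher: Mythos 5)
Your proposal is correct and follows essentially the same route as the paper: reduce to $K(Q)$ via Lemma \ref{L:cohomologically_trivial_transport}, then apply the projection formula along $\iota$, splitting into the same three cases. The only cosmetic differences are that you identify $[\cO_{Z_{P,T}}]$ as $\iota^*\bigl(h^{l+q-1}(2-h)^{q-1}\bigr)$ by injectivity of $\iota_*$ where the paper exhibits it geometrically as $\iota^*[\cO_{Z'}]$ for $Z'$ obtained by dropping one quadric, and in the case $r>n-m$, $q=0$ you get vanishing from disjointness of general translates where the paper pushes forward to $h^{m+r+l-1}=0$ in $K(\P^{2n})$; both variants are sound.
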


\begin{proof}
By Lemma \ref{L:cohomologically_trivial_transport}, we must simplify
$\chi_{Q}([\cO_{Z_{P,T}}] \cdot [\cO_{Q_{(m-1+r)}}])$.
In certain situations we can use the projection formula
along $\iota$ to do this.

{\bf Situation 1:}  Suppose $r \leq n-m$.
In this case,
$m-1+r \leq n-1$.
It follows that the inclusion
$\iota(Q_{(m-1+r)})$
is a complete intersection in $\P^{2n}$
cut out by $m-1+r$ linear equations and
the single quadratic equation defining $Q$.
Thus,
$[\cO_{Q_{(m-1+r)}}] = \iota^*(h^{m-1+r})$.
Using the projection formula we have
\begin{align*}
\chi_{Q}([\cO_{Z_{P,T}}] \cdot [\cO_{Q_{(m-1+r)}}]) 
&= \chi_{Q}([\cO_{Z_{P,T}}] \cdot \iota^*(h^{m-1+r})) \\
&= \chi_{\P^{2n}}([\cO_{Z_{P,T}}] \cdot h^{m-1+r}) \\
& =\chi_{\P^{2n}}( h^{l+q+m+r-1}(2-h)^q). \\
\end{align*}

{\bf Situation 2:}
Suppose $q$, the number of quadratic equations defining $Z_{P,T}$, is greater than zero.
\comment{This means $Z_{P,T}$ is not a linear subvariety of $\P^{2n}$.}
By ignoring one of the quadratic equations defining $Z_{P,T}$,
we define a larger subvariety $Z' \subset \P^{2n}$ such that
$Z_{P,T} = Z' \cap Q$.
It follows that 
$[\cO_{Z_{P,T}}] = \iota^*[\cO_{Z'}]$,
so by the projection formula we have
\begin{align*}
\chi_{Q}([\cO_{Z_{P,T}}] &\cdot [\cO_{Q_{(m-1+r)}}])
= \chi_{Q}(\iota^*[\cO_{Z'}] \cdot [\cO_{Q_{(m-1+r)}}])\\
&= \chi_{\P^{2n}}([\cO_{Z'}] \cdot \iota_*[\cO_{Q_{(m-1+r)}}])\\
&=
\begin{cases}
\chi_{\P^{2n}}(h^lh^{q-1}(2-h)^{q-1} \cdot h^{m-1+r}(2h-h^2)) & \text{if } r \leq n-m \\
\chi_{\P^{2n}}(h^lh^{q-1}(2-h)^{q-1} \cdot h^{m+r}) & \text{if } r > n-m \\
\end{cases}\\
&=
\begin{cases}
\chi_{\P^{2n}}(h^{l+q+m+r-1}(2-h)^{q}) & \text{ if } r \leq n-m \\
\chi_{\P^{2n}}(h^{l+q+m+r-1}(2-h)^{q-1}) & \text{ if } r > n-m. \\
\end{cases}
\end{align*}
Note that these situations are not mutually exclusive, and that when $r \leq n-m$ and $q > 0$, 
then both methods agree.
\comment{for calculating $\chi_{Q}([\cO_{Z_{P,T}}] \cdot [\cO_{Q_{(m-1+r)}}])$
agree. }

{\bf Situation 3:}
\comment{
Suppose $r > n-m$ and $q = 0$.  
In this case $Z_{P,T}$ is a
linear subvariety of $\P^{2n}$ that is contained in $Q$.
In other words, $[\cO_{Z_{P,T}}] = [\cO_{Q_{(j)}}]$ for some
$j \geq n$.  Similarly, since $r > n-m$,
the class $[\cO_{Q_{(m-1+r)}}]$ also
has degree greater than $n-1$.
It follows that the product
$[\cO_{Z_{P,T}}] \cdot [\cO_{Q_{(m-1+r)}}]$
is zero, since the sum of the degrees
exceeds $2n-1$,the dimension of $Q$.}
Suppose $r > n-m$ and $q = 0$.  
In this case both $Z_{P,T}$ and $Q_{(m-1+r)}$ can be thought of as
linear subvarieties of $\P^{2n}$ that are contained in $Q$.
Note that $\iota_*[\cO_{Z_{P,T}}] = h^{l}$
and  $\iota_*[\cO_{Q_{(m-1+r)}}] = h^{m+r}$,
but that
$\iota_*([\cO_{Z_{P,T}}] \cdot [\cO_{Q_{(m-1+r)}}]) = h^{m+r+l-1}$,
since one of these linear equations is redundant in the intersection
of generic translates of $Z_{P,T}$ and $Q_{(m-1+r)}$.
However, the integer $m+r+l-1 \geq 2n+1$, and therefore 
the Grothendieck class $h^{m+r+l-1} \in K(\P^{2n})$ vanishes.
\end{proof}

\subsection{Type D}
Let $X := OG(m,2n+2)$.
Let $Q := OG(1,2n+1)$ denote the quadric hypersurface of isotropic lines in $\P^{2n+1}$ with inclusion
$\iota: Q \hookrightarrow \P^{2n+1}$.
We describe the Schubert varieties relative to $E_{\bull}$ for
the even dimensional quadric $Q$.
Let $\widetilde{E}_{n+1} = \langle \+e_1, \ldots, \+e_n, \+e_{n+2}\rangle$.

The quadric $Q$ has \emph{two} Schubert varieties
of codimension $n$, defined by
\[
Q_{(n)} := \P(E_{n+1}) \text{ and }
\widetilde{Q}_{(n)} := \P(\widetilde{E}_{n+1}).
\]
For $j \neq n$ there is a single codimension $j$ Schubert variety defined by
 \begin{equation*}
  Q_{(j)} =
 \begin{cases}
 \P(E_{2n+2-j}) \cap Q & \text{ if } 0 \leq j \leq n-1 \\
 \P(E_{2n+1-j}) & \text{ if } n+1  \leq j \leq 2n. \\
 \end{cases}
\end{equation*}
The Schubert varieties in $Q$
have the following Bruhat order:
\begin{center}
\begin{tikzpicture}[sloped]
  \node (a) at (0,0) {$Q_{(2n)}$};
  %\node (s) at (.75,0) {$\subset$};
  \node (b) at (1.7,0) {$Q_{(2n-1)}$};
  \node (c) at (4,0) {$Q_{(n+1)}$};
  \node (d) at (5,1) {$Q_{(n)}$};
  \node (e) at (5,-1) {$\widetilde{Q}_{(n)}$};
  \node (f) at (6,0) {$Q_{(n-1)}$};
  \node (g) at (8.2,0) {$Q_{(1)}$};
  \node (h) at (10,0) {$Q_{(0)} = Q$};
  \node (pa) at (0,.7)[teal] {$\scriptstyle{\P(E_1)}$};
  \node (pb) at (1.7,.7)[teal] {$\scriptstyle{\P(E_2)}$};
  \node (pc) at (3.4,.7)[teal] {$\scriptstyle{\P(E_n)}$};
  \node (pd) at (6.3,1)[teal] {$\scriptstyle{\P(E_{n+1})}$};
  \node (pe) at (3.7,-1)[teal] {$\scriptstyle{\P(\widetilde{E}_{n+1})}$};
  \node (pf) at (6.8,-.7)[teal] {$\scriptstyle{\P(E_{n+3}) \cap Q}$};
  \node (pg) at (8.9,-.7)[teal] {$\scriptstyle{\P(E_{2n+1}) \cap Q}$};

 \draw[teal, double] (a)--(pa);
 \draw[teal, double] (b)--(pb);
 \draw[teal, double] (c)--(pc);
 \draw[teal, double] (f)--(pf);
 \draw[teal, double] (g)--(pg);
 \draw[teal, double] (d)--(pd);
 \draw[teal, double] (e)--(pe);

 \draw[white] (a)edge node[black] {$\subset$}(b);
 \draw[white] (b) edge node[black] {$\scriptstyle{\subset \hphantom{.} \cdots \hphantom{.} \subset}$} (c);
 \draw[white] (c)edge node[black] {$\subset$}(d);
 \draw[white] (c)edge node[black] {$\subset$}(e);
 \draw[white] (d)edge node[black] {$\subset$}(f);
 \draw[white] (e)edge node[black] {$\subset$}(f);
 \draw[white] (f)edge node[black] {$\scriptstyle{\subset \hphantom{.} \cdots \hphantom{.} \subset}$}(g);
 \draw[white] (g)edge node[black] {$\subset$}(h);
\end{tikzpicture}
\end{center}

We mention some facts about the Grothendieck ring
$K(Q)$, which can be found in \cite[p. 17-18]{buch:minuscule}.
As in type $B$, we have  $[\cO_{Q_{(j)}}] = \iota^*(h^j)$ for $0 \leq j \leq n-1$.
Pushforwards of Schubert classes are the same as in type $B$, the only
addition being that $\iota_*[\cO_{Q_{(n)}}] = \iota_*[\cO_{\widetilde{Q}_{(n)}}] = h^{n+1}$.
The products of codimension $n$ classes with one another depend on
the parity of $n$, in the sense that
 \begin{equation*}
[\cO_{Q_{(n)}}]^2 =
[\cO_{\widetilde{Q}_{(n)}}]^2 =
 \begin{cases}
 [\cO_{Q_{(2n)}}] = [\cO_{\text{point}}] & \text{ if } n \text{ is even,} \\
 0 & \text{ if } n \text{ is odd}, \\
 \end{cases}
\end{equation*}
whereas,
 \begin{equation*}
[\cO_{Q_{(n)}}] \cdot
[\cO_{\widetilde{Q}_{(n)}}] =
 \begin{cases}
 0 & \text{ if } n \text{ is even}, \\
 [\cO_{Q_{(2n)}}] = [\cO_{\text{point}}] & \text{ if } n \text{ is odd}. \\
 \end{cases}
\end{equation*}

The maximal even orthogonal Grassmannian, $OG(n+1,2n+2)$, has two connected components.
For any $\Sigma \in OG(n+1,2n+2)$, let $\type(\Sigma) \in \{0,1\}$
to be the codimension, mod $2$, of $\Sigma \cap E_{n+1}$ in $E_{n+1}$.
If $L \subset Q$ and $L' \subset Q$ are linear subvarieties of codimension $n$ in $Q$, then 
the affine cones $\Lambda(L)$ and $\Lambda(L')$ are elements of $OG(n+1,2n+2)$.
The $K(Q)$ classes $[\cO_L]$ and $[\cO_{L'}]$ are equal if and only if 
$\Lambda(L)$ and $\Lambda(L')$ are in the same $SO(2n+2)$ orbit,
which is the case if and only if $\type(\Lambda(L)) = \type(\Lambda(L'))$.
For any codimension $n$ linear subvariety $L \subset Q$, we let
$\type([\cO_L]) := \type(\Lambda(L))$.
It is easy to check that
$\type([\cO_{Q_{(n)}}]) = 0$ and $\type([\cO_{\widetilde{Q}_{(n)}}]) = 1$.
Hence for $\mathcal{A}$ and $\mathcal{B}$ in $\{[\cO_{Q_{(n)}}],[\cO_{\widetilde{Q}_{(n)}}]\}$,
we have
\[
 \mathcal{A} \cdot \mathcal{B} = \big( (\type(\mathcal{A}) + \type(\mathcal{B}) + n + 1) \pmod{2} \big) [\cO_{Q_{(2n)}}],
\]
where by $\pmod{2}$ we mean  ``remainder mod $2$'':
the coefficient is one if $\type(\mathcal{A}) + \type(\mathcal{B}) + n + 1$ is odd, and zero otherwise.
\comment{For the rest of this text,
we shall continue to use $\pmod{2}$ to mean ``remainder mod $2$''.}

\comment{
It is possible to calculate the product of
arbitrary Schubert classes using the facts we've mentioned,
but we will not need them here.  The reader can refer
to \cite{buch:minuscule} for further details.
}

The following lemma describes $\type([\cO_{Z_{P,T}}])$
whenever $Z_{P,T}$ has codimension $n$ in $Q$.
The lemma is adapted from the definition of the 
function $h(P,T)$ in \cite[\S 5.2]{buch:quantum_pieri}.

\begin{lemma}\label{L:projected_richardson_type}
If $Z_{P,T}$ is a linear subvariety of
codimension $n$ in the quadric $Q$,
then
 $\type(Z_{P,T}) \equiv |S| + |S'| + n + 1 \pmod{2}$, where
 \begin{align*}
 S &= \{ i \in [1, n+1] : t_j \leq i \leq p_j \text{ for some j}\}, \text{ and} \\
 S' &= \{p \in P : p \geq n+2 \text{ and } 2n+3-p \in S \}.
 \end{align*}
\end{lemma}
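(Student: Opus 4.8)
The plan is to pin down $Z_{P,T}$ explicitly and then read off its type. Since $Z_{P,T}$ is a linear subvariety of codimension $n$ in the $2n$-dimensional quadric $Q\subset\P^{2n+1}$, it is a linearly embedded $\P^{n}$, hence a complete intersection of codimension $n+1$. By Proposition~\ref{P:complete_intersection_BCD} it is also the complete intersection cut out by the quadrics $f_d-f_c$ (for consecutive $c,d\in\mathcal{Q}_{P,T}$ with $d-c\ge 2$) together with the linear forms $x_c$ ($c\in\mathcal{L}_{P,T}$), and these generators involve pairwise disjoint sets of variables; each such $f_d-f_c$ is an irreducible quadratic in $2(d-c)\ge 4$ variables, so if any were present $I(Z_{P,T})$ would have a quadratic minimal generator, which is impossible for the ideal of a linear space. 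Hence there are none, so $\mathcal{Q}_{P,T}=[0,n+1]$ (equivalently $[0,n]\subseteq\mathcal{C}_{P,T}$), the complete intersection is cut out by the $x_c$ alone so $|\mathcal{L}_{P,T}|=n+1$, and $Z_{P,T}=\bigcap_{c\in\mathcal{L}_{P,T}}\{x_c=0\}=\P(\Lambda)$ with $\Lambda:=\langle\+e_c : c\in[1,2n+2]\setminus\mathcal{L}_{P,T}\rangle$. By Lemma~\ref{L:consec_cuts}, exactly one member of each pair $\{c,\,2n+3-c\}$ with $c\in[1,n+1]$ lies in $\mathcal{L}_{P,T}$; in particular $\Lambda$ is a maximal isotropic subspace and $Z_{P,T}=\P(\Lambda)$ is a member of $OG(n+1,2n+2)$.

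Next I would compute $\type(Z_{P,T})=\type(\Lambda)$. Since $E_{n+1}=\langle\+e_1,\dots,\+e_{n+1}\rangle$ is itself a maximal isotropic subspace, of type $0$, the type of $\Lambda$ is by definition the parity of $\codim_{E_{n+1}}(\Lambda\cap E_{n+1})$. As $\Lambda$ is spanned by standard basis vectors, $\Lambda\cap E_{n+1}=\langle\+e_c : c\in[1,n+1]\setminus\mathcal{L}_{P,T}\rangle$, so $\codim_{E_{n+1}}(\Lambda\cap E_{n+1})=|\mathcal{L}_{P,T}\cap[1,n+1]|$ and $\type(Z_{P,T})\equiv|\mathcal{L}_{P,T}\cap[1,n+1]|\pmod 2$.

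It remains to show $|\mathcal{L}_{P,T}\cap[1,n+1]|\equiv|S|+|S'|+n+1\pmod 2$. Partitioning $[1,n+1]=S\sqcup([1,n+1]\setminus S)$: every column in $[1,n+1]\setminus S$ is empty, hence a zero column, hence in $\mathcal{L}_{P,T}$, contributing $(n+1)-|S|$; and a covered column $c\in S$ lies in $\mathcal{L}_{P,T}$ precisely when column $2n+3-c$ of $D(P,T)$ contains a lone star. So the congruence to prove is equivalent to
\[
\#\{c\in S : \text{column } 2n+3-c \text{ of } D(P,T) \text{ has a lone star}\}\ \equiv\ |S'|\pmod 2 .
\]
Here I would use that $[0,n]\subseteq\mathcal{C}_{P,T}$ forces, by the symmetry $c\mapsto N-c$ of the cut set (Observation~\ref{O:rotate_cuts}), that $[n+2,2n+2]\subseteq\mathcal{C}_{P,T}$, so for every $d\in[n+3,2n+2]$ both $d-1$ and $d$ are cuts and column $d$ has a lone star exactly when $d\in T\cup P$. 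Substituting $d=2n+3-c$, both sides become counts over the index set $\{d\in[n+2,2n+2] : 2n+3-d\in S\}$ — of the row-endpoints $d\in T\cup P$ on the left, of the elements $d\in P$ on the right — so the two counts differ modulo $2$ only by the number of $d\in T\setminus P$ in this index set together with a correction at the central column $n+2$ (where $n+1$ need not be a cut). Showing that this discrepancy is even is the crux, and I expect it to be the main obstacle: it should follow from the constraints that $T\preceq P$ imposes on $D(P,T)$ — notably the impossibility of conflicting lone stars (Proposition~\ref{P:alt_bruhat}) and the limited room for $2\times2$ squares once $[0,n]\subseteq\mathcal{C}_{P,T}$ — and, if needed, by reducing to the symbol $\tilde P$ of Section~\ref{S:type_BC_onto} (for which $Z_{\tilde P,T}=Z_{P,T}$ and $\tilde P\to T$) after checking that $|S|+|S'|$ is unchanged modulo $2$ when $P$ is replaced by $\tilde P$.
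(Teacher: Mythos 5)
Your reduction follows the same route as the paper's proof, and the part you complete is correct and in fact more carefully justified than in the paper: from $q=0$ you correctly deduce $\mathcal{Q}_{P,T}=[0,n+1]$, $|\mathcal{L}_{P,T}|=n+1$, that $Z_{P,T}=\P(\Lambda)$ for the coordinate subspace $\Lambda$ complementary to $\mathcal{L}_{P,T}$, and that $\type(Z_{P,T})\equiv|\mathcal{L}_{P,T}\cap[1,n+1]|\pmod 2$; this is exactly the paper's computation of $\codim_{E_{n+1}}(\Lambda\cap E_{n+1})$. However, the step you label ``the crux'' is the actual content of the lemma, and you leave it unproven, so the proposal has a genuine gap. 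What the paper uses at this point (asserted without detail) is the \emph{exact} identity
\[
\{c\in S:\ \text{column }N+1-c\text{ of }D(P,T)\text{ contains a lone star}\}\;=\;\{c\in S:\ N+1-c\in P\},
\]
whose right-hand side is in bijection with $S'$ via $p\mapsto N+1-p$. In particular the parity discrepancy you worry about --- contributions from $d\in T\setminus P$, and a correction at column $n+2$ --- does not exist: such $d$ cannot occur, so no passage to $\tilde P$ is needed.

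The identity closes with tools you already cite. For ($\supseteq$): if $d=N+1-c=p_j\geq n+3$ then $d-1\geq n+2$ lies in $\mathcal{C}_{P,T}$ (the cut set contains $[0,n]$ and is symmetric under $e\mapsto N-e$ by the definition of apparent cuts and the pairing of exceptional cuts), so $(j,d)$ is a lone star; if $d=n+2=p_j$, then $n+2\notin\mathcal{L}_{P,T}$ by Corollary~\ref{C:no_lin_in_PT}, so your ``exactly one per antipodal pair'' observation forces $n+1\in\mathcal{L}_{P,T}$, and since $n+1=c\in S$ is not a zero column, column $n+2$ must contain a lone star. For ($\subseteq$): suppose column $d=N+1-c$ contains a lone star; if it arises as $d=p_j$ with $d-1\in\mathcal{C}_{P,T}$ we are done, so assume $d=t_j\in\mathcal{C}_{P,T}$. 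Then $c\in\mathcal{L}_{P,T}$, hence $c\notin P\cup T$ by Corollary~\ref{C:no_lin_in_PT}; with $c\in S$ this gives a row $i$ with $t_i<c<p_i$, so neither $c-1$ nor $c$ is a visible cut. Now $d$ cannot be an apparent cut on account of $N-d=c-1$ being visible; if $d$ itself is visible, then $p_k\leq d=t_j<t_{k+1}$ forces $p_j=d\in P$; and if $d$ is exceptional, then $N-d=c-1$ is a cut candidate, so $c\in[P]\cap[T]$, which together with $c\notin P\cup T$ gives $d=N+1-c\in P\cap T$. In every case $d\in P$, completing the argument.
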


\begin{proof}
Let $\Lambda (Z_{P,T}) \subset \C^{2n+2}$ be the
affine cone over $Z_{P,T} \subset \P^{2n+1}$.
Note that $|S|$ is the number of $c \in [1,n+1]$ such that
$c$ is not a zero column in $D(P,T)$, and that
$|S'|$ is the number of non-zero columns $c \in [1,n+1]$ such that
column $N+1-c$ of $D(P,T)$ contains a lone star.
It follows that $|S| - |S'|$ is the number of $c \in [1,n+1]$ such that $\+e_c \in \Lambda(Z_{P,T})$,
and hence that $n+1 - (|S| - |S'|)$ is
the codimension of $\Lambda(Z_{P,T}) \cap E_{n+1}$ in $E_{n+1}$.
\end{proof}

We give an example
in which $\type([\cO_{Z_{P,T}}])$ affects a product in $K(Q)$ (which is in fact a triple intersection number).

\begin{example}
Consider $OG(2,8)$, and let $P = \{1,4\}$
and $T = \{1,2\}$.  In this case
$\type([\cO_{Z_{P,T}}]) = 0$.
It follows that
$[\cO_{Z_{P,T}}] \cdot [\cO_{Q_{(n)}}]
= (0 + 0 + 3 + 1 \mod{2}) [\cO_{Q_{(2n)}}] = 0$,
and that
$[\cO_{Z_{P,T}}] \cdot [\cO_{\widetilde{Q}_{(n)}}]
= (0 + 1 + 3 + 1 \mod{2}) [\cO_{Q_{(2n)}}] = [\cO_{Q_{(2n)}}]$.
\end{example}

Returning our attention to the 
even orthogonal Grassmannian $X$, \comment{
we note that unlike some of the Schubert varieties in $X$,
the special Schubert varieties \emph{can}
be defined by rank conditions.
In fact, the special Schubert variety $X_{(r)}$
is simply the set of elements in $X$
that intersect the affine cone over the special Schubert variety
$Q_{(m-1+r)}$ nontrivially.  In other words,}
the codimension $r$ special Schubert variety $X_{(r)}$ is defined by
$X_{(r)} := \{\Sigma \in X : \P(\Sigma) \cap Q_{(m-1+r)} \neq \emptyset\}$,
for $1 \leq r \leq 2n+1-m$.
As in the quadric case, there is an additional special Schubert variety
\footnote{
We note that our definition of the codimension $k$ special
Schubert varieties differs slightly from
the definiton in \cite[\S $3.2$]{buch:quantum_pieri}, wherein
$X_{(k)}$ and $\widetilde{X}_{(k)}$ are
switched when $n$ is odd.}
of codimension $k := n-m+1$,
defined by $\widetilde{X}_{(k)} :=  \{\Sigma \in X :  \P(\Sigma) \cap \widetilde{Q}_{(n)} \neq \emptyset\}$.
Thus $X_{(r)} = \pi(\psi^{-1}(Q_{(m-1+r)}))$ for $1 \leq r \leq 2n+1-m$ 
and $\widetilde{X}_{(k)} = \pi(\psi^{-1}(\widetilde{Q}_{(n)}))$.

Consider the triple intersection number corresponding
to $X_P$, $X^T$, and a codimension $r$ special Schubert variety.
By Lemma \ref{L:cohomologically_trivial_transport}, this equals
$\chi_Q([\cO_{Z_{P,T}}] \cdot \mathcal{A})$ where $\mathcal{A}$ is the corresponding
codimension $(m-1+r)$ special Schubert class in $K(Q)$.
We now translate this expression to the sheaf
Euler characteristic of a $K(\P^{2n+1})$ class.  The final step
of computing Euler characteristic is postponed to
the unified formula in \ref{C:trip_all}.

\comment{If needed later, $\delta$ with subscript given by $\delta^{}_{\mkern-7mu_\mathcal{A}}$.}

\begin{prop}\label{P:trip_D}
Let $X := OG(m,2n+2)$ and let $\mathcal{A}$ be a codimension $(m-1+r)$ Schubert class in $K(Q)$.
Define $\delta \in \{0,1\}$ by
\begin{equation*}
\delta \equiv 
\begin{cases}
\type(\mathcal{A}) + |S| + |S'| \pmod{2} & \text{ if } r = k, q=0, \text{ and } l = n+1, \\
1 \pmod{2} & \text{ otherwise.}\\
\end{cases}
\end{equation*}
We then have
\begin{equation*}
\chi_{Q}([\cO_{Z_{P,T}}] \cdot  \mathcal{A}) =
 \begin{cases}
 \chi_{\P^{2n+1}}(h^{m+r+l+q-1}(2-h)^q) & \text{ if } r < k \\
 \chi_{\P^{2n+1}}(h^{m+r+l+q-1}(2-h)^{q-1}) & \text{ if } r \geq k \text{ and } q > 0\\
 \chi_{\P^{2n+1}}(\delta \cdot h^{m+r+l-1})  & \text{ if } r \geq k \text{ and } q = 0.\\
 \end{cases}
\end{equation*}
\end{prop}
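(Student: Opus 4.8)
The plan is to imitate the proofs of Propositions~\ref{P:trip_C} and~\ref{P:trip_B}. First apply Lemma~\ref{L:cohomologically_trivial_transport}: since $X_{(r)}=\pi(\psi^{-1}(Q_{(m-1+r)}))$ and $\widetilde{X}_{(k)}=\pi(\psi^{-1}(\widetilde{Q}_{(n)}))$, the triple intersection number equals $\chi_Q([\cO_{Z_{P,T}}]\cdot\mathcal{A})$, with $\mathcal{A}$ the codimension $m-1+r$ special Schubert class in $K(Q)$. By Proposition~\ref{P:complete_intersection_BCD} the variety $Z_{P,T}$ is a complete intersection of $l$ linear and $q$ quadratic hypersurfaces in $\P^{2n+1}$, one of the $q$ quadrics (present exactly when $q\geq 1$) being $f^D_{n+1}$, the equation cutting out $Q$; consequently $\iota_*[\cO_{Z_{P,T}}]=h^{l}(2h-h^2)^q=h^{l+q}(2-h)^q$ in $K(\P^{2n+1})$, where $\iota\colon Q\hookrightarrow\P^{2n+1}$ is the inclusion. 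Everything is then moved to $K(\P^{2n+1})$ by the projection formula along $\iota$, according to the three cases of the statement.

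\textbf{Case $r<k$.} Here $m-1+r\leq n-1$, so $\mathcal{A}=[\cO_{Q_{(m-1+r)}}]=\iota^*(h^{m-1+r})$, and the projection formula gives
\[
\chi_Q\big([\cO_{Z_{P,T}}]\cdot\mathcal{A}\big)=\chi_{\P^{2n+1}}\big(h^{m-1+r}\cdot h^{l+q}(2-h)^q\big)=\chi_{\P^{2n+1}}\big(h^{m+r+l+q-1}(2-h)^q\big).
\]

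\textbf{Case $r\geq k$ and $q>0$.} Now $m-1+r\geq n$, so $\iota_*\mathcal{A}=h^{m+r}$ whether $\mathcal{A}$ equals $[\cO_{Q_{(n)}}]$, $[\cO_{\widetilde{Q}_{(n)}}]$, or $[\cO_{Q_{(m-1+r)}}]$. Viewed inside $Q$, the variety $Z_{P,T}$ is the complete intersection of the pullbacks to $Q$ of the remaining $l$ linear and $q-1$ quadratic hypersurfaces, so $[\cO_{Z_{P,T}}]=\iota^*\big(h^{l+q-1}(2-h)^{q-1}\big)$ in $K(Q)$, and the projection formula gives
\[
\chi_Q\big([\cO_{Z_{P,T}}]\cdot\mathcal{A}\big)=\chi_{\P^{2n+1}}\big(h^{l+q-1}(2-h)^{q-1}\cdot h^{m+r}\big)=\chi_{\P^{2n+1}}\big(h^{m+r+l+q-1}(2-h)^{q-1}\big).
\]

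\textbf{Case $r\geq k$ and $q=0$.} Then $Z_{P,T}$ is cut out purely by the linear forms $x_c$ ($c\in\mathcal{L}_{P,T}$), hence is a codimension-$l$ linear subspace of $\P^{2n+1}$ lying on $Q$. Since $q=0$ forces $[0,n]\subset\mathcal{C}_{P,T}$, Lemma~\ref{L:consec_cuts} implies $\mathcal{L}_{P,T}$ meets each of the $n+1$ pairs $\{c,N+1-c\}$ with $1\leq c\leq n+1$, whence $l\geq n+1$ and $Z_{P,T}$ has codimension $l-1\geq n$ in $Q^{2n}$. If $r>k$ or $l\geq n+2$, then $(l-1)+(m-1+r)\geq 2n+1>\dim Q$, so $[\cO_{Z_{P,T}}]\cdot\mathcal{A}=0$ in $K(Q)$; moreover $m+r+l-1\geq 2n+2$, so $h^{m+r+l-1}=0$ in $K(\P^{2n+1})=\Z[h]/(h^{2n+2})$, and both sides of the identity vanish (the value of $\delta$ being irrelevant). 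In the remaining sub-case $r=k$ and $l=n+1$, the variety $Z_{P,T}$ is a \emph{maximal} linear subspace of $Q^{2n}$ and $\mathcal{A}\in\{[\cO_{Q_{(n)}}],[\cO_{\widetilde{Q}_{(n)}}]\}$. Lemma~\ref{L:projected_richardson_type} gives $\type([\cO_{Z_{P,T}}])\equiv |S|+|S'|+n+1\pmod 2$, so the multiplication rule for the two middle classes of $K(Q)$ yields $[\cO_{Z_{P,T}}]\cdot\mathcal{A}=c\,[\cO_{Q_{(2n)}}]$ with
\[
c\equiv\type([\cO_{Z_{P,T}}])+\type(\mathcal{A})+n+1\equiv |S|+|S'|+\type(\mathcal{A})\pmod 2,
\]
that is, $c=\delta$. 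Since $[\cO_{Q_{(2n)}}]=[\cO_{\text{point}}]$ and $m+r+l-1=2n+1$, applying $\chi_Q$ gives $\chi_Q([\cO_{Z_{P,T}}]\cdot\mathcal{A})=\delta=\chi_{\P^{2n+1}}(\delta\,h^{m+r+l-1})$, completing the proof.

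Along the way one verifies the routine facts that $\iota^*h^j=[\cO_{Q_{(j)}}]$ for $j\leq n-1$ and $\iota_*[\cO_{Q_{(j)}}]=h^{j+1}$ for $j\geq n$ (recalled in the discussion of $K(Q)$ preceding the proposition), and that $Z_{P,T}$ really is a complete intersection of the stated codimension inside $Q$, which is immediate from Proposition~\ref{P:complete_intersection_BCD}. \textbf{The main obstacle} is the last sub-case: one must notice that when $q=0$, $r=k$, and $l=n+1$ the projected Richardson variety collapses onto one of the two rulings of $Q$, determine which ruling through the combinatorial quantities $|S|$ and $|S'|$ of Lemma~\ref{L:projected_richardson_type}, and then feed this into the parity-dependent product of middle Schubert classes in $K(Q)$ so that the $\pmod 2$ bookkeeping collapses precisely to the definition of $\delta$. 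The other cases are direct applications of the projection formula together with the complete-intersection class $[\cO_{Z_{P,T}}]=h^{l+q}(2-h)^q$.
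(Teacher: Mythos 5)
Your proof is correct and follows essentially the same route as the paper's: reduce to $\chi_Q([\cO_{Z_{P,T}}]\cdot\mathcal{A})$, apply the projection formula along $\iota$ exactly as in the type $B$ argument for the first two cases, and in the case $r\geq k$, $q=0$ use Lemma \ref{L:projected_richardson_type} together with the parity-dependent product of the two middle classes of $K(Q)$. Your explicit verification that $q=0$ forces $l\geq n+1$, and the dimension count showing both sides vanish unless $r=k$ and $l=n+1$, are details the paper leaves implicit but are entirely consistent with its argument.
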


\begin{proof}
The proof is exactly as in type $B$, except in the case $r \geq k$ and $q=0$.
In this case, $Z_{P,T}$ is a linear subvariety of $Q$ of codimension at least $n$.
If $r > k$ or if $Z_{P,T}$ has codimension greater than $n$, then 
$[\cO_{Z_{P,T}}] \cdot \mathcal{A}=0$ (in this case $h^{m+r+l-1} \in K(\P^{2n+1})$ is also zero, since $r > n-m+1$
or $l > n+1$).
We can therefore assume $[\cO_{Z_{P,T}}]$ and $\mathcal{A}$ are both in $\{[\cO_{Q_{(n)}}],[\cO_{\widetilde{Q}_{(n)}}]\}$. 
By Lemma \ref{L:projected_richardson_type} it follows that
\begin{align*}
\chi_Q([\cO_{Z_{P,T}}] \cdot \mathcal{A})
&=  \type([\cO_{Z_{P,T}}]) + \type(\mathcal{A}) + n + 1 \pmod{2} \\
&= |S| + |S'| + \type(\mathcal{A}) \pmod{2}.
\end{align*}
This number agrees with $\chi_{\P^{2n+1}}(\delta \cdot h^{m+r+l-1})$, since $m+r+l-1 = 2n+1$.
\comment{
Equivalently, we can note that if 
the degree $(m-1+r)$ of $\mathcal{A}$  is at least $n$,
then, depending on $\delta^{}_{\mkern-7mu_\mathcal{A}}$,
there is \emph{possibly} one redundant equation
among the $m+r$ linear equations defining the variety
corresponding to $\iota_*\mathcal{A}$
and the $l$ linear equations defining $Z_{P,T}$
in $\P^{2n+1}$.
On the other hand, if the degree of 
$\mathcal{A}$  is less than $n$, then
there are $m-1+r$ linear equations
defining the variety corresponding to $\iota_*\mathcal{A}$ 
in $\P^{2n+1}$,
none of which are redundant.
Either way, we can pushforward 
to $K(\P^{2n+1})$ to get
\[
\iota_*([\cO_{Z_{P,T}}] \cdot \mathcal{A})
= \delta \cdot h^{m+r+l-1}.
\]}
\end{proof}

\subsection{A General Formula}

Propositions \ref{P:trip_C}, \ref{P:trip_B}, and \ref{P:trip_D}
are summarized concisely in
the following formulation of
the triple intersection number,
which holds for isotropic Grassmannians
of all types.

\begin{cor}\label{C:trip_all}
Let $X :=IG_{\omega}(m,N)$ be an isotropic Grassmannian, where
$N = 2n$ in type $C$, $N = 2n+1$ in type $B$, and $N=2n+2$ in type $D$.
Let $k = n-m$ in types $B$ and $C$ and let $k=n-m+1$ in type $D$.
Given $1 \leq r \leq n+k$,
suppose $\mathcal{A} \in K(IG_{\omega}(1,N))$ is a Schubert class of
codimension $(m-1+r)$, so that $\pi_*\psi^*\mathcal{A} \in K(X)$
is a special Schubert class of codimension $r$.
Given $T \preceq P$ in $\Omega(X)$, let
$l$ and $q$ be the numbers of linear and quadratic equations defining $Z_{P,T}$ 
as a complete intersection, and let $S$ and $S'$ be the sets
defined in Lemma \ref{L:projected_richardson_type}.
Define the integers $l'$, $q'$, $\eta$, and $\delta$ as follows:
\begin{align*}
q' &=
\begin{cases}
 q-1 & \text{ if $X$ is orthogonal and } q > 0, \\
 q & \text{ otherwise,}
\end{cases}\\
l' &=
\begin{cases}
 l+m+r & \text{ if $X$ is orthogonal, } q > 0 \text{ and } r \geq k, \\
 l+m+r-1 & \text{ otherwise,}
\end{cases}\\
\eta &=
\begin{cases}
 \type(\mathcal{A}) + |S| + |S'| & \text{ if } X \text{ is type D, } q = 0 \text{, and } r = k, \\
 1 & \text{ otherwise,}
\end{cases}\\
\delta &=
\begin{cases}
 0 & \text{ if } \eta \text{ is even,} \\
 1 & \text{ if } \eta \text{ is odd}.
\end{cases}
\end{align*}
We then have
\begin{align*}
\chi_{X}([\cO_{X_P}] \cdot [\cO_{X^T}] \cdot \pi_*\psi^*\mathcal{A})
 &= \chi_{\P^{N-1}}(\delta  h^{l'}(2h-h^2)^{q'})\\
 &= \delta \cdot \sum_{j = 0}^{N-1-l'-q'}{\binom{q'}{j}(-1)^{j}(2)^{q'-j}},
\end{align*}
where we define $\binom{q'}{j}$ to be zero for $j > q'$.
\end{cor}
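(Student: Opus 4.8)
The plan is to prove Corollary \ref{C:trip_all} by showing it is a uniform repackaging of the three type-by-type propositions already established, namely Propositions \ref{P:trip_C}, \ref{P:trip_B}, and \ref{P:trip_D}. First I would dispose of the second equality, which is purely formal: for any nonnegative integers $a$ and $b$ one has $h^a(2h-h^2)^b = h^{a+b}(2-h)^b = \sum_{j=0}^{b} \binom{b}{j}(-1)^j 2^{b-j} h^{a+b+j}$ in $\Z[h]/(h^N)$, and applying $\chi_{\P^{N-1}}$, which sends each $h^i$ with $1 \le i < N$ to $1$ and each $h^i$ with $i \ge N$ to $0$, collapses this to $\sum_{j=0}^{N-1-a-b}\binom{b}{j}(-1)^j 2^{b-j}$. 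So with $a = l'$ and $b = q'$ the two displayed right-hand sides agree, with the convention $\binom{q'}{j}=0$ for $j>q'$ making the upper limit harmless when $N-1-l'-q'$ exceeds $q'$. This step is routine bookkeeping and should be stated in a line or two.

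The main work is a case analysis matching the definitions of $l'$, $q'$, $\eta$, $\delta$ against the earlier propositions. In type $C$ (so $N=2n$, $k=n-m$, $X$ not orthogonal), Corollary \ref{C:trip_all} sets $q'=q$, $l'=l+m+r-1$, $\eta=1$, $\delta=1$, giving $\chi_{\P^{N-1}}(h^{l+m+r-1}(2h-h^2)^q)$, which is exactly $\chi_{\P^{2n-1}}(h^{m+r+l+q-1}(2-h)^q)$, the content of Proposition \ref{P:trip_C} (recall there $\mathcal{A} = h^{m-1+r}$ and $\psi$ maps onto all of $\P^{2n-1}$, so $\pi_*\psi^*\mathcal{A} = [\cO_{X_{(r)}}]$). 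In type $B$ (so $N=2n+1$, $k=n-m$, $X$ orthogonal), I would split on $r<k$, on $r\ge k$ with $q>0$, and on $r\ge k$ with $q=0$, and check in each branch that the formula for $(l',q')$ reproduces the corresponding branch of Proposition \ref{P:trip_B}: when $r<k$ we get $q'=q-1$ but $l'=l+m+r-1$, so $h^{l+m+r-1}(2h-h^2)^{q-1} = h^{l+m+r+q-2}(2-h)^{q-1}$ — here I need to note the identity $h^{m+r+l+q-1}(2-h)^q \equiv h^{m+r+l+q-2}(2-h)^{q-1}$ under $\chi_{\P^{2n}}$, which holds because applying $\chi$ to $h^A(2-h)^B$ gives $\sum_{j=0}^{2n-A-B}\binom{B}{j}(-1)^j2^{B-j}$ and one checks the two truncated alternating sums coincide (equivalently, the extra factor $h(2-h)$ versus nothing is absorbed by the Euler characteristic of the quadric); similarly for $r\ge k$, $q>0$ the match is immediate, and for $r \ge k$, $q=0$ one gets $\delta=1$, $l'=l+m+r-1$, $q'=0$, so the class is $h^{m+r+l-1}$ with $m+r+l-1 \ge 2n+1 > 2n = \dim\P^{2n}$, hence $\chi = 0$, agreeing with the last line of Proposition \ref{P:trip_B}. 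Type $D$ (so $N=2n+2$, $k=n-m+1$, $X$ orthogonal) is the richest case and is handled the same way, except that when $r=k$, $q=0$, and $l=n+1$ the variety $Z_{P,T}$ is a codimension-$n$ linear subvariety of the quadric $Q$, and Proposition \ref{P:trip_D} together with Lemma \ref{L:projected_richardson_type} gives the triple intersection number as $|S|+|S'|+\type(\mathcal{A}) \pmod 2$; this is precisely $\eta \pmod 2$, i.e. $\delta$, and since $l' = l+m+r-1 = N-1 = 2n+1$ we have $\chi_{\P^{2n+1}}(\delta h^{2n+1}) = \delta$, matching.

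I expect the main obstacle to be purely organizational rather than mathematical: making sure the piecewise definitions of $l'$ and $q'$ are correctly aligned with which of the ``$r<k$'' / ``$r\ge k$, $q>0$'' / ``$r\ge k$, $q=0$'' branches of Propositions \ref{P:trip_B} and \ref{P:trip_D} one is in, and verifying the small Euler-characteristic identity $\chi_{\P^{N-1}}(h^A(2-h)^B) = \chi_{\P^{N-1}}(h^{A-1}(2-h)^{B-1})$ used to reconcile the $r<k$ cases (this is the one genuinely computational point, and it follows from the fact that $\chi_{\P^{N-1}}(h^i) = \chi_{\P^{N-1}}(h^{i'})=1$ for all $1 \le i, i' < N$, so the two alternating binomial sums, which have the same summand and whose ranges of summation differ only in ways that add or remove terms with $j$ beyond the support, must agree). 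Once the bracketing is set up correctly, the proof is a finite verification; I would present it as a short table or three short paragraphs, one per Lie type, each citing the relevant earlier proposition and pointing out that the newly introduced quantities $l', q', \eta, \delta$ are exactly the data appearing there.
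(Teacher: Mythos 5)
Your overall strategy --- a type-by-type comparison of the uniform $(l',q',\delta)$ recipe with Propositions \ref{P:trip_C}, \ref{P:trip_B}, and \ref{P:trip_D} --- is the natural one (the paper gives no other argument), and your verifications in type $C$ and in the $q=0$ cases are correct. The gap is in the orthogonal cases with $q>0$ and $m-1+r\le n-1$ (i.e.\ $r\le k$ in type $B$, $r<k$ in type $D$): there the propositions give $\chi_{\P^{N-1}}\bigl(h^{m+r+l-1}(2h-h^2)^{q}\bigr)$, whereas the corollary's recipe gives $\chi_{\P^{N-1}}\bigl(h^{l'}(2h-h^2)^{q-1}\bigr)$, and reconciling the two requires an identity of the form $\chi_{\P^{N-1}}(h^{a}(2h-h^2)^{q})=\chi_{\P^{N-1}}(h^{a}(2h-h^2)^{q-1})$ (possibly with $a$ shifted by one). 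You justify this by saying the two alternating sums ``have the same summand'' and differ only beyond the support; that is not so --- one summand is $\binom{q}{j}(-1)^j2^{q-j}$ and the other is $\binom{q-1}{j}(-1)^j2^{q-1-j}$ --- and the identity is not formal. Both sides equal $1$ exactly when the degree sum satisfies $a+2q\le N-1$, but otherwise they can differ: for instance $\chi_{\P^{2n}}(h^{2n}(2h-h^2))=2$ while $\chi_{\P^{2n}}(h^{2n})=1$.

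This is not merely a presentational defect, because the inequality $l+m+r-1+2q\le N-1$ can fail within the corollary's hypotheses, and then the recipe and the propositions genuinely disagree. Take $X=OG(1,9)$ (type $B$, $n=4$, $m=1$, $k=3$), $P=\{6\}$, $T=\{3\}$, $r=2$. Here $\mathcal{Q}_{P,T}=\{0,1,2,3,5\}$ and $\mathcal{L}_{P,T}=\{1,2,7,8,9\}$, so $q=1$, $l=5$, and $Z_{P,T}$ is the quadric surface $\{x_4x_6+\tfrac12x_5^2=0\}$ in $\P\langle \+{e}_3,\ldots,\+{e}_6\rangle$. By Lemma \ref{L:cohomologically_trivial_transport} the triple intersection number is $\chi_Q([\cO_{Z_{P,T}}]\cdot[\cO_{Q_{(2)}}])$, and $Z_{P,T}\cap g(\P(E_7)\cap Q)$ is two reduced points, so the answer is $2$, in agreement with Proposition \ref{P:trip_B} ($\chi_{\P^{8}}(h^{8}(2-h))=2$). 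The corollary's recipe gives $l'=7$, $q'=0$, $\delta=1$, hence $\chi_{\P^{8}}(h^{7})=1$. So no argument along your lines can close these cases: either one proves the bound $l+2q\le n+1$ (which this example refutes), or the definitions of $l'$ and $q'$ must be realigned so that $q'=q$ and $l'=l+m+r-1$ whenever $m-1+r\le n-1$ in the orthogonal types, matching the two branches of $\iota_*[\cO_{Q_{(m-1+r)}}]$ used in the proofs of Propositions \ref{P:trip_B} and \ref{P:trip_D}. Any correct write-up must flag and resolve this discrepancy rather than absorb it into a false Euler-characteristic identity.
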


\comment{
We note that $l'$ and $q'$ are the numbers of linear and quadratic equations
defining the projected triple intersection in $\P^{N-1}$, provided
$\delta = 1$.}

%SECTION 9
\section{Computing Pieri Coefficients}\label{S:pieris}
Let $X$ be an isotropic Grassmannian of type $B$, $C$, or $D$.
Given Schubert symbols $P$ and $Q$ and a special Schubert class $[\cO_{X_{(r)}}]$,
the $K$-theoretic structure constant $\mathcal{N}^Q_{P,r}(X)$ is the coefficient of $[\cO_{X_Q}]$
in the Pieri product $[\cO_{X_P}] \cdot [\cO_{X_{(r)}}]$.
In this section
we compute $\mathcal{N}^Q_{P,r}(X)$ as an alternating sum of triple intersection numbers
$\chi_X([\cO_{X_P}] \cdot [\cO_{X^T}] \cdot [\cO_{X_{(r)}}])$,
where $T$ ranges over a certain subset of Schubert symbols
(if $X$ is type $D$, the
special Schubert class $[\cO_{\tilde{X}_{(k)}}]$ can be substituted in order to calculate the additional Pieri coefficients
$\tilde{\mathcal{N}}^Q_{P,r}(X)$).

Given Schubert symbols $Q$ and $P$,
the M\"{o}bius function $\mu(Q,P)$  is defined by
 \begin{equation*}
\mu(Q,P) =
\begin{cases}
1 & \text{ if } Q = P, \\
- {\displaystyle \sum_{Q \preceq T \prec P}}\mu(Q,T) & \text{ if } Q \prec P, \\
0 & \text{ otherwise}.
\end{cases}
\end{equation*}
For each $Q \in \Omega(X)$, we define a class $\cO^*_Q \in K(X)$ by
 \begin{equation*}
\cO^*_{Q} := \sum_{T \in \Omega(X)}\mu(Q,T)[\cO_{X^{T}}].
\end{equation*}

\begin{lemma}\label{L:mobius_dual_class}
The class $\cO^*_Q$ is the $K$-theoretic dual
to $\cO_Q$ in the sense that
\[\chi_X([\cO_{X_P}] \cdot \cO^*_Q) = \delta_{P,Q}.\]
\end{lemma}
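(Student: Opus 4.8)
The plan is to reduce the statement to a purely combinatorial identity for the M\"obius function, using the fact that the sheaf Euler characteristic of a (nonempty) Richardson variety is $1$. First I would expand the left-hand side by linearity of $\chi_X$ and bilinearity of the product:
\[
\chi_X([\cO_{X_P}] \cdot \cO^*_Q) = \sum_{T \in \Omega(X)} \mu(Q,T)\,\chi_X([\cO_{X_P}]\cdot[\cO_{X^T}]).
\]
Then I would invoke the identity $[\cO_{X_P}]\cdot[\cO_{X^T}] = [\cO_{Y_{P,T}}]$ in $K(X)$ (see \cite{brion:lectures}), which was already used in the introduction, so that each summand becomes $\mu(Q,T)\,\chi_X([\cO_{Y_{P,T}}])$.

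The key input is that $\chi_X([\cO_{Y_{P,T}}]) = 1$ whenever $T \preceq P$ and $\chi_X([\cO_{Y_{P,T}}]) = 0$ otherwise. The second case is immediate since $Y_{P,T} = \emptyset$ unless $T \preceq P$. For the first case I would cite the fact that Richardson varieties are reduced, irreducible, and have rational singularities (\cite{richardson:intersections}, \cite{brion:lectures}); together with properness this forces $H^0(Y_{P,T},\cO_{Y_{P,T}}) = \C$ and $H^i(Y_{P,T},\cO_{Y_{P,T}}) = 0$ for $i>0$, hence $\chi_X([\cO_{Y_{P,T}}]) = 1$. With this, the sum collapses to $\sum_{T:\,Q\preceq T \preceq P}\mu(Q,T)$, the interval sum of the M\"obius function on the Bruhat order.

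It then remains to show $\sum_{Q \preceq T \preceq P}\mu(Q,T) = \delta_{P,Q}$. This is the defining recursion for $\mu$, unwound: if $Q = P$ the interval is a single point and the sum is $\mu(Q,Q) = 1$; if $Q \not\preceq P$ the index set is empty and the sum is $0$; and if $Q \prec P$ then by the definition $\mu(Q,P) = -\sum_{Q \preceq T \prec P}\mu(Q,T)$, so $\sum_{Q \preceq T \preceq P}\mu(Q,T) = \mu(Q,P) + \sum_{Q \preceq T \prec P}\mu(Q,T) = 0$. Substituting back gives $\chi_X([\cO_{X_P}]\cdot\cO^*_Q) = \delta_{P,Q}$, as claimed.

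The only step that is not completely formal is the vanishing/normalization $\chi_X([\cO_{Y_{P,T}}]) = 1$ for nonempty $Y_{P,T}$; this is the crux, though it is a well-known property of Richardson varieties and can simply be cited. Everything else is bookkeeping with the M\"obius function, so I would keep that part brief.
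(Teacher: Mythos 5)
Your proof takes essentially the same route as the paper's: expand $\cO^*_Q$ by linearity, use that $\chi_X([\cO_{Y_{P,T}}]) = 1$ for nonempty Richardson varieties, and collapse the resulting sum via the defining recursion of the M\"obius function. The case analysis at the end is the same three-way split the paper uses.

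One small imprecision in the crux step: rational singularities alone do \emph{not} force $H^i(Y,\cO_Y) = 0$ for $i>0$ (a smooth elliptic curve is its own resolution, so trivially has rational singularities, yet $h^{0,1}=1$). What you actually need in addition is that $Y_{P,T}$ is a \emph{rational} variety: then a resolution $\tilde Y \to Y_{P,T}$ is a smooth projective rational variety with $h^{0,i}(\tilde Y)=0$ for $i>0$, and rational singularities transport this vanishing down to $Y_{P,T}$. The paper handles this by citing \cite{deodhar:bruhat} for rationality, \cite{brion:positivity} for rational singularities, and \cite{griffiths:principles} for the birational invariance of $h^{0,i}$. Your citation of \cite{richardson:intersections} supplies reducedness and irreducibility but not rationality, so you should add a reference (e.g.\ Deodhar) for the latter to make the argument airtight.
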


\begin{proof}
\comment{
 Recall that for $X$ and $Y$ in $\Omega$,
 \begin{equation*}
\mu(X,Y) =
\begin{cases}
1 & \text{ if } X = Y, \\
- {\displaystyle \sum_{X \preceq Z < Y}}\mu(X,Z) & \text{ if } X \lneq Y, \\
0 & \text{ otherwise}.
\end{cases}
\end{equation*}
}
Given Schubert symbols $T \preceq P$, the Richardson variety $Y_{P,T}$ is
rational \cite{deodhar:bruhat} with rational singularities \cite{brion:positivity}.
By \cite[p. 494]{griffiths:principles} it follows that 
\begin{equation*}
\chi_X([\cO_{X_P}] \cdot [\cO_{X^T}]) =
\begin{cases}
1 & \text{ if } T \preceq P, \\
0 & \text{ otherwise}.
\end{cases}
\end{equation*}
If $Q \prec P$, then we have
\begin{align*}
\chi_X([\cO_{X_P}] \cdot \cO^*_Q)
&= \sum_{T \in \Omega(X)} \mu(Q,T) \chi_X([\cO_{X_P}] \cdot [\cO_{X^{T}}]) \\
&= \sum_{T \preceq P}{\mu(Q,T)} \\
&= \sum_{Q \preceq T \prec P}{\mu(Q,T)} + {\mu(Q,P)}
= 0. \\
\end{align*}
If $Q \not\preceq P$, then for any $T \succeq Q$ we have $T \not\preceq P$.
Thus, $[\cO_{X_P}] \cdot [\cO_{X^{T}}] = 0$ for every Schubert class
$[\cO_{X^{T}}]$ that has nonzero coefficient in $\cO^*_Q$.
Finally, if $P = Q$ then
$\chi_X([\cO_{X_Q}] \cdot \cO^*_Q) = \chi_X([\cO_{X_Q}] \cdot [\cO_{X^{Q}}]) = 1.$
\end{proof}

Since
$\mu(Q,T) = 0$ for $Q \not\preceq T$ and $[\cO_{X_P}] \cdot [\cO_{X^T}] = 0$
for $T \not\preceq P$, we have the following corollary.

\begin{cor}\label{C:pieri_coef_as_linear_comb}
\begin{equation*}
\mathcal{N}^{Q}_{P,r}(X)
= \sum_{Q \preceq T \preceq P} \mu(Q,T)\chi_{X}([\cO_{X_P}] \cdot [\cO_{X^{T}}] \cdot [\cO_{X_{(r)}}]).
\end{equation*}
\end{cor}

It is known that $\mu(Q,T) \in \{0,(-1)^{|Q|-|T|}\}$ for any Schubert symbols $Q$ and $T$
\cite[Cor 2.7.10]{bjorner:coxeter_groups}. 
In \cite[\S A]{ravikumar:thesis} a conjectured criterion is stated for when $\mu(Q,T)$ vanishes.
We hope Corollary \ref{C:pieri_coef_as_linear_comb}
will lead to a Pieri formula for $\mathcal{N}^{Q}_{P,r}(X)$
with manifestly alternating signs, in the sense that
$(-1)^{|Q| - |P| - r} \mathcal{N}^Q_{P,r}(X) = 1$ (see \cite{brion:positivity} for a proof
of this fact).

\subsection{A Global Rule}
We briefly describe how to determine $K$-theoretic dual classes, and hence
Pieri coefficients, without a ``local'' rule, but rather using the global data 
of the entire Bruhat order.  This method requires us to invert an $L \times L$ matrix,
where $L$ is the number of Schubert symbols in $\Omega(X)$,
and allows for relatively efficient computation of $\mathcal{N}^{Q}_{P,r}(X)$.

Let $\{P_1, \ldots, P_L\}$ be the set of Schubert symbols for $X$.
\comment{Let $V \cong \C^{L}$ be the vector space underlying $K(X)$.}
Let
$\cO_i := [\cO_{X_{P_i}}]$ and 
$\cO^i := [\cO_{X^{P_i}}]$.
The sets $\{\cO_1, \ldots, \cO_L\}$ 
and  $\{\cO^1, \ldots, \cO^L\}$
are both additive bases for $K(X)$.

We will use the following four $L \times L$ matrices:
\begin{enumerate}
\item Let $\+M := (m_{ij})$ be the \emph{intersection matrix} for $X$, where
\begin{equation*}
m_{ij} = 
\begin{cases}
 1 & \text{if } P_j \preceq P_i,\\
 0 & \text{otherwise}.
\end{cases}
\end{equation*}

\comment{
Let $s_1, \ldots, s_{n+k}$ index the special Schubert classes
in the sense that \[\cO_{s_r} = \cO_{(r)}\].
}

\item Let $\+C_{(r)} := (c_{ij})$ be the \emph{Pieri coefficient matrix} for $X$,
where \[\cO_i \cdot \cO_{(r)} = c_{ij}\cO_{j}.\]

\item Let $\+T_{(r)} := (t_{ij})$ be the \emph{triple intersection matrix} for $X$,
where 
\[t_{ij} = \chi_X(\cO_i \cdot \cO^j \cdot \cO_{(r)}).\]

\item Let $\+D := \+M ^{-1}$ be the \emph{matrix of duals} on $X$.
\end{enumerate}

Let $\+d^j$ denote the $j^{\text{th}}$ column vector of $\+D$,
\comment{It represents an element of $K(X)$ with respect to the basis
$\{\cO^1, \ldots, \cO^L\}$.}
and let $\cO_{\+d^j} := \sum_{k = 1}^{L}{d_{kj}\cO^{k}}$.

\begin{obs}
The element $\cO_{\+d^j}$ is dual to $\cO_j$
in the sense that
\[\chi_X(\cO_i \cdot \cO_{\+d^j}) = \delta_{i,j}.\]
\end{obs}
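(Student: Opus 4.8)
The plan is to reduce the statement to a one-line matrix identity using the pairing between the two Schubert bases that was already computed in the proof of Lemma~\ref{L:mobius_dual_class}. Recall from there that for Schubert symbols $P$ and $T$ one has $\chi_X([\cO_{X_P}]\cdot[\cO_{X^T}]) = 1$ when $T \preceq P$ and $0$ otherwise; in the index notation of this subsection this says exactly
\[
\chi_X(\cO_i \cdot \cO^k) = m_{ik}
\]
for all $i,k$, where $\+M = (m_{ij})$ is the intersection matrix. (I would first remind the reader that $\+M$ is invertible: after relabelling the Schubert symbols by a linear extension of the Bruhat order, $\+M$ becomes lower unitriangular, so $\det \+M = 1$ and $\+D := \+M^{-1}$ is well-defined with integer entries. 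This is the only point where anything needs to be checked, and it is immediate.)

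Next I would simply expand. By definition $\cO_{\+d^j} = \sum_{k=1}^{L} d_{kj}\,\cO^{k}$, so by linearity of the sheaf Euler characteristic map $\chi_X$ and the displayed identity above,
\[
\chi_X(\cO_i \cdot \cO_{\+d^j})
= \sum_{k=1}^{L} d_{kj}\,\chi_X(\cO_i \cdot \cO^{k})
= \sum_{k=1}^{L} m_{ik}\,d_{kj}
= (\+M\+D)_{ij}
= (\+M\+M^{-1})_{ij}
= \delta_{i,j},
\]
which is precisely the claim.

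There is essentially no obstacle here: the content of the observation is just the bookkeeping fact that inverting the intersection matrix converts the ``lower'' Schubert basis into the dual basis of the ``upper'' one under the Euler-characteristic pairing, and the substantive input (the value of $\chi_X([\cO_{X_P}]\cdot[\cO_{X^T}])$) has already been established. If desired, one could add a closing remark that this makes $\+T_{(r)} = \+C_{(r)}\,\+M^{-t}$ (equivalently $\+C_{(r)} = \+T_{(r)}\,\+M^{t}$), recovering Corollary~\ref{C:pieri_coef_as_linear_comb} in matrix form, since the M\"obius matrix is exactly $\+M^{-1}$ by M\"obius inversion on the Bruhat order; but that is a corollary rather than part of the proof of the observation itself.
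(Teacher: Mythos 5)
Your proof is correct and is essentially the paper's own argument: the paper's one-line proof simply notes that $\chi_X(\cO_i \cdot \cO_{\+d^j}) = \+m_i \cdot \+d^j$, i.e.\ the $(i,j)$ entry of $\+M\+M^{-1}$, which is exactly your computation spelled out. Your added remarks (unitriangularity of $\+M$ under a linear extension of the Bruhat order, and the identification $\chi_X(\cO_i\cdot\cO^k)=m_{ik}$ from the proof of Lemma~\ref{L:mobius_dual_class}) are accurate and merely make explicit what the paper leaves implicit.
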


\begin{proof}
 $\chi_X(\cO_i \cdot \cO_{\+d^j}) = \+m_i \cdot \+d^j$,
 where
 $\+m_i$ is the $i^{\text{th}}$ row of  $\+M$.
\end{proof}

\comment{Using the matrix of duals, we can easily convert triple
intersection numbers into Pieri coefficients.}

\begin{obs}
The matrix $\+D$ transforms triple intersection numbers
into Pieri coefficients, via the relation
\[
 \+T_{(r)} \cdot \+D = \+C_{(r)}. 
\]
\end{obs}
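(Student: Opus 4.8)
The plan is to reduce the claimed matrix identity to two facts already in hand: the bilinearity of the Euler pairing $(a,b)\mapsto\chi_X(a\cdot b)$ on $K(X)$, and the duality established in the preceding observation, namely $\chi_X(\cO_i\cdot\cO_{\+d^j})=\delta_{ij}$. The latter rests in turn on $\+D=\+M^{-1}$ and on the identity $m_{ij}=\chi_X(\cO_i\cdot\cO^j)$, which is precisely the computation performed inside the proof of Lemma~\ref{L:mobius_dual_class}: since $Y_{P_i,P_j}$ is rational with rational singularities, $\chi_X([\cO_{X_{P_i}}]\cdot[\cO_{X^{P_j}}])$ equals $1$ when $P_j\preceq P_i$ and $0$ otherwise, which is $m_{ij}$.

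First I would expand the $(i,j)$ entry of $\+T_{(r)}\cdot\+D$ directly and push the scalars $d_{kj}$ through $\chi_X$ and through multiplication in $K(X)$:
\[
(\+T_{(r)}\cdot\+D)_{ij}=\sum_{k=1}^{L}t_{ik}\,d_{kj}
=\sum_{k=1}^{L}\chi_X(\cO_i\cdot\cO^k\cdot\cO_{(r)})\,d_{kj}
=\chi_X\!\Big(\cO_i\cdot\cO_{(r)}\cdot\sum_{k=1}^{L}d_{kj}\cO^k\Big)
=\chi_X(\cO_i\cdot\cO_{(r)}\cdot\cO_{\+d^j}).
\]
Next I would substitute the Pieri expansion $\cO_i\cdot\cO_{(r)}=\sum_{l}c_{il}\cO_l$ and use bilinearity once more, together with the duality of $\cO_{\+d^j}$ and $\cO_j$:
\[
\chi_X(\cO_i\cdot\cO_{(r)}\cdot\cO_{\+d^j})
=\sum_{l=1}^{L}c_{il}\,\chi_X(\cO_l\cdot\cO_{\+d^j})
=\sum_{l=1}^{L}c_{il}\,\delta_{lj}=c_{ij}.
\]
Since $(i,j)$ was arbitrary, this yields $\+T_{(r)}\cdot\+D=\+C_{(r)}$.

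There is no genuine obstacle here; the statement is a formal consequence of the $\Z$-bilinearity of the Euler pairing and of the already-proved duality $\chi_X(\cO_i\cdot\cO_{\+d^j})=\delta_{ij}$. The only point I would take care to record is that $\+M$ is invertible over $\Z$: it is unitriangular with respect to any linear extension of the Bruhat order, since its diagonal entries are $1$ and $m_{ij}=0$ unless $P_j\preceq P_i$. Hence $\+D$ has integer entries and all the manipulations above are legitimate, completing the proof.
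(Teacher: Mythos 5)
Your proof is correct and follows the paper's own argument exactly: expand the $(i,j)$ entry of $\+T_{(r)}\cdot\+D$, pull the scalars $d_{kj}$ inside $\chi_X$ to recognize $\cO_{\+d^j}$, and conclude $c_{ij}$ via the duality $\chi_X(\cO_l\cdot\cO_{\+d^j})=\delta_{lj}$. The only difference is that you spell out the final step (substituting the Pieri expansion and invoking the duality) and record the unitriangularity of $\+M$, both of which the paper leaves implicit.
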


\begin{proof}
\begin{equation*}
\sum_{k=1}^{L}{t_{ik}d_{kj}} = \sum_{k=1}^{L}{\chi_X( d_{kj}\cO_i \cdot \cO^k \cdot \cO_{(r)})}
= \chi_X(\cO_i \cdot \cO_{\+d^j} \cdot \cO_{(r)}) = c_{ij}.
\end{equation*}
\end{proof}

\bibliographystyle{amsalpha}
\bibliography{vijay_thesis}

%input{bibliography}
\end{document}